\documentclass{elsarticle}

\usepackage{geometry}
\usepackage{graphicx}
\usepackage{amssymb}
\usepackage{amsmath}
\usepackage{amsthm}
\usepackage{empheq}
\usepackage{mdframed}
\usepackage{booktabs}
\usepackage{lipsum}
\usepackage{graphicx}
\usepackage{color}
\usepackage{psfrag}
\usepackage{pgfplots}
\usepackage{bm}
\usepackage{url}
\usepackage{array}
\usepackage{float}
\usepackage{mathrsfs}
\usepackage{tikz}
\usepackage{subcaption}

\graphicspath{{fig/}}

\DeclareMathAlphabet{\mathpzc}{OT1}{pzc}{m}{it}

\biboptions{sort&compress} 
\journal{ }
\usepackage[colorlinks=true]{hyperref}
\AtBeginDocument{
  \hypersetup{
    linkcolor=red,
    citecolor=green,
    urlcolor=magenta,
  }
}

\definecolor{ocre}{RGB}{243,102,25}
\definecolor{mygray}{RGB}{243,243,244}
\definecolor{deepGreen}{RGB}{26,111,0}
\definecolor{shallowGreen}{RGB}{235,255,255}
\definecolor{deepBlue}{RGB}{61,124,222}
\definecolor{shallowBlue}{RGB}{235,249,255}

\newtheoremstyle{mytheoremstyle}{3pt}{3pt}{\normalfont}{0cm}{\rmfamily\bfseries}{}{1em}{{\color{black}\thmname{#1}~\thmnumber{#2}}\thmnote{\,--\,#3}}
\newtheoremstyle{myproblemstyle}{3pt}{3pt}{\normalfont}{0cm}{\rmfamily\bfseries}{}{1em}{{\color{black}\thmname{#1}~\thmnumber{#2}}\thmnote{\,--\,#3}}
\theoremstyle{mytheoremstyle}
\newmdtheoremenv[linewidth=1pt,backgroundcolor=shallowGreen,linecolor=deepGreen,leftmargin=0pt,innerleftmargin=20pt,innerrightmargin=20pt,]{theorem}{Theorem}[section]
\theoremstyle{mytheoremstyle}
\newmdtheoremenv[linewidth=1pt,backgroundcolor=shallowBlue,linecolor=deepBlue,leftmargin=0pt,innerleftmargin=20pt,innerrightmargin=20pt,]{definition}{Definition}[section]
\theoremstyle{myproblemstyle}
\newmdtheoremenv[linecolor=black,leftmargin=0pt,innerleftmargin=10pt,innerrightmargin=10pt,]{problem}{Problem}[section]

\newtheorem{thm}{Theorem}[section]

\newtheorem{lem}{Lemma}[section]

\newtheorem{rmk}{Remark}[section]
\theoremstyle{definition}

\theoremstyle{remark}
\newcommand{\ben}{
\begin{eqnarray}}
  \newcommand{\een}{
\end{eqnarray}}
\newcommand{\bea}{
\begin{array}}
  \newcommand{\eea}{
\end{array}}

\numberwithin{equation}{section}
\usepgfplotslibrary{colorbrewer}
\pgfplotsset{width=8cm,compat=1.9}

\def\bv{\mathbf{v}}

\def\b0{\mathbf{0}}
\def\bh{\mathbf{h}}
\def\bx{\mathbf{x}}

\def\bI{\mathbf{I}}

\def\bD{\mathbf{D}}

\def\bQ{\mathbf{Q}}
\def\bG{\mathbf{G}}

\title{ Efficient Numerical Schemes for a Two-Phase Hydrodynamical
Model of Active Liquid Crystals and Solids}

\begin{document}

\begin{frontmatter}
	\author{Xuelong Gu$^{a}$}
	\author{Guanghua Ji$^{b}$}
	\author{and Qi Wang$^{a,*}$}

	\address[1]{Department of Mathematics, University of South
		Carolina, Columbia, SC, 29208, USA}

	\address[2]{Laboratory of Mathematics and Complex Systems (Ministry
		of Education), School of Mathematical Sciences, Beijing Normal
		University, Beijing 100875, P.R. China\\ \vspace{-1cm}}

	\begin{abstract}
		We propose several linear, fully decoupled numerical schemes with
		first- and second-order temporal accuracy for a novel
		${\bf Q}$-tensor-based two-phase hydrodynamic model that
		describes the coupling of active nematic liquid crystal solutions
		with isotropic solid substrates. The model is derived from the
		generalized Onsager principle and includes nontrivial terms that
		contribute zero to the total free energy dissipation. We prove
		that the proposed decoupled linear schemes are thermodynamically
		consistent at the discrete level. In the passive limit, the
		SGE-BDF1 and SGE-PDG schemes are unconditionally energy stable,
		while the SGE-BDF2 scheme is energy stable with respect to a
		modified energy under a standard boundedness assumption and a
		sufficiently large stabilization parameter. We then perform
		extensive numerical simulations to investigate how activity and
		other model parameters affect active-nematic fluid–solid
		interactions. Finally, we analyze the physical mechanisms
		underlying the observed behaviors, providing deeper insight into
		the dynamics of soft, confined active nematic fluids.

	\end{abstract}

	\begin{keyword}
		Active matter; Liquid-crystal;
		Liquid crystal-solid substrate interaction;
		Thermodynamical consistency;
		Energy-stable schemes;
	\end{keyword}

\end{frontmatter}

\begin{figure}[b]
	\small \baselineskip=10pt
	\rule[2mm]{1.8cm}{0.2mm} \par
	$^{*}$Corresponding author.\\
	E-mail address: \url{qwang@math.sc.edu} (Q. Wang).
\end{figure}

\section{Introduction}

Active matter encompasses a broad class of systems composed of
self-driven entities that consume energy to generate motion or
mechanical stresses, giving rise to behaviors fundamentally distinct
from those observed in passive matter systems. Examples range from
collective dynamics in bacterial suspensions
\cite{Dombrowski_2004_self} to fish schools
\cite{Becco_2006_experimental}, aerial flocks
\cite{Bialek_2012_statistical, Cavagna_2010_scale}, and cytoskeleton
dynamics \cite{Sanchez_2012_spontaneous,Kierfeld_2008_active}.

Among active matter systems, active liquid crystals represent a
unique subclass in which self-propelled anisotropic particles
exhibit liquid crystalline order \cite{Majumdar_2014_Perspectives}.
These systems arise naturally in biology-for example, in microtubule
bundles \cite{Sanchez_2012_spontaneous}, cytoskeletal filaments
driven by molecular motors \cite{Kierfeld_2008_active}, migrating
cell layers \cite{Poujade_2007_collective, petitjean_2010_velocity},
and dense suspensions of microswimmers \cite{Wensink_2012_meso}. They
also provide novel design principles for synthetic active materials with
potential technological applications. Based on the direction of force
couples, active liquid crystals are further classified as
contractile (forces directed inward along the particle axis) or
extensile (forces directed outward) \cite{MC_Marchetti_2013_hydrodynamics}.

The theoretical foundations of active liquid crystals build on
decades of work in active matter physics. Key advances include
Ramaswamy’s and Marchetti et al.’s development of hydrodynamic
theories \cite{S_Ramaswamy_2010_mechanics,
	MC_Marchetti_2013_hydrodynamics}, the Toner-Tu model, which
establishes that long-range order is possible in two-dimensional
active systems \cite{TT}, and the Vicsek model, which demonstrates
order-disorder transitions in collections of self-propelled particles
\cite{Vicsek1995}. These frameworks, complemented by computational
studies, have revealed phenomena such as spontaneous flow generation,
turbulence-like chaotic states, and emergent collective motion
resembling bacterial suspensions \cite{Dombrowski_2004_self,
	Wensink_2012_meso} and cytoskeletal dynamics \cite{Sanchez_2012_spontaneous}.

A particularly rich platform for exploring these behaviors is the
active nematic droplet: a soft-confined mixture of active nematic
liquid crystal and isotropic fluid. Early computational studies by
Giomi et al. \cite{Giomi_2014_spontaneous} demonstrated
activity-driven elongation and interfacial instabilities, while later
work by Doostmohammadi et al. \cite{A_Doostmohammadi_2016_defect} and
Ruske et al. \cite{LJ_Ruske_2021_morphology} explored defect dynamics
and morphology in two and three dimensions.

Existing continuum descriptions of these systems extend passive
liquid crystal hydrodynamics by incorporating activity as a
perturbation \cite{SR_Aditi_2002_hydrodynamic,
	MC_Marchetti_2013_hydrodynamics}. However, these formulations often
lack a variational structure, limiting both their analytical
tractability and the design of energy-stable numerical methods. To
address this gap, in our recent work
, we developed a
binary phase-field
model for the two-phase mixture of active liquid crystals and viscous
fluids. The model is derived using the variational and generalized
Onsager principles
\cite{Wang2020_Onsager,Joanny&J&K&P2007,Prost&J&J2015}, ensuring energy
dissipation in the absence of activity. This guarantees thermodynamic
consistency, enables rigorous analysis, and facilitates the
development of stable numerical algorithms. The resulting system couples
an Allen–Cahn type equation for the nematic order parameter with
incompressible Navier–Stokes equations for fluid motion. Given the
strong nonlinear interactions among the hydrodynamic variables, the design of
efficient and robust computational methods presents both a
significant challenge and a promising opportunity.

In addition to immiscible mixtures of active liquid crystals and
isotropic viscous fluids, another important class of two-phase systems
has received relatively little attention: active liquid crystals
interfaced with solid structures, either as inclusions or bounding
substrates. Boundary anchoring is known to play a central role in
dictating both the dynamics and the equilibrium configuration of
liquid crystal systems. Past studies have examined such boundary
effects, but analytical and computational limitations have restricted
investigations to simple, regular geometries. To date, no systematic
study has addressed the interaction between liquid crystals and
arbitrarily shaped solid inclusions or liquid crystals confined
within domains of
arbitrary geometries.

Motivated by our continuum framework for active liquid
crystal-viscous fluid mixtures, we propose a continuum model for
active liquid crystals interfaced with solid substrates of arbitrary shape.
Analogous to the droplet model, the new formulation is derived via
the generalized Onsager principle, ensuring thermodynamic
consistency. The solid phase is represented by a momentum balance
equation with a large friction coefficient and a viscoelastic
constitutive relation. Distinct phases are described by a phase-field
variable $\phi \in [0,1]$, in close analogy with hydrodynamic
phase-field models for droplet systems. The continuum model is a bona
fide extension of the phase field Navier-Stokes system for binary
fluids (e.g., Allen-Cahn-Navier-Stokes or Cahn-Hilliard-Navier-Stokes
equations). We note that when the activity is suppressed, the model
reduces to a passive liquid crystal and solid substrate system.

A wide array of numerical strategies exist for components of the
Cahn-Hilliard-Allen-Cahn (CH-AC) or Cahn-Hilliard-Navier-Stokes
(CH-NS) systems. For incompressible Navier-Stokes equations,
projection methods remain widely used
\cite{JL_Guermond_2006_overview, JL_Guermond_2000_projection,
	JL_Guermond_2009_splitting}. Phase-field equations have been
addressed through linear stabilization
\cite{RH_Nochetto_2016_diffuse, J_Shen_2010_discrete, Zhao&W&Y2016},
convex splitting \cite{DJ_Eyre_1998_unconditionally,
	Z_Guan_1014_second}, and quadratization-based approaches such as
invariant energy quadratization (IEQ) \cite{X_Yang_2016_linear,
	Yang&Z&W2017, Zhao&Y&G&W2017, X_Yang_2018_efficient} and scalar
auxiliary variable (SAV) methods \cite{J_Shen_2018_scalar,
	J_Shen_2019_new}. More recently, the supplemental variable method
(SVM) has been introduced as a general framework for constructing
thermodynamically consistent approximations of energy-based models
\cite{Hong&L&W2020, Gong&H&W2021, Hong&W&G2023}.

A central challenge in designing fully decoupled, second-order
schemes lies in handling the nonlinear coupling between phase-field
variables and the velocity field. Yang \cite{X_Yang_2021_numerical}
proposed a decoupling strategy using nonlocal auxiliary variables
governed by ODEs, while SAV-type variants \cite{SAV-NS} provide
related approaches. However, such methods typically preserve only a
modified energy that is weakly connected to the original. In our
earlier work \cite{Gu&W2025}, we clarified the essence of these
techniques by reformulating the governing system into a generalized
gradient flow with respect to a modified energy. Applying classical
discretizations to this reformulation, with explicit implementation
of the coefficient matrix, yields linearly implicit, decoupled
schemes that dissipate the modified energy.

Building on this insight, we introduce a skew-gradient embedding
(SGE) framework, which reformulates generalized dissipative systems
with “zero-energy-contribution” (ZEC) terms into a generalized
gradient flow structure with respect to the original energy. All ZEC
terms are absorbed into a skew-symmetric coefficient matrix in the
form of an exterior 2-form. This embedding permits classical
discretizations with fully explicit treatment of ZEC terms, thereby
leading to decoupled schemes. Crucially, because the reformulation
preserves the original energy structure, we can design both
original-energy-dissipative schemes either
modified-energy-dissipative schemes that remain closely tied to the
physical energy.

Within this framework, we develop three fully decoupled, linearly
implicit time-integration schemes:
\begin{itemize}
	\item SGE-BDF1, a first-order scheme rigorously proven to preserve
	      the original energy dissipation law;

	\item SGE-BDF2, a second-order scheme that is energy-stable under
	      additional solution regularity assumptions and requires a
	      sufficiently large stabilization parameter;

	\item SGE-PDG, a second-order scheme based on the polarized
	      discrete gradient method \cite{pdg_jcam}, which is
	      unconditionally energy-stable without restrictive assumptions and
	      requires only a mild stabilization condition. The SGE-PDG scheme
	      dissipates a polarized energy that remains tightly connected to
	      the physical energy.
\end{itemize}
These schemes enable efficient and accurate simulations of active
nematic fluids across a broad range of activity and environmental
parameters. The model incorporates two distinct activity parameters:
\begin{itemize}
	\item $\xi$, quantifying the effect of molecular activity on
	      nematic ordering and elastic stress;

	\item $\chi$, characterizing the contribution of activity to the
	      total stress (active stress).
\end{itemize}
Substrate friction is introduced into the bulk momentum balance
equation through a damping term $-b\mathbf{v}$ to approximate the
solid phase. In principle, the coefficient $b$ should be allowed to
vary spatially to reflect the spatial nonhomogeneity of the solid substrate.

The remainder of this paper is organized as follows. In \S2, we
review the model for active nematic liquid crystal droplets. In \S3,
we introduce the proposed numerical scheme, proving its stability in
the absence of activity. In \S4, we propose a rough description of
the spatial discretization and its energy stability. \S5 presents
numerical convergence tests and showcases diverse hydrodynamic
behaviors of active nematic droplets under various conditions.

\section{Hydrodynamic Model for Two-Phase  Active
  Liquid Crystals and Isotropic Solids }
\subsection{Notations}

In this paper, we use the
classical lowercase letters such as $f, g$ to represent scalar
fields, boldface lowercase letters $\mathbf{u}, \mathbf{v}$ to denote
vector fields; and boldface Greek letters $\bm{\tau}$ or boldface
uppercase letters $\mathbf{Q}$ to denote second-order tensor fields.
	{\color{blue} Let $\Omega \subset \mathbb{R}^d$ be a square
		computational domain, where
		$d=2$ or $3$ denotes the spatial dimension, with boundary $\partial \Omega$}.
All of these variables depend on space and time $(\mathbf{x}, t)$ .
We define the dot or inner product between vectors by $\mathbf{u}
	\cdot \mathbf{v} = \sum_{i=1}^d u_i v_i$ and the inner product
between second order tensors by $\mathbf{P} : \mathbf{Q} = {\rm
		tr}(\mathbf{P}^\top \mathbf{Q}) = \sum_{i=1}^d \sum_{j=1}^d
	\mathbf{P}_{ij} \mathbf{Q}_{ij}$, where $(\bullet)^\top$ denotes the
transpose of tensor $(\bullet)$.

For functions defined in $\Omega$ spatially, we
introduce the inner-products for scalar, vector, tensor fields,
respectively, as follows,
\begin{equation*}
	(f, g) = \textstyle\int_{\Omega} f g d\mathbf{x}, \ (\mathbf{u},
	\mathbf{v}) = \int_{\Omega} \mathbf{u} \cdot \mathbf{v}
	d\mathbf{x}, \ (\mathbf{P}, \mathbf{Q}) = \int_{\Omega}
	\mathbf{P}:\mathbf{Q} d\mathbf{x},
\end{equation*}
with the induced norm, $\|\bullet\| = \sqrt{(\bullet, \bullet)}$. The
divergence of second order tensor $\bm{\tau}$ is defined
componentwise by $[{\rm div} \bm{\tau}]_i = \partial_j
	\mathbf{\tau}_{ij}$. Here, we employ the Einstein summation
convection by omitting the summation symbol when indices are
repeated. This convention will be
used throughout the paper.

	{\color{deepGreen}
		We define the  second-order symmetric tensor
		space, $\mathcal{M}$, and its trace-free subspace, $\mathcal{M}_0$,
		as follows:
		\begin{equation*}
			\begin{aligned}
				 & \mathcal{M} = \{ {\bf M} \in \mathbb{R}^{d, d}, \mathbf{M} =
				\mathbf{M}^\top \},                                              \\
				 & \mathcal{M}_0 = \{ {\bf Q} \in \mathbb{R}^{d, d}, {\bf Q} \in
				\mathcal{M}, \ {\rm tr}({\bf Q}) = 0 \}.
			\end{aligned}
		\end{equation*}
	}
	{\color{deepGreen}
		\subsection{Governing system of equations}
		We consider a two-phase materials system in isothermal conditions
		consisting of an active nematic liquid
		crystal fluid and an isotropic solid substrate. The solid substrate
		is effectively modeled as a highly dissipative,  viscoelastic
		fluid. The active nematic liquid crystal fluid is
		composed of active liquid crystal solutions, with mass density
		$\rho_1$ and velocity $\mathbf{v}_1$, while the solid or effective
		very viscoelastic fluid is
		characterized by mass density $\rho_2$ and velocity $\mathbf{v}_2$.
		To describe the mixture, we define the total mass density $\rho =
			\rho_1 + \rho_2$ and introduce the mass-averaged velocity $\mathbf{v}
			= \tfrac{\rho_1 \mathbf{v}_1 + \rho_2 \mathbf{v}_2}{\rho}$. Here, we
		assume the two-phase system is incompressible  so that $\rho = {
					\text{const} }$ and
		$\nabla \cdot \mathbf{v} = 0$. The mass   and momentum conservation
		take the form
		\begin{equation} \label{eq:incompressible-conservation}
			\left\lbrace
			\begin{aligned}
				 & \mathbf{\rho} \tfrac{\partial \mathbf{v}}{\partial t} = \nabla
				\cdot (\bm{\sigma} - \rho \mathbf{v}\mathbf{v}) - b(\phi) \mathbf{v}, \\
				 & \nabla \cdot \mathbf{v} = 0,
				\\
			\end{aligned}
			\right.
		\end{equation}
		where $[\nabla \mathbf{v}]_{ij} = \partial_j v_i$ is the velocity
		gradient and ${\bm \sigma}$ the total stress tensor, the mass
		fraction of the active component $\phi = \frac{\rho_1}{\rho}$ is used
		to distinguish active nematic fluid from passive solid components,
		$b(\phi)\geq 0$ is the drag coefficient taking a large value in the
		solid and zero value in the active liquid crystal region. Specifically,
		\begin{itemize}
			\item $\phi = 1$ corresponds to the region fully occupied by the
			      active nematic fluid,
			\item $\phi = 0$ corresponds to the isotropic solid,
			\item $0 < \phi < 1$ represents the interface between the two phases.
		\end{itemize}
		Generally, $\phi$ may depend on both spatial and temporal variables.
		In this work, we fix $\phi$ as a prescribed spatially varying
		function $\phi = \phi(\mathbf{x})$, which defines the spatial
		distribution of the active nematic phase. The model is valid though
		even if we prescribe $\phi$ as time-dependent as well. $b(\phi) >
			0$ is a friction
		coefficient. By choosing a large value for $b$, we model the solid
		as a very dissipative viscoelastic fluid, fully separated from the
		active liquid crystal.

		To characterize the orientational order of active liquid crystals, we
		employ a second-order, symmetric and traceless tensor order parameter
		${\bf Q}$ \cite{Doi&E1986,LCBook-1993}, perceived as the
		traceless component of the second moment tensor of the orientation
		distribution probability function of liquid crystals. In two spatial
		dimensions, $\bf Q$ is given by  the uniaxial
		representation ${\bf Q} = 2q\left({\bf \hat{n}} {\bf \hat{n}}^T -
			\tfrac{1}{2} {\bf I}\right)$, where $q$ denotes the scalar degree of
		nematic order and ${\bf \hat{n}}$ is a unit vector field known as the
		director, representing the average local orientation of the active
		particles. In 3D, ${\bf Q}$ is generally biaxial when flows  or
		strong anchoring boundary conditions are present. We note that a 3-D uniaxial
		state only shows up in relaxed equilibrium states or strong
		uniaxial elongational flows in passive liquid crystals
		\cite{Wang1997,Zhou&W&W&F2007}.
			{\color{ocre}

				Throughout this work, we focus on  the following boundary
				conditions:
				\begin{equation}\label{eq:bc-v}
					\mathbf{v}\big|_{\partial\Omega} = \mathbf{0},
				\end{equation}
				and
				\begin{equation}\label{eq:bc-Q}
					\mathbf{Q}\big|_{\partial\Omega} = \mathbf{Q}_D \ \text{or} \
					\partial_\mathbf{n} \mathbf{Q} \big|_{\partial\Omega} = 0,
				\end{equation}
				where $\mathbf{n}$ denotes the outward unit normal vector on
				$\partial\Omega$. In both case, the boundary condition is required
				to be compatible with the constraint $\mathbf{Q} \in \mathcal{M}_0$.
				These boundary
				conditions play a crucial role
				in the derivation of energy estimates and energy stability. Other boundary
				conditions, such as periodic boundary conditions, can be treated in a
				similar manner, and the corresponding conclusions can be extended
				accordingly.
			}

		Within domain $\Omega$, the Helmholtz free energy functional depends on the
		density of active particles and the nematic order, given by
		\begin{equation*}
			F_\phi(\mathbf{Q}, \nabla \mathbf{Q}) = \int_\Omega
			f_\phi(\mathbf{Q}, \nabla \mathbf{Q}) d\mathbf{x},
		\end{equation*}
		where $f_\phi(\cdot)$ denotes the free energy density. We define the
		total free energy, including the macroscopic kinetic energy and the
		Helmholtz free energy,  by
		\begin{equation}\label{eq:total-F}
			F^{\text{total}}_\phi(\mathbf{v}, \mathbf{Q}, \nabla \mathbf{Q}) =
			\tfrac{\rho}{2} \|\mathbf{v}\|^2 + F_\phi(\mathbf{Q}, \nabla \mathbf{Q}).
		\end{equation}
		Under isothermal conditions, the rate of energy dissipation is given by
		\begin{equation}\label{eq:dt-total-F}
			\tfrac{d F^{\rm total}_\phi}{d t} = \int_\Omega[ \rho \mathbf{v}
				\cdot \mathbf{v}_t - \mathbf{H}: \mathbf{Q}_t - \hat{r} \hat{\mu}]
			d\mathbf{x} + \int_{\partial \Omega} \mathbf{n} \cdot \mathbf{h} ds,
		\end{equation}
		where, $\mathbf{H} = - \frac{\delta F^{\rm total}_\phi}{\delta
				\mathbf{Q}}$ defines the molecule field conjugate to the nematic
		order tensor $\mathbf{Q}$, $\hat{r}$ is the number of active
		particles per unit time and unit volume, $\hat{\mu}$ is the energy
		gain or generated per unit particle, the boundary energy flux is
		$\mathbf{h}$, $\mathbf{n}$ the unit outward normal, and
		\ben
		\bh=(\sigma-\rho \bv \bv -p\bI)\cdot \bv+\frac{\partial
			F_{\phi}}{\partial \nabla \bQ}:\bQ_t.
		\een
		In \eqref{eq:dt-total-F}, the first term represents the change of the
		kinetic energy, the second encapsulates the change in free
		energy through identity $\tfrac{d F_\phi}{dt} =\int_{\Omega}
			\mathbf{H} : \mathbf{Q}_t d\bx$, and the third term describes the
		energy reduction rate of the biological energy corresponding to the
		consumption of energy  by active particles. In the theory, we set
		$\hat{r}$ as a prescribed constant and treat $\hat{\mu}$ as a
		fundamental active parameter.

		By substituting \eqref{eq:incompressible-conservation} into
		\eqref{eq:dt-total-F}, the first contribution becomes
		\begin{equation}\label{eq:expansion-fenergy1}
			\begin{aligned}
				\int_\Omega \rho \mathbf{v} \mathbf{v}_t d\mathbf{x} & =
				\int_\Omega  \mathbf{v} \cdot \nabla \cdot (\bm{\sigma}- \rho
				\mathbf{v} \mathbf{v}) - b(\phi)|\mathbf{v}|^2 d\mathbf{x},
				\\
				                                                     & = \int_\Omega -\nabla \mathbf{v}: (\bm{\sigma} - \rho
				\mathbf{v} \mathbf{v}) - b(\phi) |\mathbf{v}|^2 d\mathbf{x} +
				\int_{\partial \Omega} \mathbf{v} \cdot (\bm{\sigma} - \rho
				\mathbf{v} \mathbf{v}) \cdot \mathbf{n} ds.
			\end{aligned}
		\end{equation}
		Notice that in the case of Dirichlet boundary conditions, ${\bf v}
			= 0$ on the boundary, the last surface integration is zero;
		otherwise, one must retain the surface contribution to the total
		free energy dissipation.

		In assessing the second contribution in \eqref{eq:dt-total-F}, we
		introduce the corotational time derivative for tensor $\bf Q$:
		\begin{equation}\label{eq:rotation_deriv_Q}
			\mathring{\bf Q} = \mathbf{Q}_t + \mathbf{v} \cdot \nabla
			\mathbf{Q} + \mathbf{Q} \cdot \bm{\Omega} - \bm{\Omega} \cdot \mathbf{Q}.
		\end{equation}
		where $\mathbf{D} = \tfrac{1}{2} (\nabla \mathbf{v} + \nabla
			\mathbf{v}^\top)$ is the symmetric rate of strain tensor and
		$\bm{\Omega} = \tfrac{1}{2} (\nabla \mathbf{v} -
			\nabla\mathbf{v}^\top)$ the antisymmetric vorticity tensor.
		Exploiting the symmetry of $\mathbf{Q}, \mathbf{H}$ alongside the
		antisymmetry of $\bm{\Omega}$ yields
		\begin{equation}\label{eq:transf}
			\begin{aligned}
				\mathbf{H} : \mathbf{Q}_t & = \mathbf{H} : (\mathring{\mathbf{Q}}
				- \mathbf{v} \cdot \nabla \mathbf{Q} - \mathbf{Q} \cdot
				\bm{\Omega} + \bm{\Omega} \cdot \mathbf{Q})
				\\
				                          & = \mathbf{H} : \mathring{\mathbf{Q}} - \mathbf{H}:(\mathbf{v}
				\cdot \nabla \mathbf{Q}) - \nabla \mathbf{v}: (\mathbf{Q} \cdot
				\mathbf{H} - \mathbf{H} \cdot \mathbf{Q})
			\end{aligned}
		\end{equation}
		We then decompose the stress into three parts: the antisymmetric part
		of the stress $\bm{\sigma}^a$, the Ericksen stress $\bm{\sigma}^e$,
		and the symmetric stress $\bm{\sigma}^s$, respectively,
		\begin{equation} \label{eq:stress-aes}
			\begin{aligned}
				\bm{\sigma}^a & = \mathbf{Q} \cdot \mathbf{H} - \mathbf{H} \cdot
				\mathbf{Q},                                                      \\
				\bm{\sigma}^e & = f_\phi \mathbf{I} - \nabla \mathbf{Q} :
				\tfrac{\partial f}{\partial \nabla \mathbf{Q}},                  \\
				\bm{\sigma}^s & = \bm{\sigma} - \rho \mathbf{v} \mathbf{v} -
				\bm{\sigma}^e - \bm{\sigma}^a.
			\end{aligned}
		\end{equation}
		Note that symmetry of $\bm{\sigma}^s$ can be derived from
		\cite{X_Yang_2014_active} and the following Gibbs-Duhem relation
		\begin{equation}\label{eq:gibbs}
			\nabla \cdot \bm{\sigma}^e = - \mathbf{H} : \nabla \mathbf{Q}.
		\end{equation}
		Combining \eqref{eq:dt-total-F}, \eqref{eq:expansion-fenergy1},
		\eqref{eq:rotation_deriv_Q}, \eqref{eq:transf}, \eqref{eq:stress-aes}
		and \eqref{eq:gibbs}, we rewrite the time rate of change of the
		total free energy
		as follows \cite{Joanny&J&K&P2007,X_Yang_2014_active}:
		\begin{equation}
			\begin{aligned}
				\tfrac{d F_\phi^{\rm total}}{dt} = -\int_\Omega [ \mathbf{D}:
					                                         \bm{\sigma}^s  + \mathbf{H}:\mathring{\bf Q} + b(\phi)
					                                         |\mathbf{v}|^2 + \hat{r} \hat{\mu} ]d\mathbf{x}.
			\end{aligned}
		\end{equation}

		Notice that this is an inner product between generalized fluxes ($\bD$,
		$\mathring{\bf Q}$, $\hat{r}$) and the corresponding forces
		($\bm{\sigma}^s$, $\bf H$, $\hat{\mu}$). Since time parity of $\bf D$
		is different from other fluxes and time parity of ${\bm \sigma}^s$ is
		different from other forces, we distinguish between the components of
		the fluxes that show the same behavior under time inversion as the
		dissipative conjugate forces or fluxes, and those that show the
		opposite behavior called reactive conjugate forces or fluxes. We
		denote the various components by superscripts ``$\mathpzc{d}$'' and
		``$\mathpzc{r}$'', respectively \cite{Joanny&J&K&P2007,X_Yang_2014_active}.

		Following the generalized Onsager principle, a set of constitutive
		equations for the dissipative currents is obtained
		\cite{Joanny&J&K&P2007,X_Yang_2014_active}
		\begin{equation}
			\begin{pmatrix}
				\sigma^{s, \mathpzc{d}}_{\alpha \beta}
				\\ \mathring{Q}_{\alpha\beta}^\mathpzc{d} \\ \hat{r}^\mathpzc{d}
			\end{pmatrix}
			=
			\begin{pmatrix}
				2 \eta(\phi) \delta_{\alpha k} \delta_{\beta l}           & 0
				                                                          & 0       \\
				0                                                         &
				\Gamma_\mathbf{Q}(\phi)\delta_{\alpha k} \delta_{\beta l} &
				\zeta_{10}(\phi) Q_{\alpha \beta}                                   \\
				0                                                         &
				\zeta_{10}(\phi) Q_{kl}                                   & \lambda
			\end{pmatrix}
			\begin{pmatrix}
				D_{kl} \\ H_{kl} \\ \hat{\mu}
			\end{pmatrix}.
		\end{equation}
		Here, $\eta(\phi)$ denotes the shear viscosity. For thermodynamic
		consistency, the diagonal parameters $\eta(\phi)$,
		$\Gamma_{\mathbf{Q}}(\phi)$ must be non-negative. For fluxes of the
		opposite time parity, the antisymmetric mobility, i.e., the reactive
		terms are given by

		\begin{equation}
			\begin{pmatrix}
				\sigma^{s, \mathpzc{r}}_{\alpha \beta}
				\\ \mathring{Q}_{\alpha\beta}^\mathpzc{r} \\ \hat{r}^\mathpzc{r}
			\end{pmatrix}
			=
			\begin{pmatrix}
				0                            & - \mathcal{A}_{\alpha\beta k l} &
				\mathcal{B}_{\alpha \beta}                                         \\
				\mathcal{A}_{\alpha\beta kl} & 0                               &
				0                                                                  \\
				-\mathcal{B}_{kl}            & 0                               & 0
			\end{pmatrix}
			\begin{pmatrix}
				D_{kl} \\ H_{kl} \\ \hat{\mu}
			\end{pmatrix},
		\end{equation}
		where
		\begin{equation*}
			\begin{aligned}
				\mathcal{A}_{\alpha\beta kl} & = \tfrac{2a}{d}  \delta_{\alpha k}
				\delta_{\beta l} + a (Q_{\alpha k} \delta_{\beta l} +
				\delta_{\alpha k} Q_{\beta l}) - 2a Q_{kl} (Q_{\alpha \beta} +
				\tfrac{1}{d}\delta_{\alpha \beta}) + \theta_1
				\delta_{kl}\delta_{\alpha \beta},                                 \\
				\mathcal{B}_{\alpha \beta}   & = \zeta_0 (\phi) - \zeta_2
				\delta_{\alpha \beta}.
			\end{aligned}
		\end{equation*}
		We rewrite $\zeta_{10}(\phi) \hat{\mu}$ as $\xi(\phi)$,
		$\zeta_0(\phi) \hat{\mu}$ as $-\chi(\phi)$, where $\xi(\phi)$ and
		$\chi(\phi)$ quantify the
		particle's activity via a linear dependence on $\hat{\mu}$. The
		positive (negative) value of $\chi$
		corresponds to extensile (contractile) particles
		\cite{SR_Aditi_2002_hydrodynamic, D_Marenduzzo_2007_steady,
			R_Voituriez_2005_spontaneous}.

		Adopting the expressions for dissipative and reactive fluxes in the
		above constitutive equations, we summarize the governing system of
		equations for the two-phase system as follows:
		\begin{equation}\label{eq:governing}
			\left\lbrace
			\begin{aligned}
				 & \rho (\mathbf{v}_t + \mathbf{v}\cdot \nabla \mathbf{v}) =
				\nabla \cdot (\bm{\sigma}^a + \bm{\sigma}^s) - \mathbf{H}:\nabla
				\mathbf{Q} - b(\phi) \mathbf{v},                             \\
				 & \nabla \cdot \mathbf{v} = 0,
				\\
				 & \mathbf{Q}_t + \mathbf{v} \cdot \nabla \mathbf{Q} =
				\Gamma_\mathbf{Q}(\phi) \mathbf{H} + \mathbf{S}+ \xi(\phi) \mathbf{Q},
			\end{aligned}
			\right.
		\end{equation}
		where,
		\begin{equation*}
			\begin{aligned}
				\bm{\sigma}^a & = \mathbf{Q} \cdot \mathbf{H} - \mathbf{H} \cdot
				\mathbf{Q},
				\\
				\bm{\sigma}^s & = -p \mathbf{I} + 2 \eta(\phi) \mathbf{D} -
				a(\mathbf{Q} \cdot \mathbf{H} + \mathbf{H} \cdot \mathbf{Q}) -
				\tfrac{2a}{d} \mathbf{H} + 2a (\mathbf{Q}:\mathbf{H})(\mathbf{Q}
				+ \tfrac{1}{d}\mathbf{I}) - \chi(\phi) \mathbf{Q},                                                                                                   \\
				\mathbf{S}    & = {\bf \Omega \cdot Q - Q \cdot \Omega} + a ({\bf
						                                                          Q\cdot D + D \cdot Q}) + \tfrac{2a}{d} {\bf D} - 2a ({\bf D :
						                                                                                                             Q})({\bf Q } + \tfrac{1}{d} {\bf I}),
			\end{aligned}
		\end{equation*}
		and $p$ is the hydrostatic pressure.


		We further decompose the Helmholtz free energy of the liquid
		crystal system into  three components: the conformational entropy,
		bulk and anchoring
		energy as follows \cite{LCBook-1993}
		\begin{equation}\label{eq:fenergy}
			F_\phi(\mathbf{Q}, \nabla \mathbf{Q})
			=
			F_{\text{el}}(\nabla \mathbf{Q})
			+
			F_{\text{bulk}}(\mathbf{Q})
			+
			F_{\text{anch}}(\mathbf{Q}).
		\end{equation}
		The conformational entropy  is given by
		\begin{equation}
			F_{\text{el}}(\nabla\mathbf{Q})
			=
			\int_\Omega
			\tfrac{K(\phi)}{2}\,|\nabla \mathbf{Q}|^2\, d\mathbf{x},
		\end{equation}
		where the coefficient $K(\phi)$ depends on phase-field $\phi$
		and thus modulates the energetic penalty for spatial distortions of
		$\mathbf Q$ according to the local material state.
		The bulk free energy takes the classical Landau-de Gennes form
		\cite{LCBook-1993},
		\begin{equation}
			F_{bulk}(\mathbf{Q})
			=
			\int_\Omega
			\left[
				\tfrac{\alpha(\phi)}{2}\operatorname{tr}(\mathbf{Q}^2)
				-\tfrac{\beta(\phi)}{3}\operatorname{tr}(\mathbf{Q}^3)
				+\tfrac{\gamma(\phi)}{4}\bigl(\operatorname{tr}(\mathbf{Q}^2)\bigr)^2
				\right]
			d\mathbf{x},
		\end{equation}
		where the coefficients depend on $\phi$ through the functions,
		\[
			\alpha(\phi)=A\!(1-\tfrac{N(\phi)}{3}),
			\qquad
			\beta(\phi)=\gamma(\phi)=A\,N(\phi).
		\]
		{\color{red} Here, $A > 0$ sets the bulk energy scale, and $N(\phi)
				\geq 0$ is a
			scalar dimensionless concentration
			\cite{LJ_Ruske_2021_morphology, Doi&E1986,Beris&E1994}.}

		To model weak anchoring effects at the fluid-solid
		interface, the anchoring energy is proposed as follows
		\begin{equation}
			F_{\text{anch}}(\mathbf{Q})
			=
			\int_\Omega
			\tfrac{W(\phi)}{2}\,|\mathbf{Q}-\mathbf{Q}_\star|^2\, d\mathbf{x},
		\end{equation}
		where $W(\phi)=W_0|\nabla\phi|$ localizes the anchoring contribution to the
		interfacial region of finite thickness, $W_0$ parameterizes the
		anchoring strength,
		and $\mathbf{Q}_\star$ represents the preferred configuration imposed
		by the boundary anchoring (a traceless second order tensor).

			{\color{ocre}
				\begin{rmk}
					The governing system of equations is derived for the two-phase
					active liquid crystal
					solutions and highly dissipative, viscoelastic fluid system. We
					use the highly dissipative viscoelastic fluid part to describe
					the isotropic solid approximately
					by enforcing a large friction term $-b(\phi) \bv$, assuming a
					large viscosity coefficient, specifying other viscoelastic materials
					parameters, and annihilating active parameters.
					$\mathbf{Q}_\star$ denotes a prescribed, three-dimensional, anchored
					$\mathbf{Q}$-tensor field, which
					admits a uniaxial form,
					\begin{equation}\label{eq:uniaxial-form}
						\mathbf{Q}_\star
						= S_{eq}\!\left(\mathbf{p}\mathbf{p}^\top -
						\tfrac{1}{3}\mathbf{I}\right),
					\end{equation}
					where $\mathbf{p}$ is a unit vector representing the preferred molecular
					orientation at the interface. In this study, $\mathbf{p}$
					corresponds to the unit
					outward normal
					vector in the case of normal anchoring, or to the unit tangential
					vector along
					the solid boundary in the case of tangential anchoring.

					Under conditions $\alpha < 0$ and $\beta, \gamma <
						0$,
					\[
						S_{eq}
						= \frac{\beta + \sqrt{\beta^2 - 24 \alpha \gamma}}{4\gamma}.
					\]
					This uniaxial equilibrium structure and the expression for
					$S_{eq}$ are standard
					results in the Landau--de~Gennes theory for nematic liquid
					crystals; see, for
					example, \cite{LCBook-1993} for a details.
				\end{rmk}
			}

		We introduce characteristic length scale $l_0$ and time scale $t_0$ to
		nondimensionlize the physical variables as follows
		\begin{equation}\label{eq:dimensionless-01}
			\tilde{t} = \tfrac{t}{t_0}, \ \tilde{\mathbf{x}} =
			\tfrac{\mathbf{x}}{l_0},  \ \tilde{\mathbf{H}} = \tfrac{t_0^2
				\mathbf{H}}{\rho l_0^2}.
		\end{equation}
		Then, the following dimensionless parameters arise
		\begin{equation}\label{eq:dimensionless-02}
			\tilde{A} = \tfrac{t_0^2 A}{\rho l_0^2},
			\ \tilde{\Gamma}_{\mathbf{Q}} = \tfrac{\rho l_0^2
				\Gamma_{\mathbf{Q}}}{t_0}, \ \tilde{K} = \tfrac{t_0^2 K}{\rho},
			\ \tilde{a} = a, \ \tilde{\eta} = \tfrac{t_0 \eta}{\rho l_0^2},
			\ \tilde{p} = \tfrac{t_0^2}{\rho l_0^2} p, \ \tilde{\xi} = t_0
			\xi , \ \tilde{\chi} = \tfrac{t_0^2 \chi}{\rho l_0^2}.
		\end{equation}
		Utilizing \eqref{eq:dimensionless-01} and
		\eqref{eq:dimensionless-02},  we
		obtain the dimensionless governing equations after dropping the
		$\tilde{}$ over the parameters:
	}

\begin{equation}\label{eq:governing-dimensionless}
	\left\lbrace
	\begin{aligned}
		 & \mathbf{v}_t + \mathbf{v} \cdot \nabla \mathbf{v} = - \nabla p
		+ 2 \nabla \cdot (\eta(\phi) \mathbf{D}) + \nabla \cdot
		\bm{\Sigma} - \mathbf{H}:\nabla \mathbf{Q} - \nabla \cdot
		(\chi(\phi) \mathbf{Q}) - b(\phi) \mathbf{v},                                                                                                       \\
		 & \nabla \cdot \mathbf{v} = 0,
		\\
		 & \mathbf{Q}_t + \mathbf{v} \cdot \nabla \mathbf{Q} =
		\Gamma_\mathbf{Q}(\phi) {\bf H} + {\bf S} + \xi(\phi) {\bf Q},
		\\
		 & {\bf \Sigma} = {\bf Q \cdot H - H \cdot Q} - a({\bf Q \cdot H +
				                                                H \cdot Q}) - \tfrac{2a}{d} {\bf H} + 2a ({\bf Q : H})({ \bf Q} +
		\tfrac{1}{d} {\bf I}),
		\\
		 & \mathbf{S}  = {\bf \Omega \cdot Q - Q \cdot \Omega} + a ({\bf
				                                                         Q\cdot D + D \cdot Q}) + \tfrac{2a}{d} {\bf D} - 2a ({\bf D :
				                                                                                                            Q})({\bf Q } + \tfrac{1}{d} {\bf I}),
		\\
		 & \mathbf{H} = \nabla \cdot (K(\phi) \nabla \mathbf{Q}) -
		\left[\alpha(\phi) \mathbf{Q} - \beta(\phi) (\mathbf{Q}^2 -
			\tfrac{{\rm tr}(\mathbf{Q}^2)}{d} \mathbf{I} ) + \gamma(\phi)
			{\rm tr}(\bf{Q}^2){\bf Q}\right] - W(\phi)(\mathbf{Q} - \mathbf{Q}_\star),
	\end{aligned}
	\right.
\end{equation}

{
	In this study, we prescribe the model parameters in
	\eqref{eq:governing-dimensionless}
	as follows:
	\begin{equation}\label{eq:model-parameters}
		\begin{aligned}
			 & b(\phi)=b_{solid}(1-\phi), \ \eta(\phi)=\eta_{fluid} \phi +
			\eta_{solid}(1-\phi),                                          \\
			 & \xi(\phi)=\xi_{fluid} \phi, \ \chi(\phi)=\chi_{fluid} \phi,
			\ K(\phi) = K_{fluid} \phi,                                    \\
			 & N(\phi)=N_{fluid} \phi + N_{solid} (1-\phi),
			\ \Gamma_{\mathbf{Q}}=\Gamma_{fluid} \phi + \Gamma_{solid}(1 - \phi).
		\end{aligned}
	\end{equation}
	Here, the ${(\bullet)}_{fluid}$ and ${(\bullet)}_{solid}$ are
	prescribed model parameters in the active liquid crystal fluid and
	solid, respectively.
}

The total {\color{ocre} free energy} of
\eqref{eq:governing-dimensionless} is given by
\begin{equation}\label{eq:total-F}
	F^{\text{total}}_\phi(\mathbf{v}, \mathbf{Q}, \nabla \mathbf{Q}) =
	\tfrac{\rho}{2} \|\mathbf{v}\|^2 + F_\phi(\mathbf{Q}, \nabla \mathbf{Q}).
\end{equation}
It
yields {\color{ocre} following energy dissipation rate:}
\begin{equation}\label{eq:energy-continuous}
	\tfrac{d F_\phi^{total}}{dt} = - \int_\Omega [2\eta(\phi)
		|\mathbf{D}|^2 + \Gamma(\phi) |\mathbf{H}|^2 + b(\phi)
		|\mathbf{v}|^2 ]d\mathbf{x} + (\mathbf{Q}, \chi(\phi) \mathbf{D} -
	\xi(\phi) \mathbf{H}).
\end{equation}
If active parameters $\chi(\phi)$, $\xi(\phi)$
vanish, we have $\tfrac{d F^{total}_\phi}{d t} \leq 0$.
	{\color{ocre} In this case, the governing system
		\eqref{eq:governing-dimensionless} describes a two-phase system
		consisting of passive liquid crystals and
		solids whose total free energy dissipative.}

\begin{rmk}
	Note that  $\mathbf{H}$ corresponds to the
	variational derivative in subspace $\mathcal{M}_0$.
	Specifically, ${\bf H} = - \mathcal{P}_{\mathcal{M}_0}
		\tfrac{\delta F^{\rm total}_\phi}{\delta \mathbf{Q}}$, where
	$\mathcal{P}_{\mathcal{M}_0}$ is the orthogonal projection operator from
	$\mathcal{M}$ to $\mathcal{M}_0$, given by
	\begin{equation*}
		\mathcal{P}_{\mathcal{M}_0} (\mathbf{M}) = {\bf M} -
		\tfrac{\bf{I}}{d} {\rm tr}({\bf M}).
	\end{equation*}
\end{rmk}

\section{Thermodynamically consistent, decoupled, and linearly implicit
  numerical algorithms}
In this section, we present several thermodynamically consistent
discretizations of system \eqref{eq:governing-dimensionless}, i.e.,
the resulting numerical schemes preserve a discrete analog of
\eqref{eq:energy-continuous}. In particular, when the liquid crystal
system is passive, the proposed algorithms preserve the discrete
energy dissipation rate, leading to energy stability. We remark that
the energy dissipation rate preservation is stronger property than
simply energy stability.

\subsection{SGE reformulation of the system}
{\color{blue} We} reformulate \eqref{eq:governing-dimensionless}
using SGE formulated in
\cite{Gu&W2025}. The central idea of the SGE approach is to express
the ``zero-energy-contribution'' (ZEC) term in a low-rank
skew-gradient form. By unveiling this intrinsic gradient structure,
one can seamlessly adapt classical techniques to construct
energy dissipation rate preserving or energy stable schemes.

We define the following intermediate variables:
\begin{equation*}
	\begin{array}{llll}
		                                & \mathbf{g}_\mathbf{v} = \mathbf{v},         & \mathbf{c}_\mathbf{v} = -
		\mathbf{v} \cdot \nabla \mathbf{v} + \nabla \cdot {\bf \Sigma} -
		\mathbf{Q} : \nabla \mathbf{H}, & \mathbf{f}_\mathbf{v} = -\nabla
		\cdot (\chi(\phi) \mathbf{Q}),                                                                                                                     \\\\
		                                & \mathbf{G}_{\bf Q} = -\mathbf{H},           & \mathbf{C}_{\bf Q} = -
		                                                                                                     {\bf v \cdot \nabla \mathbf{Q}} + \mathbf{S},
		                                & \mathbf{F}_{\bf Q} = \xi (\phi) \mathbf{Q},
	\end{array}
\end{equation*}
{\color{blue} Then, \eqref{eq:governing-dimensionless} can be recast into}
\begin{equation}\label{eq:governing-SGE}
	\left\lbrace
	\begin{aligned}
		 & \mathbf{v}_t = -\nabla p + 2 \nabla \cdot (\eta(\phi)
		\mathbf{D}) - b(\phi)\mathbf{v} +
		\tfrac{(\mathbf{g}_{\mathbf{v}}, \mathbf{g}_{\mathbf{v}}) +
			(\mathbf{G}_\mathbf{Q},
			\mathbf{G}_\mathbf{Q})}{\mathcal{G}(\mathbf{g}_\mathbf{v},
			\mathbf{G}_\mathbf{Q})} \mathbf{c}_\mathbf{v} -
		\tfrac{(\mathbf{g}_{\mathbf{v}}, \mathbf{c}_{\mathbf{v}}) +
			(\mathbf{G}_\mathbf{Q},
			\mathbf{C}_\mathbf{Q})}{\mathcal{G}(\mathbf{g}_\mathbf{v},
			\mathbf{G}_\mathbf{Q})} \mathbf{g}_\mathbf{v} + \mathbf{f}_\mathbf{v}, \\
		 & \nabla \cdot \mathbf{v} = 0,
		\\
		 & \mathbf{Q}_t = \Gamma_{\mathbf{Q}}(\phi)\mathbf{H} +
		\tfrac{(\mathbf{g}_\mathbf{v}, \mathbf{g}_\mathbf{v}) +
			(\mathbf{G}_\mathbf{Q},
			\mathbf{G}_\mathbf{Q})}{\mathcal{G}(\mathbf{g}_\mathbf{v},
			\mathbf{G}_\mathbf{Q})} \mathbf{C}_\mathbf{Q} -
		\tfrac{(\mathbf{g}_\mathbf{v}, \mathbf{c}_\mathbf{v}) +
			(\mathbf{G}_\mathbf{Q},
			\mathbf{C}_\mathbf{Q})}{\mathcal{G}(\mathbf{g}_\mathbf{v},
			\mathbf{G}_\mathbf{Q})} \mathbf{G}_\mathbf{Q} + \mathbf{F}_\mathbf{Q}.
	\end{aligned}
	\right.,
\end{equation}
{\color{blue}
	where
	\begin{equation}\label{den:mathcalg}
		\mathcal{G}(\mathbf{g}_\mathbf{v}, \mathbf{G}_\mathbf{Q}) =
		\|\mathbf{g}_\mathbf{v}\|^2 + \|\mathbf{G}_\mathbf{Q}\|^2.
	\end{equation}
}

Note that the SGE reformulation, \eqref{eq:governing-SGE}, is
equivalent to the original system,
\eqref{eq:governing-dimensionless}, via the following identities
\begin{equation}\label{eq:identity-equivalent}
	\left\{
	\begin{aligned}
		 & (\mathbf{g}_\mathbf{v}, \mathbf{g}_\mathbf{v}) +
		(\mathbf{G}_\mathbf{Q}, \mathbf{G}_\mathbf{Q}) =
		\mathcal{G}(\mathbf{g}_\mathbf{v}, \mathbf{G}_\mathbf{Q}), \\\\
		 & (\mathbf{g}_\mathbf{v}, \mathbf{c}_\mathbf{v}) +
		(\mathbf{G}_\mathbf{Q}, \mathbf{C}_\mathbf{Q}) = 0.
	\end{aligned}\right.
\end{equation}
For the second identity, readers please refer to
\cite{J_Zhao_JSC_Qtensor} for details.

\begin{rmk}
	\color{blue}
	Note that $\mathcal{G}(\mathbf{g}_\mathbf{v},
		\mathbf{G}_\mathbf{Q})$ is defined as the sum of the squared $L^2$ norms
	of the free-energy gradient components \eqref{den:mathcalg}. The condition
	$\mathcal{G}(\mathbf{g}_\mathbf{v},
		\mathbf{G}_\mathbf{Q}) = 0$ therefore corresponds to the system
	reaching a critical point on the energy surface. Away from such
	critical points, one generally has
	$\mathcal{G}(\mathbf{g}_\mathbf{v}, \mathbf{G}_\mathbf{Q}) \neq
		0$. To ensure that the formulation remains well defined in both
	situations, the degenerate case is treated in a manner consistent
	with the general case: if $(\mathbf{g}_\mathbf{v},
		\mathbf{G}_\mathbf{Q}) = 0$, the coefficients in front of
	$\mathbf{c}_\mathbf{v}, \mathbf{C}_\mathbf{Q}$ are set to $1$,
	while those multiplying $\mathbf{g}_\mathbf{v}$ and
	$\mathbf{G}_\mathbf{Q}$ are set to $0$.
	Since the main difficulty in constructing an energy-stable scheme
	arises in the case where the free-energy gradient does not vanish, i.e.,
	$\mathcal{G}(\mathbf{g}_{\mathbf v}, \mathbf{G}_{\mathbf Q}) \neq 0$,
	we primarily focus on this regime in the subsequent analysis.
\end{rmk}
\begin{rmk}
	By introducing $\mathbf{\Psi} = (\mathbf{v}, \mathbf{Q})$, system
	\eqref{eq:governing-SGE}
	can be further written in the form,
	\begin{equation*}
		\mathbf{\Psi}_t = \mathscr{M}(\bm{\Psi}) \delta_{\bm{\Psi}}
		F^{total} + \mathscr{J}(\mathbf{\Psi}) \delta_{\mathbf{\Psi}}
		F^{total} + \mathscr{F}(\mathbf{\Psi}),
	\end{equation*}
	where $\delta_{\mathbf{\Psi}}$ denotes the variational derivative
	of the total energy functional with respect to $\bm{\Psi}$ in
	space $\mathbf{V} \times \mathcal{M}_0$. For velocity fields
	subject to the zero Dirichlet boundary condition, the admissible
	solution space is defined as {\color{blue} $\mathbf{V} = \{ \mathbf{v} \in
				[L^2(\Omega)]^d | \nabla \cdot \mathbf{v} = 0, \ \mathbf{v} \cdot
				\mathbf{n}|_{\partial \Omega} = 0 \}$}. The specific forms of the
	operators are given by
	\begin{equation}\label{eq:SGE-reformulation-matrix}
		\mathscr{M}(\mathbf{\Psi}) = {\rm diag}(\mathcal{B},
		-\Gamma_\mathbf{Q}), \ \mathscr{J}(\bm{\Psi}) =
		\begin{pmatrix}
			\mathbf{v} \\ -\mathbf{H}
		\end{pmatrix}
		\wedge
		\begin{pmatrix}
			\mathcal{P}_{\mathbf{V}} \mathbf{c}_\mathbf{v}
			\mathcal{P}_{\mathbf{V}} \\
			\mathcal{P}_{\mathcal{M}_0} \mathbf{C}_\mathbf{Q}
			\mathbf{P}_{\mathcal{M}_0}
		\end{pmatrix},
		\mathscr{F}(\mathbf{\Psi}) =
		\begin{pmatrix}\mathcal{P}_\mathbf{V} \mathbf{f}_\mathbf{v}
			\\ \mathbf{F}_\mathbf{Q}
		\end{pmatrix}.
	\end{equation}
	Here, $\delta_{\bm{\Psi}} F^{total} = (\mathbf{v}, -\mathbf{H}) =
		(\mathbf{g}_\mathbf{v}, \mathbf{G}_\mathbf{Q})$,
	$\mathcal{P}_\mathbf{V}$ denotes the Hemholtz-Leray projection,
	and $\mathcal{B} = \mathcal{P}_\mathbf{V} \nabla \cdot (\nabla
		(\bullet))$, represents a Stokes operator with variable
	coefficients. It is self-adjoint with non-positive spectra when
	$\eta > 0$, ensuring that $\mathscr{M}$ is a self-adjoint and
	non-positive mobility operator. The operator
	$\mathscr{J}(\mathbf{\Psi})$ defines a skew-symmetric operator on
	$\mathbf{V} \times \mathcal{M}_0$ {\color{ocre} and therefore
		generates the reversible part of the dynamics, contributing zero
		dissipation to the total free energy, } as shown in our previous work
	\cite{Gu&W2025}. This is an activity-driven gradient flow system,
	where $ \mathscr{F}(\mathbf{\Psi})$ represents
	the generalized force due to the activity.

	The reformulation,
	\eqref{eq:SGE-reformulation-matrix}, together with its equivalent
	form \eqref{eq:governing-SGE}, provides a convenient framework
	for constructing energy-stable schemes using some well-tested tools,
	such as stabilization techniques
	\cite{Lu-Stabilization,Wang-CH-Stabilization,Feng-2013,Tang-2016,Tang-2020},
	and discrete gradient methods \cite{AVF1,AVF2,AVF3,AVF4,AVF5}.
	In addition, since all convective terms are encapsulated within
	operator $\mathscr{J}(\bm{\Psi})$, they can be
	discretized explicitly independent of the spatial discretization
	without destroying the underlying energy dissipative structure. Notably, the
	resulting system can also be solved efficiently in a fully
	decoupled manner due to the low-rank (rank-2) structure in
	$\mathscr{J}(\bm{\Psi})$.
\end{rmk}

\subsection{First and second-order schemes based on stabilization
	techniques}
In this section, we construct first- and second-order numerical
schemes based on the stabilization technique and the SGE
reformulation of governing system \eqref{eq:governing-SGE}. By
applying these two techniques, we break down the discretized,
coupled hydrodynamic system into decoupled systems: a Stokes
problem for the velocity-pressure pair and a sequence of decoupled
Poisson equations. Each time step thus requires only a Stokes
solver and a Poisson solver, respectively, greatly simplifying the
overall computation.

Suppose we consider the problem over time interval $(0, T]$. Let
$N_t$ be a positive integer representing the total number of time
steps. We partition the interval into a uniform temporal grid with
time step size $\tau = \tfrac{T}{N_t}$. The discrete time levels
are denoted by $t_n = n\tau$ for $n = 0, 1, \dots, N_t$, and the
approximation of a physical variable $f$ at time $t_n$ is denoted by $f^n$.

We next introduce stabilized backward-difference schemes of first
and second order (SGE-BDF1 and SGE-BDF2) for
\eqref{eq:governing-SGE}, respectively. At each time step the
update takes the unified form as follows,
\begin{equation}\label{eq:sge-bdf1}
	\left\lbrace
	\begin{aligned}
		 & \tfrac{\alpha_k \mathbf{v}^{n+1} - A_k(\mathbf{v}^n)}{\tau} =
		-\nabla p^{n+1} + 2 \nabla \cdot (\eta \mathbf{D}^{n+1}) - b
		\mathbf{v}^{n+1} + \zeta^{n+1} B_k (\mathbf{c}_\mathbf{v}^n) -
		\omega^{n+1} B_k(\mathbf{g}_{\mathbf{v}}^n) +
		B_k(\mathbf{f}_\mathbf{v}^n),
		\\
		 & \nabla \cdot \mathbf{v}^{n+1} = 0,
		\\
		 & \tfrac{\alpha_k \mathbf{Q}^{n+1} - A_k (\mathbf{Q}^n)}{\tau}
		= \Gamma_{\bf Q} \mathbf{H}^{n+1} + \zeta^{n+1}
		B_k(\mathbf{C}_\mathbf{Q}^n) - \omega^{n+1}
		B_k(\mathbf{G}_\mathbf{Q}^n) + B_k(\mathbf{F}_\mathbf{Q}^{n}),
		\\
		 & \zeta^{n+1} = \tfrac{(\mathbf{g}_\mathbf{v}^{n+1},
			                 B_k(\mathbf{g}_\mathbf{v}^n)) + (\mathbf{G}_\mathbf{Q}^{n+1},
			                 B_k(\mathbf{G}_\mathbf{Q}^n))}{\mathcal{G}(B_k(\mathbf{g}_\mathbf{v}^n),
			                 B_k(\mathbf{G}_\mathbf{Q}^n))}, \ \omega^{n+1} =
		\tfrac{(\mathbf{g}_\mathbf{v}^{n+1},
			B_k(\mathbf{c}_\mathbf{v}^n)) + (\mathbf{G}_\mathbf{Q}^{n+1},
			B_k(\mathbf{C}_\mathbf{Q}^n))}{\mathcal{G}(B_k(\mathbf{g}_\mathbf{v}^n),
			B_k(\mathbf{G}_\mathbf{Q}^n))},
	\end{aligned}
	\right.
\end{equation}
where $k = 1$ yields the first-order scheme and $k = 2$ the
second-order variant, The weights $\alpha_k$, $A_k$ and $B_k$ arise
from a Taylor expansion and are specified as follows:
\begin{itemize}
	\item SGE-BDF1 (first-order, $k = 1$): $\alpha_1 = 1, \ A_1(f^n)
		      = f^n, \ B_1(f^n) = f^n$,
	\item SGE-BDF2 (second-order, $k = 2$): $\alpha_2 = \tfrac{3}{2},
		      \ A_2(f^n) = 2f^n - \tfrac{1}{2} f^{n-1}, \ B_2(f^n) = 2 f^n - f^{n-1}$.
\end{itemize}
In addition, we set $\mathbf{G}_\mathbf{Q}^{n+1} = -\mathbf{H}^{n+1}$ with
\begin{equation}\label{eq:sge-bdf-H}
	\mathbf{H}^{n+1} =
	\left\lbrace
	\begin{aligned}
		 & \nabla \cdot (K \nabla \mathbf{Q}^{n+1}) - W(\mathbf{Q}^{n+1}
		- \mathbf{Q}_\star) - \kappa (\mathbf{Q}^{n+1} - \mathbf{Q}^n)
		- f_{\rm bulk}(\mathbf{Q}^n) , \ \text{(first-order)},
		\\
		 & \nabla \cdot (K \nabla \mathbf{Q}^{n+1}) - W(\mathbf{Q}^{n+1}
		- \mathbf{Q}_\star) -  \tau \kappa (\alpha_2 \mathbf{Q}^{n+1} -
		A_2(\mathbf{Q}^n)) - B_2(f_{\rm bulk}(\mathbf{Q}^n)),
		\ \text{(second-order)}.
	\end{aligned}
	\right.
\end{equation}
At first glance, \eqref{eq:sge-bdf1} appears only weakly
coupled-through parameters $\zeta^{n+1}$ and $\omega^{n+1}$-despite
being linearly implicit. In the next section, we show that the
update can in fact be carried out in two fully decoupled steps,
eliminating any potential coupling.

By inspecting linear system \eqref{eq:sge-bdf1} and formulation
\eqref{eq:sge-bdf-H} at time level $n+1$, we split each unknown
into three independent components
\begin{equation}\label{eq:bdf1-semi-decomposition}
	\left\lbrace
	\begin{aligned}
		\mathbf{v}^{n+1} & = \mathbf{v}_1 + \zeta^{n+1} \mathbf{v}_2 -
		\omega^{n+1} \mathbf{v}_3,                                     \\
		\mathbf{Q}^{n+1} & = \mathbf{Q}_1 + \zeta^{n+1} \mathbf{Q}_2 -
		\omega^{n+1} \mathbf{Q}_3,                                     \\
		\mathbf{H}^{n+1} & = \mathbf{H}_1 + \zeta^{n+1} \mathbf{H}_2 -
		\omega^{n+1} \mathbf{H}_3,                                     \\
	\end{aligned}
	\right.
\end{equation}
Each triplet $(\mathbf{v}_i, \mathbf{Q}_i, \mathbf{H}_i)$ is then
computed from fully decoupled subproblems:
\begin{itemize}
	\item {\bf Generalized Stokes equations. } For $i = 1, 2, 3$,
	      find $\mathbf{v}_i$ by solving:
	      \begin{equation}
		      \left\lbrace
		      \begin{aligned}
			       & (\tfrac{\alpha_k}{\tau} + b)\mathbf{v}_i^{n+1} - 2 \nabla
			      \cdot (\eta \mathbf{D}(\mathbf{v}_i^{n+1})) + \nabla
			      p^{n+1} = \mathbf{f}_i,                                      \\
			       & \nabla \cdot \mathbf{v}_i^{n+1} = 0.
		      \end{aligned}
		      \right.
	      \end{equation}
	\item {\bf Decoupled tensor Poisson equations. } For $i = 1, 2,
		      3$, find $\mathbf{Q}_i$ from
	      \begin{equation}
		      (\tfrac{\alpha_k}{\tau} + \Gamma_{\mathbf{Q}} (\kappa +
		      W))\mathbf{Q}_i^{n+1} - \Gamma_{\mathbf{Q}} \nabla \cdot (K
		      \nabla \mathbf{Q}_i^{n+1}) = \mathbf{F}_i.
	      \end{equation}
\end{itemize}
The right-hand sides contain all explicit  terms:
\begin{equation*}
	\left\lbrace
	\begin{aligned}
		 & \mathbf{f}_1 = \tfrac{1}{\tau}A_k(\mathbf{v}^n) +
		B_k(\mathbf{f}_\mathbf{v}^n), \ \mathbf{f}_2 =
		B_k(\mathbf{c}_\mathbf{v}^n), \ \mathbf{f}_3 =
		B_k(\mathbf{g}_\mathbf{v}^n),                              \\
		 & \mathbf{F}_1 = \tfrac{1}{\tau} A_k (\mathbf{Q}^n) + B_k
		(\mathbf{F}_\mathbf{Q}^n), \ \mathbf{F}_2 =
		B_k(\mathbf{C}_\mathbf{Q}^n), \ \mathbf{F}_3 =
		B_k(\mathbf{G}_\mathbf{Q}^n).
	\end{aligned}
	\right.
\end{equation*}
Next, each $\mathbf{H}_i^{n+1}$ is obtained by substituting
\eqref{eq:bdf1-semi-decomposition} into \eqref{eq:sge-bdf-H} and
collecting like terms with respect to $\zeta^{n+1}$ and
$\omega^{n+1}$. As an example, for the SGE-BDF1 scheme, this reads
\begin{equation*}
	\left\lbrace
	\begin{aligned}
		 & \mathbf{H}_1^{n+1} = \nabla \cdot (K \nabla
		\mathbf{Q}_1^{n+1}) -  W(\mathbf{Q}_1^{n+1} - \mathbf{Q}_\star)
		- \kappa (\mathbf{Q}_1^{n+1} - \mathbf{Q}^n) -
		f_{\rm bulk}(\mathbf{Q}^n),                    \\
		 & \mathbf{H}_2^{n+1} = \nabla \cdot (K \nabla
		\mathbf{Q}_2^{n+1}) - (W + \kappa) \mathbf{Q}_2^{n+1},
		\\
		 & \mathbf{H}_3^{n+1} = \nabla \cdot (K \nabla
		\mathbf{Q}_3^{n+1}) - (W + \kappa) \mathbf{Q}_3^{n+1}.
		\\
	\end{aligned}
	\right.
\end{equation*}
To close the time step, we solve for the two scalars $\zeta^{n+1}$
and $\omega^{n+1}$. Noting that $\mathbf{G}_\mathbf{Q}^{n+1} =
	-\mathbf{H}^{n+1}$ and letting $\mathbf{G}_i = -\mathbf{H}_i$, we
take the inner-product of the velocity split in
\eqref{eq:bdf1-semi-decomposition} with
$B_k(\mathbf{g}_\mathbf{v}^n)$ and $B_k(\mathbf{c}_\mathbf{v}^n)$,
and of the $\mathbf{Q}$-split \eqref{eq:bdf1-semi-decomposition}
with $-B_k(\mathbf{G}_\mathbf{Q}^n)$ and
$-B_k(\mathbf{C}_\mathbf{Q}^n)$. This produces four equations:
\begin{equation}\label{eq:semi-before-scalar}
	\begin{aligned}
		 & (\mathbf{g}_\mathbf{v}^{n+1}, B_k(\mathbf{g}_\mathbf{v}^n)) =
		(\mathbf{v}_1, B_k(\mathbf{g}_\mathbf{v}^n)) + \zeta^{n+1}
		(\mathbf{v}_2, B_k(\mathbf{g}_\mathbf{v}^n)) - \omega^{n+1}
		(\mathbf{v}_3, B_k(\mathbf{g}_\mathbf{v}^n)),                    \\
		 & (\mathbf{g}_\mathbf{v}^{n+1}, B_k(\mathbf{c}_\mathbf{v}^n)) =
		(\mathbf{v}_1, B_k(\mathbf{c}_\mathbf{v}^n)) + \zeta^{n+1}
		(\mathbf{v}_2, B_k(\mathbf{c}_\mathbf{v}^n)) - \omega^{n+1}
		(\mathbf{v}_3, B_k(\mathbf{c}_\mathbf{v}^n)),                    \\
		 & (\mathbf{G}_\mathbf{Q}^{n+1}, B_k(\mathbf{G}_\mathbf{Q}^n)) =
		(\mathbf{G}_1, B_k(\mathbf{G}_\mathbf{Q}^n)) + \zeta^{n+1}
		(\mathbf{G}_2, B_k(\mathbf{G}_\mathbf{Q}^n)) - \omega^{n+1}
		(\mathbf{G}_3, B_k(\mathbf{G}_\mathbf{Q}^n)),                    \\
		 & (\mathbf{G}_\mathbf{Q}^{n+1}, B_k(\mathbf{C}_\mathbf{Q}^n)) =
		(\mathbf{G}_1, B_k(\mathbf{C}_\mathbf{Q}^n)) + \zeta^{n+1}
		(\mathbf{G}_2, B_k(\mathbf{C}_\mathbf{Q}^n)) - \omega^{n+1}
		(\mathbf{G}_3, B_k(\mathbf{C}_\mathbf{Q}^n)).                    \\
	\end{aligned}
\end{equation}
By summing the first and third equations-and likewise the second
and fourth-in \eqref{eq:semi-before-scalar} to eliminate two
unknowns $(\bv_{\bv}, \bG_{\bQ})$, we arrive at the following $2
	\times 2$ systems for $\zeta^{n+1}$, $\omega^{n+1}$:
\begin{equation}\label{eq:semi-2by2}
	\left\lbrace
	\begin{aligned}
		 & [\mathcal{G}(B_k(\mathbf{g}_\mathbf{v}^n),
			   B_k(\mathbf{G}_\mathbf{Q}^n)) - (\mathbf{v}_2,
			   B_k(\mathbf{g}_\mathbf{v}^n)) - (\mathbf{G}_2,
			   B_k(\mathbf{G}_\mathbf{Q}^n))] \zeta^{n+1} + [(\mathbf{v}_3,
			                                   B_k(\mathbf{g}_\mathbf{v}^n)) + (\mathbf{G}_3,
			                                   B_k(\mathbf{G}_\mathbf{Q}^n))] \omega^{n+1}                \\
		 & \quad = (\mathbf{v}_1, B_k(\mathbf{g}_\mathbf{v}^n)) +
		(\mathbf{G}_1, B_k(\mathbf{G}_\mathbf{Q}^n)),                                                    \\
		 & [(\mathbf{v}_2, B_k(\mathbf{c}_\mathbf{v}^n)) +
			   (\mathbf{G}_2, B_k(\mathbf{C}_\mathbf{Q}^n))] \zeta^{n+1} -
		                                                  [\mathcal{G}(B_k(\mathbf{g}_\mathbf{v}^n),
			                                                  B_k(\mathbf{G}_\mathbf{Q}^n)) + (\mathbf{v}_2,
			                                                  B_k(\mathbf{c}_\mathbf{v}^n)) + (\mathbf{G}_2,
			                                                  B_k(\mathbf{C}_\mathbf{Q}^n))] \omega^{n+1} \\
		 & \quad = -(\mathbf{v}_1, B_k(\mathbf{c}_\mathbf{v}^n)) -
		(\mathbf{G}_1, B_k(\mathbf{C}_\mathbf{Q}^n)).
	\end{aligned}
	\right.
\end{equation}
{\color{blue}For sufficiently small time steps, the determinant of
	the coefficient matrix in system \eqref{eq:semi-2by2} is nonzero,
	and hence} system \eqref{eq:semi-2by2} admits a unique solution for
$\zeta^{n+1}$ and $\omega^{n+1}$. Substituting them back into the
decompositions,  \eqref{eq:bdf1-semi-decomposition}, yields
solutions $\mathbf{v}^{n+1}$ and $\mathbf{Q}^{n+1}$ at the next time level.

Notice  that at each time step, the update decouples into just
three Stokes type equations, multiple
scalar Poisson equations, and a single $2\times 2$ linear
system. These yield a linearly implicit, fully decoupled system {\color{blue}
		in which equations arising at each time step can be solved efficiently}.

We now turn to the method’s structure‐preserving property.
First, we note that if the initial $\mathbf{Q}$-tensor is
traceless, this property is exactly retained at each subsequent time.
\begin{thm}
	Suppose the initial condition $\mathbf{v}_0$ and $\mathbf{Q}_0$
	satisfies $\nabla \cdot \mathbf{v}_0 = 0$ and ${\rm
			tr}(\mathbf{Q}_0) = 0$, then for any $n = 1, \cdots, N_t$,
	solutions $\mathbf{Q}^n$ solved from \eqref{eq:sge-bdf1}
	satisfies ${\rm tr}(\mathbf{Q}^n) = 0$.
\end{thm}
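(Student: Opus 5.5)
The plan is induction on $n$: take the trace of the $\mathbf{Q}$-equation in \eqref{eq:sge-bdf1}, check that every explicit right-hand side term is traceless, and reduce the implicit part to a scalar elliptic problem for $q:={\rm tr}(\mathbf{Q}^{n+1})$ whose only solution is $q=0$. Assume ${\rm tr}(\mathbf{Q}^m)=0$ for all $m\le n$ (for BDF2 also $m=n-1$; the starting step is BDF1 or traceless by construction). Then ${\rm tr}\,A_k(\mathbf{Q}^n)=0$, ${\rm tr}\,B_k(\mathbf{F}_\mathbf{Q}^n)=\xi\,{\rm tr}\,B_k(\mathbf{Q}^n)=0$ and ${\rm tr}\,\mathbf{Q}_\star=0$. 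The scalars $\zeta^{n+1},\omega^{n+1}$ are just numbers, so they factor out of the trace. It remains to check the traces of $\mathbf{C}_\mathbf{Q}^m$, $\mathbf{G}_\mathbf{Q}^m$ and $f_{\rm bulk}(\mathbf{Q}^m)$ for $m\le n$.

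\textbf{Explicit terms.} For $\mathbf{C}_\mathbf{Q}=-\mathbf{v}\cdot\nabla\mathbf{Q}+\mathbf{S}$:
\begin{itemize}
\item ${\rm tr}(\mathbf{v}\cdot\nabla\mathbf{Q})=\mathbf{v}\cdot\nabla{\rm tr}\mathbf{Q}=0$.
\item ${\rm tr}(\bm\Omega\mathbf{Q}-\mathbf{Q}\bm\Omega)=0$ by cyclicity of the trace.
\item Using ${\rm tr}\mathbf{D}=\nabla\cdot\mathbf{v}=0$ and symmetry,
\[
{\rm tr}\Bigl[a(\mathbf{Q}\mathbf{D}+\mathbf{D}\mathbf{Q})+\tfrac{2a}{d}\mathbf{D}-2a(\mathbf{D}:\mathbf{Q})(\mathbf{Q}+\tfrac1d\mathbf{I})\Bigr]=2a\,\mathbf{D}:\mathbf{Q}-2a(\mathbf{D}:\mathbf{Q})(0+1)=0 .
\]
\end{itemize}
Here I use that $\nabla\cdot\mathbf{v}^m=0$ holds for every computed velocity, since it is imposed in the Stokes step. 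For the bulk term, $\alpha\mathbf{Q}-\beta(\mathbf{Q}^2-\tfrac{{\rm tr}\mathbf{Q}^2}{d}\mathbf{I})+\gamma\,{\rm tr}(\mathbf{Q}^2)\mathbf{Q}$ is traceless when $\mathbf{Q}$ is. Finally $\mathbf{G}_\mathbf{Q}^m=-\mathbf{H}^m$, and ${\rm tr}\,\mathbf{H}^m$ is a linear combination of traces of $\mathbf{Q}^m,\mathbf{Q}^{m-1},\mathbf{Q}_\star$ and bulk terms, hence zero by the induction hypothesis. Consequently all $B_k(\cdot)$ terms are traceless.

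\textbf{Implicit part.} Taking the trace of the $\mathbf{Q}$-equation with $\mathbf{H}^{n+1}$ from \eqref{eq:sge-bdf-H} gives, in both cases,
\[
\Bigl(\tfrac{\alpha_k}{\tau}+\Gamma_\mathbf{Q}(c_k\kappa+W)\Bigr)q-\Gamma_\mathbf{Q}\nabla\cdot(K\nabla q)=0,
\]
with $c_1=1$ and $c_2=\tau\alpha_2$. The boundary condition \eqref{eq:bc-Q} gives $q|_{\partial\Omega}={\rm tr}\,\mathbf{Q}_D=0$ or $\partial_\mathbf{n}q=0$, by the stated compatibility with $\mathcal{M}_0$. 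Multiply by $q/\Gamma_\mathbf{Q}$ (assuming $\Gamma_\mathbf{Q}>0$) and integrate by parts. The zeroth-order coefficient is positive and $K\ge0$, so $q=0$.

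The step needing the most care is this uniqueness argument. If $\Gamma_\mathbf{Q}$ is allowed to vanish somewhere, I would instead test against $q$ directly; since $\Gamma_\mathbf{Q}$ is constant in space in the schemes \eqref{eq:sge-bdf1}, this is harmless. A second point to watch is that $\zeta^{n+1},\omega^{n+1}$ are well defined, which is the unique solvability of \eqref{eq:semi-2by2} already assumed. Alternatively, one can argue component-wise through the decomposition \eqref{eq:bdf1-semi-decomposition}: each $\mathbf{Q}_i$ solves a tensor Poisson problem with traceless data $\mathbf{F}_i$, so each is traceless, and the same conclusion follows.
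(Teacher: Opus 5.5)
Your proposal is correct and follows the paper's proof essentially step by step. Like the paper, you argue by induction on $n$, show ${\rm tr}\,\mathbf{S}^n={\rm tr}\,\mathbf{H}^n=0$ so that all explicit $B_k(\cdot)$ terms are traceless, and reduce the traced $\mathbf{Q}$-equation to a homogeneous scalar elliptic problem for ${\rm tr}(\mathbf{Q}^{n+1})$.

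One correction to your side remark: $\Gamma_\mathbf{Q}=\Gamma_{fluid}\phi+\Gamma_{solid}(1-\phi)$ is not spatially constant in the paper. It is, however, bounded below by $\min\{\Gamma_{fluid},\Gamma_{solid}\}>0$, so your test against $q/\Gamma_\mathbf{Q}$ remains valid. Because the zeroth-order coefficient is strictly positive, that test also gives $q=0$ in the Neumann case, without the mean-zero normalization the paper invokes in its remark.
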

\begin{proof}
	We prove it by induction. Assume ${\rm tr}(\mathbf{Q}^0) = 0$.
	For the SGE-BDF2 scheme, we also assume that the computed
	$\mathbf{Q}^1$ is traceless. Now, suppose that for any $1 \leq i
		\leq n$, the sequence $\{\mathbf{Q}^i\}_{i=1}^n$ solved from
	\eqref{eq:sge-bdf1} satisfies ${\rm tr} (\mathbf{Q}^i) = 0$. We
	next prove ${\rm tr}(\mathbf{Q}^{n+1}) = 0$. Applying the trace
	operator on the third equation of \eqref{eq:sge-bdf1} leads to
	\begin{equation}\label{eq:trace-bdf}
		\tfrac{\alpha_k}{\tau}  {\rm tr}(\mathbf{Q}^{n+1}) =
		\Gamma_\mathbf{Q} {\rm tr}(\mathbf{H}^{n+1}) + \zeta^{n+1} {\rm
			tr} (B_k (\mathbf{C}_\mathbf{Q}^n)) - \omega^{n+1} {\rm tr}
		(B_k (\mathbf{G}_\mathbf{Q}^n)) + {\rm tr} (B_k
		(\mathbf{F}_\mathbf{Q}^n)).
	\end{equation}
	{\color{ocre}
		Since $B_k$ is linear, it follows immediately that
		\begin{equation*}
			\begin{aligned}
				{\rm tr} (B_k (\mathbf{C}_\mathbf{Q}^n)) = B_k\left({\rm
					                                              tr}\left(\mathbf{C}_\mathbf{Q}^n\right)\right), \ {\rm tr}
				(B_k (\mathbf{G}_\mathbf{Q}^n)) = B_k\left({\rm
					                                     tr}\left(\mathbf{G}_\mathbf{Q}^n\right)\right), \  {\rm tr}
				(B_k (\mathbf{F}_\mathbf{Q}^n)) = B_k\left({\rm
					                                     tr}\left(\mathbf{F}_\mathbf{Q}^n\right)\right).
			\end{aligned}
		\end{equation*}
		We first show that
		\begin{equation}\label{eq:add_traceless_a1}
			{\rm tr}  (B_k (\mathbf{C}_\mathbf{Q}^n)) = {\rm tr} (B_k
			(\mathbf{G}_\mathbf{Q}^n)) = {\rm tr} (B_k
			(\mathbf{F}_\mathbf{Q}^n)) = 0.
		\end{equation}
		To do this, it suffices to prove
		\begin{equation*}
			{\rm tr}(\mathbf{C}_\mathbf{Q}^n) = {\rm
				tr}(\mathbf{G}_\mathbf{Q}^n) = {\rm tr}(\mathbf{F}_\mathbf{Q}^n)
			= 0
		\end{equation*}
		by induction, which immediately implies
		\eqref{eq:add_traceless_a1} for the case $k = 1$.  For $k = 2$,
		one also needs to verify
		\begin{equation*}
			{\rm tr}(\mathbf{C}_\mathbf{Q}^{n-1}) = {\rm
				tr}(\mathbf{G}_\mathbf{Q}^{n-1}) = {\rm
				tr}(\mathbf{F}_\mathbf{Q}^{n-1}) = 0,
		\end{equation*}
		and the proof proceeds in a completely analogous manner.

		By definition and assumption, we have
		\begin{equation*}
			\begin{aligned}
				 & {\rm tr}(\mathbf{F}_\mathbf{Q}^n) = \xi(\phi) {\rm
					                                       tr}(\mathbf{Q}^n) = 0,                                   \\
				 & {\rm tr} (\mathbf{C}_\mathbf{Q}^n) = - {\rm tr} (\mathbf{v}
				                                        \cdot \nabla \mathbf{Q}^n) + {\rm tr} (\mathbf{S}^n) = -
				\mathbf{v} \cdot \nabla {\rm tr}(\mathbf{Q}^n) + {\rm
					                 tr}(\mathbf{S}^n) = {\rm tr}(\mathbf{S}^n),                                    \\
				 & {\rm tr}(\mathbf{G}_\mathbf{Q}^n) = -{\rm tr} (\mathbf{H}^n).
			\end{aligned}
		\end{equation*}
		Therefore, it remains to show that
		\begin{equation*}
			{\rm tr}(\mathbf{S}^n)=0 \  \text{and} \ {\rm tr} (\mathbf{H}^n)=0.
		\end{equation*}
		A direct calculation yields
		\begin{equation*}
			\begin{aligned}
				{\rm tr} (\mathbf{S}^n) & = [{\rm tr}(\bm{\Omega}^n \cdot
					                            \mathbf{Q}^n - \mathbf{Q}^n \cdot \bm{\Omega}^n)]  + a [{\rm
							                                                                                  tr}(\mathbf{Q}^n \cdot \mathbf{D}^n + \mathbf{D}^n \cdot
						                                                                                  \mathbf{Q}^n)] + \tfrac{2a}{d} {\rm tr} (\mathbf{D}^n) - 2a
					                                                                                                                                             [(\mathbf{D}^n : \mathbf{Q}^n) {\rm tr}\left(\mathbf{Q}^n +
						                                                                                                                                             \tfrac{1}{d}\mathbf{I}\right)] \\
				                        & = [{\rm tr}(\bm{\Omega}^n \cdot
					                            \mathbf{Q}^n) - {\rm tr}(\mathbf{Q}^n \cdot \bm{\Omega}^n)] +  a [{\rm
							                                                                                            tr}(\mathbf{Q}^n \cdot \mathbf{D}^n) + {\rm tr}(\mathbf{D}^n \cdot
						                                                                                            \mathbf{Q}^n)]  - 2a
				(\mathbf{D}^n : \mathbf{Q}^n)
			\end{aligned}
		\end{equation*}
		where we have used  ${\rm tr}(\mathbf{D}^n) = \nabla \cdot
			\mathbf{v}^n = 0$ and ${\rm
				tr} (\mathbf{Q}^n) = 0$.
		Using the cyclic property of the trace and symmetry of
		$\mathbf{D}^n$, we obtain
		\begin{equation*}
			{\rm tr} (\mathbf{S}^n) =  2a {\rm tr} (\mathbf{D}^n \cdot
			\mathbf{Q}^n) - 2a {\rm
					tr} ((\mathbf{D}^n)^\top \cdot \mathbf{Q}^n) = 0.
		\end{equation*}
		Analogously, we have
		\begin{equation*}
			\begin{aligned}
				{\rm tr}(\mathbf{H}^n) & = {\rm tr} \left( \nabla \cdot \left(K
				\nabla \mathbf{Q}^n\right)  \right) - W {\rm tr}(
				\mathbf{Q}^n - \mathbf{Q}_\star ) - \kappa {\rm tr}
				(\mathbf{Q}^n - \mathbf{Q}^{n-1})                                                                                   \\
				                       & \quad - {\rm tr} \left[ \alpha(\phi)\mathbf{Q}^n - \beta(\phi)
					                                          \left((\mathbf{Q}^n)^2 - \tfrac{{\rm tr}((\mathbf{Q}^n)^2)}{d}
					                                          \right) \mathbf{I}  + \gamma(\phi) {\rm tr}((\mathbf{Q}^n)^2)
					                                          {\rm tr}(\mathbf{Q}^n) \right]            \\
				                       & = \nabla \cdot \left(K \nabla {\rm tr}(\mathbf{Q}^n)\right)
				- W {\rm tr}(\mathbf{Q}^n) + W {\rm tr}(\mathbf{Q}_\star) -
				\kappa {\rm tr}(\mathbf{Q}^n) + \kappa {\rm tr}(\mathbf{Q}^{n-1})                                                   \\
				                       & \quad - \alpha(\phi) {\rm tr}(\mathbf{Q}^n) + \beta(\phi)
				                                                                       {\rm tr} \left( (\mathbf{Q}^n)^2 -  {\rm tr}
				((\mathbf{Q}^n)^2 )\tfrac{\mathbf{I}}{d} \right) -
				\gamma(\phi) {\rm tr}((\mathbf{Q}^n)^2) {\rm tr}(\mathbf{Q}^n) = 0,
			\end{aligned}
		\end{equation*}
		where the last equality follows from the traceless property of
		$\mathbf{Q}^n$ and $\mathbf{Q}^{n-1}$. Consequently,
		\begin{equation*}
			{\rm tr}  (B_k (\mathbf{C}_\mathbf{Q}^n)) = {\rm tr} (B_k
			(\mathbf{G}_\mathbf{Q}^n)) = {\rm tr} (B_k
			(\mathbf{F}_\mathbf{Q}^n)) = 0.
		\end{equation*}
	}
	According to \eqref{eq:sge-bdf-H}, we have
	\begin{equation}\label{eq:trace-H-bdf}
		{\rm tr}(\mathbf{H}^{n+1}) =
		\left\lbrace
		\begin{aligned}
			 & \nabla \cdot (K \nabla {\rm tr} (\mathbf{Q}^{n+1})) - (W +
			          \kappa) {\rm tr}(\mathbf{Q}^{n+1}), \ \text{(SGE-BDF1)} \\
			 & \nabla \cdot (K \nabla {\rm tr} (\mathbf{Q}^{n+1})) - \tau
			(W + \kappa) \alpha_2 {\rm tr}(\mathbf{Q}^{n+1}), \ \text{(SGE-BDF2)}.
		\end{aligned}
		\right.
	\end{equation}
	Combining \eqref{eq:trace-bdf} and \eqref{eq:trace-H-bdf} yields
	a scalar Poisson problem for ${\rm tr} (\mathbf{Q}^{n+1})$. In
	the SGE-BDF1 case this takes the form
	\begin{equation}\label{eq:tr_poisson}
		\Big[\tfrac{\alpha_k}{\tau} + \kappa + W - \nabla \cdot \left(K
			\nabla (\bullet)\right)\Big] {\rm tr} (\mathbf{Q}^{n+1}) = 0.
	\end{equation}
	with the proposed boundary conditions. The unique solution is thus ${\rm
			tr}(\mathbf{Q}^{n+1}) = 0$. An identical argument applies to
	SGE-BDF2. Hence ${\rm tr} (\mathbf{Q}^{n+1}) = 0$ and the
	induction is complete.
\end{proof}
{\color{ocre}
	\begin{rmk}\label{rmk:boundary_tr}
		We remark that the unique solvability of \eqref{eq:tr_poisson}
		relies on imposed boundary conditions.
		For example, in the case of the Dirichlet boundary
		condition, equation
		\eqref{eq:tr_poisson} reduces to a Poisson equation with
		homogeneous Dirichlet boundary data, which admits a unique solution.

		For the homogeneous Neumann boundary condition
		imposed on $\mathbf{Q}$, equation \eqref{eq:tr_poisson} becomes
		a Poisson equation equipped with the homogeneous Neumann
		boundary condition. In this case, unique solvability
		is guaranteed in the sense
		\[
			\int_{\Omega} {\rm tr}(\mathbf{Q}^{n+1})\,{\rm d}\mathbf{x} = 0,
		\]
		which fixes the additive constant and ensures uniqueness of the solution.
	\end{rmk}
}

We now investigate energy stability of the algorithms.
We first prove that the SGE-BDF1 scheme is energy stable with a sufficiently large  stabilization parameter $\kappa$. This proof contains the main cancellation mechanism in the SGE
reformulation. {\color{magenta} The proof for the energy stability of the SGE-BDF2 algorithm involves
		several additional
		estimates for the extrapolated bulk
		molecular field. It is presented separately in
		\ref{app:sge-bdf2-stability}}.

The analyses rest upon {\color{blue} an assumption on the pointwise boundedness of
		$\bf Q^n$}:
\begin{equation}\label{eq:ass-stabilization}
	\max_{n=1, \cdots N_t} \max_{\mathbf{x} \in \overline{\Omega}}
	\|\mathbf{Q}^n\| \leq M,
\end{equation}
where $M>0$ is a constant.

	{   \color{blue}
		In our numerical solutions of the model, condition
		\eqref{eq:ass-stabilization} holds whenever
		the numerical solution doesn't blow up. Physically, it should not as an order parameter tensor for liquid crystals. For the continuous active liquid crystal model with the polynomial bulk free energy density, a proof for the pointwise boundedness of $\bQ$ model is not available. As the result,
		a fully rigorous proof at the discrete level would require a comprehensive convergence
		analysis using the inductive arguments analogous to those in
		\cite{SAV-NS, SAV-H2}, which is
		far  beyond the scope of this study. Nevertheless,
		extensive computations confirm that the stabilization strategy
		delivers both robustness and efficiency in many scenarios and the
		boundedness of $\bf Q^n$ is never an issue.
		\cite{Wang-CH-Stabilization, Lu-Stabilization, Feng-2013, Tang-2020}.}

\begin{lem}\label{lem:convex}
	Under the hypothesis, \eqref{eq:ass-stabilization},  energy
	functional $F_\kappa(\mathbf{Q})$ defined by
	\begin{equation}\label{eq:den_F_kap}
		\begin{aligned}
			F_\kappa(\mathbf{Q}) & = \int_\Omega \tfrac{\kappa}{2} {\rm
				                                     tr}(\mathbf{Q}^2) - \left[\tfrac{\alpha}{2} {\rm
					                                                          tr}(\mathbf{Q}^2) - \tfrac{\beta}{3} {\rm tr}(\mathbf{Q}^3) +
				                                                          \tfrac{\gamma}{4}{\rm tr}(\mathbf{Q}^2)^2  \right] d\mathbf{x} \\
			                     & = \tfrac{\kappa}{2} \|\mathbf{Q}\|^2 - F_{\text{bulk}}(\mathbf{Q})
		\end{aligned}
	\end{equation}
	is convex in $\Omega$  for sufficiently large $\kappa>0$.
\end{lem}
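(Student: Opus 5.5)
The plan is to reduce convexity of $F_\kappa$ to a pointwise statement about the integrand. Write
\[
f_\kappa(\mathbf{Q}) = \tfrac{\kappa}{2}{\rm tr}(\mathbf{Q}^2) - \tfrac{\alpha}{2}{\rm tr}(\mathbf{Q}^2) + \tfrac{\beta}{3}{\rm tr}(\mathbf{Q}^3) - \tfrac{\gamma}{4}{\rm tr}(\mathbf{Q}^2)^2 ,
\]
regarded as a smooth function on the finite-dimensional space $\mathcal{M}_0$ (or $\mathcal{M}$), with coefficients depending on $\mathbf{x}$ through $\phi$. Since $F_\kappa(\mathbf{Q})=\int_\Omega f_\kappa(\mathbf{Q}(\mathbf{x}))\,d\mathbf{x}$ and integration preserves convexity, it suffices to show the following. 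For every $\mathbf{x}\in\overline\Omega$ and every $\mathbf{Q}$ with $\|\mathbf{Q}\|\le M$ (the region singled out by \eqref{eq:ass-stabilization}), the Hessian of $f_\kappa$ is positive semidefinite. Convexity is then understood on the admissible set of fields obeying the bound, which is convex because it is a ball in sup-norm. This is all that the later stability argument needs, since the iterates $\mathbf{Q}^n,\mathbf{Q}^{n+1}$ and their convex combinations stay in it.

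The key step is to compute the second variation along a direction $\mathbf{P}$:
\[
\tfrac{d^2}{ds^2} f_\kappa(\mathbf{Q}+s\mathbf{P})\big|_{s=0} = (\kappa-\alpha)|\mathbf{P}|^2 + 2\beta\,{\rm tr}(\mathbf{Q}\mathbf{P}^2) - \gamma\big[2|\mathbf{Q}|^2|\mathbf{P}|^2 + 4(\mathbf{Q}:\mathbf{P})^2\big].
\]
Each term is then bounded by $C|\mathbf{P}|^2$. Use $|{\rm tr}(\mathbf{Q}\mathbf{P}^2)|\le |\mathbf{Q}|\,|\mathbf{P}|^2$, which follows from submultiplicativity of the Frobenius norm. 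Use Cauchy--Schwarz to get $(\mathbf{Q}:\mathbf{P})^2\le|\mathbf{Q}|^2|\mathbf{P}|^2$. Together these give the lower bound
\[
\big(\kappa - \alpha - 2|\beta|M - 6|\gamma|M^2\big)|\mathbf{P}|^2 .
\]

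The coefficients $\alpha(\phi)=A(1-N(\phi)/3)$ and $\beta=\gamma=AN(\phi)$ are bounded uniformly on $\overline\Omega$, because $\phi\in[0,1]$ and $N$ is affine in $\phi$ by \eqref{eq:model-parameters}. The choice
\[
\kappa \ge \sup_{\Omega}\big(\alpha + 2|\beta| M + 6|\gamma| M^2\big)
\]
therefore makes the Hessian nonnegative pointwise and uniformly in $\mathbf{x}$. By Taylor's formula with integral remainder this gives the convexity inequality $f_\kappa(\mathbf{Q}_2)\ge f_\kappa(\mathbf{Q}_1)+\partial f_\kappa(\mathbf{Q}_1):(\mathbf{Q}_2-\mathbf{Q}_1)$ for $\mathbf{Q}_1,\mathbf{Q}_2$ in the ball. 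Integrating over $\Omega$ yields convexity of $F_\kappa$.

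The main obstacle is that the quartic term $-\tfrac{\gamma}{4}{\rm tr}(\mathbf{Q}^2)^2$ makes $f_\kappa$ genuinely nonconvex for large $|\mathbf{Q}|$. Convexity can therefore only hold on the bounded set, and I must take care that the segment between two admissible states stays bounded by $M$. This holds by convexity of the norm ball. The matrix norm bounds for the cubic term are routine.
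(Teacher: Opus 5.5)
Your argument is correct and follows essentially the paper's route: compute the second variation of $F_\kappa$, bound ${\rm tr}(\mathbf{Q}\mathbf{P}^2)$ and the quartic terms using $|\mathbf{Q}|\le M$, and take $\kappa$ large. Your explicit restriction to the convex sup-norm ball, with coefficients bounded uniformly in $\mathbf{x}$, is a useful clarification the paper leaves implicit. One small slip: the second variation of $\tfrac{\gamma}{4}{\rm tr}(\mathbf{Q}^2)^2$ is $\gamma\,[\,|\mathbf{Q}|^2|\mathbf{P}|^2+2(\mathbf{Q}:\mathbf{P})^2\,]$, not twice that, so the paper's threshold uses $3|\gamma|M^2$. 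Your larger constant $6|\gamma|M^2$ is still a valid sufficient condition, so the conclusion stands.
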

\begin{proof}
	To complete the proof, we only need to verify that the
	second-order variation of $F_\kappa(\mathbf{Q})$ is non-negative.
	Indeed, for any $\mathbf{P} \in \mathcal{M}_0$, a direct calculation gives
	\begin{equation*}
		D_2 F_\kappa(\mathbf{Q})[\mathbf{P}, \mathbf{P}] = \int_\Omega
		(\kappa - \alpha) {\rm tr}(\mathbf{P}^2) + 2\beta {\rm
			tr}(\mathbf{Q} \mathbf{P}^2) - \gamma [ {\rm
				tr}(\mathbf{Q^2}){\rm tr}(\mathbf{P}^2) + 2({\rm
				tr}(\mathbf{P}\mathbf{Q}))^2  ] d\mathbf{x}.
	\end{equation*}
	{\color{ocre} We note that $\mathbf{P} \in \mathcal{M}_0$
		and therefore admits the decomposition} $\mathbf{P} =
		\mathbf{W}^\top \Lambda \mathbf{W} = \lambda_i \mathbf{w}_i
		\mathbf{w}_i^\top$, where $\mathbf{w}_i, \ i = 1, \cdots, d$ are
	orthonormal eigenvectors. Using $|\mathbf{Q}| \leq M$, we find
	\begin{equation*}
		\begin{aligned}
			 & {\rm tr} (\mathbf{Q}\mathbf{P}^2) = {\rm tr( \lambda_i^2
				                                       \mathbf{Q} \mathbf{w}_i \mathbf{w}_i^\top )} = \lambda_i^2
			\mathbf{w}_i^\top \mathbf{Q} \mathbf{w}_i \leq M
			\sum\limits_{i=1}^d \lambda_i^2 = M {\rm tr}(\mathbf{P}^2),                                                          \\
			 & {\rm tr}(\mathbf{Q}^2){\rm tr}(\mathbf{P}^2) \leq M^2{\rm
					                                                     tr}(\mathbf{P}^2),  \quad [{\rm tr}(\mathbf{P} \mathbf{Q})]^2
			\leq |\mathbf{P}|^2 |\mathbf{Q}|^2 \leq M^2 {\rm tr}(\mathbf{P}^2).
		\end{aligned}
	\end{equation*}
	Consequently, the second variation satisfies
	\begin{equation*}
		D_2 F_\kappa(\mathbf{Q})[\mathbf{P}, \mathbf{P}] \geq [(\kappa
		- \alpha) - 2|\beta|M - 3|\gamma|M^2) \int_{\Omega} {\rm
			tr}(\mathbf{P}^2) d\mathbf{x}.
	\end{equation*}
	Hence, by choosing $\kappa \geq \alpha + 2 |\beta| M + 3 |\gamma|
		M^2$, the functional $F_\kappa(\mathbf{Q})$ is convex.
\end{proof}

\begin{lem}\label{lem:dg-bdf1}
	Assuming the stabilization parameter satisfies the requirement of
	Lemma~\ref{lem:convex}, we then have the following inequality:
	{\color{orange}
		\begin{equation*}
			F_{\text{bulk}}(\mathbf{Q}^{n+1}) - F_{\text{bulk}}(\mathbf{Q}^n)
			\leq (\kappa (\mathbf{Q}^{n+1} - \mathbf{Q}^n) +
			f_{\text{bulk}}(\mathbf{Q}^n), \mathbf{Q}^{n+1} - \mathbf{Q}^n),
		\end{equation*}
	}
\end{lem}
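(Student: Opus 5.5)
The plan is to exploit the convexity of $F_\kappa(\mathbf{Q}) = \tfrac{\kappa}{2}\|\mathbf{Q}\|^2 - F_{\text{bulk}}(\mathbf{Q})$ from Lemma~\ref{lem:convex}. The claimed inequality is just the first-order (supporting hyperplane) characterization of convexity, rewritten in terms of $F_{\text{bulk}}$. Throughout, $f_{\text{bulk}}(\mathbf{Q})$ denotes the variational derivative of $F_{\text{bulk}}$ restricted to $\mathcal{M}_0$:
\[
f_{\text{bulk}}(\mathbf{Q}) = \alpha\mathbf{Q} - \beta\bigl(\mathbf{Q}^2 - \tfrac{{\rm tr}(\mathbf{Q}^2)}{d}\mathbf{I}\bigr) + \gamma\,{\rm tr}(\mathbf{Q}^2)\mathbf{Q}.
\]
Since $\mathbf{Q}^{n+1}-\mathbf{Q}^n \in \mathcal{M}_0$ by the trace-preservation theorem, the projection $\mathcal{P}_{\mathcal{M}_0}$ does not affect the pairing. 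Hence $(f_{\text{bulk}}(\mathbf{Q}^n), \mathbf{P}) = D F_{\text{bulk}}(\mathbf{Q}^n)[\mathbf{P}]$ for $\mathbf{P} = \mathbf{Q}^{n+1}-\mathbf{Q}^n$.

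\textbf{Step 1 (convexity inequality).} Convexity of $F_\kappa$ along the segment $\mathbf{Q}^n + s(\mathbf{Q}^{n+1}-\mathbf{Q}^n)$, $s\in[0,1]$, gives
\[
F_\kappa(\mathbf{Q}^{n+1}) \ge F_\kappa(\mathbf{Q}^n) + D F_\kappa(\mathbf{Q}^n)[\mathbf{Q}^{n+1}-\mathbf{Q}^n].
\]
To justify this, set $g(s) = F_\kappa(\mathbf{Q}^n + s\mathbf{P})$. Taylor's formula with integral remainder gives $g(1) = g(0) + g'(0) + \int_0^1 (1-s) g''(s)\,ds$, and $g''(s) = D_2F_\kappa[\mathbf{P},\mathbf{P}] \ge 0$ by the computation in Lemma~\ref{lem:convex}.

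\textbf{Step 2 (the obstacle).} The nonnegativity of $g''(s)$ requires the pointwise bound $|\mathbf{Q}^n + s\mathbf{P}| \le M$ at the intermediate points. This follows from \eqref{eq:ass-stabilization}, because the Frobenius ball $\{|\mathbf{Q}|\le M\}$ is convex and both endpoints $\mathbf{Q}^n$ and $\mathbf{Q}^{n+1}$ lie in it. This is the one place where care is needed: Lemma~\ref{lem:convex} is stated for $\mathbf{Q}$ in the bounded set, and convexity of that set is exactly what makes it apply along the whole segment.

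\textbf{Step 3 (rearrangement).} Compute the derivative
\[
D F_\kappa(\mathbf{Q}^n)[\mathbf{P}] = (\kappa \mathbf{Q}^n - f_{\text{bulk}}(\mathbf{Q}^n), \mathbf{P}),
\]
and use the identity $\tfrac{\kappa}{2}\|\mathbf{Q}^{n+1}\|^2 - \tfrac{\kappa}{2}\|\mathbf{Q}^n\|^2 - \kappa(\mathbf{Q}^n,\mathbf{P}) = \tfrac{\kappa}{2}\|\mathbf{P}\|^2$. Substituting $F_\kappa = \tfrac{\kappa}{2}\|\cdot\|^2 - F_{\text{bulk}}$ into the inequality of Step 1 then yields
\[
F_{\text{bulk}}(\mathbf{Q}^{n+1}) - F_{\text{bulk}}(\mathbf{Q}^n) \le (f_{\text{bulk}}(\mathbf{Q}^n), \mathbf{P}) + \tfrac{\kappa}{2}\|\mathbf{P}\|^2.
\]
This is in fact slightly sharper than the statement. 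The stated bound follows because $\tfrac{\kappa}{2}\|\mathbf{P}\|^2 \le \kappa\|\mathbf{P}\|^2 = (\kappa(\mathbf{Q}^{n+1}-\mathbf{Q}^n), \mathbf{Q}^{n+1}-\mathbf{Q}^n)$ for $\kappa>0$. The surplus $\tfrac{\kappa}{2}\|\mathbf{P}\|^2$ is dissipation that can be kept for the subsequent energy estimate of SGE-BDF1.
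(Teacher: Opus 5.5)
Your proof is correct and takes essentially the same route as the paper. The paper likewise expands $F_\kappa$ to second order about $\mathbf{Q}^n$, drops the nonnegative second variation, and combines this with the identity $(\mathbf{Q}^{n+1},\mathbf{Q}^{n+1}-\mathbf{Q}^n)=\tfrac12\|\mathbf{Q}^{n+1}\|^2-\tfrac12\|\mathbf{Q}^n\|^2+\tfrac12\|\mathbf{Q}^{n+1}-\mathbf{Q}^n\|^2$, reaching your sharper bound with the surplus $\tfrac{\kappa}{2}\|\mathbf{Q}^{n+1}-\mathbf{Q}^n\|^2$ before discarding it, while your explicit checks that the segment stays in the Frobenius ball of radius $M$ and that the trace-free projection does not affect the pairing make precise two points the paper leaves implicit.
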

\begin{proof}
	Using the identity
	\begin{equation}\label{eq:dg-bdf1-linear}
		(\mathbf{Q}^{n+1}, \mathbf{Q}^{n+1} - \mathbf{Q}^n) =
		\tfrac{1}{2} \|\mathbf{Q}^{n+1}\|^2 -
		\tfrac{1}{2}\|\mathbf{Q}^n\|^2 + \tfrac{1}{2}\|\mathbf{Q}^{n+1}
		- \mathbf{Q}^n\|^2.
	\end{equation}
	By Lemma \ref{lem:convex} and Taylor expansion, there exits
	$\widetilde{\mathbf{Q}} = (1 - s) \mathbf{Q}^n +
		s\mathbf{Q}^{n+1}$, with $s \in [0, 1]$,  such that
		{\color{orange}
			\begin{equation}\label{eq:dg-bdf1-nlinear}
				\begin{aligned}
					F_\kappa(\mathbf{Q}^{n+1})- F_\kappa(\mathbf{Q}^n) & =
					(\kappa \mathbf{Q}^n - f_{\text{bulk}}(\mathbf{Q}^n),
					\mathbf{Q}^{n+1} - \mathbf{Q}^n) + D_2
					F_\kappa(\widetilde{\mathbf{Q}})[\mathbf{Q}^{n+1} -
						  \mathbf{Q}^n, \mathbf{Q}^{n+1} - \mathbf{Q}^n]                                                              \\
					                                                   & \geq (\kappa \mathbf{Q}^n - f_{\text{bulk}}(\mathbf{Q}^n),
					\mathbf{Q}^{n+1} - \mathbf{Q}^n).
				\end{aligned}
			\end{equation}
		}
		{\color{orange}
			Multiplying \eqref{eq:dg-bdf1-linear} by $\kappa$ and
			\eqref{eq:dg-bdf1-nlinear} by $-1$, and summing the resulting
			equations, we have
		}
		{\color{orange}
			\begin{equation}\label{eq:res_ene_pre}
				\begin{aligned}
					 & \left[\tfrac{\kappa}{2} \|\mathbf{Q}^{n+1}\|^2 -
						   F_\kappa(\mathbf{Q}^{n+1})\right] - \left[\tfrac{\kappa}{2}
						                                        \|\mathbf{Q}^{n}\|^2 - F_\kappa(\mathbf{Q}^{n})\right] +
					\tfrac{\kappa}{2} \|\mathbf{Q}^{n+1} - \mathbf{Q}^n\|^2                                           \\
					 & \quad \leq \left(\kappa (\mathbf{Q}^{n+1} - \mathbf{Q}^n) +
					f_{\text{bulk}}(\mathbf{Q}^n), \mathbf{Q}^{n+1} - \mathbf{Q}^n\right).
				\end{aligned}
			\end{equation}
			Combining \eqref{eq:res_ene_pre} and \eqref{eq:den_F_kap}, we
			obtain the desired result.
		}

\end{proof}
\begin{thm}\label{thm:semi-sge-bdf1}
	Provided that the stabilization parameter is sufficiently large
	so that the condition in Lemma~\ref{lem:convex} is satisfied, the
	total energy in the SGE-BDF1 scheme satisfies the following inequality
	\begin{equation*}
		\begin{aligned}
			E_{\rm BDF1}^{n+1} - E_{\rm BDF1}^n & \leq  -2 \tau
			\|\sqrt{\eta} \mathbf{D}^{n+1}\|^2 - \tau \|\sqrt{b}
			\mathbf{v}^{n+1}\|^2 - \tau \| \sqrt{\Gamma_\mathbf{Q}}
			\mathbf{H}^{n+1}\|^2
			\\
			                                    & \quad - \tfrac{\tau}{2} \|\mathbf{v}^{n+1} - \mathbf{v}^n\|^2
			- \tfrac{\tau}{2}\|\sqrt{K} \nabla (\mathbf{Q}^{n+1} -
			\mathbf{Q}^n)\|^2 + \tau (\mathbf{Q}^n, \chi \mathbf{D}^{n+1} -
			\xi \mathbf{H}^{n+1}),
		\end{aligned}
	\end{equation*}
	where
	\begin{equation*}
		E_{\rm BDF1}^n = \int_\Omega \left[\tfrac{1}{2} |\mathbf{v}^n|^2
			+ \tfrac{K}{2} |\nabla \mathbf{Q}^n|^2 + \tfrac{W}{2}
			|\mathbf{Q}^n - \mathbf{Q}^\star|^2 \right] d\mathbf{x} +
		F_{\text{bulk}}(\mathbf{Q}^n).
	\end{equation*}
	So, the scheme is energy stable for passive liquid crystal systems.
\end{thm}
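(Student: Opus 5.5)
The plan is the standard energy method for the SGE-BDF1 scheme ($k=1$). We take the $L^2$ inner product of the momentum equation in \eqref{eq:sge-bdf1} with $\tau\mathbf{v}^{n+1}=\tau\mathbf{g}_\mathbf{v}^{n+1}$, and of the $\mathbf{Q}$-equation with $-\tau\mathbf{H}^{n+1}=\tau\mathbf{G}_\mathbf{Q}^{n+1}$, and then add the two results. The individual contributions are handled as follows.

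\textbf{Velocity time difference.} The identity $(\mathbf{v}^{n+1}-\mathbf{v}^n,\mathbf{v}^{n+1})=\tfrac12\|\mathbf{v}^{n+1}\|^2-\tfrac12\|\mathbf{v}^n\|^2+\tfrac12\|\mathbf{v}^{n+1}-\mathbf{v}^n\|^2$ produces the kinetic-energy difference plus a nonnegative numerical-dissipation term.

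\textbf{Linear dissipative terms.} Since $\nabla\cdot\mathbf{v}^{n+1}=0$ and $\mathbf{v}^{n+1}|_{\partial\Omega}=0$, the pressure term vanishes. Integration by parts and symmetry of $\mathbf{D}$ give $2(\nabla\cdot(\eta\mathbf{D}^{n+1}),\mathbf{v}^{n+1})=-2\|\sqrt{\eta}\mathbf{D}^{n+1}\|^2$. The drag term gives $-\|\sqrt b\,\mathbf{v}^{n+1}\|^2$, and the relaxation term $\Gamma_\mathbf{Q}\mathbf{H}^{n+1}$ tested with $-\mathbf{H}^{n+1}$ gives $-\|\sqrt{\Gamma_\mathbf{Q}}\mathbf{H}^{n+1}\|^2$.

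\textbf{Skew (ZEC) terms.} This is the key cancellation. After summing the two equations, the $\zeta^{n+1}$ and $\omega^{n+1}$ terms are
\[
\zeta^{n+1}\big[(\mathbf{g}_\mathbf{v}^{n+1},\mathbf{c}_\mathbf{v}^n)+(\mathbf{G}_\mathbf{Q}^{n+1},\mathbf{C}_\mathbf{Q}^n)\big]-\omega^{n+1}\big[(\mathbf{g}_\mathbf{v}^{n+1},\mathbf{g}_\mathbf{v}^n)+(\mathbf{G}_\mathbf{Q}^{n+1},\mathbf{G}_\mathbf{Q}^n)\big].
\]
By the definitions of $\zeta^{n+1}$ and $\omega^{n+1}$ (with $B_1=\mathrm{id}$), the first bracket equals $\omega^{n+1}\mathcal{G}^n$ and the second equals $\zeta^{n+1}\mathcal{G}^n$. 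The two terms therefore cancel exactly, whatever the explicit data are. If $\mathcal{G}^n=0$, the convention in the Remark applies and the $\mathbf{g}$-terms drop out.

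The same cancellation also takes care of the convective terms, since they sit inside $\mathbf{c}_\mathbf{v}$ and $\mathbf{C}_\mathbf{Q}$.

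\textbf{Activity terms.} These are $\tau(\mathbf{f}_\mathbf{v}^n,\mathbf{v}^{n+1})=-\tau(\nabla\cdot(\chi\mathbf{Q}^n),\mathbf{v}^{n+1})=\tau(\chi\mathbf{Q}^n,\nabla\mathbf{v}^{n+1})=\tau(\mathbf{Q}^n,\chi\mathbf{D}^{n+1})$, where symmetry of $\mathbf{Q}^n$ and the vanishing boundary velocity are used. The other piece is $-\tau(\xi\mathbf{Q}^n,\mathbf{H}^{n+1})$. Together they give exactly the source term $\tau(\mathbf{Q}^n,\chi\mathbf{D}^{n+1}-\xi\mathbf{H}^{n+1})$.

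\textbf{Free-energy difference.} This step is the main technical one. We must bound $-(\mathbf{Q}^{n+1}-\mathbf{Q}^n,\mathbf{H}^{n+1})$ from below by the free-energy difference. We expand $\mathbf{H}^{n+1}$ from \eqref{eq:sge-bdf-H}.

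The elastic part, after integration by parts under either boundary condition in \eqref{eq:bc-Q}, gives
\[
(K\nabla\mathbf{Q}^{n+1},\nabla(\mathbf{Q}^{n+1}-\mathbf{Q}^n))=\tfrac12\|\sqrt K\nabla\mathbf{Q}^{n+1}\|^2-\tfrac12\|\sqrt K\nabla\mathbf{Q}^n\|^2+\tfrac12\|\sqrt K\nabla(\mathbf{Q}^{n+1}-\mathbf{Q}^n)\|^2 .
\]
In the Dirichlet case the test function $\mathbf{Q}^{n+1}-\mathbf{Q}^n$ vanishes on $\partial\Omega$, so the boundary term drops.

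The anchoring part gives the analogous identity for $\tfrac W2|\mathbf{Q}-\mathbf{Q}_\star|^2$, with a nonnegative remainder that can simply be dropped.

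The bulk and stabilization part, $(\kappa(\mathbf{Q}^{n+1}-\mathbf{Q}^n)+f_{\rm bulk}(\mathbf{Q}^n),\mathbf{Q}^{n+1}-\mathbf{Q}^n)$, is bounded below by $F_{\rm bulk}(\mathbf{Q}^{n+1})-F_{\rm bulk}(\mathbf{Q}^n)$ using Lemma~\ref{lem:dg-bdf1}. That lemma requires $\kappa$ large, which is where Lemma~\ref{lem:convex} and the assumption \eqref{eq:ass-stabilization} enter.

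\textbf{Assembly and constants.} Collecting all terms yields the stated inequality. I expect the main care to lie in tracking signs and the constants multiplying the numerical-dissipation terms ($\tfrac12$ versus $\tfrac\tau2$). The terms $\|\mathbf{v}^{n+1}-\mathbf{v}^n\|^2$ and $\|\sqrt K\nabla(\mathbf{Q}^{n+1}-\mathbf{Q}^n)\|^2$ actually arise with coefficient $\tfrac12$. The stated coefficient $\tfrac\tau2$ is weaker for $\tau\le1$, and I would note that. In the passive case $\chi=\xi=0$ the right-hand side is nonpositive, so $E^{n+1}_{\rm BDF1}\le E^n_{\rm BDF1}$.
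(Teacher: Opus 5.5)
Your proposal is correct and follows the paper's proof essentially step for step. You test the momentum and $\mathbf{Q}$ equations with $\mathbf{v}^{n+1}$ and $\mathbf{G}_\mathbf{Q}^{n+1}=-\mathbf{H}^{n+1}$, cancel the skew terms exactly via $\zeta^{n+1}\omega^{n+1}\mathcal{G}^n-\omega^{n+1}\zeta^{n+1}\mathcal{G}^n=0$, and use Lemma~\ref{lem:dg-bdf1} for the bulk/stabilization part, just as the paper does. Your remark on the constants is also right: after multiplying by $\tau$, the terms $\|\mathbf{v}^{n+1}-\mathbf{v}^n\|^2$ and $\|\sqrt{K}\nabla(\mathbf{Q}^{n+1}-\mathbf{Q}^n)\|^2$ carry coefficient $\tfrac12$ in the paper's own derivation too, so the stated $\tfrac{\tau}{2}$ follows only for $\tau\le 1$, while the passive energy-stability conclusion holds for every $\tau$.
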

\begin{proof}
	Taking the inner products of the first and third equations of
	\eqref{eq:sge-bdf1}, respectively, with
	$\mathbf{g}_\mathbf{v}^{n+1} = \mathbf{v}^{n+1}$ and
	$\mathbf{G}_\mathbf{Q}^{n+1} = -\mathbf{H}_{\rm BDF1}^{n+1}$,
	integrating by parts, and using $\nabla \cdot \mathbf{v}^{n+1} =
		0$, one obtains
	\begin{equation}\label{eq:ene-bdf1-01}
		\begin{aligned}
			\tfrac{1}{\tau}(\mathbf{v}^{n+1} - \mathbf{v}^n,
			\mathbf{v}^{n+1}) & = -2 \|\sqrt{\eta} \mathbf{D}^{n+1}\|^2 -
			\|\sqrt{b}\mathbf{v}^{n+1}\| + (\mathbf{f}_\mathbf{v}^n,
			\mathbf{v}^{n+1}),
			\\
			                  & \quad + \zeta^{n+1} (\mathbf{g}_\mathbf{v}^{n+1},
			\mathbf{c}_\mathbf{v}^n) - \omega^{n+1}
			(\mathbf{g}_\mathbf{v}^{n+1}, \mathbf{g}_\mathbf{v}^n),
		\end{aligned}
	\end{equation}
	\begin{equation} \label{eq:ene-bdf1-02}
		\begin{aligned}
			\tfrac{1}{\tau}(\mathbf{Q}^{n+1} - \mathbf{Q}^n,
			\mathbf{G}_\mathbf{Q}^{n+1}) & = - \|\sqrt{\Gamma_\mathbf{Q}}
			\mathbf{H}^{n+1}\|^2 - (\mathbf{F}_\mathbf{Q}^n,
			\mathbf{H}^{n+1})                                                               \\
			                             & \quad + \zeta^{n+1}(\mathbf{G}_\mathbf{Q}^{n+1},
			\mathbf{C}_\mathbf{Q}^n) -
			\omega^{n+1}(\mathbf{G}_\mathbf{Q}^{n+1}, \mathbf{G}_\mathbf{Q}^n).
		\end{aligned}
	\end{equation}
	A straightforward computation using Lemma~\ref{lem:dg-bdf1} yields
	\begin{equation} \label{eq:ene-bdf1-03}
		(\mathbf{v}^{n+1} - \mathbf{v}^n, \mathbf{v}^{n+1}) =
		\tfrac{1}{2}\|\mathbf{v}^{n+1}\|^2 -
		\tfrac{1}{2}\|\mathbf{v}^n\|^2 + \tfrac{1}{2}
		\|\mathbf{v}^{n+1} - \mathbf{v}^n\|^2,
	\end{equation}
	\begin{equation} \label{eq:ene-bdf1-04}
		\begin{aligned}
			(\mathbf{Q}^{n+1} - \mathbf{Q}^n,
			\mathbf{G}_\mathbf{Q}^{n+1}) & = - (\mathbf{Q}^{n+1} -
			\mathbf{Q}^n, \nabla \cdot (K\nabla \mathbf{Q}^{n+1})) +
			W(\mathbf{Q}^{n+1} - \mathbf{Q}^\star, \mathbf{Q}^{n+1} -
			\mathbf{Q}^n)                                                                           \\
			                             & \quad + (\mathbf{Q}^{n+1} - \mathbf{Q}^n, \kappa
			(\mathbf{Q}^{n+1} - \mathbf{Q}^n) +
			f_\mathcal{\text{bulk}}(\mathbf{Q}^n))
			\\
			                             & \geq \tfrac{1}{2}\|\sqrt{K}\nabla \mathbf{Q}^{n+1}\|^2 -
			\tfrac{1}{2}\|\sqrt{K}\nabla \mathbf{Q}^n\|^2
			+\tfrac{1}{2}\|\sqrt{K} \nabla (\mathbf{Q}^{n+1} -
			\mathbf{Q}^n) \|^2                                                                      \\
			                             & \quad  + \tfrac{1}{2} \|\sqrt{W}(\mathbf{Q}^{n+1} -
			\mathbf{Q}_\star)\|^2 - \tfrac{1}{2}\|\sqrt{W}(\mathbf{Q}^{n}
			- \mathbf{Q}_\star)\|^2 + \tfrac{1}{2}
			\|\sqrt{W}(\mathbf{Q}^{n+1} - \mathbf{Q}^n)\|^2                                         \\
			                             & \quad + F_{\text{bulk}}(\mathbf{Q}^{n+1}) -
			F_{\text{bulk}}(\mathbf{Q}^n)
		\end{aligned}
	\end{equation}
	\begin{equation}\label{eq:ene-bdf1-05}
		\begin{aligned}
			 & \zeta^{n+1} (\mathbf{g}_\mathbf{v}^{n+1},
			\mathbf{c}_\mathbf{v}^n) - \omega^{n+1}
			(\mathbf{g}_\mathbf{v}^{n+1}, \mathbf{g}_\mathbf{v}^n) +
			\zeta^{n+1} (\mathbf{G}_\mathbf{Q}^{n+1},
			\mathbf{C}_\mathbf{Q}^n) -
			\omega^{n+1}(\mathbf{G}_\mathbf{Q}^{n+1}, \mathbf{G}_\mathbf{Q}^n) \\
			 & \quad = \mathcal{G}(\mathbf{g}_\mathbf{v}^n,
			\mathbf{G}_\mathbf{Q}^n)\zeta^{n+1} \omega^{n+1} -
			\mathcal{G}(\mathbf{g}_\mathbf{v}^n, \mathbf{G}_\mathbf{Q}^n)
			\omega^{n+1} \zeta^{n+1} = 0.
		\end{aligned}
	\end{equation}
	Summing \eqref{eq:ene-bdf1-01} and \eqref{eq:ene-bdf1-02} and
	substituting the indentities
	\eqref{eq:ene-bdf1-03}-\eqref{eq:ene-bdf1-05}, one directly
	arrives at the discrete energy inequality.
\end{proof}

\begin{rmk}
	\color{magenta}
	The condition on the stabilization parameter $\kappa$ in Theorem~\ref{thm:semi-sge-bdf1} is a sufficient condition for ensuring energy stability of the algorithm at the theoretical level. Because stabilization methods are intrinsically connected to convex-splitting approaches, such conditions are commonly imposed to guarantee the convexity of the bulk free energy required in the analysis. They are, however, generally not necessary in practical computations.

	In practice, the choice of $\kappa$ must be balanced against the time-step size $\tau$. A smaller $\kappa$ may lead to a loss of convexity in the bulk free energy and thereby induce possible energy growth, but this effect can often be compensated for by taking a sufficiently small $\tau$. Conversely, a larger $\kappa$ enhances stability and permits larger time steps. However, choosing a large $\kappa$ together with a large time-step size may perturb the discrete system substantially away from the continuum model, leading to considerable numerical error.

	Thus, the interplay between $\kappa$ and $\tau$ is primarily a practical implementation issue, and these parameters should be calibrated with care.  Figure~\ref{fig:energy_kappa_dt} shows that energy stability   can  be achieved even when $\kappa$ is below the theoretical bound,  provided that $\tau$ is chosen appropriately.




\end{rmk}

\subsection{Second-order linearly implicit schemes based on the
	discrete gradient scheme}
The SGE-BDF1 and SGE-BDF2 schemes are both robust and efficient;
however, their energy stability proofs require a relatively
large stabilization parameter. To address this limitation, we adopt the
polarized discrete gradient (PDG) approach
\cite{pdg_siam,pdg_jcam}. In this method, a mild stabilization
parameter $\kappa$ would warrant energy stability {\color{blue}
		while compared with the
		SGE-BDF1 scheme}. The role of the stabilization parameter is to
guarantee solvability of the
$\bQ$-tensor system. Specifically, it suffices to take $\kappa \geq
	\max\{0, -\alpha\}$. Unlike the previous schemes, no additional
assumption on
$\kappa$ is required to establish energy stability. The resulting
second-order SGE-PDG scheme is formulated as follows:
\begin{equation}\label{eq:scheme-PDG}
	\left\lbrace
	\begin{aligned}
		 & \tfrac{\mathbf{v}^{n+1} - \mathbf{v}^{n-1}}{2\tau} = - \nabla
		p^{\overline{n}} + 2 \nabla \cdot (\eta \mathbf{D}^{\overline{n}})
		- b \mathbf{v}^{\overline{n}}
		+ \overline{\zeta}^{n} \mathbf{c}_{\mathbf{v}}^n
		- \overline{\omega}^{n} \mathbf{g}_{\mathbf{v}}^n +
		\mathbf{f}_\mathbf{v}^n,                                         \\
		 & \nabla \cdot \mathbf{v}^{\overline{n}} = 0,                   \\
		 & \tfrac{\bf Q^{n+1} - Q^{n-1}}{2\tau} = \Gamma_{\bf Q}
		\overline{\mathbf{H}}^{n} + \overline{\zeta}^n
		\mathbf{C}_{\mathbf{Q}}^n - \overline{\omega}^n
		\mathbf{G}^n_{\mathbf{Q}} + \mathbf{F}_{\mathbf{Q}}^n,           \\
		 & \overline{\zeta}^n =
		\tfrac{(\overline{\mathbf{g}}_\mathbf{v}^n,
			\mathbf{g}_\mathbf{v}^n) + (\overline{\mathbf{G}}_\mathbf{Q}^n,
			\mathbf{G}_\mathbf{Q}^n)}{\mathcal{G}(\mathbf{g}_\mathbf{v}^n,
			\mathbf{G}_\mathbf{Q}^n)} , \ \overline{\omega}^n =
		\tfrac{(\overline{\mathbf{g}}_\mathbf{v}^n,
			\mathbf{c}_\mathbf{v}^n) + (\overline{\mathbf{G}}_\mathbf{Q}^n,
			\mathbf{C}_\mathbf{Q}^n)}{\mathcal{G}(\mathbf{g}_\mathbf{v}^n,
			\mathbf{G}_\mathbf{Q}^n)},
	\end{aligned}
	\right.
\end{equation}
where $(\bullet)^{\overline{n}} = \tfrac{(\bullet)^{n+1} +
		(\bullet)^{n-1}}{2}$,
\begin{equation}\label{eq:polarized-H}
	\overline{\mathbf{H}}^n =\nabla \cdot (K \nabla
	\mathbf{Q}^{\overline{n}}) - W (\mathbf{Q}^{\overline{n}} -
	\mathbf{Q}_\star) - \kappa (\mathbf{Q}^{\overline{n}} -
	\mathbf{Q}^n) - 2 \overline{f}_{\text{bulk}} (\mathbf{Q}^{n-1},
	\mathbf{Q}^n, \mathbf{Q}^{n+1}).
\end{equation}
Here, $\overline{f}_{\text{bulk}}(\bullet, \bullet, \bullet)$ is the
PDG associated with the polarized energy
$\overline{F}_{\text{bulk}}(\bullet, \bullet)$ of
$F_{\text{bulk}}(\bullet)$. We call
$\overline{F}_{\text{bulk}}(\bullet, \bullet)$ a polarized functional
of $F_\text{bulk}$ if it satisfies
\begin{equation}
	\overline{F}_{\text{bulk}}(\mathbf{U}, \mathbf{U}) =
	F_{\text{bulk}}(\mathbf{U}), \ \overline{F}_{\text{bulk}}(\mathbf{U},
	\mathbf{V}) = \overline{F}_{\text{bulk}}(\mathbf{V}, \mathbf{U}).
\end{equation}
The first property ensures consistency of
$\overline{F}_{\text{bulk}}(\bullet, \bullet)$ with the original
energy, while the second enforces symmetry and thus guarantees the
second-order accuracy. We then say that
$\overline{f}_{\text{bulk}}(\bullet, \bullet, \bullet)$ is the
polarized discrete gradient of $\overline{F}_{\text{bulk}}$ if, for
all $\mathbf{U}$, $\mathbf{V}$, $\mathbf{W}$,
\begin{equation}\label{eq:den-pdg}
	2\overline{f}_\text{bulk}(\mathbf{U}, \mathbf{U}, \mathbf{U}) =
	f_\text{bulk}(\mathbf{U}),
	\ (\overline{f}_\text{bulk}(\mathbf{U}, \mathbf{V}, \mathbf{W}),
	\mathbf{W} - \mathbf{U})=  \overline{F}_\text{bulk}(\mathbf{V},
	\mathbf{W}) - \overline{F}_\text{bulk}(\mathbf{U}, \mathbf{V}).
\end{equation}
The key to the PDG method is the selection of an appropriate
polarized energy functional. With a judicious choice, one obtains a
linearly implicit scheme. Here, we define the polarization of the
bulk energy as follows:
\begin{equation} \label{eq:polarized-FN}
	\overline{F}_\text{bulk}(\mathbf{U}, \mathbf{V}) = \int_\Omega
	[\tfrac{\alpha}{2} \tfrac{{\rm tr}(\mathbf{U}^2) + {\rm
				tr}(\mathbf{V}^2)}{2} - \tfrac{\beta}{3} {\rm tr}(\mathbf{U}
		\mathbf{V} \tfrac{\mathbf{U} + \mathbf{V}}{2}) +
		\tfrac{\gamma}{4} {\rm tr} (\mathbf{U}^2) {\rm tr}(\mathbf{V}^2)]
	d\mathbf{x}.
\end{equation}
The polarized discrete gradient $\overline{f}_\mathcal{N}$ in
\eqref{eq:polarized-H} to \eqref{eq:polarized-FN} is as follows
\begin{equation} \label{eq:polarized-fN}
	\begin{aligned}
		\overline{f}_\text{bulk}(\mathbf{U}, \mathbf{V}, \mathbf{W}) & =
		\tfrac{\alpha}{2} \tfrac{\mathbf{U} + \mathbf{W}}{2} -
		\tfrac{\beta}{6} \Big[ \left(\mathbf{V}^2 + \mathbf{V}
			                 \tfrac{\mathbf{U} + \mathbf{W}}{2} +
			                 \tfrac{\mathbf{U}+\mathbf{W}}{2} \mathbf{V}\right) \\
			                 &\quad - \tfrac{1}{3} {\rm tr} \left(\mathbf{V} (\mathbf{U} +
			                 \mathbf{V} + \mathbf{W})\right) \mathbf{I}  \Big] +
		\tfrac{\gamma}{2} {\rm tr}(\mathbf{V}^2) \tfrac{\mathbf{U} +
			                                         \mathbf{W}}{2}.
	\end{aligned}
\end{equation}
\begin{rmk}
	In the scalar scenarios, one can define the PDG using the
	following Itoh-Abe formulation
	\begin{equation}\label{eq:Itoh-Abe}
		\overline{f}(u, v, w) = \tfrac{F(v, w) - F(u, v)}{w - u}.
	\end{equation}
	However, \eqref{eq:Itoh-Abe} cannot be extended directly to
	tensor variables, since $\mathbf{U}$, $\mathbf{W}$ and energy
	differences live in noncommutative tensor spaces, so the division
	is undefined.
\end{rmk}

We next demonstrate that if ${\rm tr}(\mathbf{Q}^0) = 0$ and the
solution $\mathbf{Q}^1$ is solved from a chosen startup scheme that is also
traceless, then the SGE-PDG scheme preserves ${\rm
		tr}(\mathbf{Q}^n) = 0$ at every future step.

\begin{thm}
	Suppose the SGE-PDG algorithm is initialized with ${\rm tr}({\bf
			Q}^0)$ and ${\rm tr}({\bf Q}^1) = 0$ both traceless and $\nabla
		\cdot \mathbf{v}^0$, $\nabla \cdot \mathbf{v}^1 = 0$ both
	divergence-free, and choose $\kappa \geq \max \{0, -\alpha\}$.
	Then every subsequent solution $\{\mathbf{v}^n, \mathbf{Q}^n
		\}_{n=2}^{N_t}$ produced by \eqref{eq:scheme-PDG} satisfies
	$\nabla \cdot \mathbf{v}^n = 0$, \ ${\rm tr} (\mathbf{Q}^n) = 0$.
\end{thm}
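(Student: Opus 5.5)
The argument is a strong induction on $n$, along the same lines as the proof of the trace-preservation theorem for \eqref{eq:sge-bdf1}. Assume that $\nabla\cdot\mathbf{v}^i=0$ and ${\rm tr}(\mathbf{Q}^i)=0$ for all $0\le i\le n$; the base case $n=1$ is the hypothesis. We then show that $\mathbf{v}^{n+1}$ and $\mathbf{Q}^{n+1}$ have the same two properties.

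\textbf{Velocity.} The scheme \eqref{eq:scheme-PDG} imposes $\nabla\cdot\mathbf{v}^{\overline{n}}=0$, that is, $\nabla\cdot\mathbf{v}^{n+1}=-\nabla\cdot\mathbf{v}^{n-1}$. The induction hypothesis gives $\nabla\cdot\mathbf{v}^{n-1}=0$, so $\nabla\cdot\mathbf{v}^{n+1}=0$.

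\textbf{Explicit terms are traceless.} Every explicit quantity evaluated at level $n$ has zero trace. This repeats the computation in the previous proof:
\begin{itemize}
\item ${\rm tr}(\mathbf{F}_\mathbf{Q}^n)=\xi\,{\rm tr}(\mathbf{Q}^n)=0$;
\item ${\rm tr}(\mathbf{C}_\mathbf{Q}^n)={\rm tr}(\mathbf{S}^n)=0$, which uses ${\rm tr}(\mathbf{D}^n)=\nabla\cdot\mathbf{v}^n=0$, ${\rm tr}(\mathbf{Q}^n)=0$ and the cyclic property of the trace;
\item ${\rm tr}(\mathbf{G}_\mathbf{Q}^n)=-{\rm tr}(\mathbf{H}^n)=0$, whichever consistent molecular field is used to define $\mathbf{G}_\mathbf{Q}^n$.
\end{itemize}
Since $\overline{\zeta}^n$ and $\overline{\omega}^n$ are scalars, taking the trace of the $\mathbf{Q}$-equation leaves
\[
\tfrac{{\rm tr}(\mathbf{Q}^{n+1})-{\rm tr}(\mathbf{Q}^{n-1})}{2\tau}=\Gamma_\mathbf{Q}\,{\rm tr}(\overline{\mathbf{H}}^n).
\]

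\textbf{Trace of $\overline{\mathbf{H}}^n$.} Write $q={\rm tr}(\mathbf{Q}^{n+1})$. By linearity, and using ${\rm tr}(\mathbf{Q}_\star)=0$ and ${\rm tr}(\mathbf{Q}^{n-1})={\rm tr}(\mathbf{Q}^n)=0$, the gradient, anchoring and stabilization terms of \eqref{eq:polarized-H} contribute $\tfrac12\nabla\cdot(K\nabla q)-\tfrac{W+\kappa}{2}q$. Taking the trace of \eqref{eq:polarized-fN} with $\mathbf{U}=\mathbf{Q}^{n-1}$, $\mathbf{V}=\mathbf{Q}^n$, $\mathbf{W}=\mathbf{Q}^{n+1}$ gives the following:
\begin{itemize}
\item the $\alpha$ term gives $\tfrac{\alpha}{4}q$;
\item the $\gamma$ term gives $\tfrac{\gamma}{4}{\rm tr}((\mathbf{Q}^n)^2)\,q$;
\item the $\beta$ bracket gives ${\rm tr}(\mathbf{V}^2)+{\rm tr}(\mathbf{V}(\mathbf{U}+\mathbf{W}))-\tfrac d3{\rm tr}(\mathbf{V}(\mathbf{U}+\mathbf{V}+\mathbf{W}))$, which vanishes identically only when $d=3$.
\end{itemize}
The $\beta$ bracket is the step I expect to be the main obstacle. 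I would first check whether the $\tfrac13$ in \eqref{eq:polarized-fN} is meant to be $\tfrac1d$, the trace-free projection $\mathcal{P}_{\mathcal{M}_0}$ consistent with $\mathbf{H}=-\mathcal{P}_{\mathcal{M}_0}\delta F/\delta\mathbf{Q}$. With that reading the bracket vanishes in any dimension, and I would adopt it, noting that the continuous $\mathbf{H}$ in \eqref{eq:governing-dimensionless} uses exactly $\tfrac1d$.

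\textbf{Scalar equation for $q$.} Collecting terms, $q$ satisfies the linear elliptic equation
\[
\Big[\tfrac1{\tau}+\tfrac{\Gamma_\mathbf{Q}}{2}\big(\kappa+W+\alpha+\gamma\,{\rm tr}((\mathbf{Q}^n)^2)\big)-\tfrac{\Gamma_\mathbf{Q}}{2}\nabla\cdot(K\nabla\,\cdot)\Big]q=0,
\]
with homogeneous Dirichlet or Neumann data inherited from \eqref{eq:bc-Q}. The sign of the $\alpha$ contribution should be rechecked against \eqref{eq:polarized-H}, which subtracts $2\overline{f}_{\rm bulk}$. The hypothesis $\kappa\ge\max\{0,-\alpha\}$ is what makes the zeroth-order coefficient nonnegative: $\kappa+\alpha\ge0$, $W\ge0$, $\Gamma_\mathbf{Q}\ge0$, and the $\gamma$ term is nonnegative when $\gamma\ge0$. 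Together with $\tfrac1\tau>0$, the coefficient is strictly positive.

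\textbf{Conclusion.} Testing the equation with $q$ and integrating by parts gives $\int(\text{positive})q^2+\tfrac{\Gamma_\mathbf{Q}}{2}\|\sqrt K\nabla q\|^2=0$, hence $q=0$. In the Neumann case this is combined with the normalization in Remark~\ref{rmk:boundary_tr}. This closes the induction.
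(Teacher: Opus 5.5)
Your proposal is correct and follows essentially the same route as the paper's proof. Both use induction, show that $\mathbf{S}^n$, $\mathbf{G}_\mathbf{Q}^n$ and $\mathbf{F}_\mathbf{Q}^n$ are traceless, and compute ${\rm tr}(\overline{\mathbf{H}}^n)$, where the paper gets the same $+\alpha$ sign you were unsure of. Both then conclude by uniqueness for the scalar elliptic problem, whose zeroth-order coefficient is nonnegative because $\kappa\ge\max\{0,-\alpha\}$.

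Your remark on the $\beta$ bracket is sharper than the paper, which drops it silently in \eqref{eq:tr-H}; that is legitimate for the $3\times 3$ tensors used in the paper's computations. Three small things to tidy: the time term should be $\tfrac{1}{2\tau}q$ rather than $\tfrac1\tau q$; you should divide by the spatially varying $\Gamma_\mathbf{Q}>0$ before testing with $q$; and no Neumann normalization is needed, because the zeroth-order coefficient is strictly positive.
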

\begin{proof}
	The divergence-free condition follows immediately from the second
	formulation \eqref{eq:scheme-PDG} by a straightforward inductive
	argument. It remains to show that $\mathbf{Q}^{n+1}$ is
	traceless. By hypothesis, ${\rm tr}(\mathbf{Q}^0) = {\rm
			tr}(\mathbf{Q}^1) = 0$. Assume ${\rm tr}(\mathbf{Q}^k) = 0$ for
	$k = 2, \cdots, n$. Taking the trace of the third equation in
	\eqref{eq:scheme-PDG} and using the inductive assumption, we obtain
	\begin{equation}\label{eq:trQnp1}
		\begin{aligned}
			\tfrac{{\rm tr} (\mathbf{Q}^{n+1})}{2\tau} & =
			\Gamma_\mathbf{Q} {\rm tr}(\overline{\mathbf{H}}^n) +
			\overline{\zeta}^n {\rm tr} (\mathbf{C}_\mathbf{Q}^n) -
			\overline{\omega}^n {\rm tr}(\mathbf{G}_\mathbf{Q}^n),                                         \\
			                                           & = \Gamma_\mathbf{Q}{\rm tr}(\overline{\bf H}^n) +
			\overline{\zeta}^n {\rm tr}(\mathbf{S}^n)
		\end{aligned}
	\end{equation}
	By $\nabla \cdot \mathbf{v}^n = 0$ and the facts that
	$\mathbf{Q}^n$ is symmetric while $\bm{\Omega}^n$ is
	skew-symmetric, we have
	\begin{equation*}
		\begin{aligned}
			{\rm tr}(\mathbf{S}^n) & = {\rm tr}(a (\mathbf{Q}^n
			\mathbf{D}^n + \mathbf{D}^n \mathbf{Q}^n)) - \tfrac{2a}{d}
			                                             (\mathbf{Q}^n : \mathbf{D}^n) {\rm tr} (\mathbf{I}) \\
			                       & = 2a {\rm tr}(\mathbf{Q}^n \mathbf{D}^n) - 2a
			\mathbf{Q}^n:\mathbf{D}^n = 0.
		\end{aligned}
	\end{equation*}
	By straightforward calculations, one finds
	\begin{equation}\label{eq:tr-H}
		\begin{aligned}
			{\rm tr}(\overline{\mathbf{H}}^n) & = \tfrac{1}{2} \nabla
			\cdot \left(K \Delta {\rm tr}(\mathbf{Q}^{n+1})\right) -
			\tfrac{W + \kappa + \alpha}{2} {\rm tr} (\mathbf{Q}^{n+1}) -
			\tfrac{\gamma}{2} |\mathbf{Q}^n|^2 {\rm tr}(\mathbf{Q}^{n+1}).
		\end{aligned}
	\end{equation}
	Combining \eqref{eq:trQnp1} and \eqref{eq:tr-H}, we end up with
	the scalar Poisson problem with respect to ${\rm tr} (\mathbf{Q}^{n+1})$:
	\begin{equation*}
		\left[\tfrac{1}{2}(\tfrac{1}{\tau} + W + \kappa + \alpha +
			\gamma |\mathbf{Q}^n|^2) - \nabla \cdot ( K \nabla (\bullet) )
			\right] {\rm tr}(\mathbf{Q}^{n+1}) = 0.
	\end{equation*}
	Whenever $\kappa \geq \max \{0, -\alpha\}$, {\color{ocre} by
		imposing boundary conditions on
		${\rm tr}(\mathbf{Q}^{n+1})$ that are compatible with those prescribed for
		$\mathbf{Q}^{n+1}$ (see Remark~\ref{rmk:boundary_tr}) and by
		invoking the classical theory of
		elliptic equations,
		the above problem admits a unique solution, satisfying}
	\begin{equation*}
		{\rm tr} (\mathbf{Q}^{n+1}) = 0.
	\end{equation*}
	The proof is thus completed.
\end{proof}

Finally, we show that SGE-PDG preserves the energy dissipation rate
even in the active liquid crystal case.
\begin{thm}\label{thm:semi-sge-pdg}
	The discrete energy dissipation rate of the SGE-PDG scheme is given by
	\begin{equation*}
		\begin{aligned}
			E^{n+1}_{\rm PDG} - E^n_{\rm PDG} & = -2 \tau \|\sqrt{\eta}
			\mathbf{D}^{\overline{n}}\|^2 - \tau \|\sqrt{b}
			\mathbf{v}^{\overline{n}}\|^2 - \tau \|
			\sqrt{\Gamma_\mathbf{Q}} \overline{\mathbf{H}}^n \|^2 + \tau
			(\mathbf{Q}^n, \chi \mathbf{D}^{\overline{n}} - \xi
			\overline{\mathbf{H}}^n),
		\end{aligned}
	\end{equation*}
	where
	\begin{equation*}
		\begin{aligned}
			E^n_{\rm PDG} & = \int_\Omega \tfrac{1}{2}
			\tfrac{|\mathbf{v}^{n-1}|^2 + |\mathbf{v}^n|^2}{2} +
			\tfrac{K}{2} \tfrac{|\nabla \mathbf{Q}^{n-1}|^2 +  |\nabla
				             \mathbf{Q}^n|^2}{2} + \tfrac{W}{2} \tfrac{|\mathbf{Q}^{n-1} -
				                                                 \mathbf{Q}_\star|^2 + |\mathbf{Q}^n-\mathbf{Q}_\star|^2}{2} +
			\tfrac{\kappa}{4}|\mathbf{Q}^n - \mathbf{Q}^{n-1}|^2 d\mathbf{x}                  \\
			              & \quad + \overline{F}_\text{bulk}(\mathbf{Q}^{n-1}, \mathbf{Q}^n).
		\end{aligned}
	\end{equation*}
	For passive liquid crystal systems, it implies the scheme is
	unconditional energy stable.
\end{thm}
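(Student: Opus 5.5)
The plan is to mirror the proof of Theorem~\ref{thm:semi-sge-bdf1}, with the BDF test functions replaced by Crank--Nicolson-type averages. I take the inner product of the first equation of \eqref{eq:scheme-PDG} with $2\tau\,\overline{\mathbf{g}}_\mathbf{v}^n = 2\tau\,\mathbf{v}^{\overline{n}}$, and of the third equation with $2\tau\,\overline{\mathbf{G}}_\mathbf{Q}^n = -2\tau\,\overline{\mathbf{H}}^n$. Since $\nabla\cdot\mathbf{v}^{\overline{n}}=0$ and $\mathbf{v}$ satisfies the no-slip condition, the pressure term drops out after integration by parts. The viscous and drag terms then give $-2\tau\|\sqrt{\eta}\mathbf{D}^{\overline{n}}\|^2-\tau\|\sqrt{b}\mathbf{v}^{\overline{n}}\|^2$, and the mobility term gives $-\tau\|\sqrt{\Gamma_\mathbf{Q}}\overline{\mathbf{H}}^n\|^2$. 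Notice that the test function is $2\tau$ times the average rather than $\tau$; this factor must be tracked so that $(\mathbf{v}^{n+1}-\mathbf{v}^{n-1},\mathbf{v}^{\overline{n}})=\tfrac12(\|\mathbf{v}^{n+1}\|^2-\|\mathbf{v}^{n-1}\|^2)$ produces exactly the differences of the averaged energy $E^{n+1}_{\rm PDG}-E^n_{\rm PDG}$. The same telescoping applies to the $K|\nabla\mathbf{Q}|^2$ and $W|\mathbf{Q}-\mathbf{Q}_\star|^2$ terms via $(\mathbf{Q}^{n+1}-\mathbf{Q}^{n-1},\mathbf{Q}^{\overline{n}})$. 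If the stated coefficients $\tau$ disagree with the $2\tau$ normalization, I will rescale consistently, since the structure is the same either way.

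For the ZEC cancellation, the terms $\overline{\zeta}^n(\mathbf{c}_\mathbf{v}^n,\overline{\mathbf{g}}_\mathbf{v}^n)+\overline{\zeta}^n(\mathbf{C}_\mathbf{Q}^n,\overline{\mathbf{G}}_\mathbf{Q}^n)$ equal $\overline{\zeta}^n\overline{\omega}^n\mathcal{G}(\mathbf{g}_\mathbf{v}^n,\mathbf{G}_\mathbf{Q}^n)$ by the definition of $\overline{\omega}^n$. Likewise, the $\overline{\omega}^n$ terms equal $\overline{\omega}^n\overline{\zeta}^n\mathcal{G}$ by the definition of $\overline{\zeta}^n$. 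The two therefore cancel exactly, as in \eqref{eq:ene-bdf1-05}. The active terms give $2\tau(\mathbf{f}^n_\mathbf{v},\mathbf{v}^{\overline{n}})=2\tau(\chi\mathbf{Q}^n,\mathbf{D}^{\overline{n}})$ after integrating by parts (using the symmetry of $\mathbf{Q}^n$), together with $-2\tau(\xi\mathbf{Q}^n,\overline{\mathbf{H}}^n)$. Together these form the activity term in the statement.

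The remaining and central step is the stabilization and bulk part:
\[
(\mathbf{Q}^{n+1}-\mathbf{Q}^{n-1},\kappa(\mathbf{Q}^{\overline{n}}-\mathbf{Q}^n)+2\overline{f}_{\text{bulk}}(\mathbf{Q}^{n-1},\mathbf{Q}^n,\mathbf{Q}^{n+1})).
\]
For the $\kappa$ part, I write $\mathbf{Q}^{\overline{n}}-\mathbf{Q}^n=\tfrac12[(\mathbf{Q}^{n+1}-\mathbf{Q}^n)-(\mathbf{Q}^n-\mathbf{Q}^{n-1})]$ and $\mathbf{Q}^{n+1}-\mathbf{Q}^{n-1}=(\mathbf{Q}^{n+1}-\mathbf{Q}^n)+(\mathbf{Q}^n-\mathbf{Q}^{n-1})$. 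The product is then $\tfrac{\kappa}{2}(\|\mathbf{Q}^{n+1}-\mathbf{Q}^n\|^2-\|\mathbf{Q}^n-\mathbf{Q}^{n-1}\|^2)$, a pure telescoping difference matching the $\tfrac{\kappa}{4}$ term in $E_{\rm PDG}$ under the averaged normalization. For the bulk part, the defining property \eqref{eq:den-pdg} gives $\overline{F}_{\text{bulk}}(\mathbf{Q}^n,\mathbf{Q}^{n+1})-\overline{F}_{\text{bulk}}(\mathbf{Q}^{n-1},\mathbf{Q}^n)$, which is again a telescoping difference.

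The main obstacle is verifying that the explicit formula \eqref{eq:polarized-fN} actually satisfies \eqref{eq:den-pdg} for the polarization \eqref{eq:polarized-FN}. I will check this term by term with $\mathbf{U},\mathbf{W}$ varying and $\mathbf{V}$ fixed. The $\alpha$ and $\gamma$ terms follow from $(\tfrac{\mathbf{U}+\mathbf{W}}{2},\mathbf{W}-\mathbf{U})=\tfrac12(\|\mathbf{W}\|^2-\|\mathbf{U}\|^2)$. For the cubic term, I expand ${\rm tr}(\mathbf{V}\mathbf{W}(\mathbf{V}+\mathbf{W}))-{\rm tr}(\mathbf{U}\mathbf{V}(\mathbf{U}+\mathbf{V}))$ using trace cyclicity. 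The identity term is harmless because $\mathbf{W}-\mathbf{U}$ is traceless, which holds by the preceding traceless theorem. Since everything is an equality, no sign assumption is needed, so in the passive case ($\chi=\xi=0$) the right-hand side is nonpositive. This yields unconditional stability of $E_{\rm PDG}$.
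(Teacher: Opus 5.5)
Your proposal is correct and follows the paper's proof: test the two equations with $\mathbf{v}^{\overline{n}}$ and $\overline{\mathbf{G}}_\mathbf{Q}^n=-\overline{\mathbf{H}}^n$, cancel the ZEC terms through the definitions of $\overline{\zeta}^n,\overline{\omega}^n$, telescope the quadratic and $\kappa$ parts, and invoke \eqref{eq:den-pdg} for the bulk part. Your explicit check of \eqref{eq:den-pdg} for \eqref{eq:polarized-fN}, where the identity term is killed by ${\rm tr}(\mathbf{Q}^{n+1}-\mathbf{Q}^{n-1})=0$, is a step the paper only asserts; the one slip is normalization, since testing with $2\tau$ times the averages gives $2(E^{n+1}_{\rm PDG}-E^n_{\rm PDG})$ on the left (for instance $\tfrac12(\|\mathbf{v}^{n+1}\|^2-\|\mathbf{v}^{n-1}\|^2)$ is twice the kinetic increment) and $-4\tau\|\sqrt{\eta}\mathbf{D}^{\overline{n}}\|^2-2\tau\|\sqrt{b}\mathbf{v}^{\overline{n}}\|^2-\ldots$ on the right, so test with $\tau\mathbf{v}^{\overline{n}}$ and $-\tau\overline{\mathbf{H}}^n$ to get the stated coefficients directly.
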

\begin{proof}
	Taking the inner-product of the first equation in
	\eqref{eq:scheme-PDG} with $\mathbf{v}^{\overline{n}}$ and using
	incompressibility from the continuity equation immediately yields
	\begin{equation}\label{eq:energy-pdg-01}
		\begin{aligned}
			 & \tfrac{1}{4\tau}\|\mathbf{v}^{n+1}\|^2 -
			\tfrac{1}{4\tau}\|\mathbf{v}^{n-1}\|^2                          \\
			 & = (p^{\overline{n}}, \nabla \cdot \mathbf{v}^{\overline{n}})
			- 2 (\eta \mathbf{D}^{\overline{n}},
			\mathbf{v}^{\overline{n}}) - (b \mathbf{v}^{\overline{n}},
			\mathbf{v}^{\overline{n}}) + \overline{\zeta}^n
			(\mathbf{c}_\mathbf{v}^n, \overline{\mathbf{g}}_\mathbf{v}^n)
			- \overline{\omega}^n (\mathbf{g}_\mathbf{v}^n,
			\overline{\mathbf{g}}_\mathbf{v}^n) + (\mathbf{Q}^n, \chi
			\nabla \mathbf{v}^{\overline{n}})                               \\
			 & = -2 \|\sqrt{\eta} \mathbf{D}^{\overline{n}}\|^2 -
			\|\sqrt{b} \mathbf{v}^{\overline{n}}\|^2 + \overline{\zeta}^n
			(\mathbf{c}_\mathbf{v}^n, \overline{\mathbf{g}}_\mathbf{v}^n)
			- \overline{\omega}^n (\mathbf{g}_\mathbf{v}^n,
			\overline{\mathbf{g}}_\mathbf{v}^n) + (\mathbf{Q}^n, \chi
			\mathbf{D}^{\overline{n}}).
		\end{aligned}
	\end{equation}
	Taking the inner-product of the third equation in
	\eqref{eq:scheme-PDG} with $\overline{\mathbf{G}}_\mathbf{Q}^n =
		-\overline{\mathbf{H}}^n$ yields
	\begin{equation}\label{eq:energy-pdg-02}
		-\tfrac{1}{2 \tau}(\mathbf{Q}^{n+1} - \mathbf{Q}^{n-1},
		\overline{\mathbf{H}}^n) = - \| \sqrt{\Gamma_\mathbf{Q}}
		\overline{\mathbf{H}}^n\|^2 + \overline{\zeta}^n
		(\mathbf{C}_\mathbf{Q}^n, \overline{\mathbf{G}}_\mathbf{Q}^n) -
		\overline{\omega}^n (\mathbf{G}_\mathbf{Q}^n,
		\overline{\mathbf{G}}_\mathbf{Q}^n) - (\mathbf{Q}^n, \xi
		\overline{\mathbf{H}}^n).
	\end{equation}
	Substituting the definition of $\overline{\mathbf{H}}^n$
	\eqref{eq:polarized-H} then leads to
	\begin{equation}\label{eq:energy-pdg-03}
		\begin{aligned}
			 & -\tfrac{1}{2}(\mathbf{Q}^{n+1} - \mathbf{Q}^{n-1},
			\overline{\mathbf{H}}^n) -
			(\overline{f}_\text{bulk}(\mathbf{Q}^{n-1}, \mathbf{Q}^n,
			\mathbf{Q}^{n+1}), \mathbf{Q}^{n+1} - \mathbf{Q}^{n-1})                                                               \\
			 & = -\tfrac{1}{2}(\mathbf{Q}^{n+1} - \mathbf{Q}^{n-1}, \nabla
			\cdot (K \nabla \mathbf{Q}^{\overline{n}}) - W
			(\mathbf{Q}^{\overline{n}} - \mathbf{Q}_\star) - \kappa
			(\mathbf{Q}^{\overline{n}} - \mathbf{Q}^n))                                                                           \\
			 & = \tfrac{1}{2} (K \nabla \mathbf{Q}^{\overline{n}}, \nabla
			\mathbf{Q}^{n+1} - \nabla \mathbf{Q}^{n-1}) +
			\tfrac{\kappa}{2} (\mathbf{Q}^{\overline{n}} - \mathbf{Q}^n,
			\mathbf{Q}^{n+1}-\mathbf{Q}^{n-1})                                                                                    \\
			 & \quad +  \tfrac{1}{2} ( W (\tfrac{\mathbf{Q}^{n+1} -
				                              \mathbf{Q}_\star}{2} +  \tfrac{\mathbf{Q}^{n-1} -
				                                                       \mathbf{Q}_\star}{2}), (\mathbf{Q}^{n+1} - \mathbf{Q}_\star )
			- (\mathbf{Q}^{n-1} - \mathbf{Q}_\star))                                                                              \\
			 & = \int_\Omega \tfrac{K}{4} (|\nabla \mathbf{Q}^{n+1}|^2 -
			|\nabla \mathbf{Q}^{n-1}|^2) + \tfrac{\kappa}{4}
			|\mathbf{Q}^{n+1} - \mathbf{Q}^n|^2 - \tfrac{\kappa}{4}
			|\mathbf{Q}^{n} - \mathbf{Q}^{n-1}|^2 d\mathbf{x},                                                                    \\
			 & \quad + \int_\Omega \tfrac{W}{4} (| \mathbf{Q}^{n+1} -
			\mathbf{Q}_\star |^2 - |\mathbf{Q}^{n-1} -
			\mathbf{Q}_\star|^2) d\mathbf{x}.
		\end{aligned}
	\end{equation}
	By the definition of the discrete gradient \eqref{eq:den-pdg}, we obtain
	\begin{equation}\label{eq:energy-pdg-04}
		(\overline{f}_\text{bulk}(\mathbf{Q}^{n-1}, \mathbf{Q}^n,
		\mathbf{Q}^{n+1}), \mathbf{Q}^{n+1}-\mathbf{Q}^{n-1}) =
		\overline{F}_\text{bulk}(\mathbf{Q}^n, \mathbf{Q}^{n+1}) -
		\overline{F}_\text{bulk}(\mathbf{Q}^{n-1}, \mathbf{Q}^n).
	\end{equation}
	Finally, combining
	\eqref{eq:energy-pdg-01}-\eqref{eq:energy-pdg-04} with the
	following identity, \eqref{eq:energy-pdg-05},
	\begin{equation}\label{eq:energy-pdg-05}
		\begin{aligned}
			 & \overline{\zeta}^n (\mathbf{c}_\mathbf{v}^n,
			\overline{\mathbf{g}}_\mathbf{v}^n) - \overline{\omega}^n
			(\mathbf{g}_\mathbf{v}^n, \overline{\mathbf{g}}_\mathbf{v}^n)
			+ \overline{\zeta}^n (\mathbf{C}_\mathbf{Q}^n,
			\overline{\mathbf{G}}_\mathbf{Q}^n) - \overline{\omega}^n
			(\mathbf{G}_\mathbf{Q}^n, \overline{\mathbf{G}}_\mathbf{Q}^n) \\
			 & = \mathcal{G}(\mathbf{g}_\mathbf{v}^n,
			\mathbf{G}_\mathbf{Q}^n) \overline{\zeta}^n
			\overline{\omega}^n - \mathcal{G}(\mathbf{g}_\mathbf{v}^n,
			\mathbf{G}_\mathbf{Q}^n) \overline{\omega}^n \overline{\zeta}^n  = 0,
		\end{aligned}
	\end{equation}
	completes the proof.
\end{proof}
The equations resulted from the SGE-PDG scheme are solved as follows.

\begin{itemize}
	\item {\bf Generalized Stokes equations. } For $i = 1, 2, 3$,
	      find $\mathbf{v}_i$ by solving:
	      \begin{equation}
		      \left\lbrace
		      \begin{aligned}
			       & \tfrac{1}{2}(\tfrac{1}{\tau} + b)\mathbf{v}_i^{n+1} -
			      \nabla \cdot (\eta \mathbf{D}(\mathbf{v}_i^{n+1})) + \nabla
			      p^{n+1} = \mathbf{f}_i,                                  \\
			       & \nabla \cdot \mathbf{v}_i^{n+1} = 0.
		      \end{aligned}
		      \right.
	      \end{equation}
	      {\color{blue}
		      Here,
		      \begin{equation*}
			      \begin{aligned}
				       & \mathbf{f}_1 = \tfrac{1}{2}\left(\tfrac{1}{\tau} -
				      b\right)\mathbf{v}^{n-1} + \nabla
				      \cdot (\eta \mathbf{D}(\mathbf{v}^{n-1})) +
				      \mathbf{f}_\mathbf{v}^n,                              \\
				       & \mathbf{f}_2 = \mathbf{c}_\mathbf{v}^n, \quad
				      \mathbf{f}_3 = \mathbf{g}_\mathbf{v}^n.               \\
			      \end{aligned}
		      \end{equation*}
	      }
	\item {\bf Coupled tensor-valued Poisson equations. } For $i = 1,
		      2, 3$, find $\mathbf{Q}_i$ from
	      \begin{equation}
		      \mathscr{L}(\mathbf{Q}^n) \mathbf{Q}^{n+1} = \mathbf{F}_i,
	      \end{equation}
\end{itemize}
where, the linear operator $\mathscr{L}$ is
\begin{equation*}
	\begin{aligned}
		\mathscr{L}(\mathbf{Q}^n)\mathbf{Q} & = \Big(\tfrac{1}{2\tau} +
		\tfrac{\kappa + W}{2}\Gamma_\mathbf{Q} - \tfrac{\alpha
			                                         \Gamma_\mathbf{Q}}{2} -
		\tfrac{\beta\Gamma_\mathbf{Q}}{6} \mathbf{Q}^n +
		\tfrac{\beta\Gamma_\mathbf{Q}}{2}{\rm tr}((\mathbf{Q}^n)^2) -
		\tfrac{\Gamma_\mathbf{Q}}{2}\nabla \cdot (K \nabla
		(\bullet))\Big) \mathbf{Q},                                                      \\
		                                    & \quad - \tfrac{\beta \Gamma_\mathbf{Q}}{6}
		\mathbf{Q}\mathbf{Q}^n + \tfrac{\beta
			                         \Gamma_\mathbf{Q}}{9} {\rm tr}(\mathbf{Q}^n \mathbf{Q}) \mathbf{I},
	\end{aligned}
\end{equation*}
and
	{
		\color{blue}
		\begin{equation*}
			\begin{aligned}
				 & \mathbf{F}_1 = \left(\tfrac{1}{2\tau} +
				\tfrac{\Gamma_\mathbf{Q}}{2} \nabla \cdot \left(K \nabla
				(\bullet)\right) - \tfrac{\kappa + W}{2}
				\Gamma_\mathbf{Q} - \tfrac{\Gamma_\mathbf{Q}\alpha(\phi)}{2} +
				\tfrac{\Gamma_\mathbf{Q}\beta(\phi)}{6} \mathbf{Q}^{n} -
				\tfrac{\Gamma_\mathbf{Q}\beta(\phi)}{2} {\rm
					tr}((\mathbf{Q}^n)^2) \right)
				\mathbf{Q}^{n-1}                                              \\
				 & \quad + \Gamma_\mathbf{Q} W
				\mathbf{Q}_\star + \kappa \Gamma_\mathbf{Q} \mathbf{Q}^n +
				\left(\tfrac{\Gamma_\mathbf{Q} \beta (\phi)}{3}\mathbf{Q}^n +
				\tfrac{\Gamma_\mathbf{Q} \beta(\phi)}{6}
				\mathbf{Q}^{n-1}\right) \mathbf{Q}^n                          \\
				 & \quad -
				\tfrac{\Gamma_\mathbf{Q} \beta(\phi)}{9} {\rm tr}
				((\mathbf{Q}^n)^2) \mathbf{I} - \tfrac{\Gamma_\mathbf{Q}
					                                \beta(\phi)}{9} {\rm tr}(\mathbf{Q}^n
				\mathbf{Q}^{n-1})\mathbf{I} + \mathbf{F}_\mathbf{Q}^n,        \\
				 & \mathbf{F}_2 = \mathbf{C}_\mathbf{Q}^n, \quad \mathbf{F}_3
				= \mathbf{G}_\mathbf{Q}^n.
			\end{aligned}
		\end{equation*}
		Using this algorithm, a coupled Poisson equation system
		needs to be solved, rather than three separate Poisson equations as in
		the previous SGE-BDF1 algorithm.
	}

\section{Spatial discretization and implementation}
The spatial discretization of the afore-developed semidiscrete
schemes is performed using finite difference schemes on a staggered
grid \cite{Hong-2024-JCP,Hong-2025-POF}. To ensure that the fully
discrete scheme preserves the energy dissipation properties stated
in Theorems~\ref{thm:semi-sge-bdf1} and \ref{thm:semi-sge-pdg}, it
is essential that the spatial discretization preserves discrete
summation-by-parts (SBP) identities (see \cite{Gong-2018-CHNS}), such as
\begin{equation}\label{eq:mac-sbp}
	\begin{aligned}
		 & (\nabla_h \cdot \mathbf{v}_h, p_h)_h = - (\mathbf{v}_h,
		\nabla p_h)_h,                                                  \\
		 & (\nabla_h \cdot (\eta\mathbf{D}_h) \mathbf{v}_h,
		\mathbf{u}_h)_h = -(\eta \mathbf{D}_h(\mathbf{v}_h),
		\mathbf{D}_h(\mathbf{u}_h))_h,                                  \\
		 & (\nabla_h \cdot( K \nabla_h \mathbf{Q}_h), \mathbf{P}_h)_h =
		- (K \nabla_h \mathbf{Q}_h, \nabla_h \mathbf{P}_h)_h.
	\end{aligned}
\end{equation}
Here, $(\cdot, \cdot)_h$ denotes the discrete inner product. The
discrete inner product may depend on the location of variables on
the staggered grid, but for simplicity, we do not introduce
multiple distinct inner-product notations. The operators $\nabla_h$
and $\nabla_h \cdot$ represent the discrete gradient and
divergence operation, respectively,
and $p_h, \mathbf{v}_h, \mathbf{Q}_h$ are the discrete counterparts
of the physical variables. Identities in \eqref{eq:mac-sbp} can be ensured by
employing a mark-and-cell (MAC) schemes, which warrants
Theorems~\ref{thm:semi-sge-bdf1} and \ref{thm:semi-sge-pdg}
after spatial discretizations.
\begin{rmk}
	In structure-preserving frameworks, achieving full energy dissipation at the
	discrete level also requires the preservation of fundamental structural
	properties {\color{ocre} under boundary conditions \eqref{eq:bc-v} and
			\eqref{eq:bc-Q}}:
	\begin{equation}\label{eq:sbp-conv-ns}
		(\mathbf{v} \cdot \nabla \mathbf{v}, \mathbf{v}) = 0,
	\end{equation}
	\begin{equation}\label{eq:sbp-conv-qtensor}
		(\mathbf{v} \cdot \nabla \mathbf{Q}, f^\prime(\mathbf{Q})) =
		(\mathbf{v}, \nabla f(\mathbf{Q}) ) = -(f(\mathbf{Q}), \nabla
		\cdot \mathbf{v}) = 0.
	\end{equation}
	To maintain \eqref{eq:sbp-conv-ns}, a common approach is to
	rewrite $\mathbf{v}\cdot \nabla \mathbf{v}$ as $B(\mathbf{v},
		\mathbf{v}) = \tfrac{1}{2}[\mathbf{v}\cdot \nabla \mathbf{v} +
			\nabla \cdot (\mathbf{v}\mathbf{v})]$, which leverages the
	divergence free condition, and can be discretized to preserve
	\eqref{eq:sbp-conv-ns} due to its skew-symmetry property.

	However, \eqref{eq:sbp-conv-qtensor} is generally not preserved
	after spatial discretization if $f(\cdot)$ is non-quadratic.
	Consequently, fully discrete energy-dissipative schemes cannot be
	constructed in the classical framework, even though the
	semi-discrete dissipation may still be achieved (see
	\cite{Zhao&W&Y2016, J_Zhao_JSC_Qtensor}).

	In contrast, under the SGE framework, both $\mathbf{v} \cdot
		\nabla \mathbf{v}$ and $\mathbf{v} \cdot \nabla \mathbf{Q}$ are
	inherently skew-symmetrized with respect to the energy gradient,
	allowing \eqref{eq:sbp-conv-ns} and \eqref{eq:sbp-conv-qtensor}
	to hold naturally, regardless of the spatial discretization.
\end{rmk}

In the practical implementation, all resulting linear systems are
solved using a multigrid method. For the generalized Stokes system,
we employ the Vanka smoother \cite{Vanka}. For scalar Poisson
equations, the classical Gauss-Seidel method is used as a smoother.
For vector-valued Poisson equations, a block Gauss-Seidel smoother
is adopted and an additional projection step is included in the
smoother to enforce the traceless property of $\bQ$.

\section{Numerical results}
\subsection{Convergence test}
We begin by evaluating the spatial and temporal convergence
properties of the three proposed schemes. In the absence of an
analytical solution, we employ the following functions as the
solution of governing system of equations
\eqref{eq:governing-dimensionless} with a forcing term,
\begin{equation*}
	\left\lbrace
	\begin{aligned}
		 & u = \cos(t^2) (-\cos{(2 \pi x)}\sin{(2\pi y)} + \sin{(2\pi y)}), \\
		 & v = \cos(t^2) (\sin{(2\pi x)} \cos{(2\pi y)} - \sin{(2\pi x)}),  \\
		 & p = \cos(t^2) \cos{(2\pi x)} \cos{(2\pi y)},                     \\
		 & \mathbf{Q} = \cos(t^2) \cos(4  \pi x) \cos(4 \pi y)
		\begin{pmatrix}
			0.5  & 0.15 & 0     \\
			0.15 & 0.25 & 0     \\
			0    & 0    & -0.75
		\end{pmatrix}.
	\end{aligned}
	\right.
\end{equation*}
In this test, all simulations are conducted on the unit square,
$\Omega = (0, 1)^2$, with $\Gamma = 10$, $K = 10^{-3}$ and all
remaining parameters are set to unity. Zero Dirichlet boundary
conditions are employed to the velocity field and zero Neumann
boundary conditions are imposed to all the components of the
$\mathbf{Q}$-tensor field. {\color{blue}
		In all numerical experiments, if there is no additional
		explanation, the stabilization parameter is set
		to $\kappa = 5$ for the SGE-BDF1 and SGE-BDF2 schemes, and to
		$\kappa = 1$ for the SGE-PDG scheme.
	} For the SGE-BDF1 scheme, a fixed spatial
grid of $h = \frac{1}{512}$ is employed such that the spatial error
is negligible while the time step is refined; {\color{blue} For the
	SGE-BDF2 and SGE-PDG schemes, the spatial and temporal discretizations are
	refined simultaneously in order to verify the second-order
	convergence of the scheme in both space and time.} At the
termination time $T
	= 1$, the following discrete errors of the velocity and
$\mathbf{Q}$-tensor are computed, and convergence rates are
extracted via the log-log regression. The discrete errors are calculated by
\begin{equation*}
	\text{Error}_\mathbf{v} = \left(\sum\limits_{i=1}^{N_x}
	\sum\limits_{j=1}^{N_y} \left|\mathbf{v}_{i,j}^{N_t} -
	\mathbf{v}(x_i, y_j, T)\right|^2\right)^{1/2}, \
	\text{Error}_\mathbf{Q} = \left(\sum\limits_{i=1}^{N_x}
	\sum\limits_{j=1}^{N_y} \left|\mathbf{Q}_{i,j}^{N_t} -
	\mathbf{Q}(x_i, y_j, T)\right|^2\right)^{1/2}.
\end{equation*}
\begin{figure}[H]
	\begin{center}
		\includegraphics[width=0.24\textwidth]{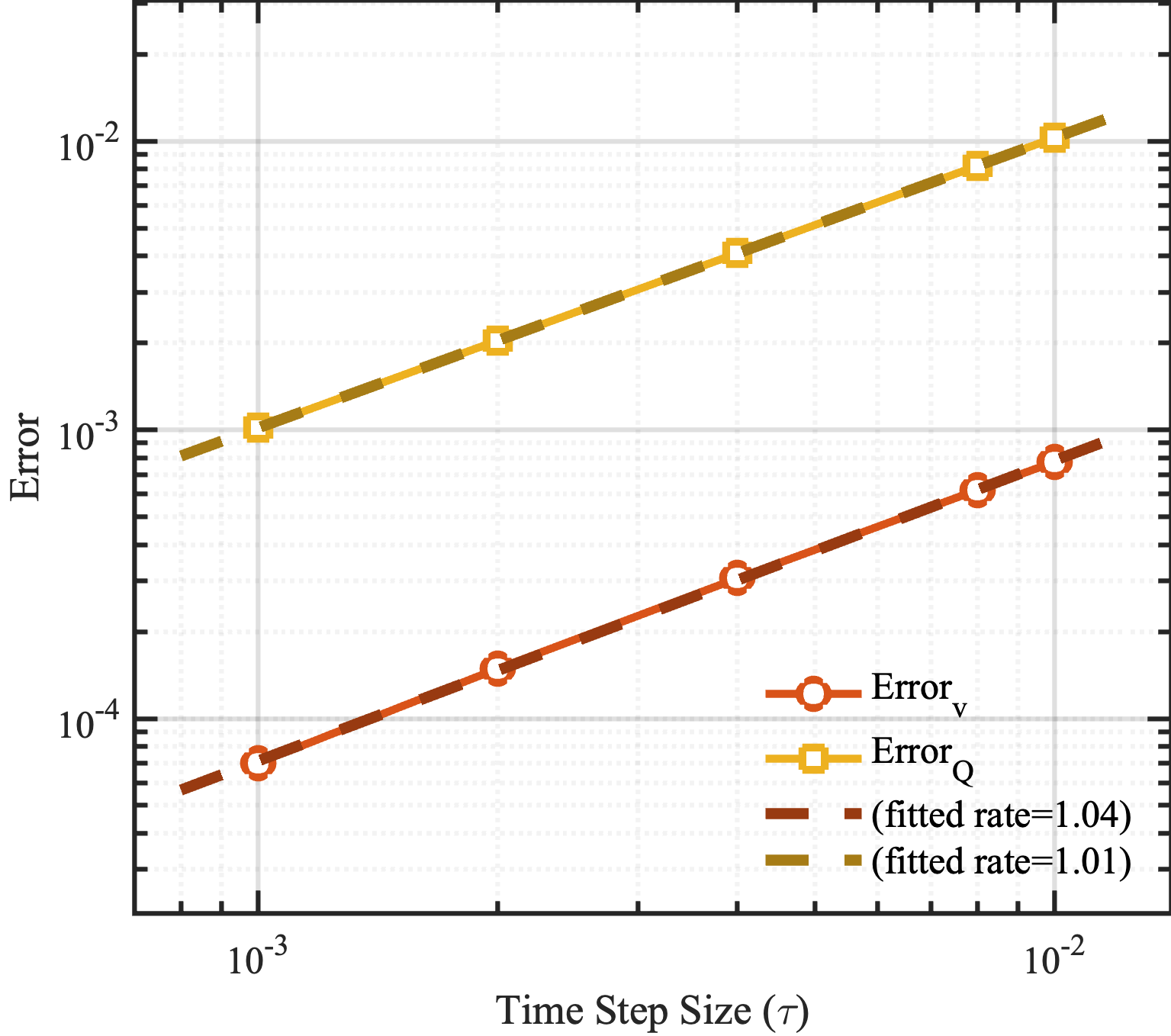}
		\includegraphics[width=0.24\textwidth]{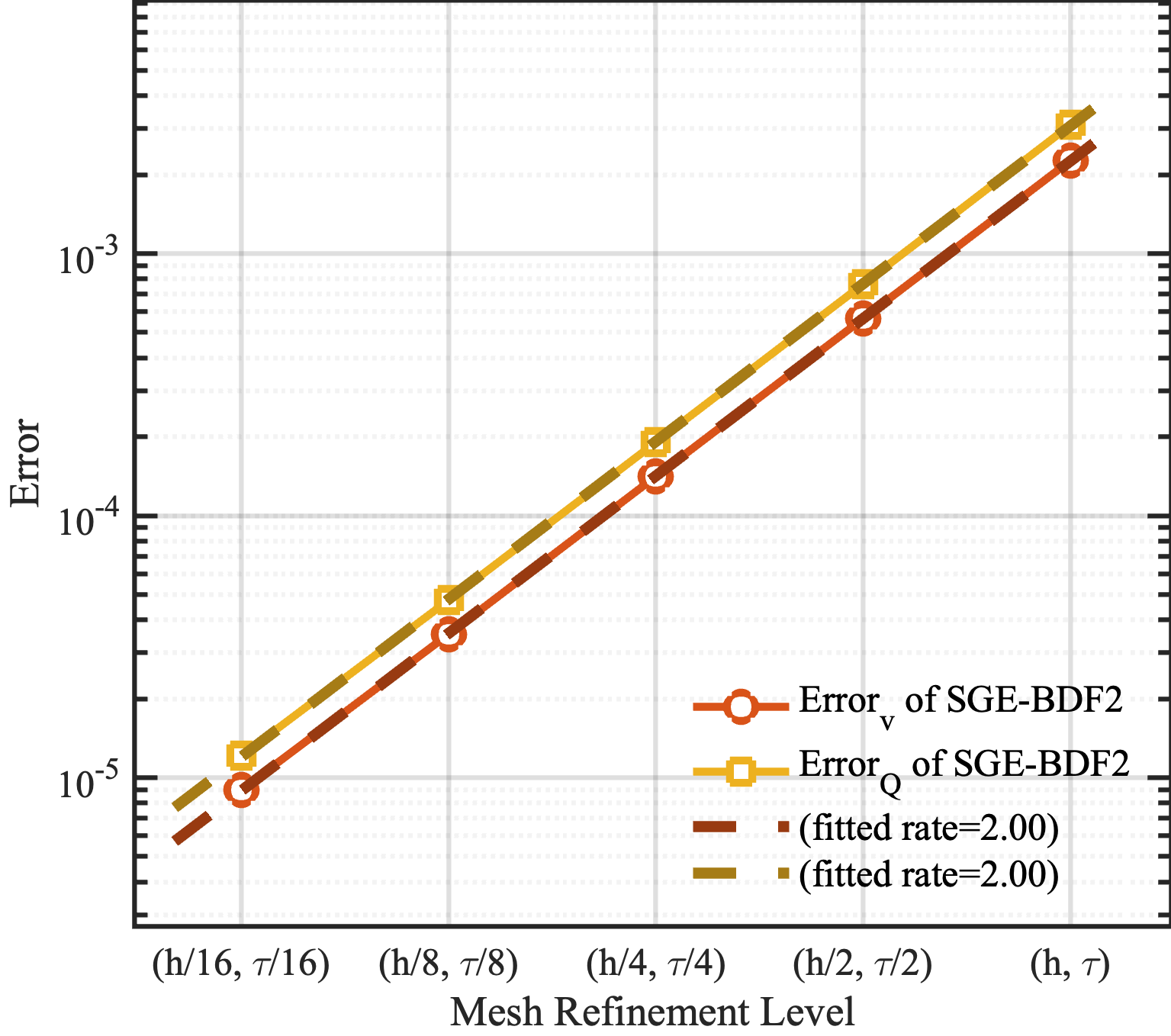}
		\includegraphics[width=0.24\textwidth]{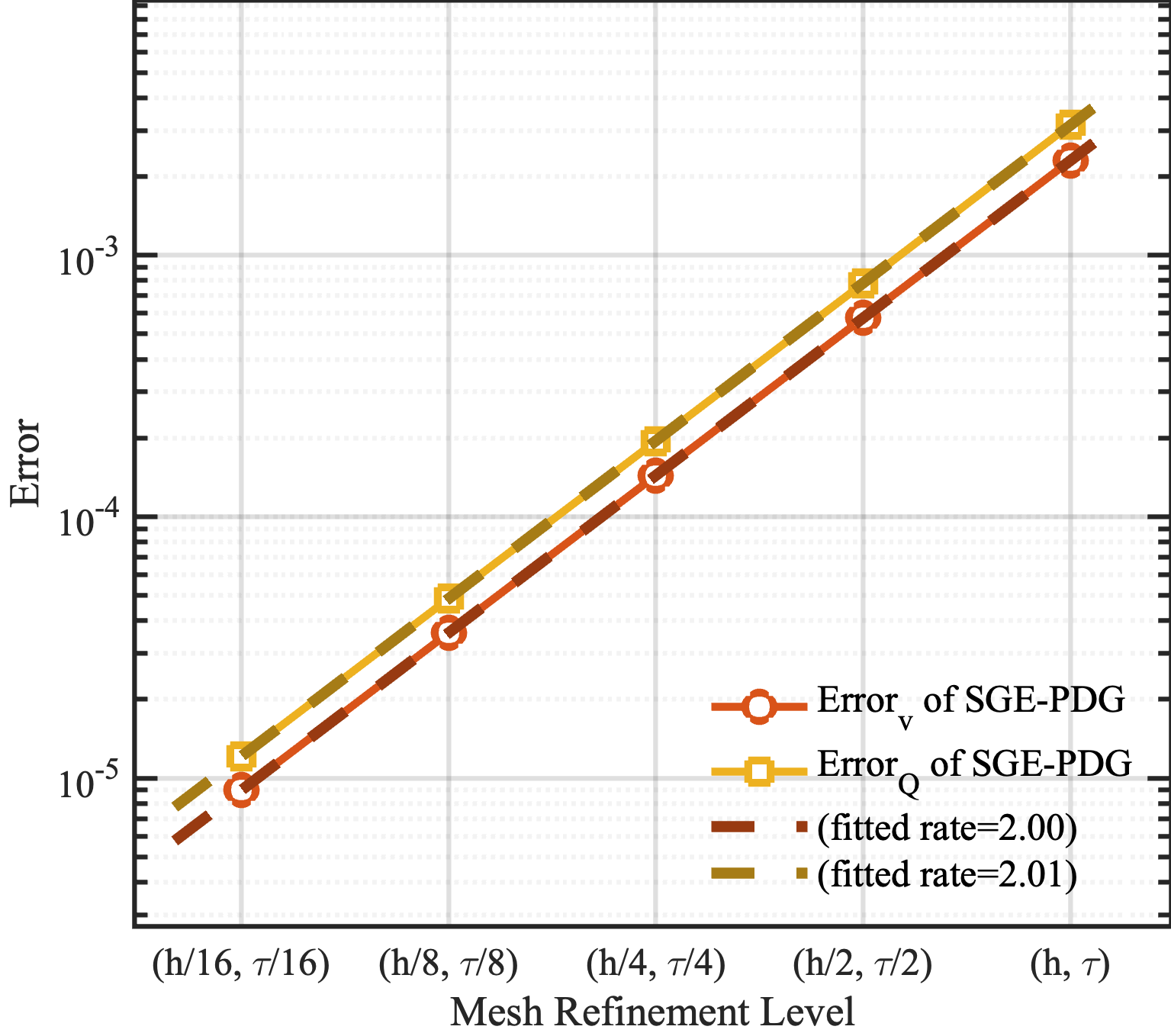}
	\end{center}
	\caption{Mesh refinement test of the proposed schemes. The error
		is calculated at $T = 1$. Linear fitting is used to compute the
		convergence rate.}\label{fig:convergence}
\end{figure}
Figure~\ref{fig:convergence} confirms first-order temporal
convergence of the SGE-BDF1 scheme and second-order convergence in
both spatial and temporal variables for the SGE-BDF2 and SGE-PDG schemes.

\subsection{Interaction of a pair of  $\pm\tfrac{1}{2}$ defects}
In this example, we investigate the interaction of a pair of  $\pm
	\tfrac{1}{2}$ defects, and examine how their behavior is influenced
by the presence of solid obstacles, the variation in activity
parameters, and the effect of the boundary anchoring. The
computational domain is defined as $\Omega = (0, 1)^2$.
The initial condition is given by
\begin{equation*}
	\mathbf{Q}_0 = S_{eq}\left(\tfrac{\bf n_0 n_0^\top}{\|\bf
		n_0\|^2} - \tfrac{\bf I}{3}\right), \quad \mathbf{n}_0 = ({\rm
			\cos \Theta}, {\rm \sin \Theta}, 0),
\end{equation*}
where
\begin{equation*}
	\Theta =
	\left\lbrace
	\begin{aligned}
		 & -0.5 {\rm atan2}(y - 0.5, x - 0.3), \ x < 0.5,                \\
		 & 0.5 {\rm atan2}(y - 0.5, x - 0.7) + 3\pi, \ \text{otherwise}. \\
	\end{aligned}
	\right.
\end{equation*}
The initial velocity is set to zero. Homogeneous Dirichlet boundary
conditions are imposed on the velocity field, while homogeneous
Neumann boundary conditions are applied to the $\mathbf{Q}$-tensor
field at the boundary of the computational domain.

In this simulation, the obstacle is defined as an interior
rectangle: $D = (0.48, 0.52) \times (0.2, 0.8)$. The model
parameters are given as follows
\begin{equation*}
	\begin{aligned}
		 & b_{solid} = 10^5, \ \eta_{fluid} = 1, \ \eta_{solid} = 10^3,
		\ K_{fluid} = 5\times 10^{-4},                                  \\
		 & N_{fluid} = 7.5, \ N_{solid} = 0.1, \ \Gamma_{fluid} = 10^2,
		\ \Gamma_{solid} = 10^3, \ A = \tfrac{2}{15}.
	\end{aligned}
\end{equation*}

\subsubsection{Results with the normal anchoring condition}
We first compare the case without the rectangular obstacle with the
case with the normal boundary anchoring at the obstacle.
\begin{figure}[H]
	\begin{center}
		\includegraphics[width=0.18\textwidth]{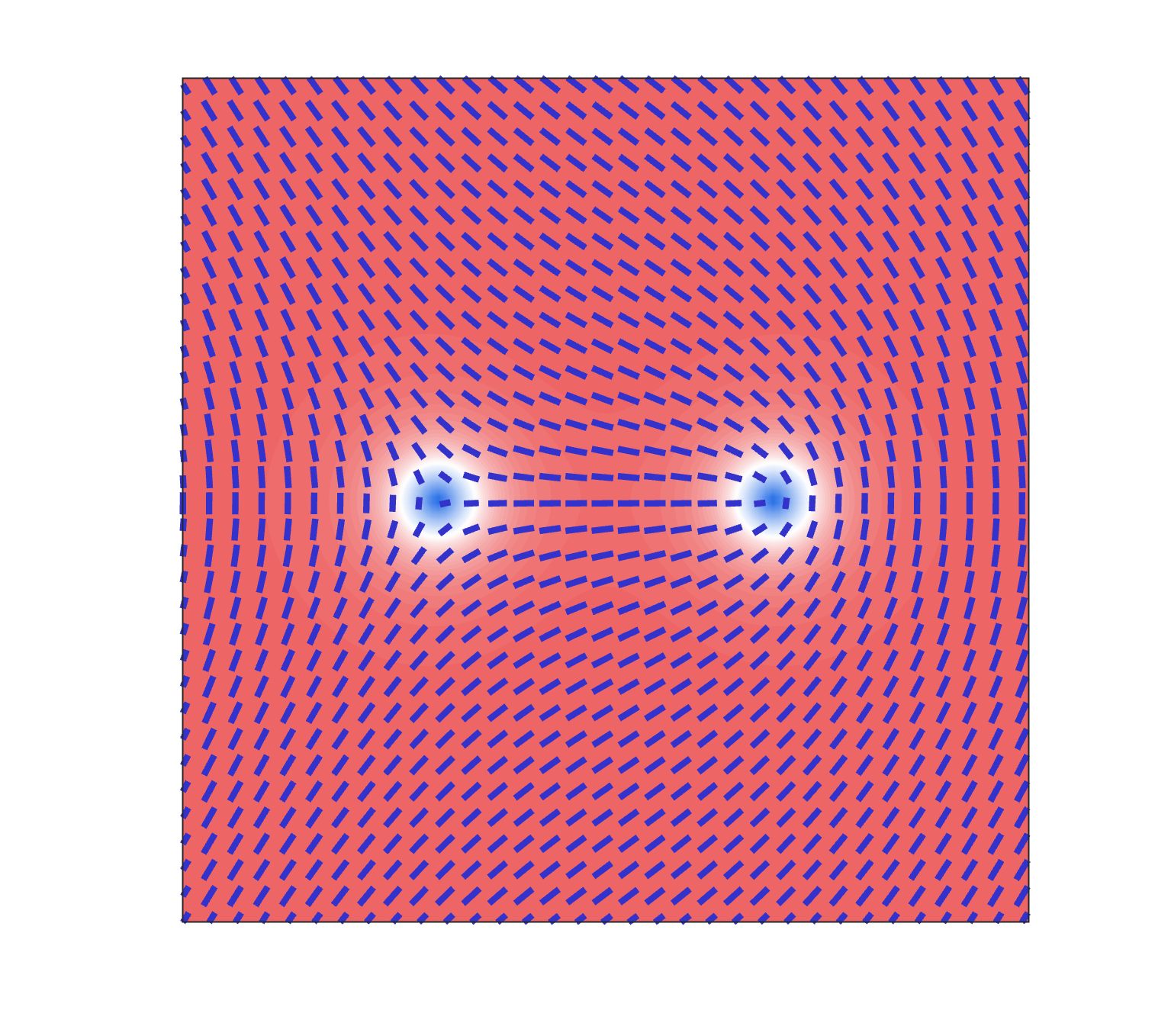}
		\includegraphics[width=0.18\textwidth]{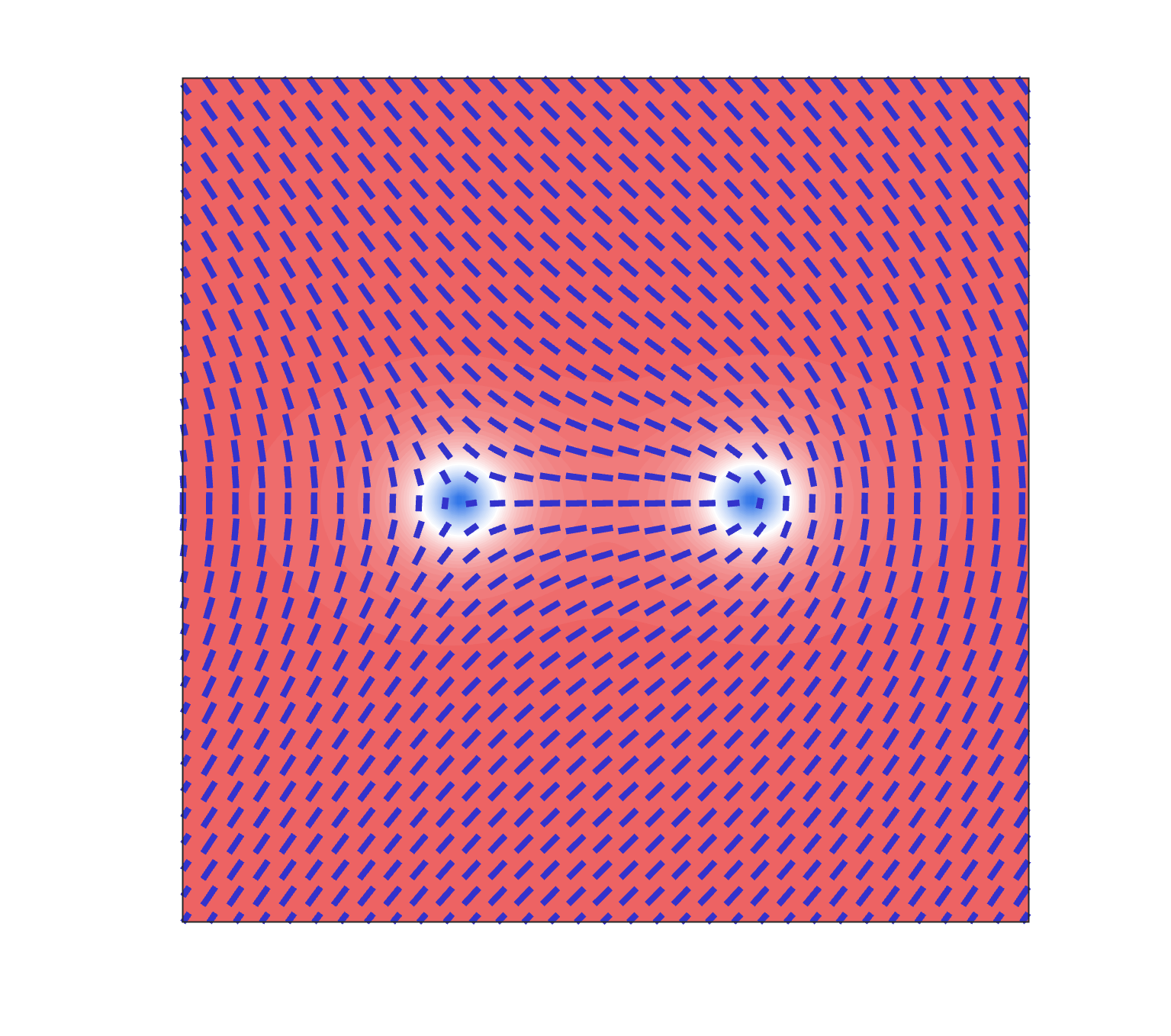}
		\includegraphics[width=0.18\textwidth]{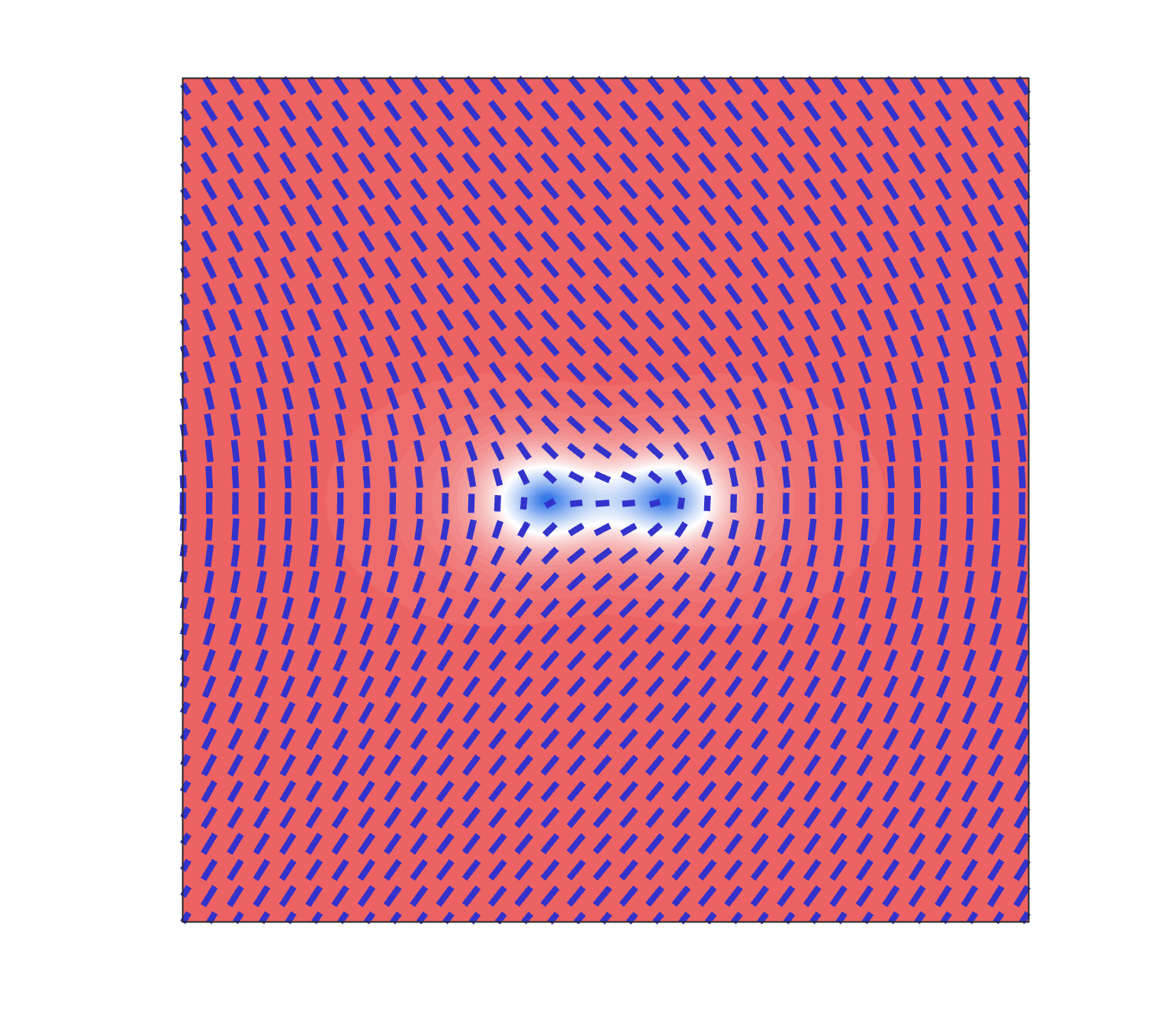}
		\includegraphics[width=0.18\textwidth]{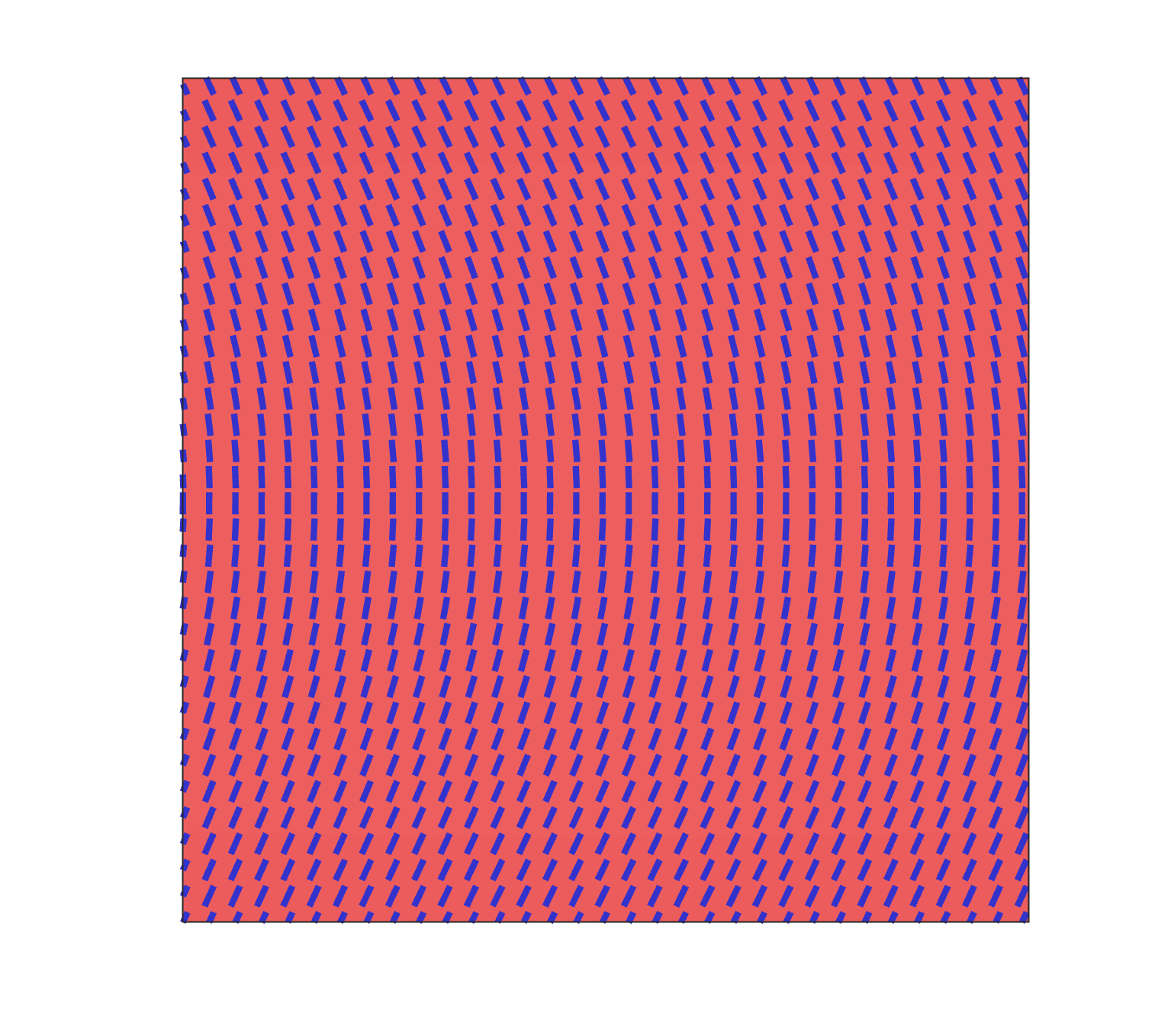}
		\includegraphics[width=0.18\textwidth]{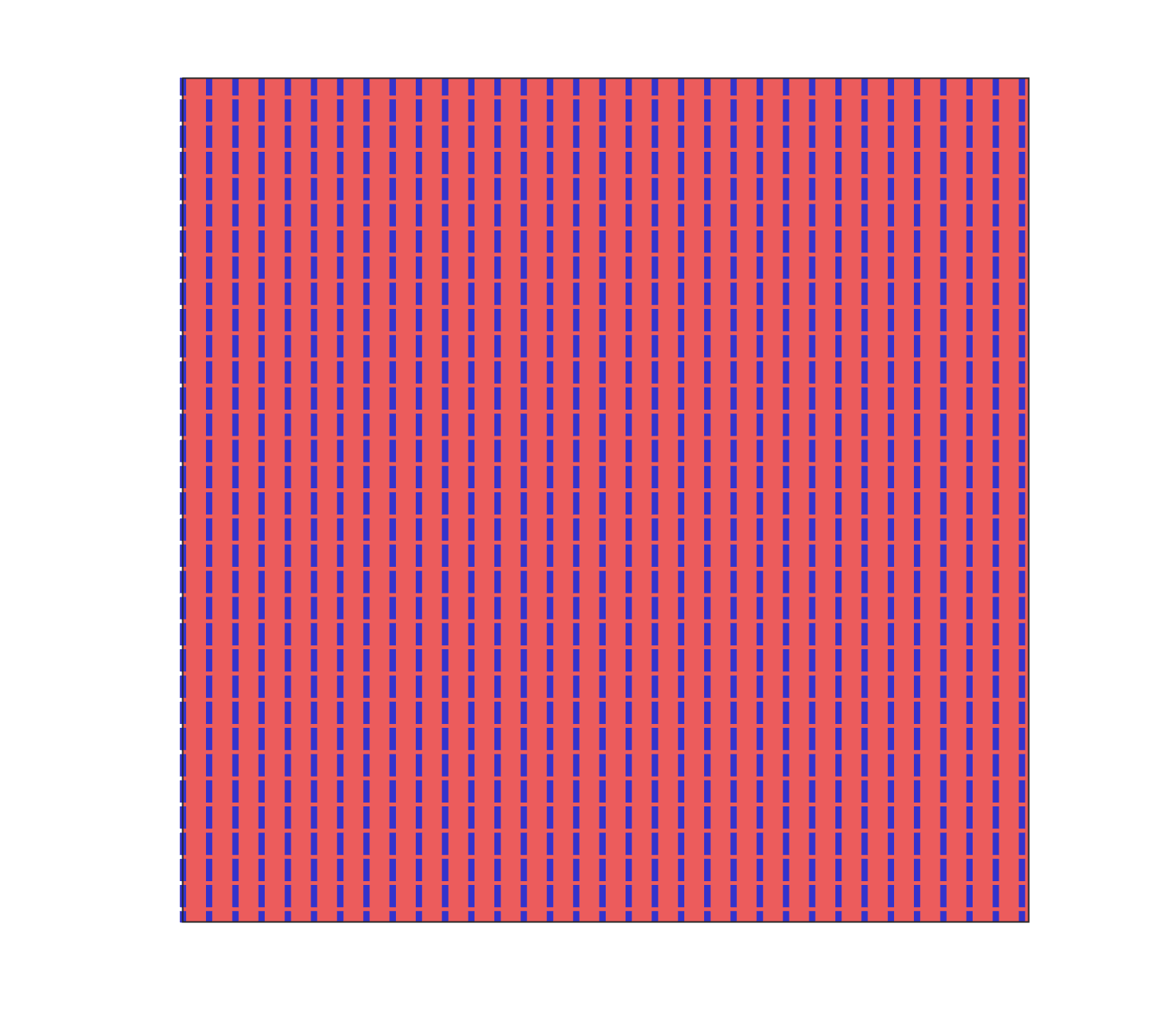}
	\end{center}
	\caption{Interaction of a pair of passive $\pm\tfrac{1}{2}$
		defects without obstacles at times $t = 0.1$, $1$, $2$, $3$,
		$20$, respectively. The pair annihilate each other after a while.
	}\label{fig:nobstacle}
\end{figure}

Figure~\ref{fig:nobstacle} shows the contour plots of the principal
eigenvalue difference and the corresponding director field for a
pair of passive $\pm \tfrac{1}{2}$ defects at different time
instances in the domain without any solid obstacles. Over time, the
two defects gradually move toward each other due to their opposite
topological charges. Eventually, they merge, leading to the
complete annihilation of the defect pair. In the steady state, no
defects remain in the system, indicating that the system has
relaxed to a defect-free configuration. This result depicts the
intrinsic energy-minimizing process of passive liquid crystals, in
which oppositely charged defects attract and annihilate each other,
ultimately yielding a uniform, topologically trivial state.
\begin{figure}[H]
	\begin{center}
		\includegraphics[width=0.3\textwidth]{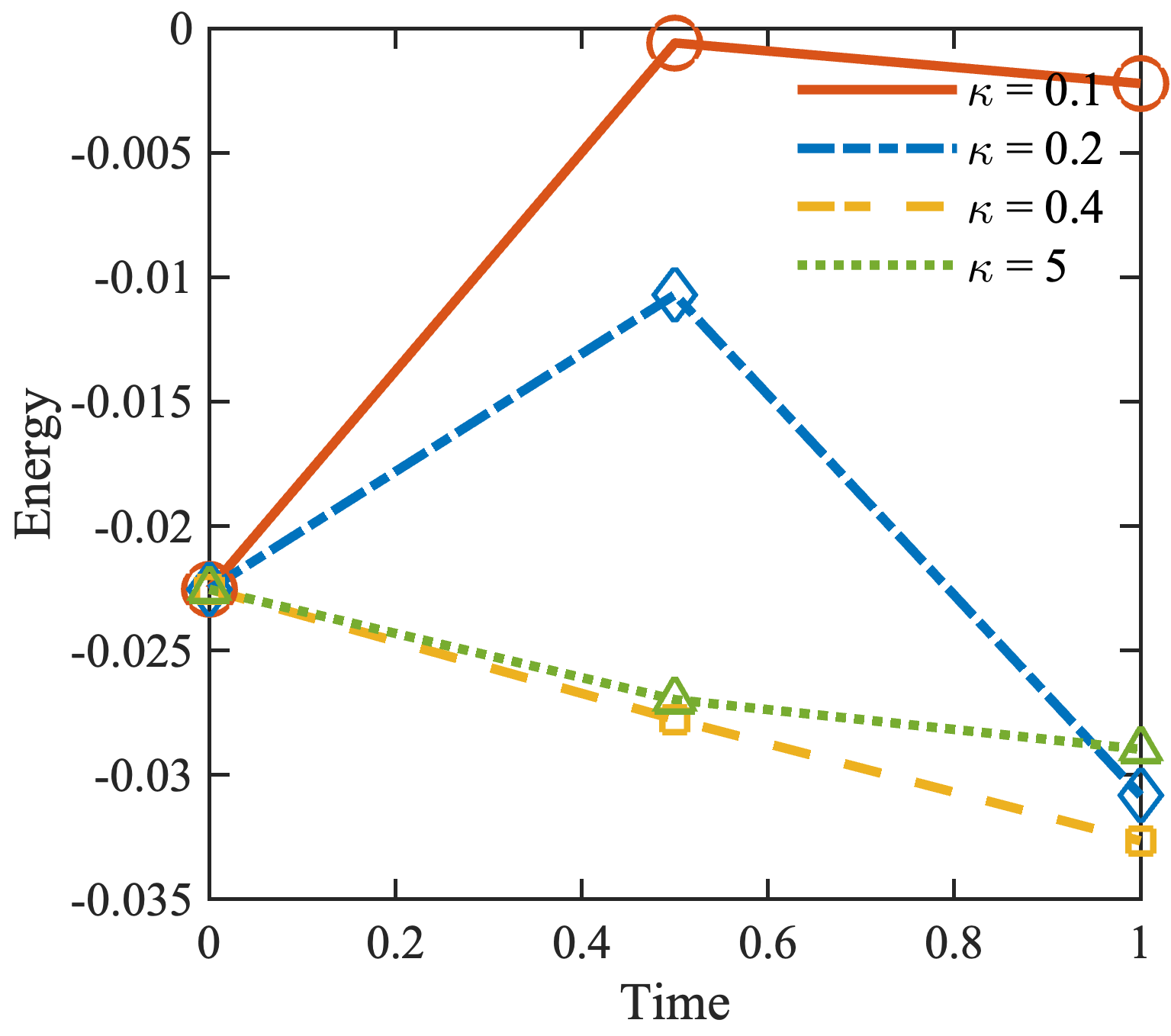}
		\includegraphics[width=0.3\textwidth]{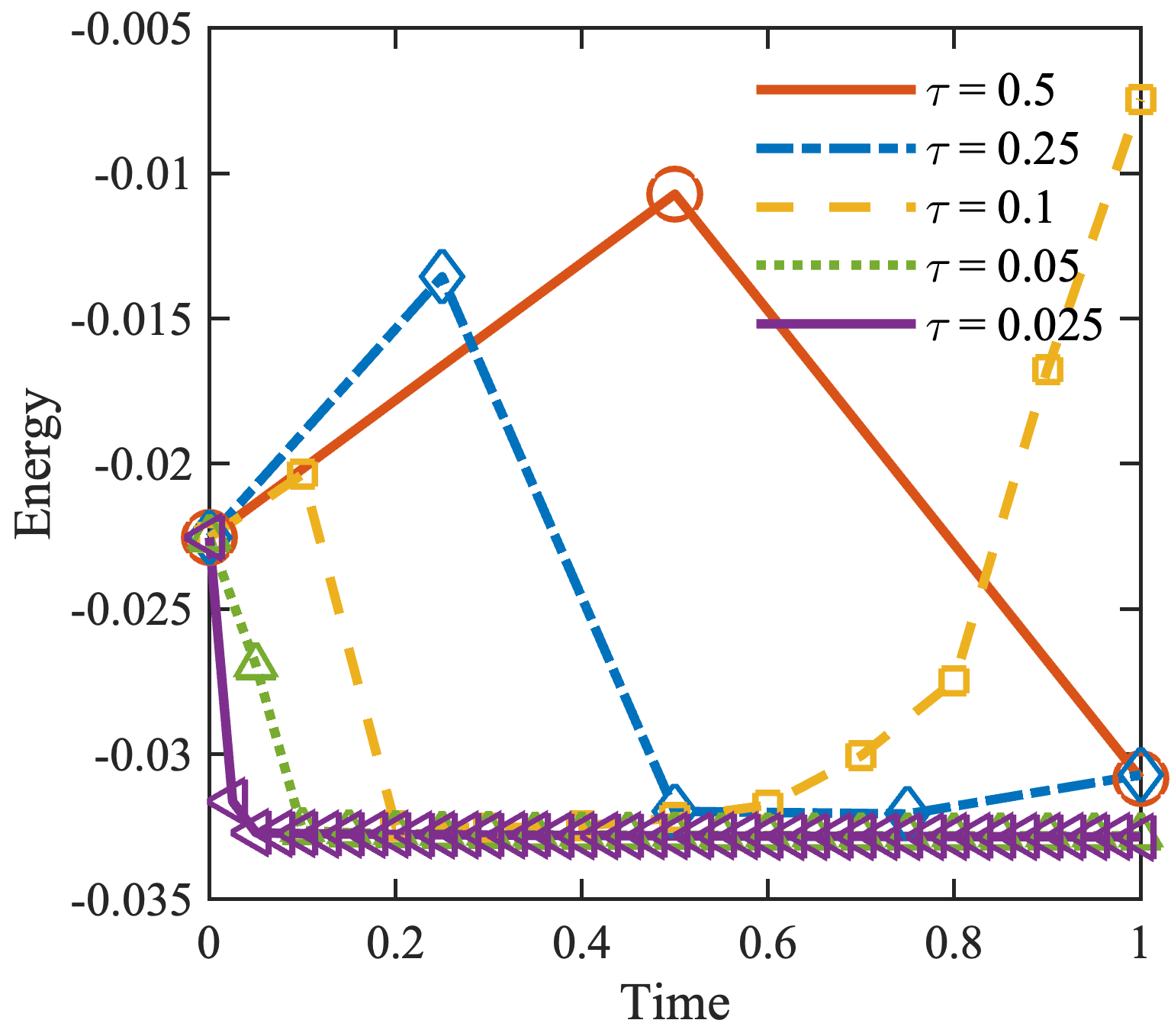}
		\includegraphics[width=0.3\textwidth]{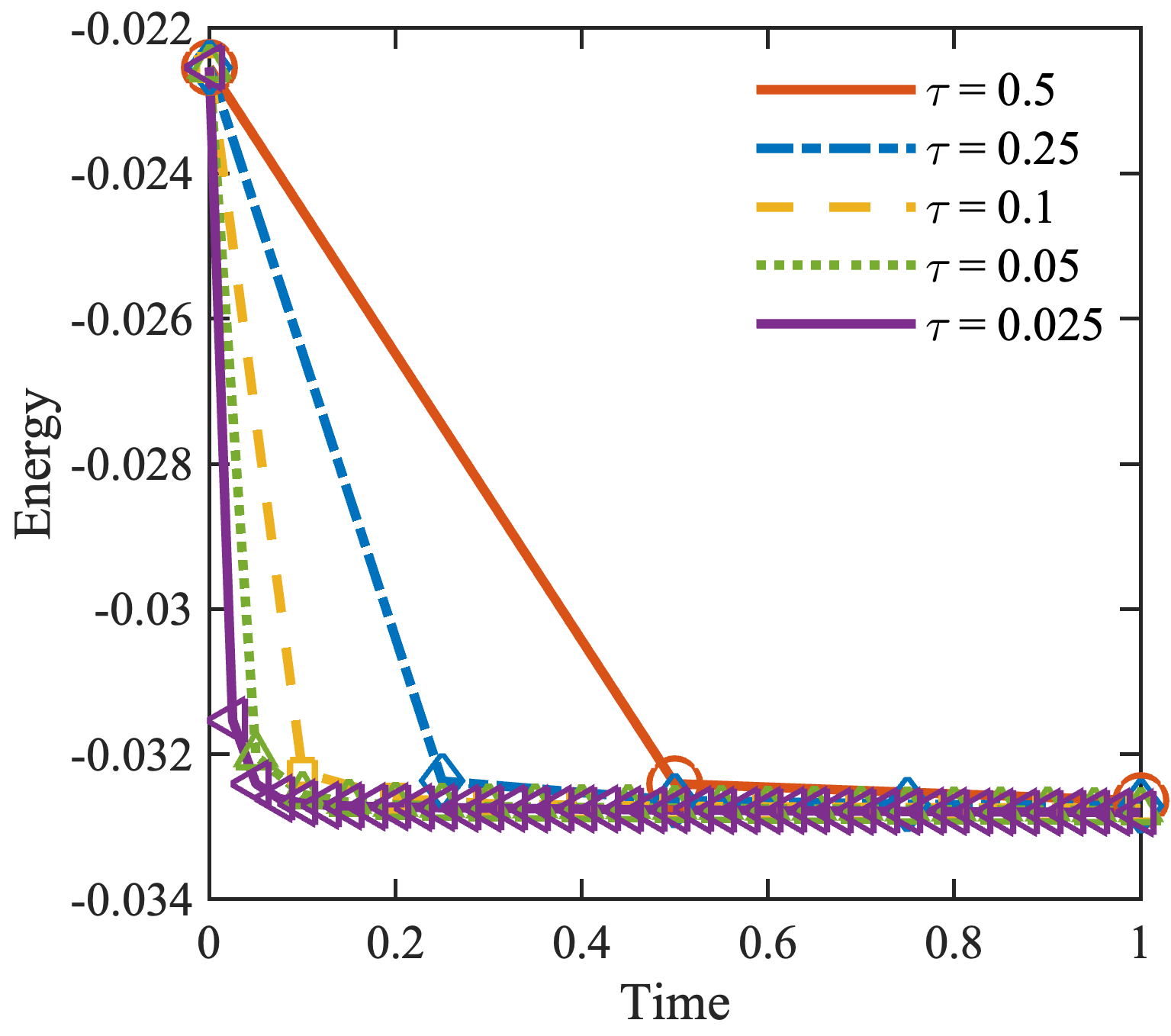}
	\end{center}
	\caption{Free-energy evolution of the SGE-BDF1 scheme under
		several choices of the
		stabilization parameter, $\kappa$, and the time step size, $\tau$.
		Left: fixed $\tau=0.5$ with varying $\kappa$.
		Middle: fixed $\kappa=0.2$ with varying $\tau$.
		Right: fixed $\kappa=1$ with varying $\tau$.
		The results show that either a large stabilization
		parameter or a small time step, $\tau$, enforces
		monotone energy decay.}\label{fig:energy_kappa_dt}
\end{figure}

{\color{magenta}
	Figure~\ref{fig:energy_kappa_dt} illustrates the interplay between the
	stabilization parameter, $\kappa$, and the time step size, $\tau$, in the
	SGE-BDF1 scheme. With $\tau$ fixed (left), a large
	$\kappa$ guarantees monotone energy decay. For smaller $\kappa$,
	this monotonicity
	may be lost.

	However, as shown in the middle panel, reducing $\tau$ can recover the
	expected energy dissipation even when $\kappa$ is below the theoretical
	bound. In the right panel, for a relatively large $\kappa$, the scheme
	remains stable for all tested time steps.

	These results indicate that the theoretical condition on $\kappa$ is
	sufficient but not necessary in practice, and that a suitable balance
	between $\kappa$ and $\tau$ can ensure the energy stable behavior.
}

\begin{figure}[H]
	\begin{center}
		\includegraphics[width=0.18\textwidth]{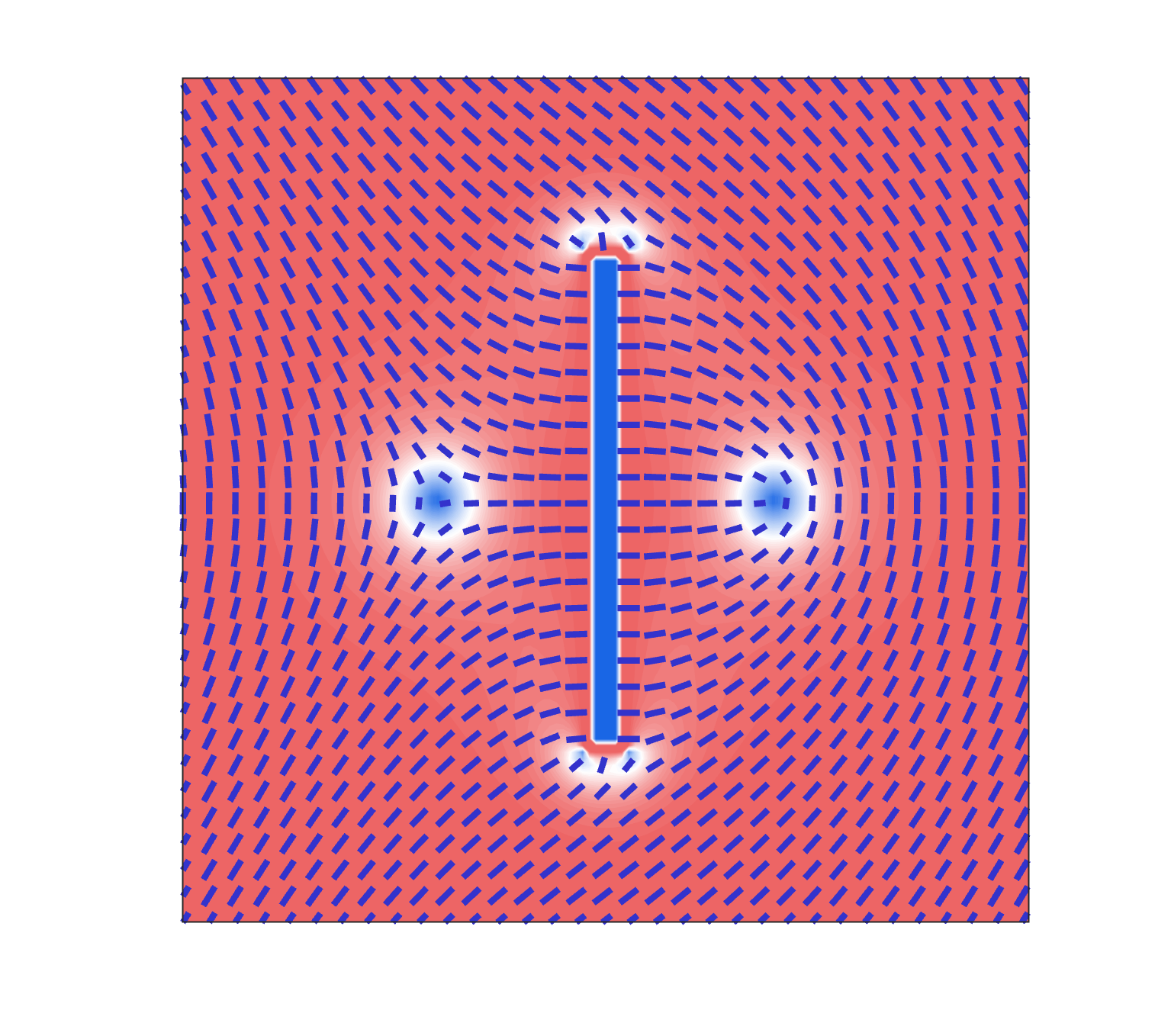}
		\includegraphics[width=0.18\textwidth]{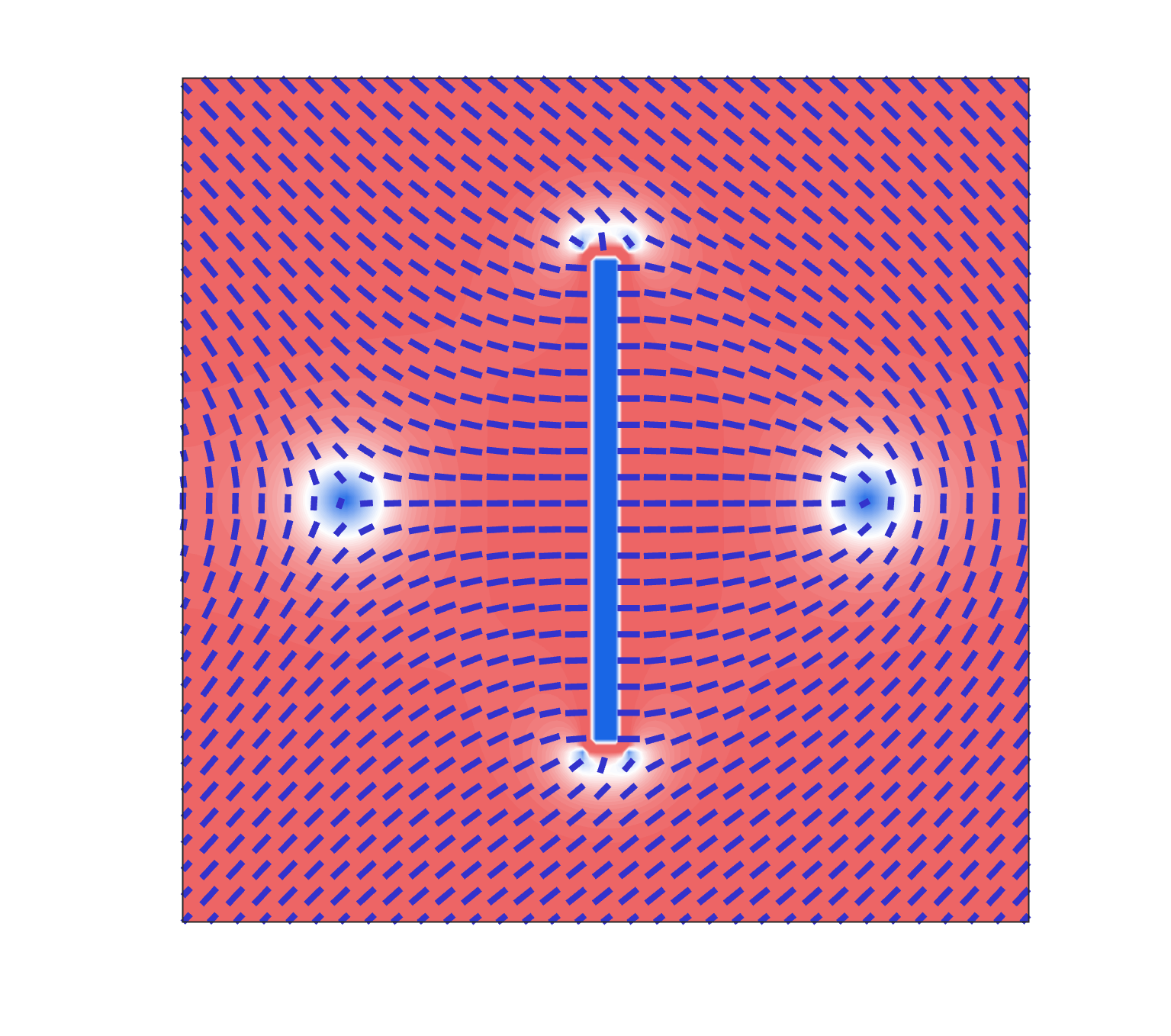}
		\includegraphics[width=0.18\textwidth]{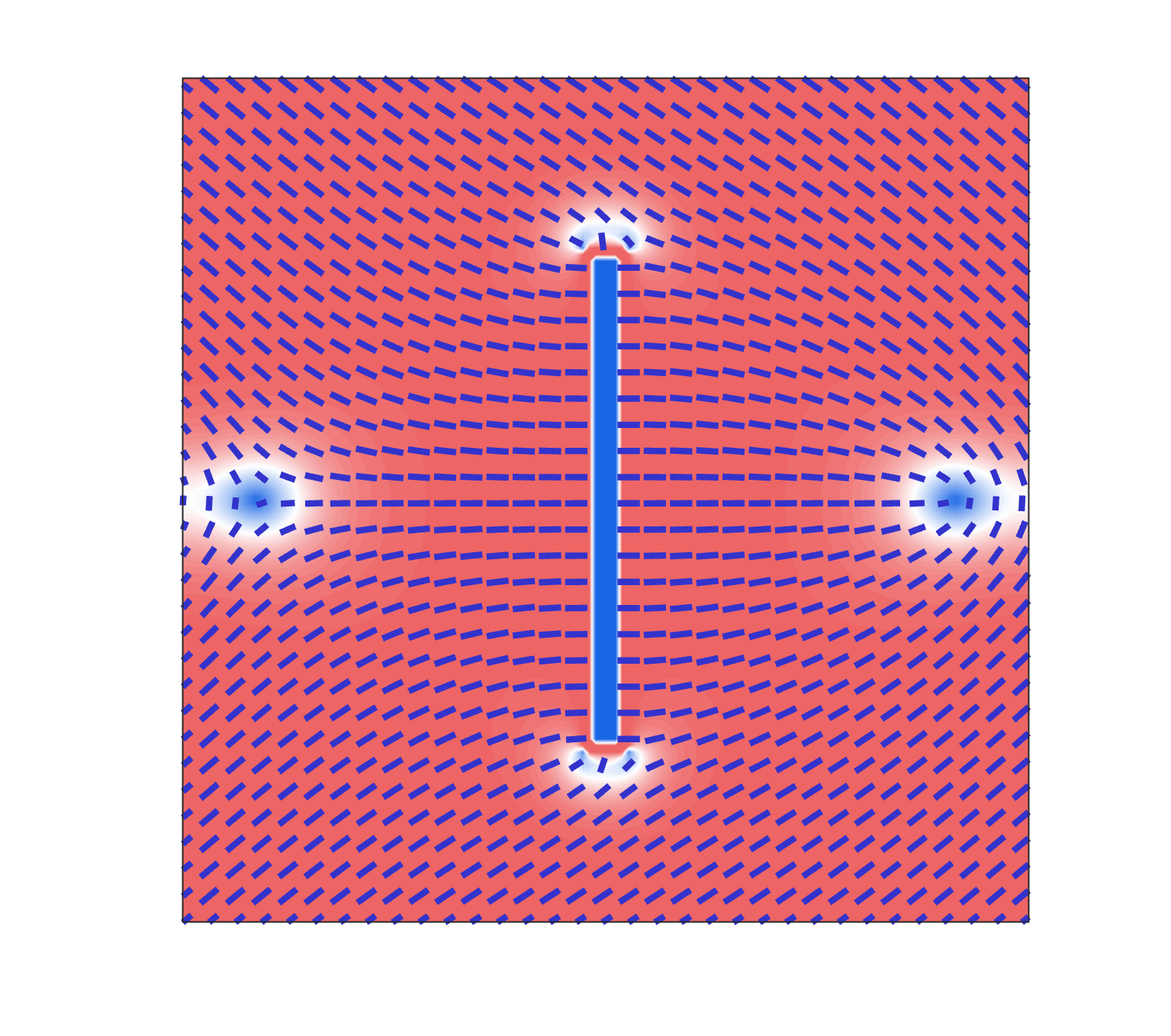}
		\includegraphics[width=0.18\textwidth]{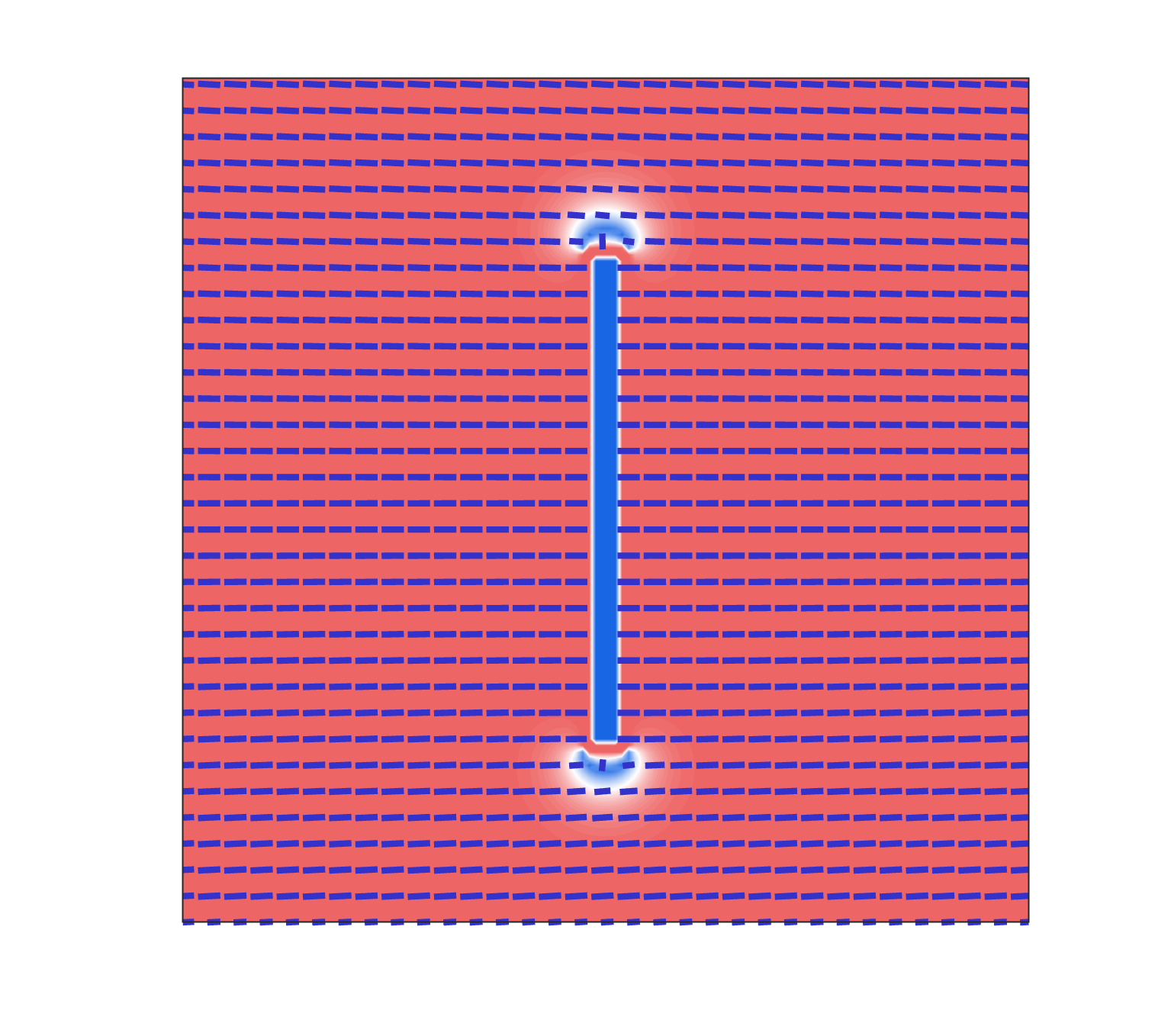}
		\includegraphics[width=0.18\textwidth]{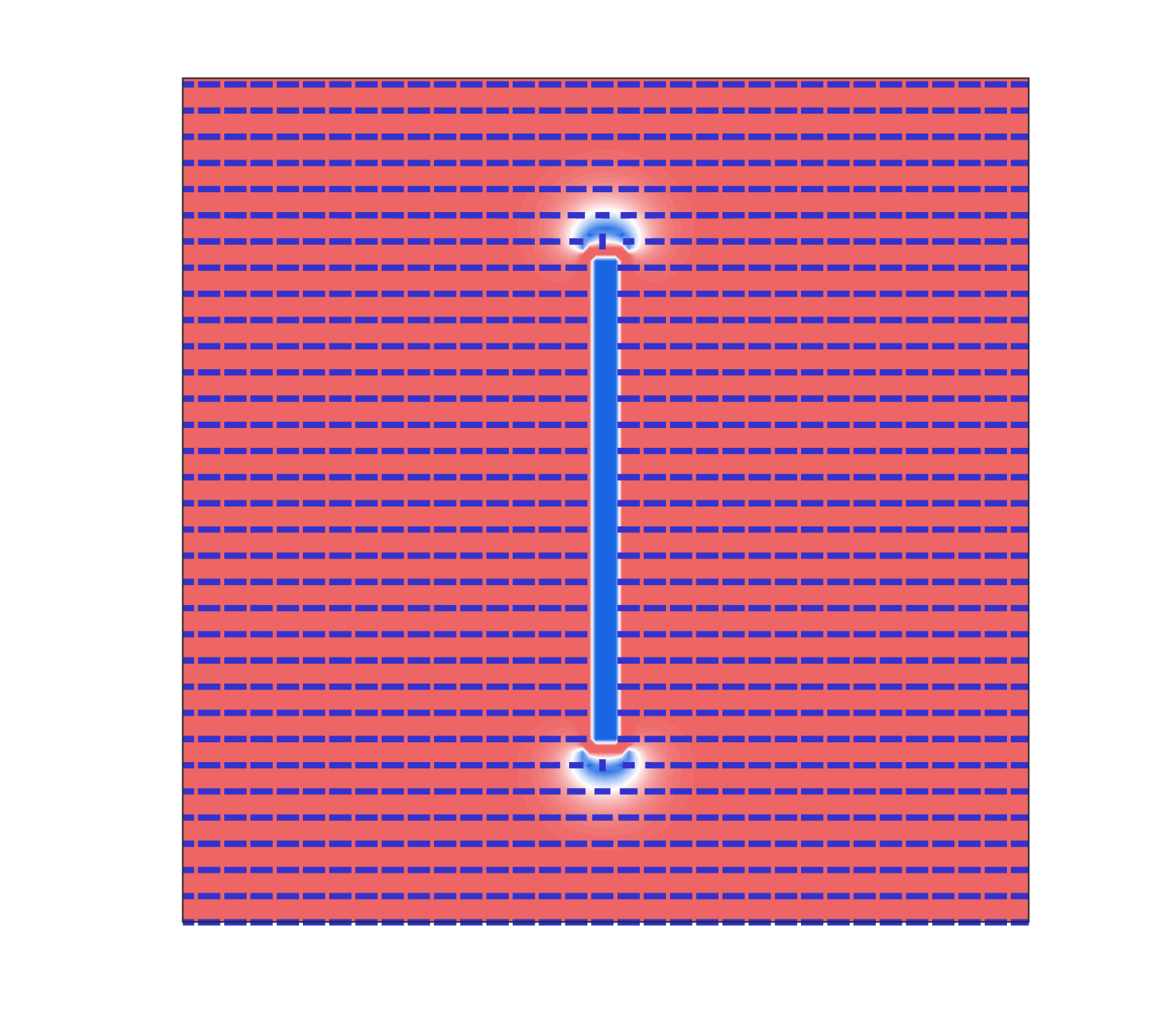}
	\end{center}
	\caption{Interaction of a pair of passive $\pm\tfrac{1}{2}$
		defects with a rectangular obstacle under normal anchoring at
		times $t = 0.1$, $2$, $3$, $7.8$, $20$, respectively. The pair
		moves out of the computational domain after a while. The
		$\bQ-$tensor is isotropic in the obstacle.}\label{fig:chi0}
\end{figure}

Figure~\ref{fig:chi0} illustrates the dynamics of a pair of passive
$\pm \tfrac{1}{2}$ defects in a domain containing a rectangular
obstacle positioned between them. Following a rapid initial
relaxation, the orientation tensor field is zero within the solid
at the time instances shown above, demonstrating the effectiveness
of the proposed obstacle treatment.

In contrast to the case without obstacles, the two defects remain
separated and symmetrically positioned with respect to the
obstacle, which acts as a barrier due to its anchoring effect.
Around $t = 3$, both defects exit the domain through the
boundaries, resulting in a defect-free configuration. The final
panel at $t = 20$ confirms that the director field within the
obstacle remains isotropic and stationary over long time evolution,
further validating the effective of the numerical approach.

\begin{figure}[H]
	\begin{center}
		\includegraphics[width=0.18\textwidth]{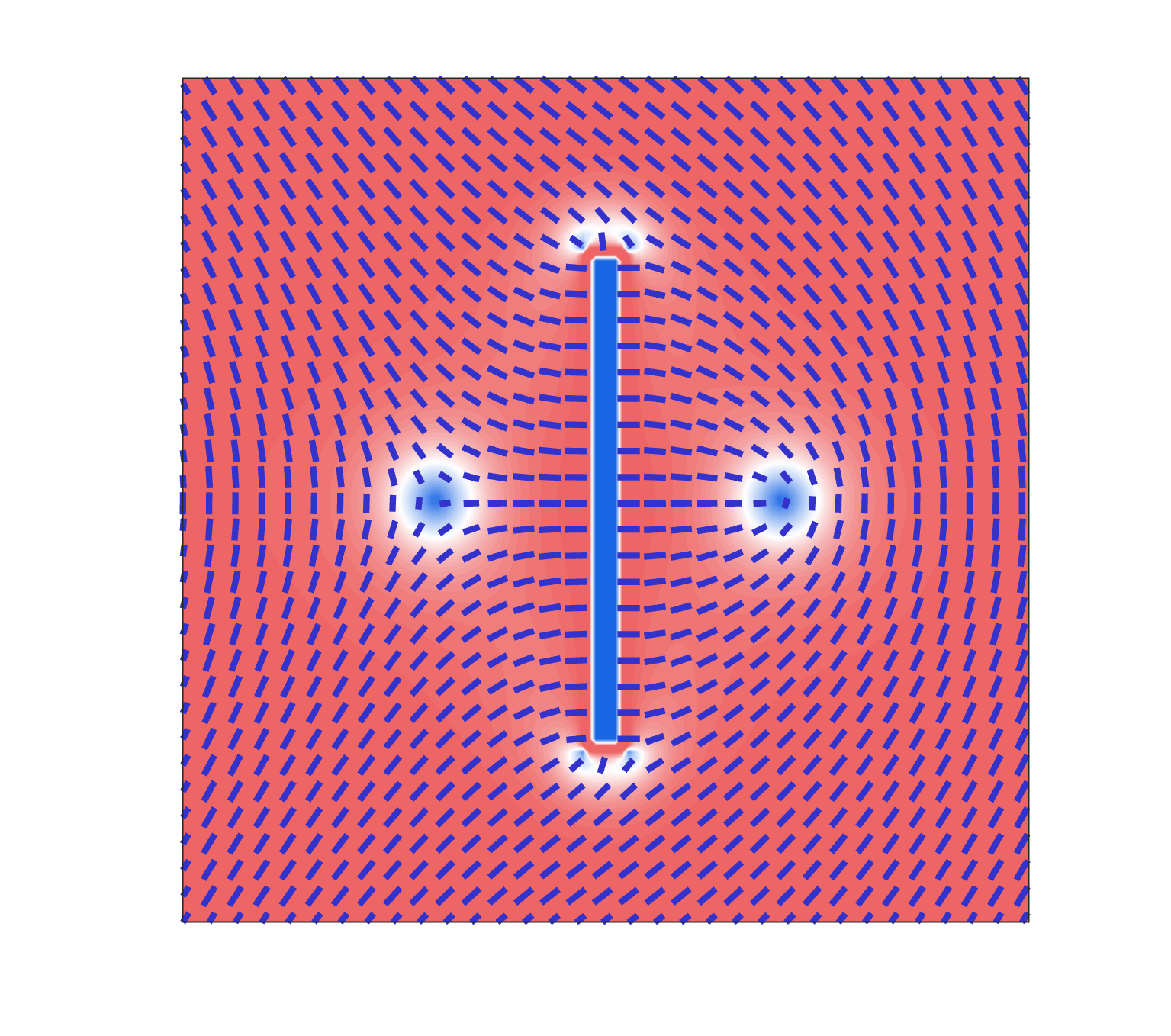}
		\includegraphics[width=0.18\textwidth]{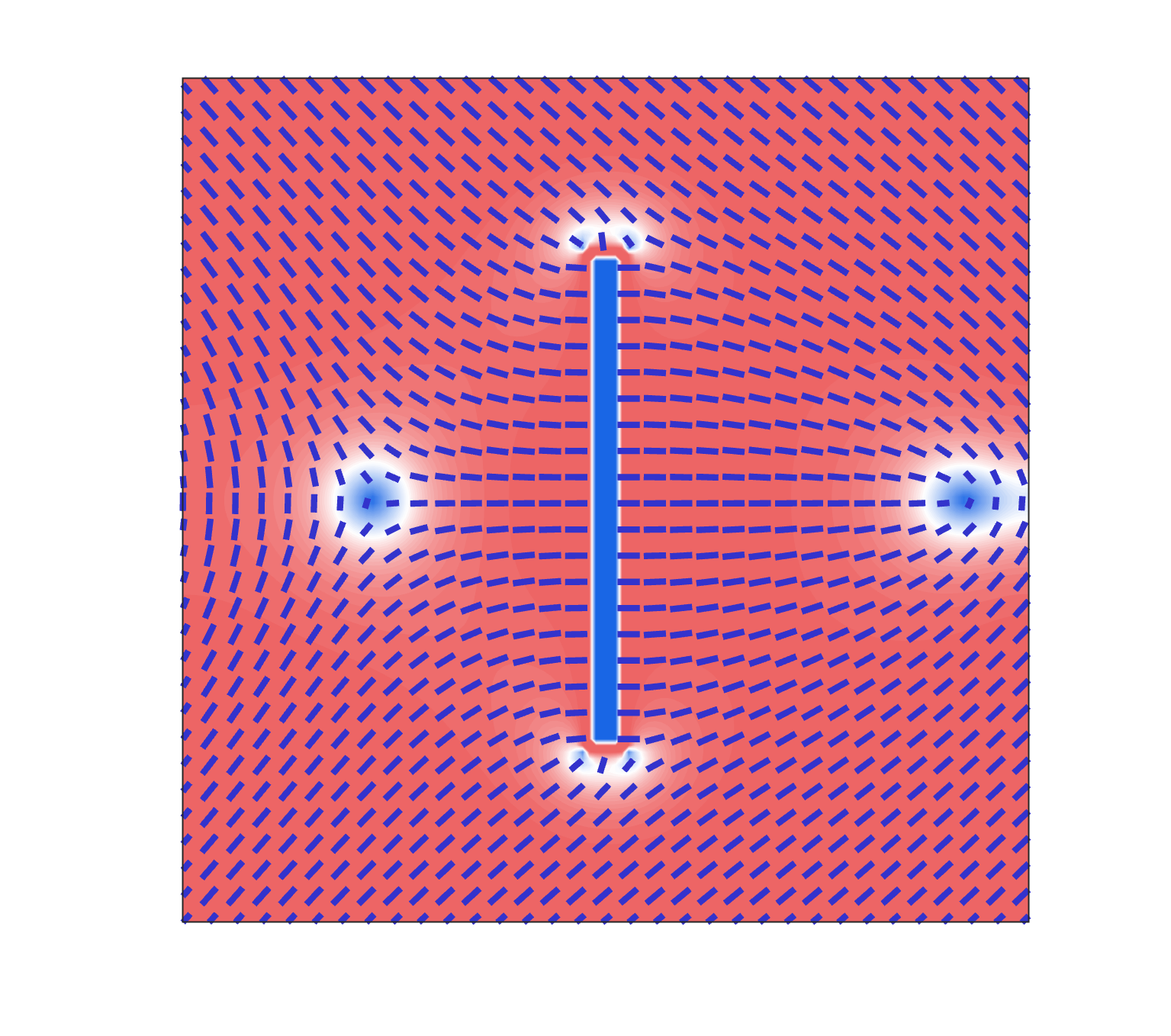}
		\includegraphics[width=0.18\textwidth]{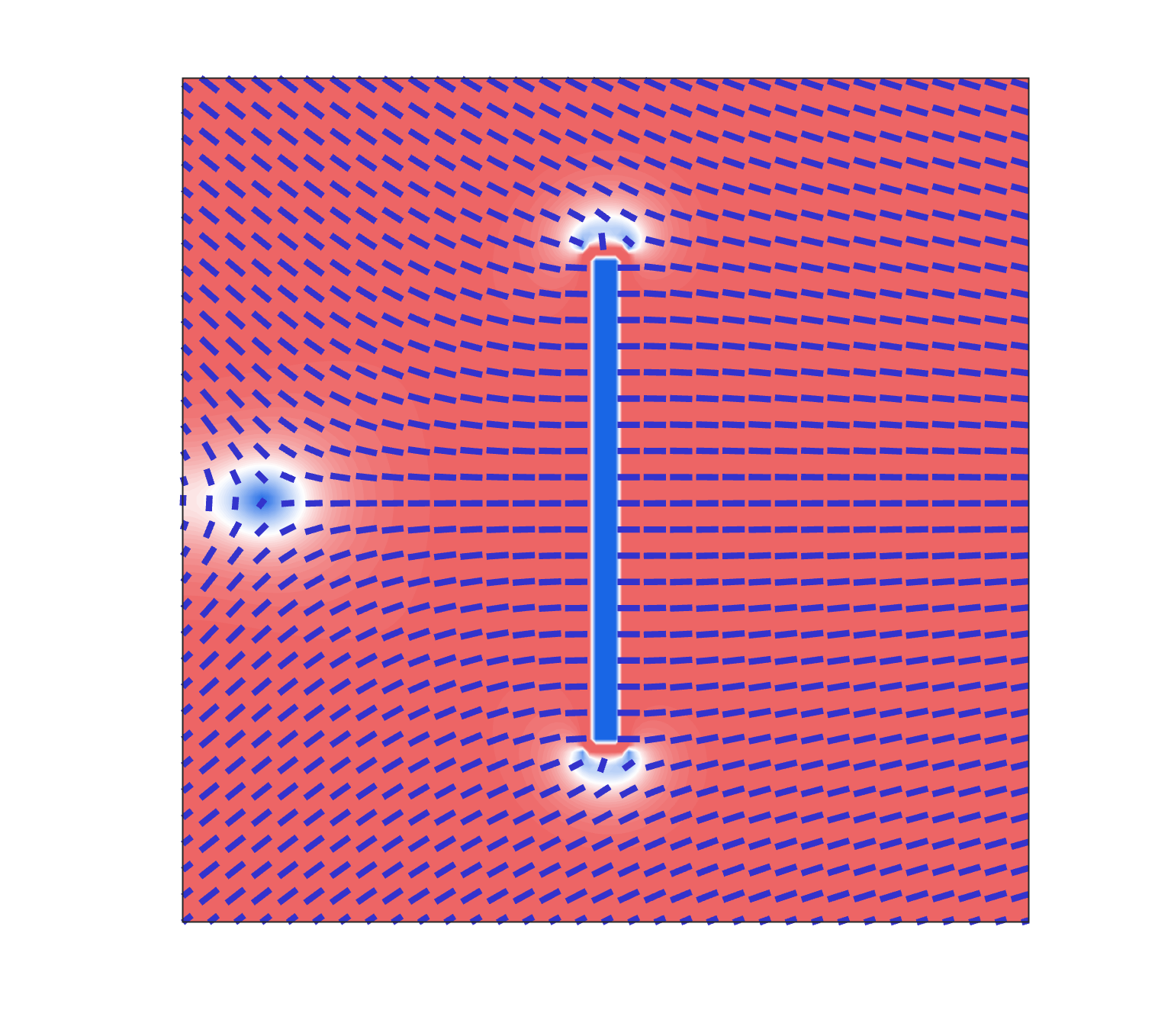}
		\includegraphics[width=0.18\textwidth]{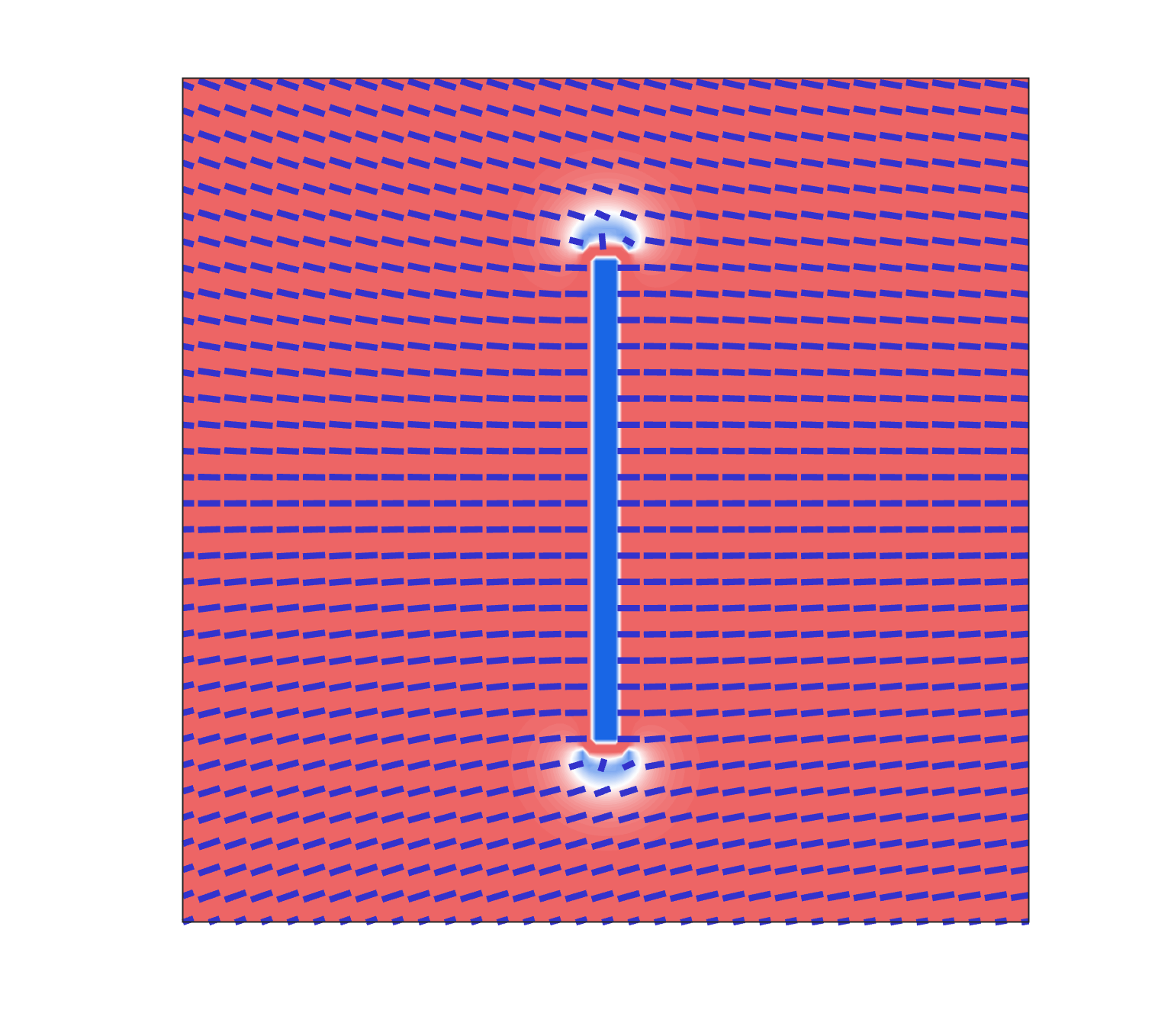}
		\includegraphics[width=0.18\textwidth]{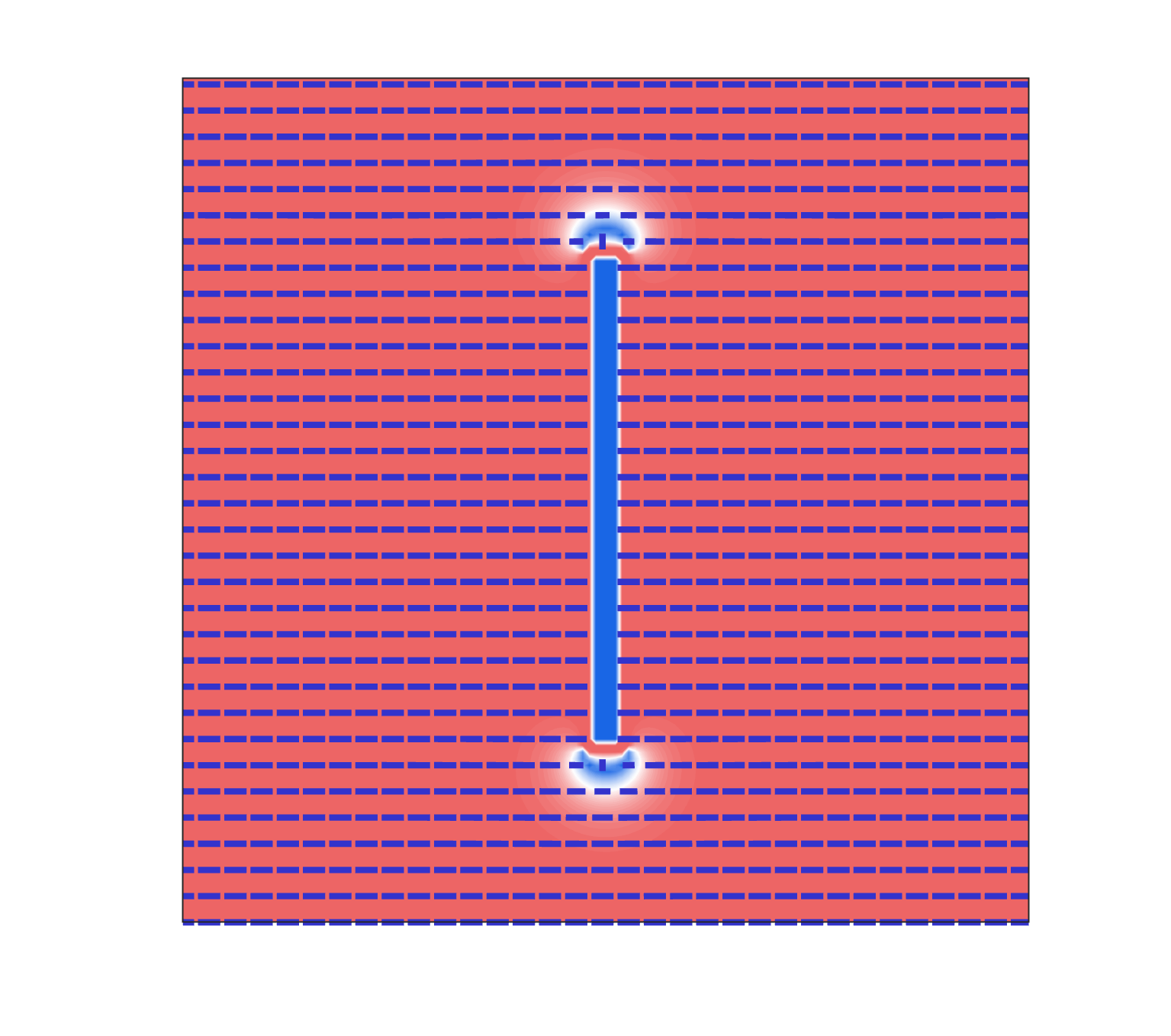}
	\end{center}
	\caption{Interaction of a pair of active $\pm \tfrac{1}{2}$
		defects with a rectangular obstacle under normal anchoring at
		times $t = 0.1$, $1.8$, $3.7$, $5$, $20$, respectively, with
		parameter values $\chi_{fluid} = 5$ and $\xi_{fluid} =
			-0.1$.}\label{fig:chipos}
\end{figure}

Figure~\ref{fig:chipos} shows the dynamics of a pair of active $\pm
	\tfrac{1}{2}$ defects in a domain containing a solid obstacle, with
activity parameters \eqref{eq:model-parameters} set at
$\chi_{fluid} = 5$ and $\xi_{fluid} = -0.1$. Similar to the passive
case, the defects are repelled from each other due to anchoring
effects imposed by both the obstacle and the domain boundary.
However, a key difference arises from the presence of activity: the
active $+\tfrac{1}{2}$ defect moves more rapidly along its head
direction compared to its passive counterpart, and therefore exits
the domain earlier.

For the $-\tfrac{1}{2}$ defect, the net effect of the active force
is negligible, resulting in a slower departure. It gradually drifts
away from the obstacle and eventually leaves the domain as well.
The final panel at $t = 20$ confirms that the director field within
the obstacle remains isotropic and stationary over long time
integration, consistent with earlier observations and validating
the robustness of the numerical scheme.

\begin{figure}[H]
	\begin{center}
		\includegraphics[width=0.18\textwidth]{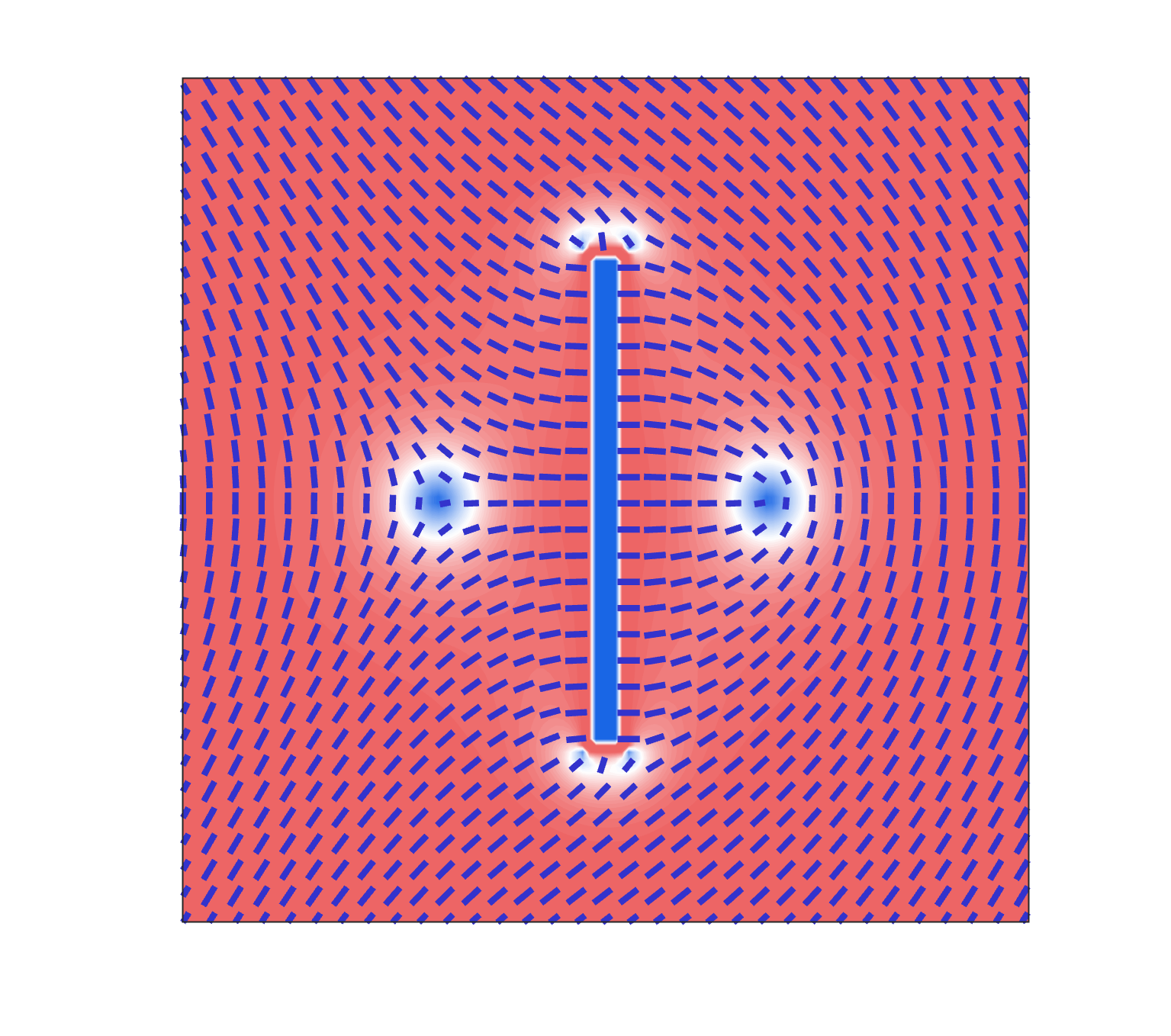}
		\includegraphics[width=0.18\textwidth]{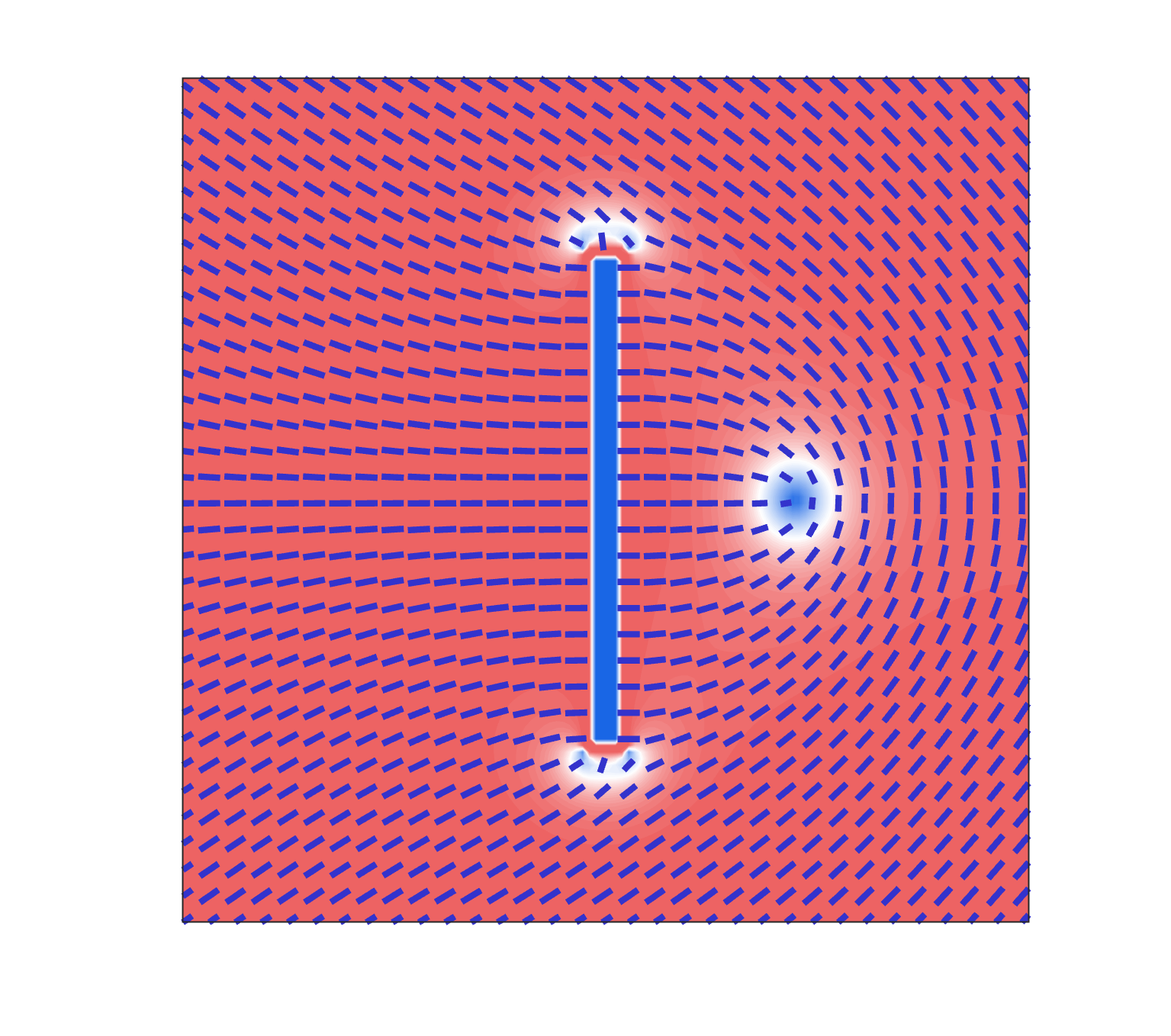}
		\includegraphics[width=0.18\textwidth]{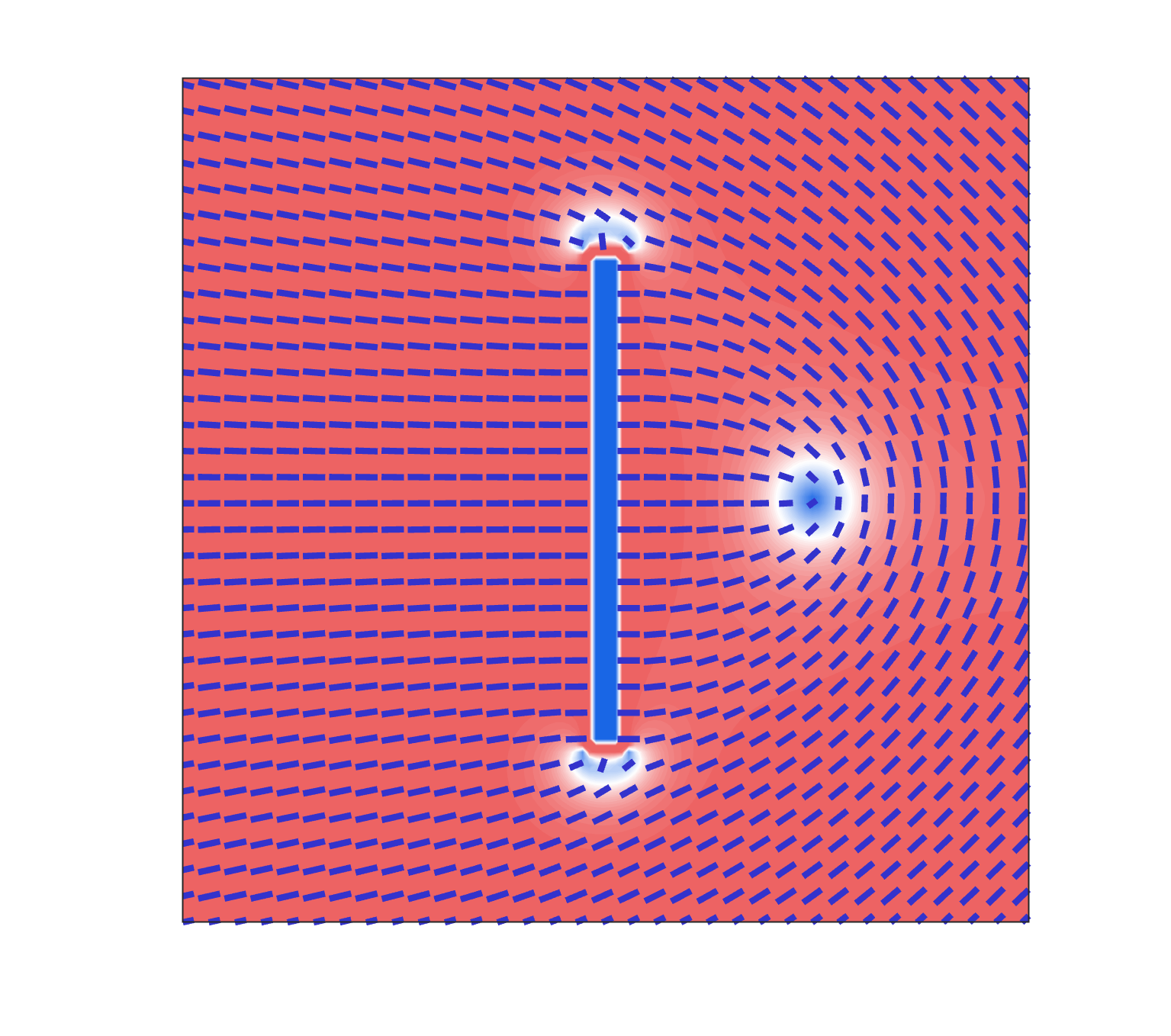}
		\includegraphics[width=0.18\textwidth]{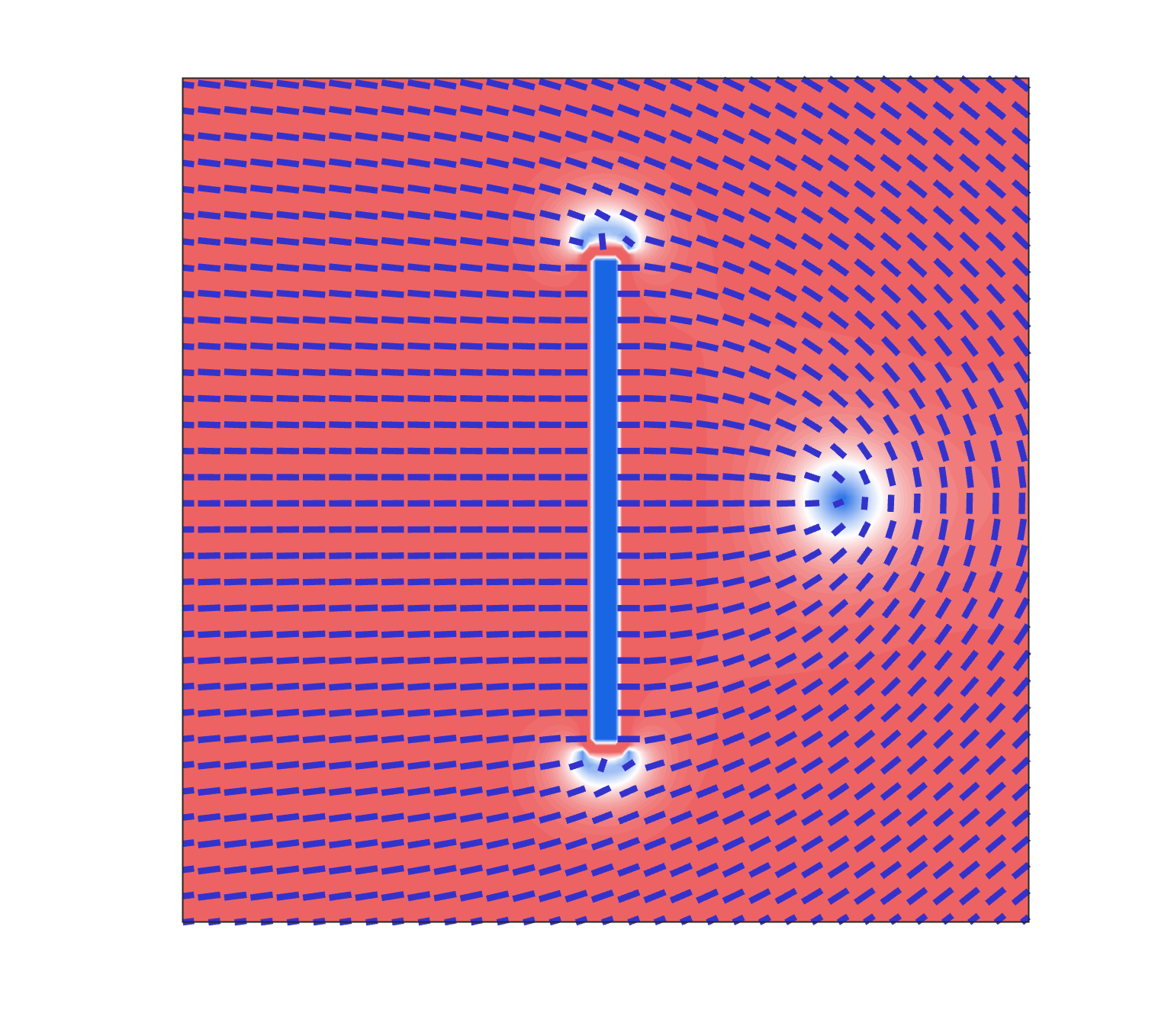}
		\includegraphics[width=0.18\textwidth]{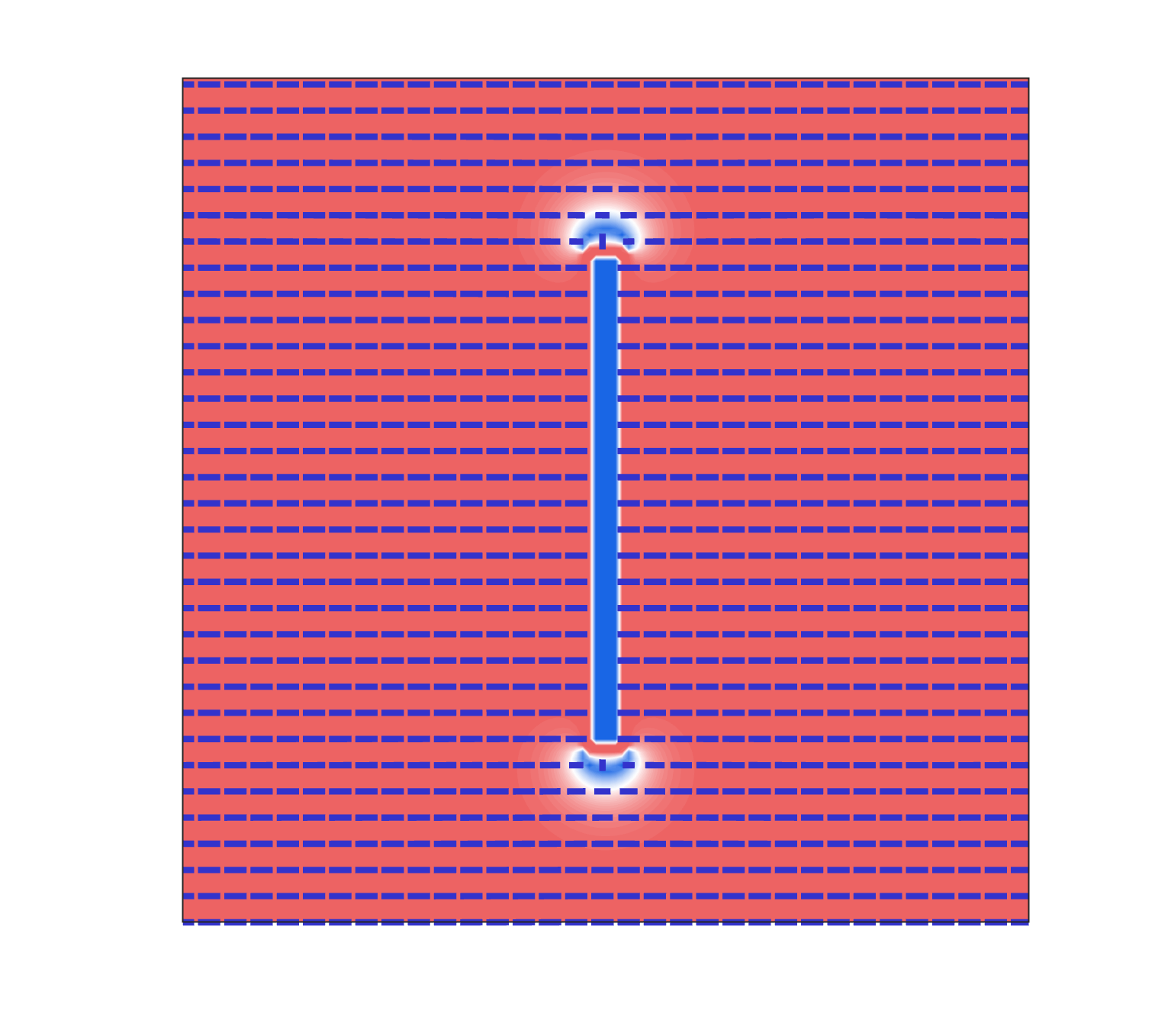}
	\end{center}
	\caption{Interaction of a pair of active $\pm \tfrac{1}{2}$
		defects with a rectangular obstacle under normal anchoring at
		different times $t = 0.1, 3, 5, 7, 20$,respectively,  with
		activity parameter values  $\chi_{fluid} = -5$ and $\xi_{fluid} =
			0.1$. The left defect moves out of the computational domain
		sooner than the right one. }\label{fig:chineg}
\end{figure}

Figure~\ref{fig:chineg} illustrates the dynamics of a pair of $\pm
	\tfrac{1}{2}$ defects in a domain containing a solid obstacle,
under activity parameter values $\chi_{fluid} = -5$ and
$\xi_{fluid} = 0.1$. The $-\tfrac{1}{2}$ defect gradually drifts
away from the obstacle and eventually exits the domain through the
boundary. In contrast, the motion of the $+\tfrac{1}{2}$ defect is
strongly influenced by the negative activity parameter
$\chi_{fluid}$ and the anchoring condition. A negative activity
parameter includes a force directed from head to tail, rather than
in its usual head-forward direction. As a result, the
$+\tfrac{1}{2}$ defect moves more slowly than the $-\tfrac{1}{2}$
in this scenario.
The final panel at $t = 20$ shows that both $\pm\tfrac{1}{2}$
defects have exited the domain, and the director field remains
isotropic and stationary within the solid region.


\begin{figure}[H]
	\begin{center}
		\includegraphics[width=0.24\textwidth]{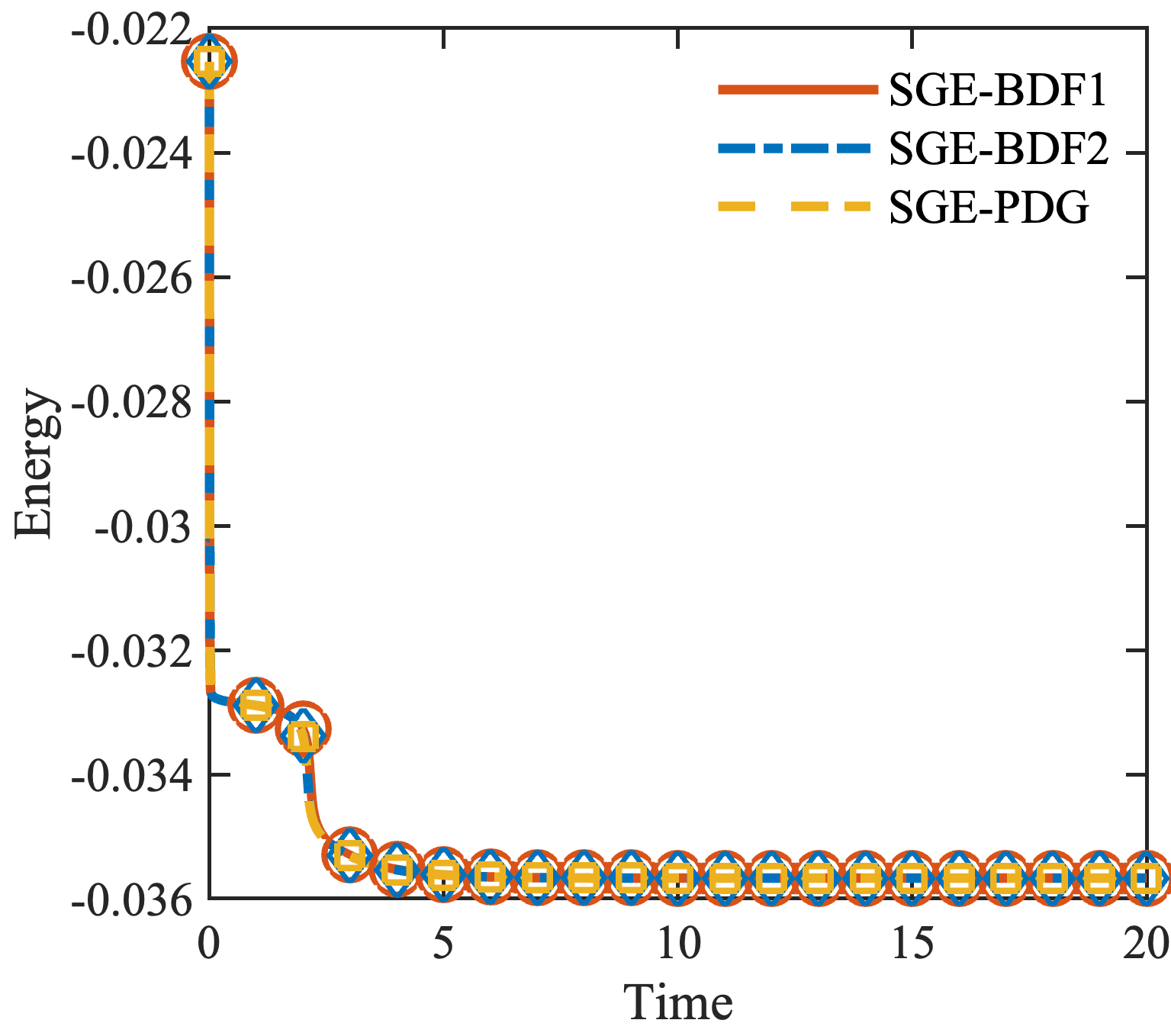}
		\includegraphics[width=0.24\textwidth]{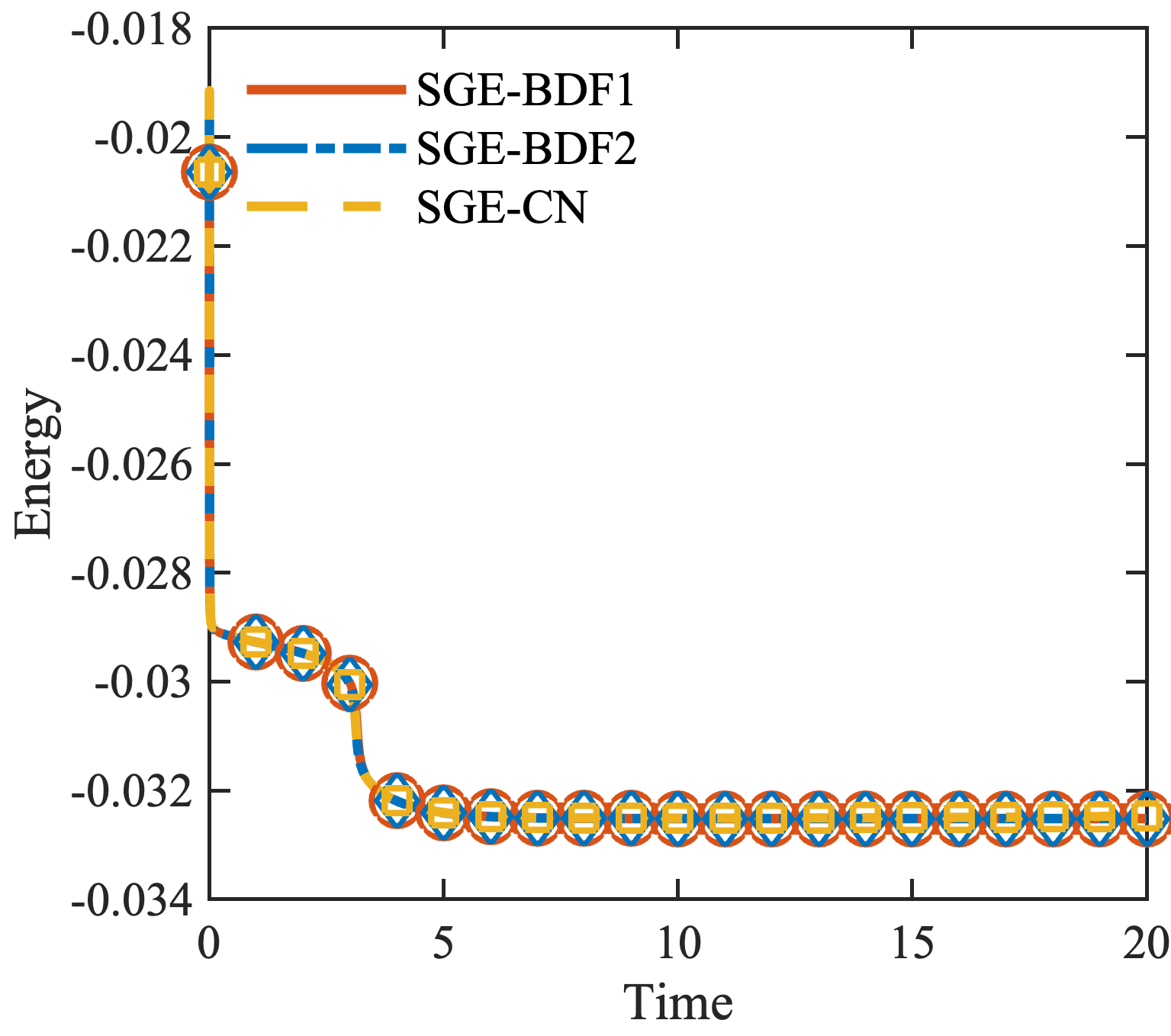}
		\includegraphics[width=0.24\textwidth]{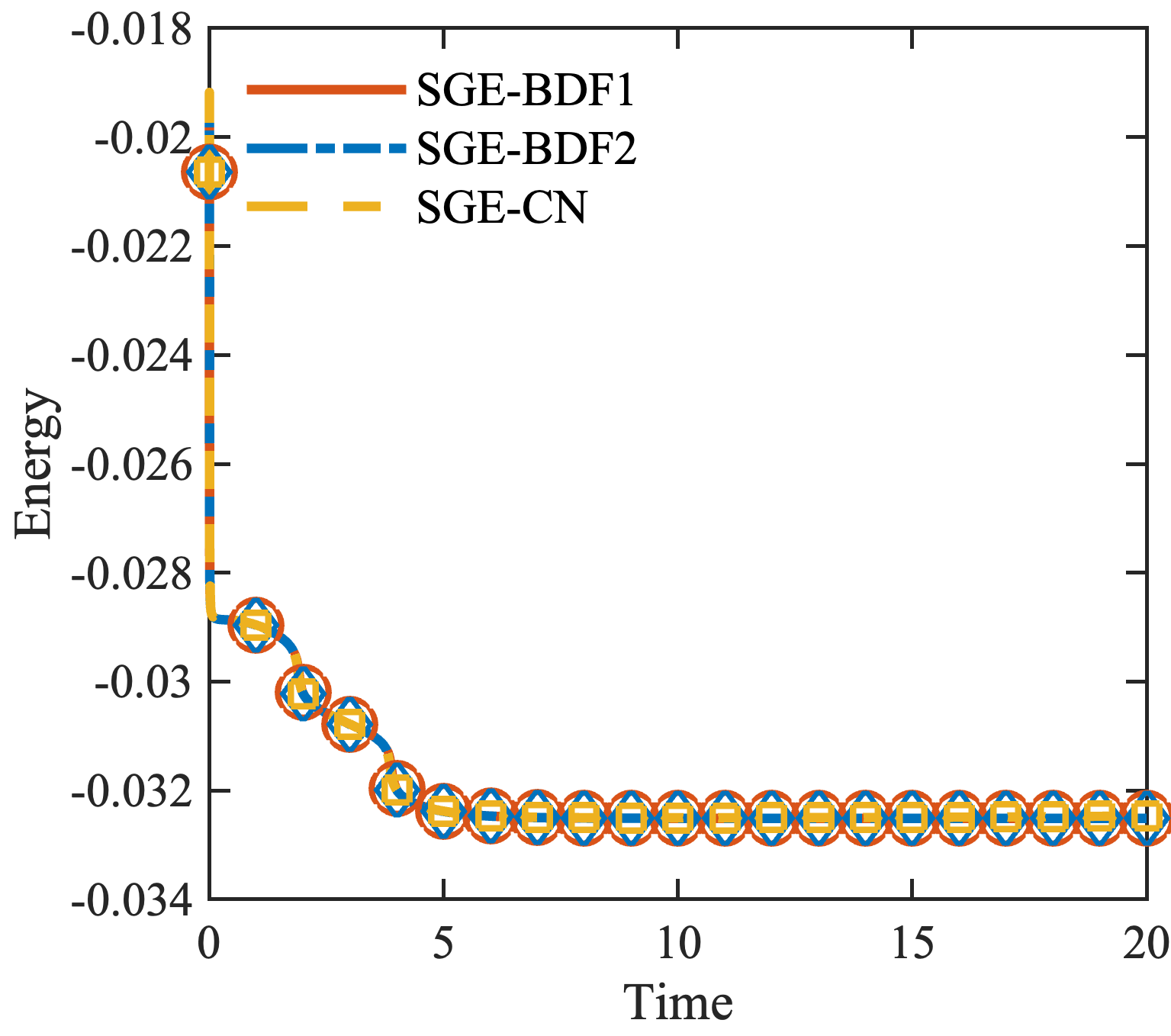}
		\includegraphics[width=0.24\textwidth]{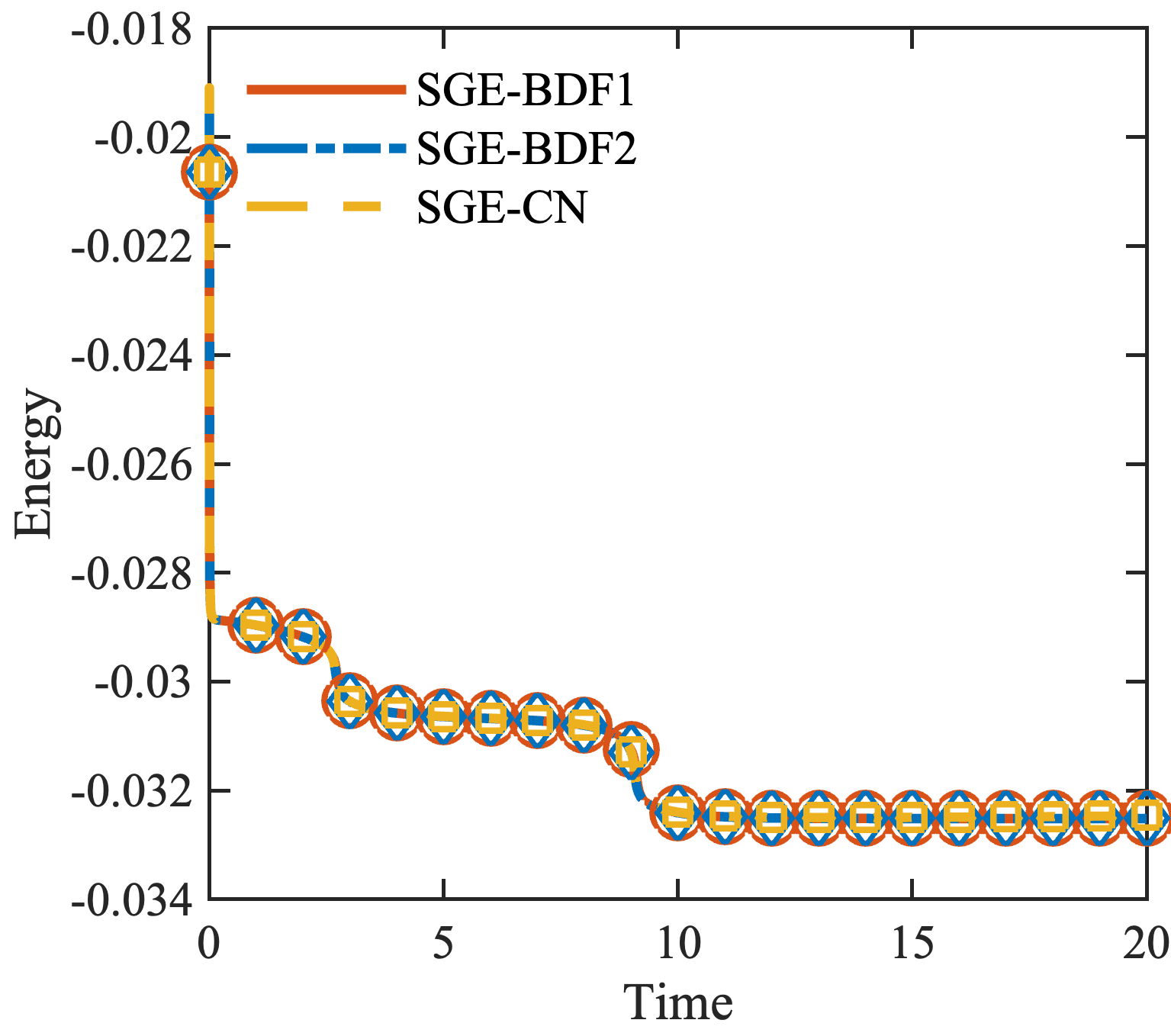}
	\end{center}
	\caption{Time evolution of the free energy in the four different
		test cases with the solutions solved using the three proposed
		schemes. From left to right, the plots correspond to: (1) the
		passive liquid crystal without an obstacle; (2) the passive
		liquid crystal with a rectangular obstacle; (3) the active liquid
		crystal with $\chi_{fluid} = 5$, $\xi_{fluid} = -0.1$; and (4)
		the active liquid crystal with $\chi_{fluid} = -5$, $\xi_{fluid}
			= 0.1$. In all cases, the energy decays in time. }\label{fig:ex2_fenergy}
\end{figure}

Figure~\ref{fig:ex2_fenergy} presents the temporal evolution of the
free energy in the four cases discussed above, computed using the
three proposed numerical schemes. The energy profiles produced by
the different schemes show excellent agreement, demonstrating the
consistency and stability of the schemes.

For the passive liquid crystal cases, the free energy decays
monotonically over time, as expected. In contrast, for active
systems, the energy does not necessarily decrease monotonically due
to the presence of active forcing. However, this non-monotonic
behavior is less pronounced in the present example, as the activity
parameters are relatively small. This phenomenon will be more
clearly demonstrated in subsequent numerical experiments.

Notably, each energy curve exhibits one or more sudden drops, which
correspond to events of the topological defects. These drops
typically indicate that defect pairs have annihilated or exited the
domain due to boundary anchoring. In the case with non-zero
activity parameters, two distinct energy drops are observed,
corresponding to the sequential disappearance of the
$+\tfrac{1}{2}$ and $-\tfrac{1}{2}$ defects. In the other
scenarios, only one energy drop is present, suggesting either
simultaneous annihilation of the $\pm \tfrac{1}{2}$ defects or
simultaneous exits of the defects.

\subsubsection{Solutions with different anchoring conditions}
We compare the solutions of the model under different boundary
anchoring conditions. In this section, the active parameters are
set at $\chi_{fluid} = -5$ and $\xi_{fluid} = 0.1$.

\begin{figure}[H]
	\begin{center}
		\includegraphics[width=0.18\textwidth]{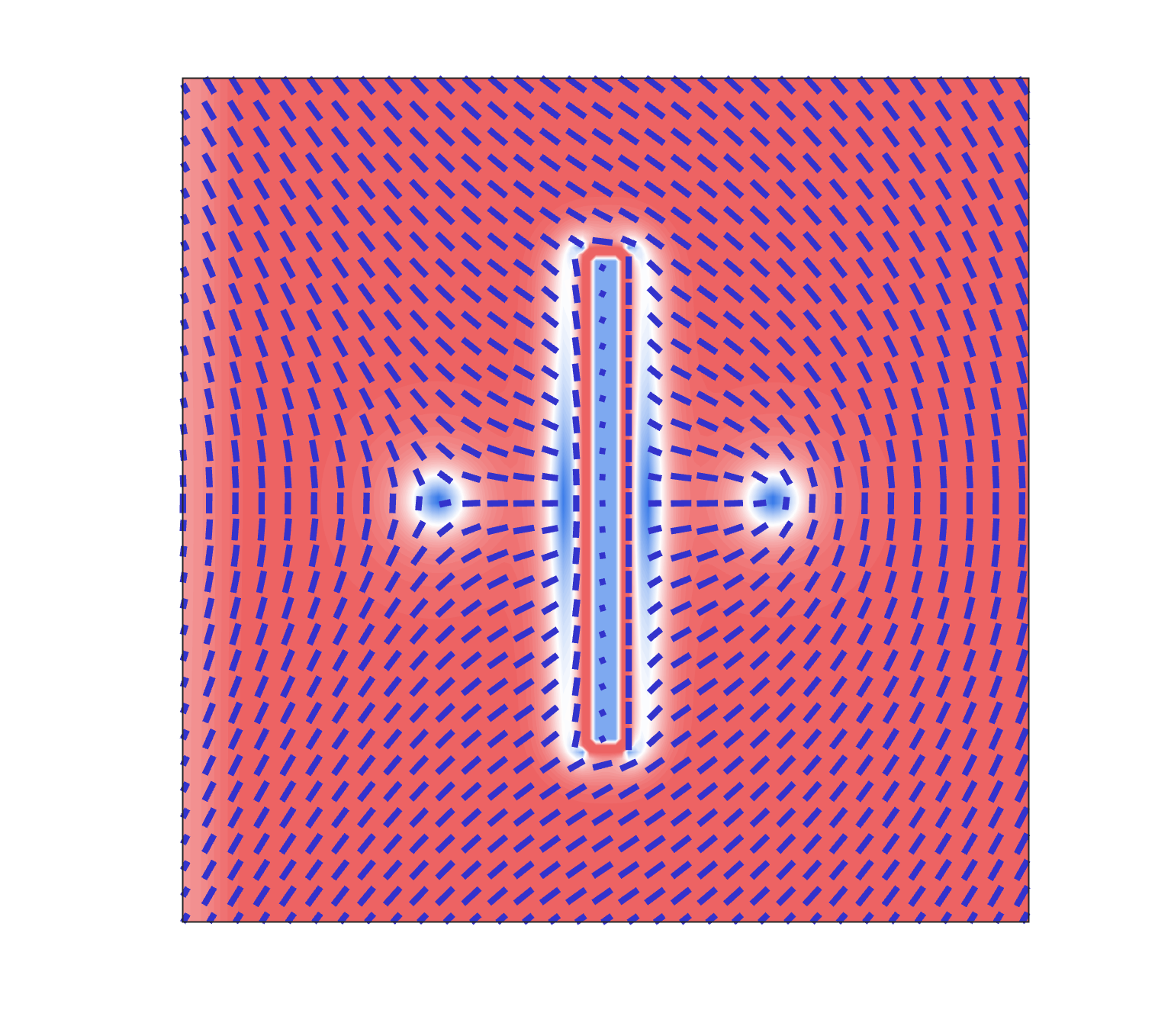}
		\includegraphics[width=0.18\textwidth]{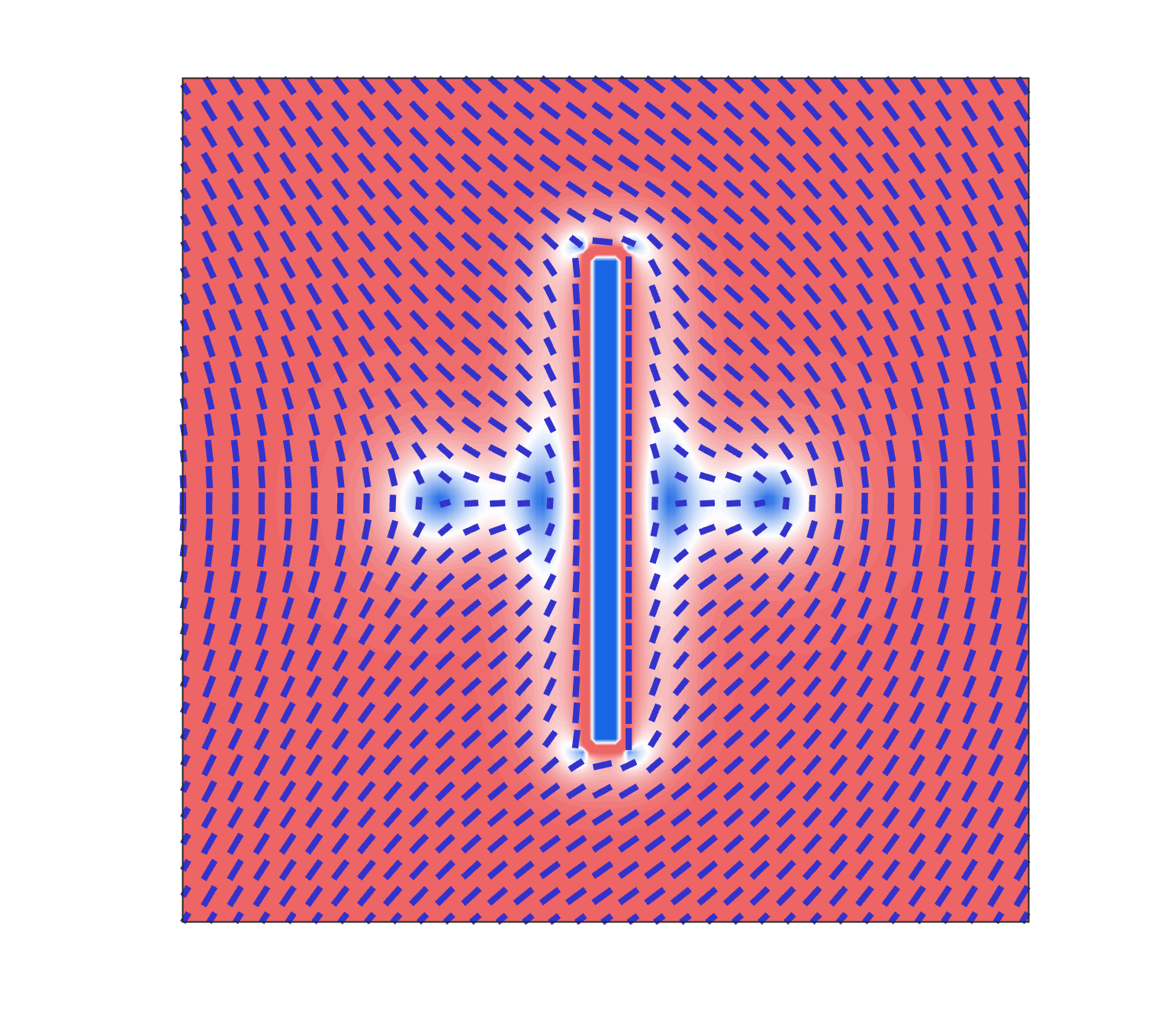}
		\includegraphics[width=0.18\textwidth]{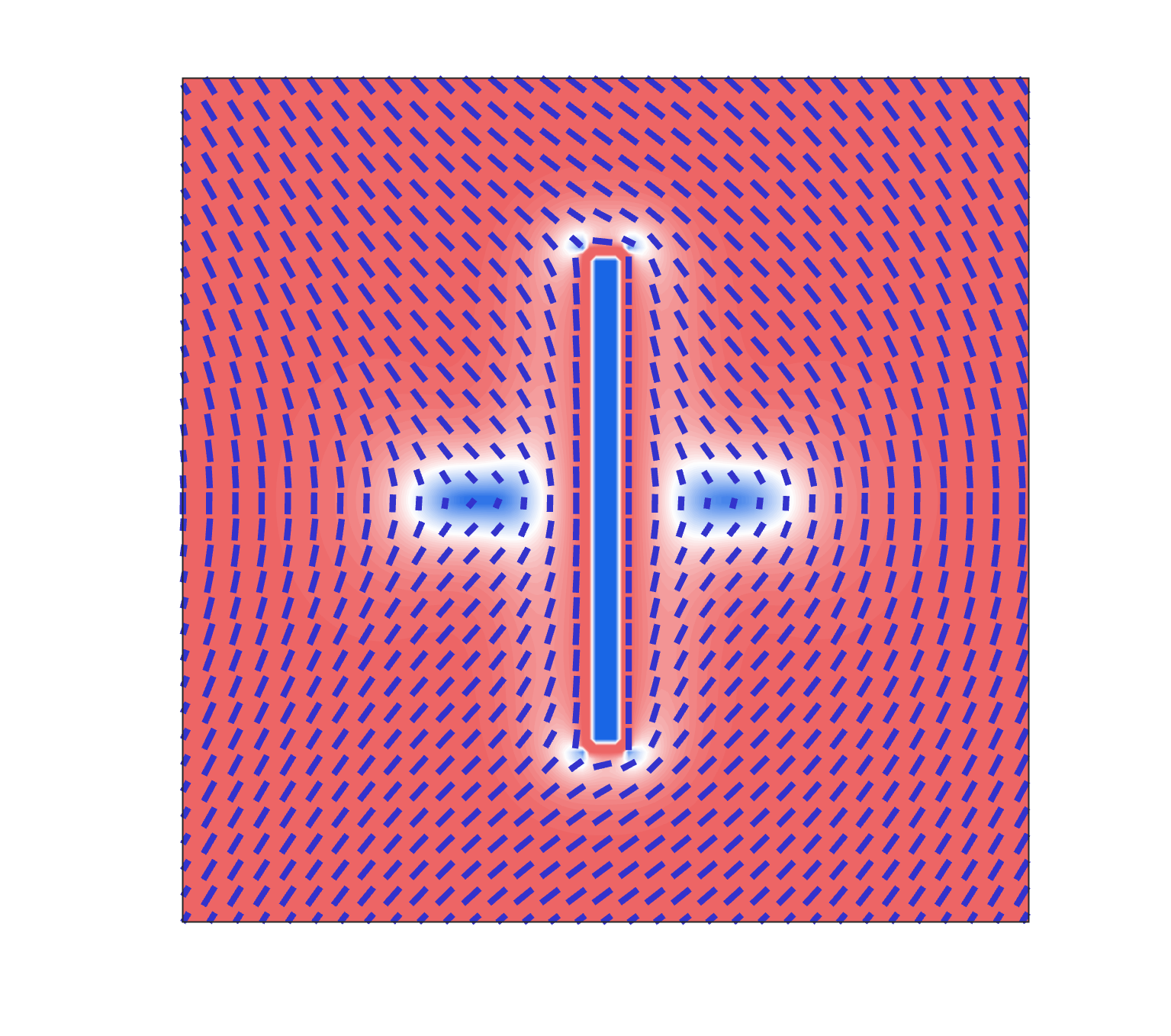}
		\includegraphics[width=0.18\textwidth]{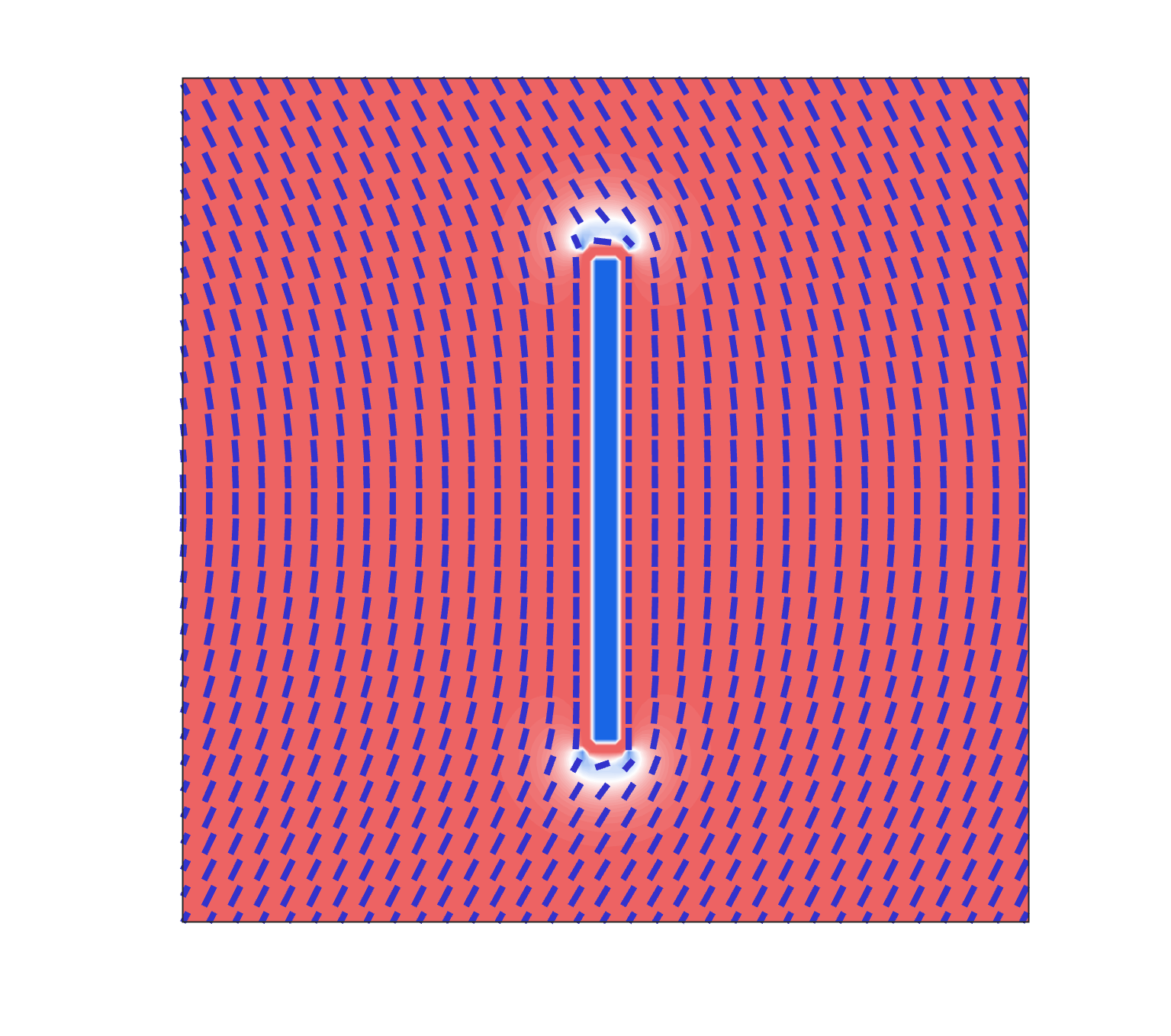}
		\includegraphics[width=0.18\textwidth]{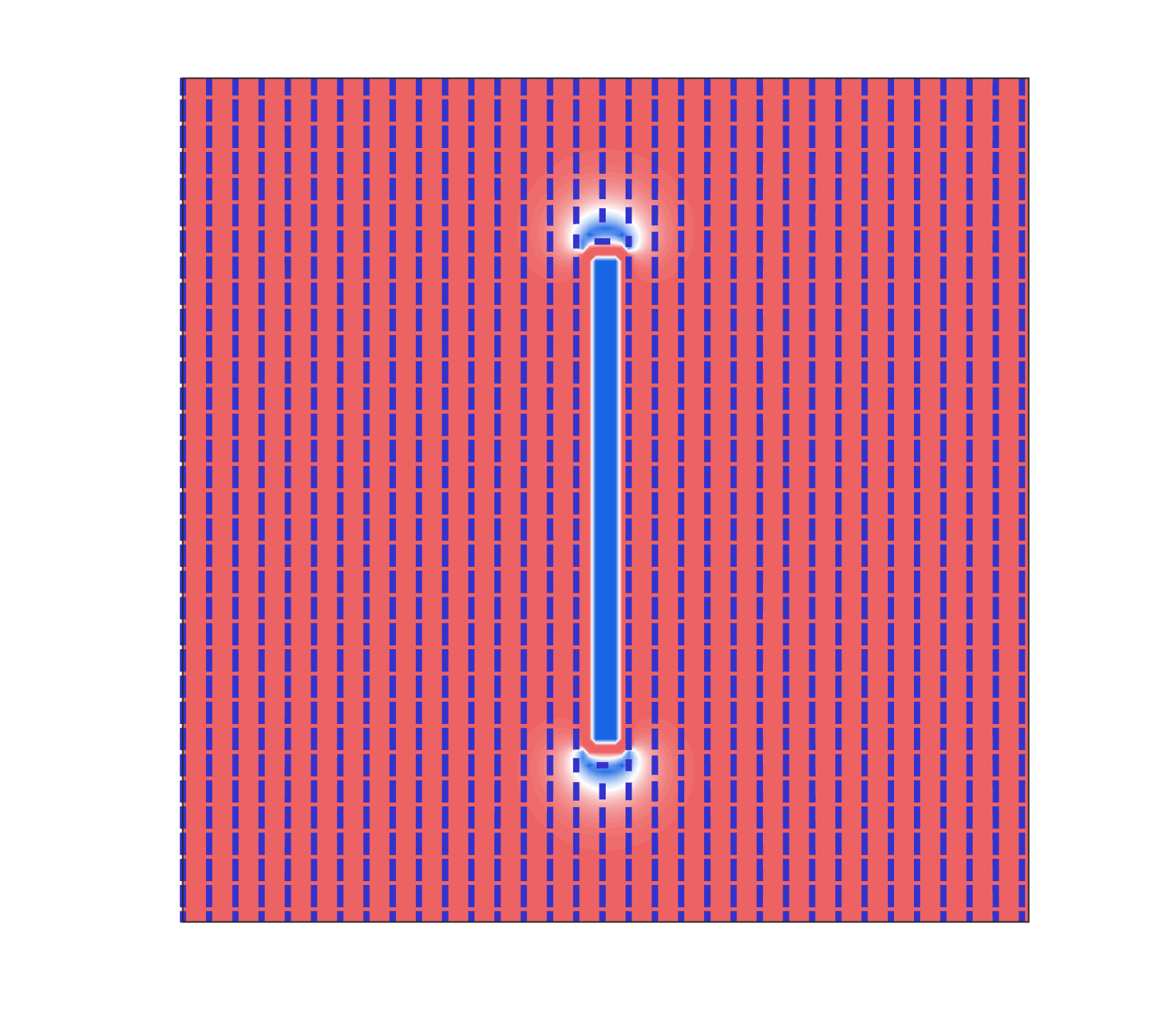}
	\end{center}
	\caption{Interaction of a pair of active $\pm \tfrac{1}{2}$
		defects with a rectangular obstacle with the tangential anchoring
		at $t = 0.01$, $0.05$, $0.1$, $1.0$, $20$, respectively, with
		activity parameter values  $\chi_{fluid} = -5$ and $\xi_{fluid} =
			0.1$. Defects disappear near the obstacle. }\label{fig:tangential_chineg}
\end{figure}

\begin{figure}[H]
	\begin{center}
		\includegraphics[width=0.18\textwidth]{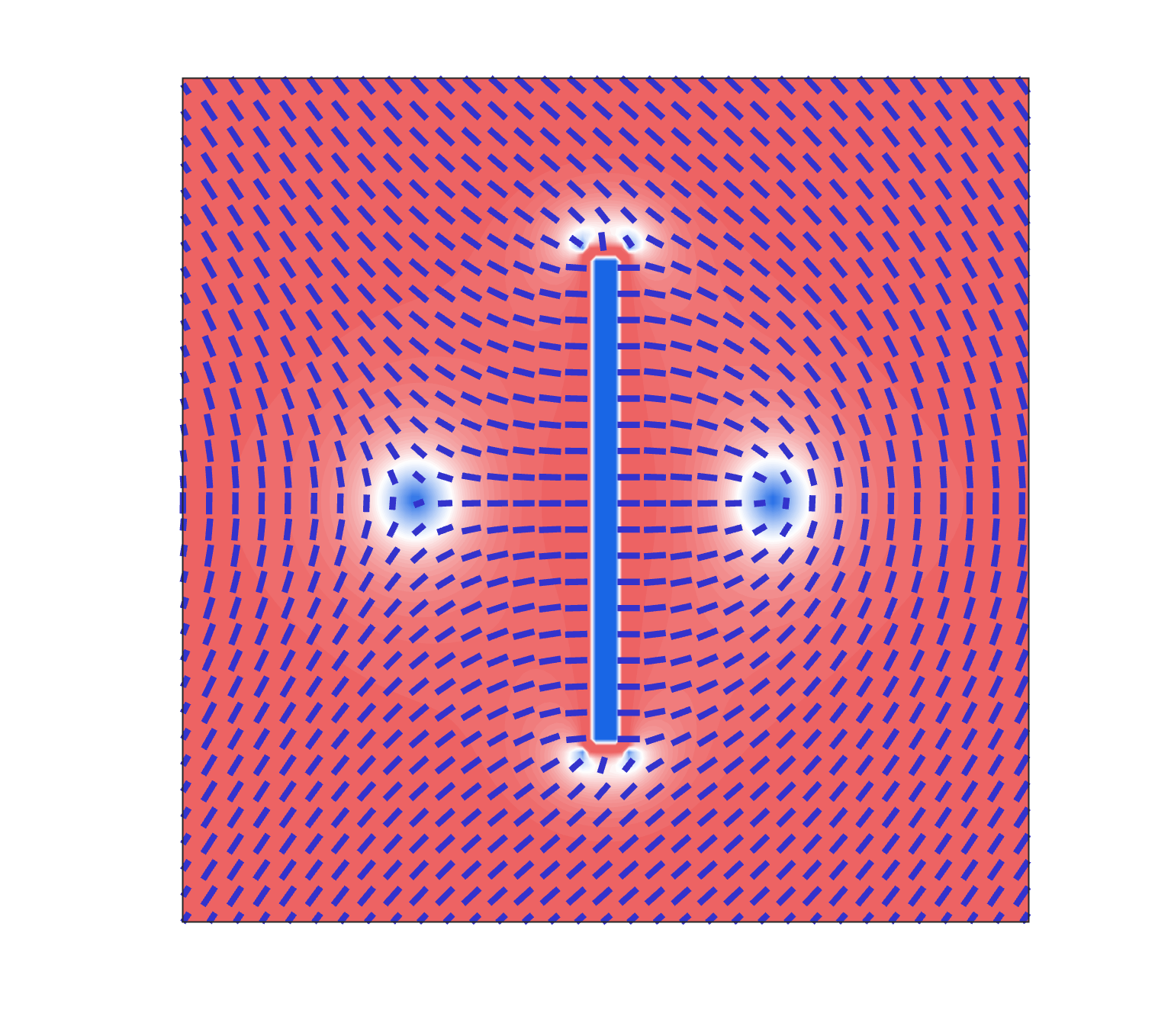}
		\includegraphics[width=0.18\textwidth]{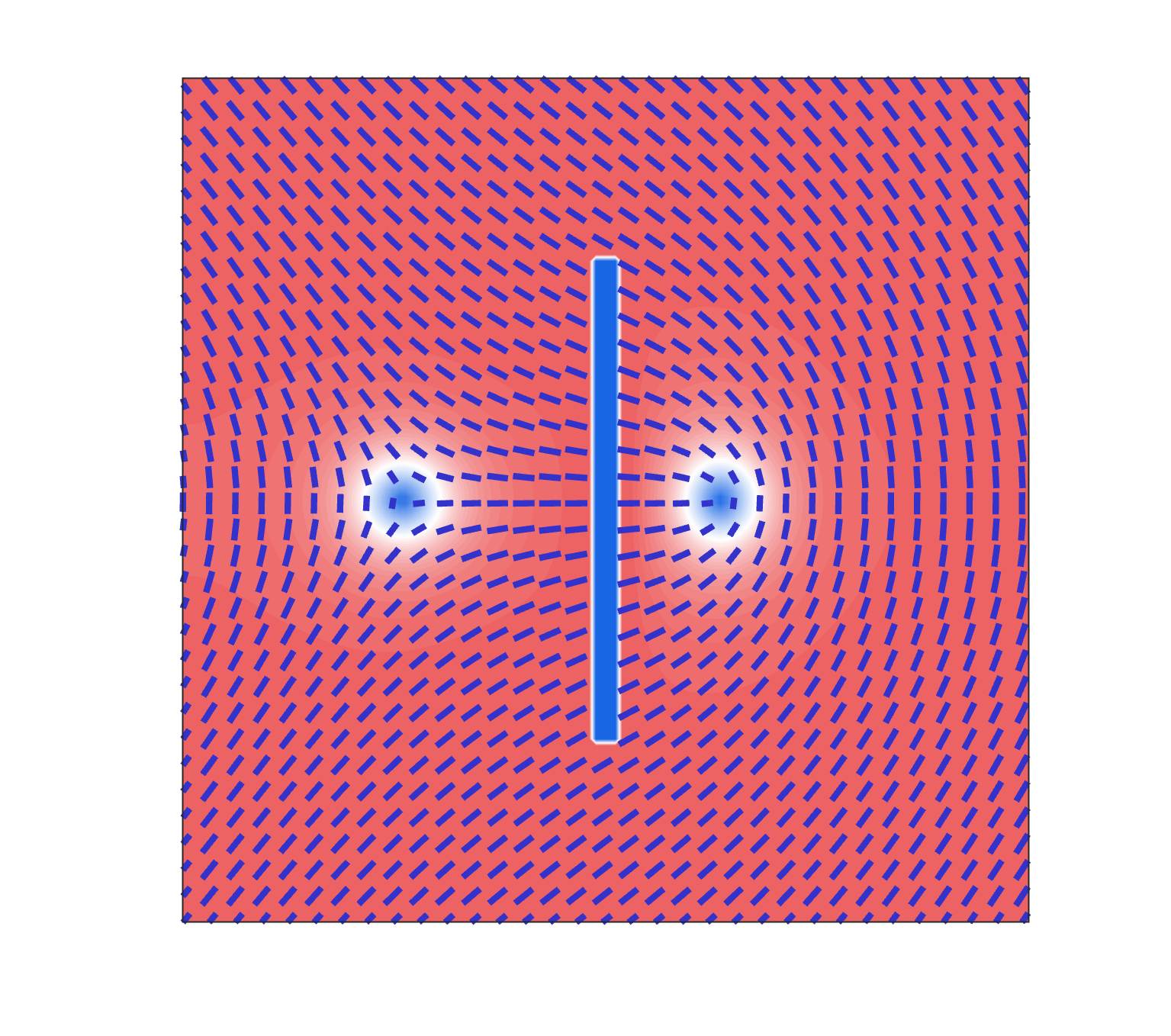}
		\includegraphics[width=0.18\textwidth]{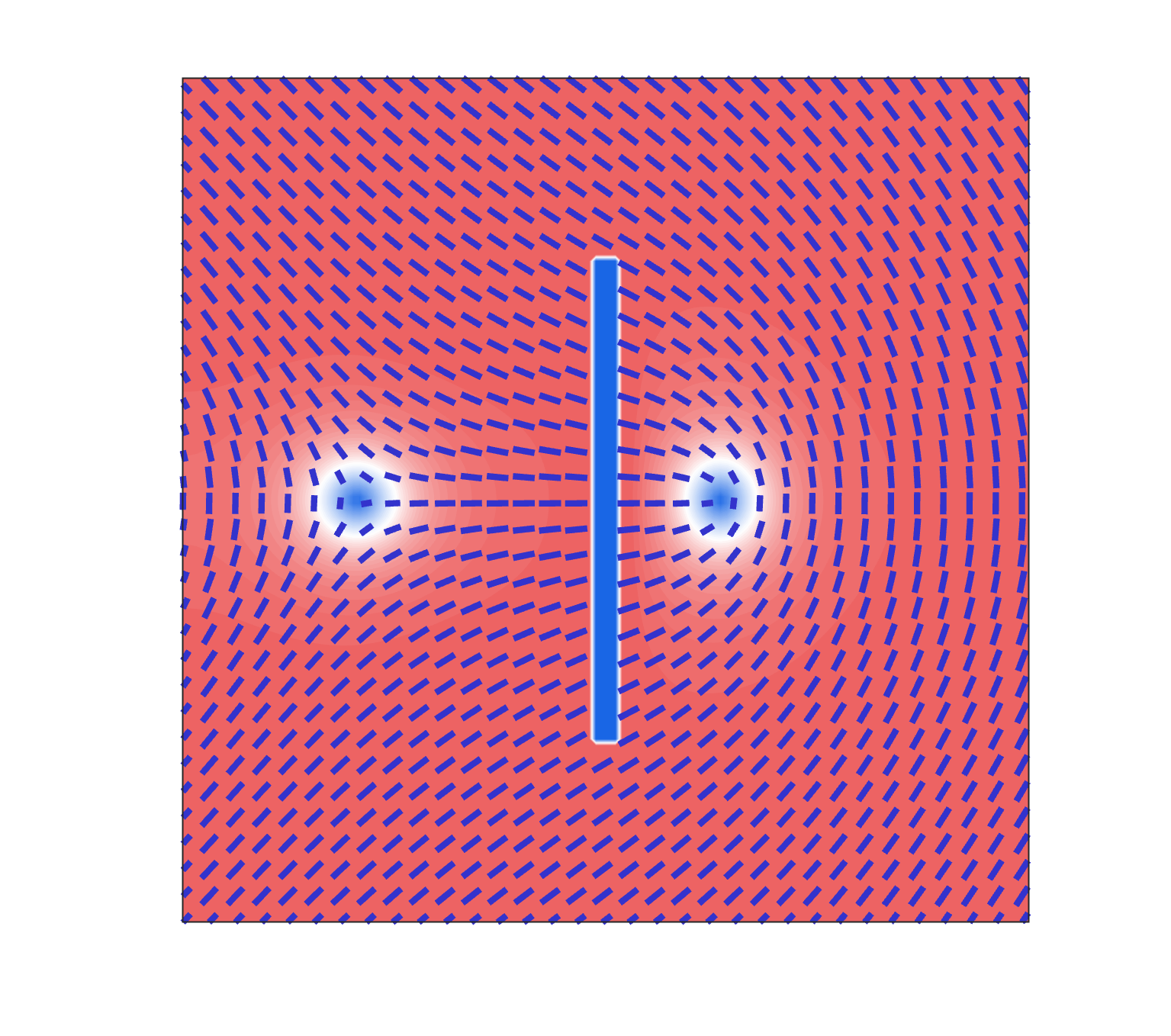}
		\includegraphics[width=0.18\textwidth]{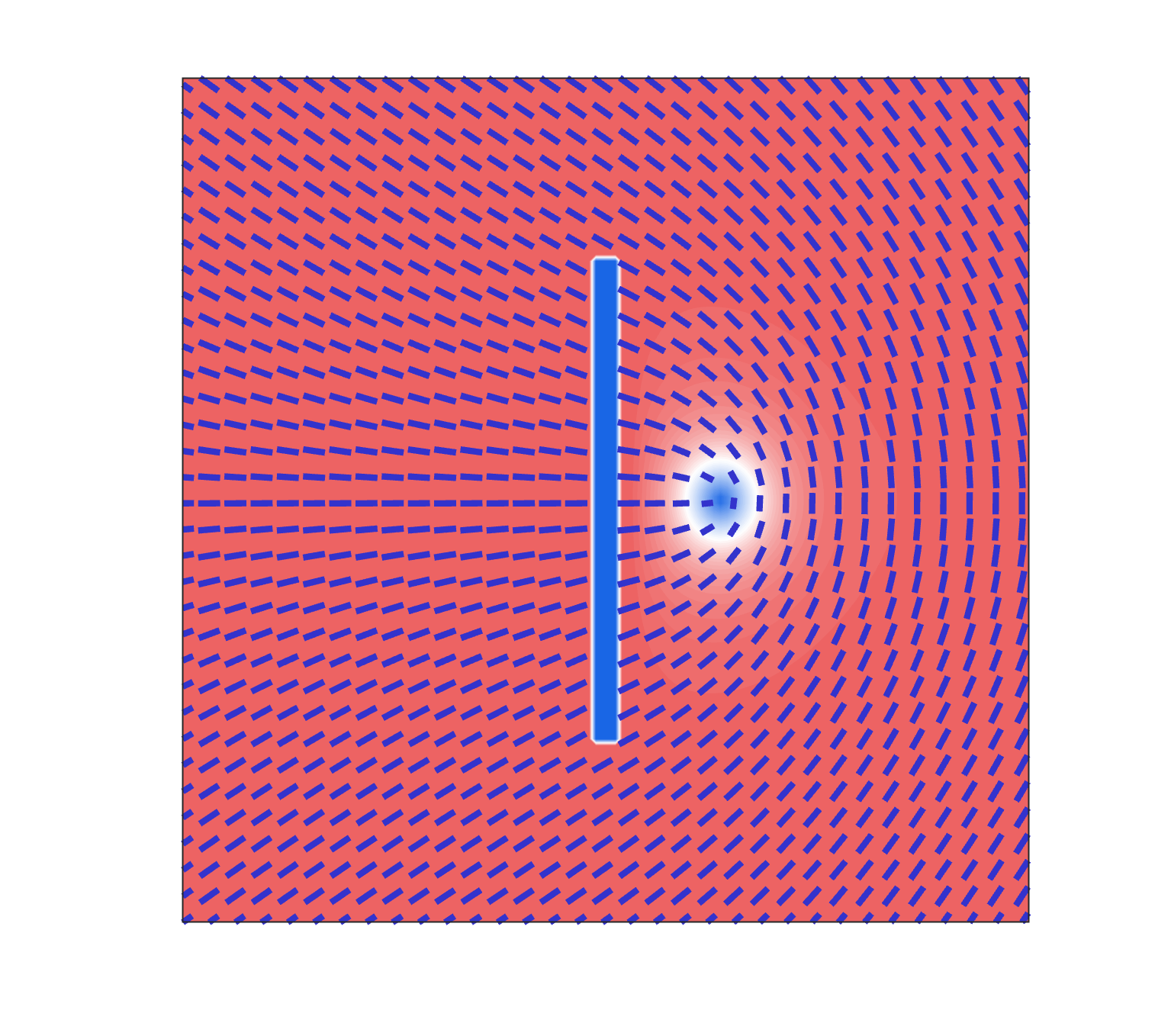}
		\includegraphics[width=0.18\textwidth]{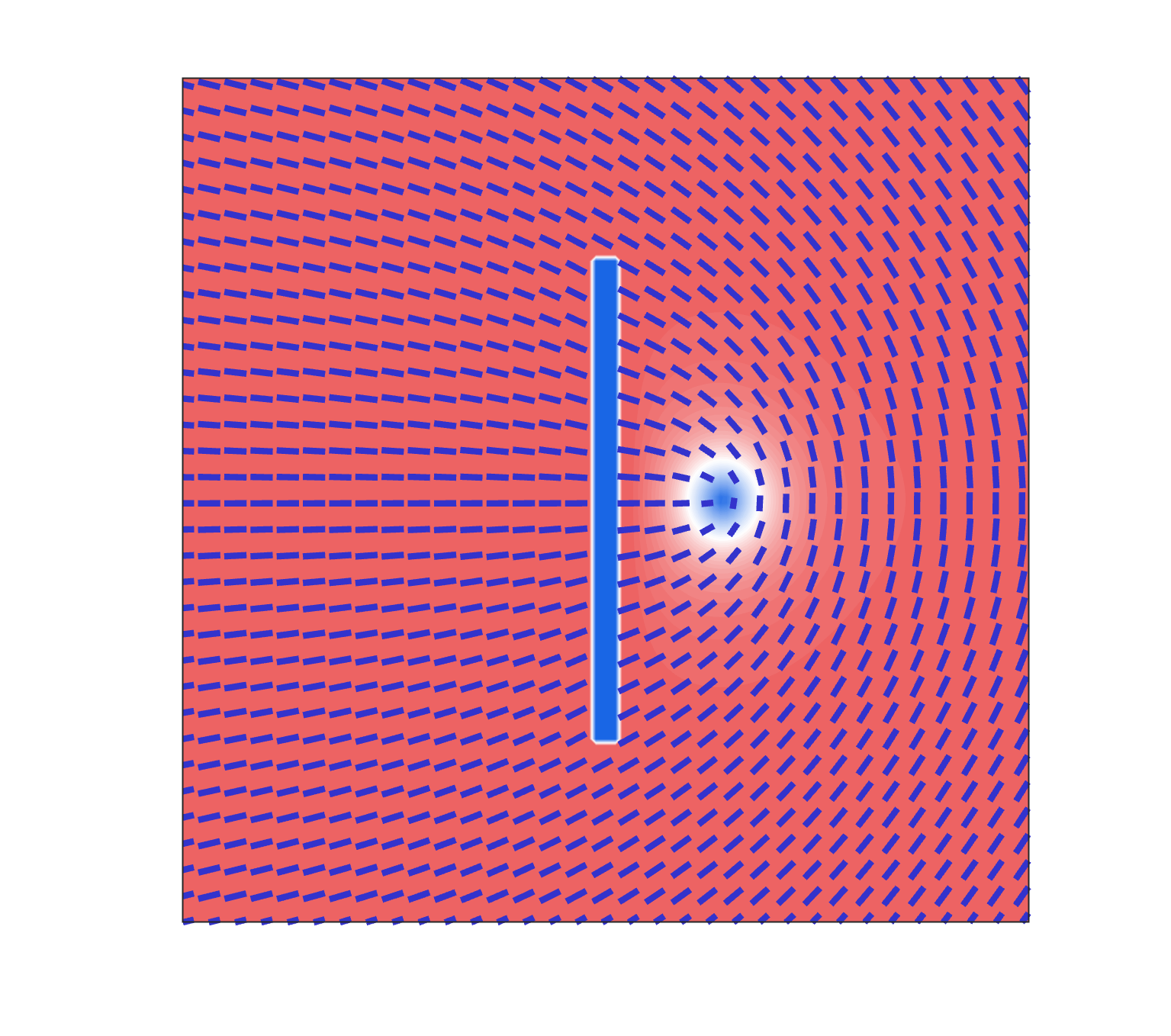}
	\end{center}
	\caption{Interaction of a pair of active $\pm \tfrac{1}{2}$
		defects with a rectangular obstacle under anchoring consistent
		with the initial condition at $t = 0.5$, $3$, $5$, $7$, $20$,
		respectively, with activity parameter values  $\chi_{fluid} = -5$
		and $\xi_{fluid} = 0.1$. The left defect leaves the computational
		domain after a while.}\label{fig:initial_chineg}
\end{figure}

\begin{figure}[H]
	\begin{center}
		\includegraphics[width=0.24\textwidth]{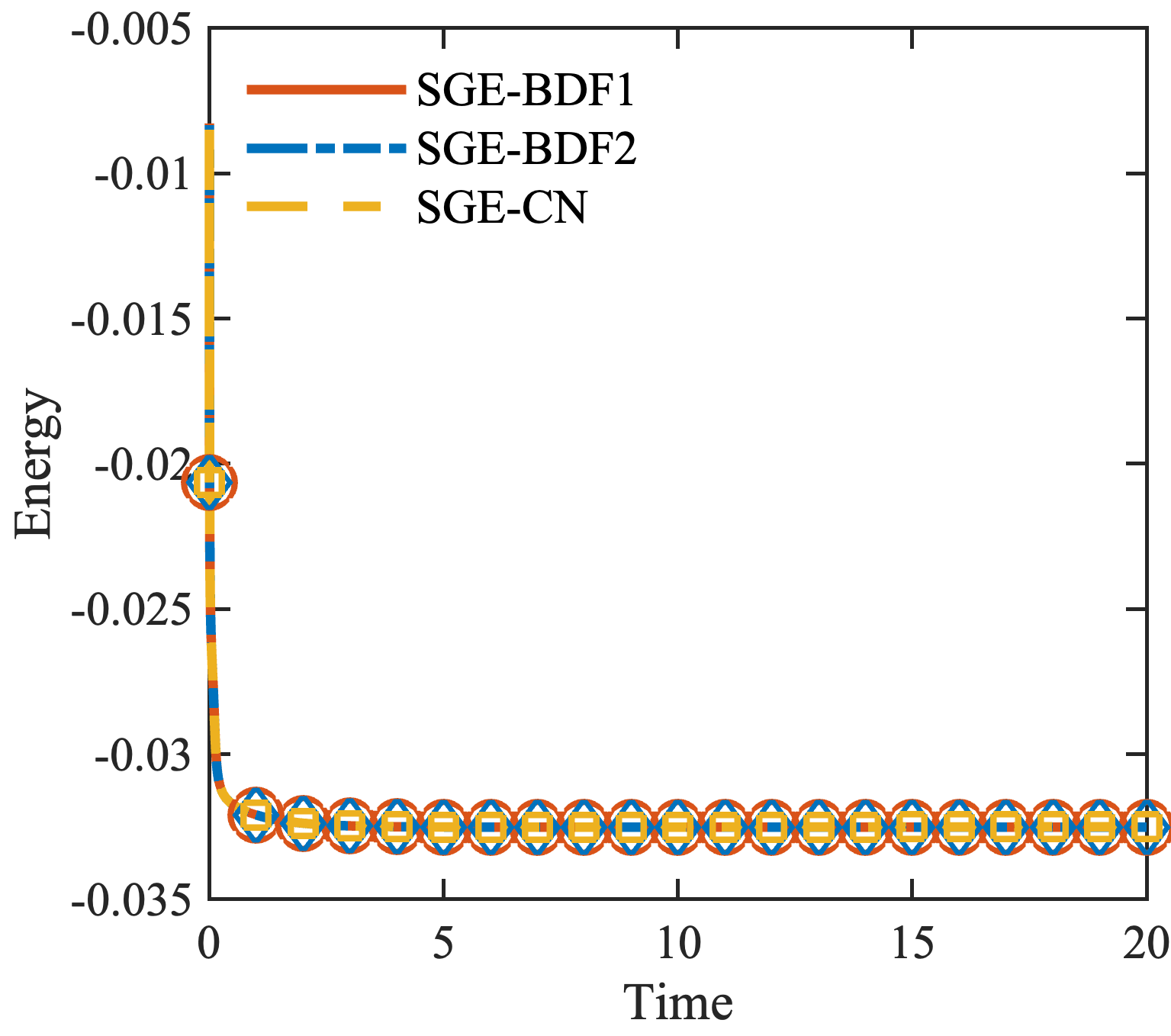}
		\includegraphics[width=0.24\textwidth]{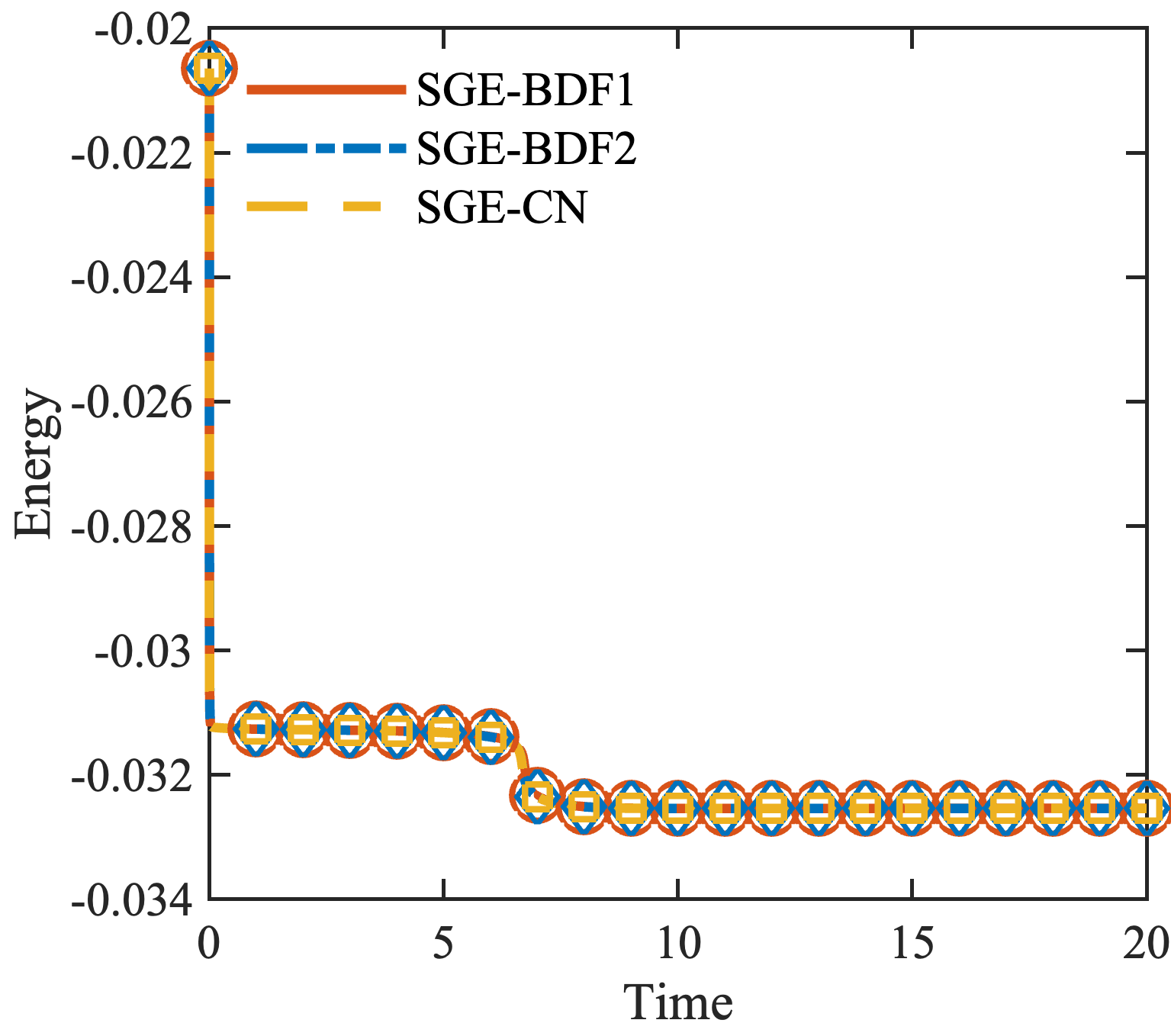}
	\end{center}
	\caption{Time evolution of the free energy for two different test
		cases solved using the three proposed schemes. From the left to
		right, the plots correspond to: (1) the active liquid crystal
		with $\chi_{fluid} = -5$ and $\xi_{fluid} = 0.1$ with the
		tangential anchoring; and (2) the active liquid crystal with
		$\chi_{fluid} = -5$ and $\xi_{fluid} = 0.1$ with the anchoring
		consistent with the initial condition.}\label{fig:ex2_fenergy_2}
\end{figure}

Figure~\ref{fig:tangential_chineg} depicts the time evolution of
the principal eigenvalue difference and the defect field for an
active liquid crystal with weak tangential anchoring at the solid
boundary. The defect is absorbed by the boundary due to the
anchoring effect, and the system eventually reaches a steady state.
Compared with the case of normal anchoring-where the steady-state
defect field is aligned horizontally-here the director field is
aligned vertically, reflecting the influence of the tangential
anchoring at the solid boundary.

Figure~\ref{fig:initial_chineg} depicts the time evolution of the
principal eigenvalue difference and the defect field for the active
liquid crystal under the anchoring condition
$$
	\mathbf{Q}|_{\partial D} = \mathbf{Q}_0|_{\partial D}.
$$
The steady-state behavior differs from the previous cases. The
$-\tfrac{1}{2}$ defect gradually drifts away from the obstacle and
eventually leaves the domain. In contrast, due to the active force
and the imposed anchoring, the $+\tfrac{1}{2}$ defect is driven
from head to tail, causing it to move toward the obstacle where it
is ultimately trapped. The final panel at $t = 20$ shows that
$+\tfrac{1}{2}$ remains trapped in the domain in this scenario.
Moreover, despite the continuous accumulation of active stress near
the obstacle, the director field remains isotropic and stationary
within the solid region, further validating the effectiveness of
the proposed treatment for obstacles in long-time simulations.

Figures~\ref{fig:ex2_fenergy_2} show the corresponding free energy
for the two cases. The energy curves obtained by the three
different schemes are in close agreement; they decrease
monotonically. These two cases illustrate the strong influence that
the boundary anchoring can have on the defect dynamics.

\subsection{Shape effect of the obstacle  on the dynamics of active
	liquid crystals}
In this example, we further investigate how the shape of the solid
obstacle influences the dynamics of active liquid crystals. The
computational domain is chosen as $\Omega = (0, 4)^2$.
	{\color{blue} Stabilization parameter is set to $\kappa = 5$ in all
		cases.} The initial velocity field is set to zero, and the initial
condition for the
$\mathbf{Q}$-tensor field is defined as

\begin{equation*}
	\mathbf{Q}_0 = S_{eq}\left(\tfrac{\mathbf{n}_0
		\mathbf{n}_0^\top}{\|\mathbf{n}_0\|^2} -
	\tfrac{\mathbf{I}}{3}\right), \quad \mathbf{n}_0 =
	\left\lbrace
	\begin{aligned}
		 & (1, 0, 0)^\top, \ \sqrt{(x - 2)^2 + (y - 2)^2} < 0.5, \\
		 & (0, 1, 0)^\top, \ \text{otherwise}.
	\end{aligned}
	\right.
\end{equation*}

Homogeneous Dirichlet boundary conditions are imposed on the
velocity field, while homogeneous Neumann boundary conditions are
applied to the $\mathbf{Q}$-tensor field at the boundary of the
computational domain. The model parameters are specified as follows
\begin{equation*}
	\begin{aligned}
		 & b_{solid} = 10^5, \ \eta_{fluid} = 1, \ \eta_{solid} = 10^3,
		\ K_{fluid} = 10^{-3},                                          \\
		 & N_{fluid} = 7.5, \ N_{solid} = 0.1, \ \Gamma_{fluid} = 10^2,
		\ \Gamma_{solid} = 10^3, \ A = \tfrac{2}{15}.
	\end{aligned}
\end{equation*}

We choose spatial step size as $h = \tfrac{1}{64}$ and time step
size as $\tau = 5 \times 10^{-4}$. In all the subsequent examples,
$\mathbf{Q}_\star$ is chosen to be consistent with the initial
condition, $\mathbf{Q}_\star = \mathbf{Q}_0$.

We examine the evolution of active liquid crystals confined within
solid obstacles of various shapes, under different signs of
activity parameters. The following obstacle geometries are
considered, each represented via indicator functions.

\begin{itemize}
	\item Circle:
	      \begin{equation*}
		      \phi = \left\lbrace
		      \begin{aligned}
			      1, & \ \sqrt{(x - 2)^2 + (y - 2)^2} < 1.4, \\
			      0, & \ \text{otherwise}.
		      \end{aligned}
		      \right.
	      \end{equation*}
	\item Horizontal ellipse:
	      \begin{equation*}
		      \phi = \left\lbrace
		      \begin{aligned}
			      1, & \ (\tfrac{x - 2}{1.4})^2 + (\tfrac{y - 2}{0.7})^2 < 1, \\
			      0, & \ \text{otherwise}.
		      \end{aligned}
		      \right.
	      \end{equation*}
	\item Vertical ellipse:
	      \begin{equation*}
		      \phi = \left\lbrace
		      \begin{aligned}
			      1, & \ (\tfrac{x - 2}{0.7})^2 + (\tfrac{y - 2}{1.4})^2 < 1, \\
			      0, & \ \text{otherwise}.
		      \end{aligned}
		      \right.
	      \end{equation*}
	\item Star:
	      \begin{equation*}
		      \phi = \left\lbrace
		      \begin{aligned}
			      1, & \ \sqrt{(x - 2)^2 + (y - 2)^2} < 1 + 0.4 \cos{\Big(5
				                                                    {\rm atan2}(y - 2, x - 2)\Big)}, \\
			      0, & \ \text{otherwise}.
		      \end{aligned}
		      \right.
	      \end{equation*}
\end{itemize}

Figures~\ref{fig:incircle-compare}--\ref{fig:star-compare} show the
contour plots of the principal eigenvalue and the corresponding
director field at selected time instances. In all cases, the
initially circular defect splits into four distinct defects-two of
type $+\tfrac{1}{2}$ and two of type $-\tfrac{1}{2}$. These defects
subsequently interact under the combined influence of boundary
anchoring, active stresses, and mutual defect interactions.

A primary observation is that the post-splitting behavior strongly
depends on the sign of the active parameter $\chi_{fluid}$. For
$\chi_{fluid} > 0$, the initial circular defect splits into four
defects, with two $-\tfrac{1}{2}$ defects aligned vertically and
two $+\tfrac{1}{2}$ defects aligned horizontally. Whereas when
$\chi_{fluid} < 0$, the alignment is reversed: the two
$-\tfrac{1}{2}$ defects appear horizontally, while the two
$+\tfrac{1}{2}$ defects are oriented vertically.

\begin{figure}[H]
	\centering

	\fbox{%
		\begin{minipage}{0.96\textwidth}
			\centering
			\textbf{$\chi_{fluid} = 10$, $\xi_{fluid} = -0.1$} \\[0.5em]
			\includegraphics[width=0.18\textwidth]{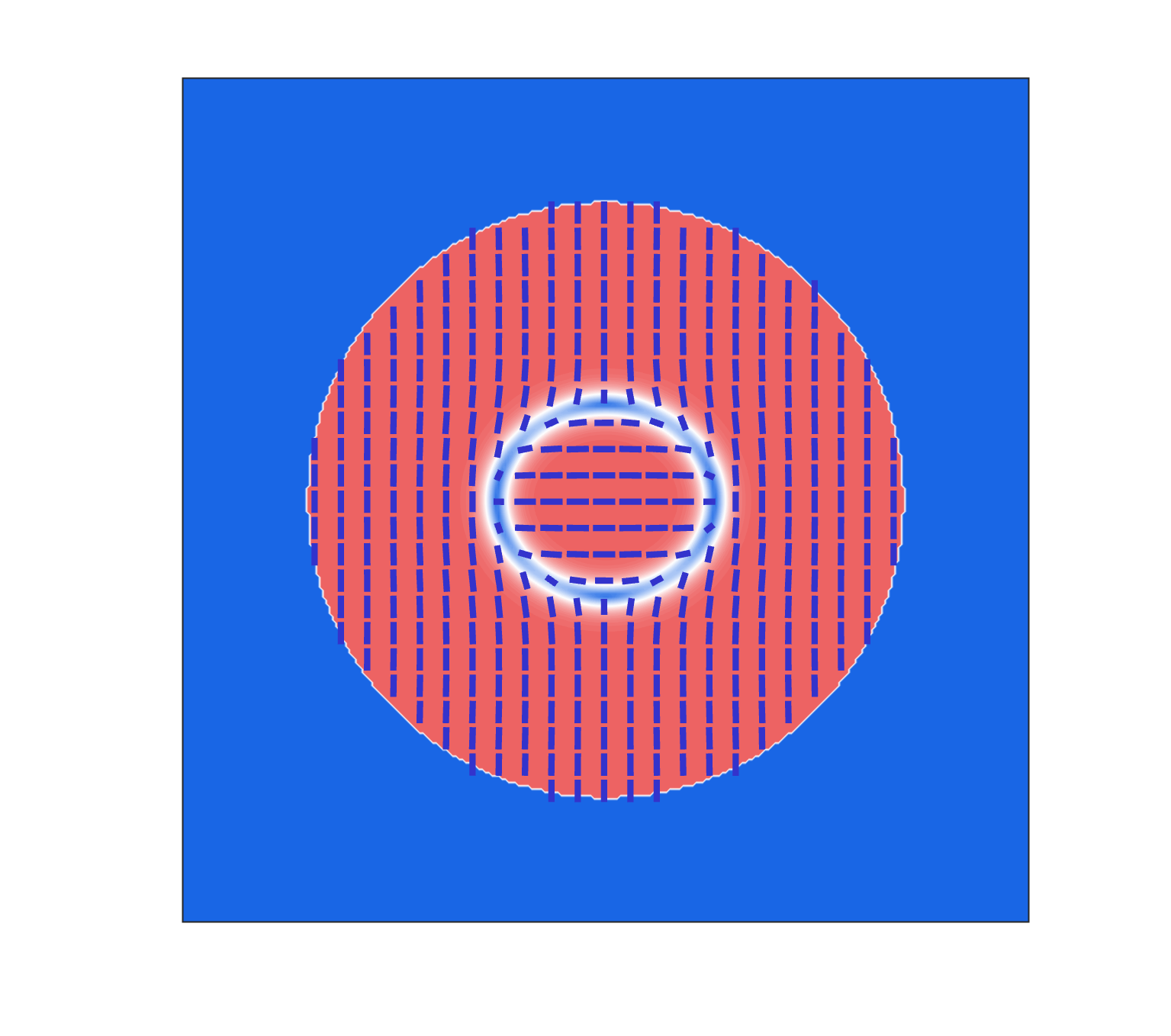}
			\includegraphics[width=0.18\textwidth]{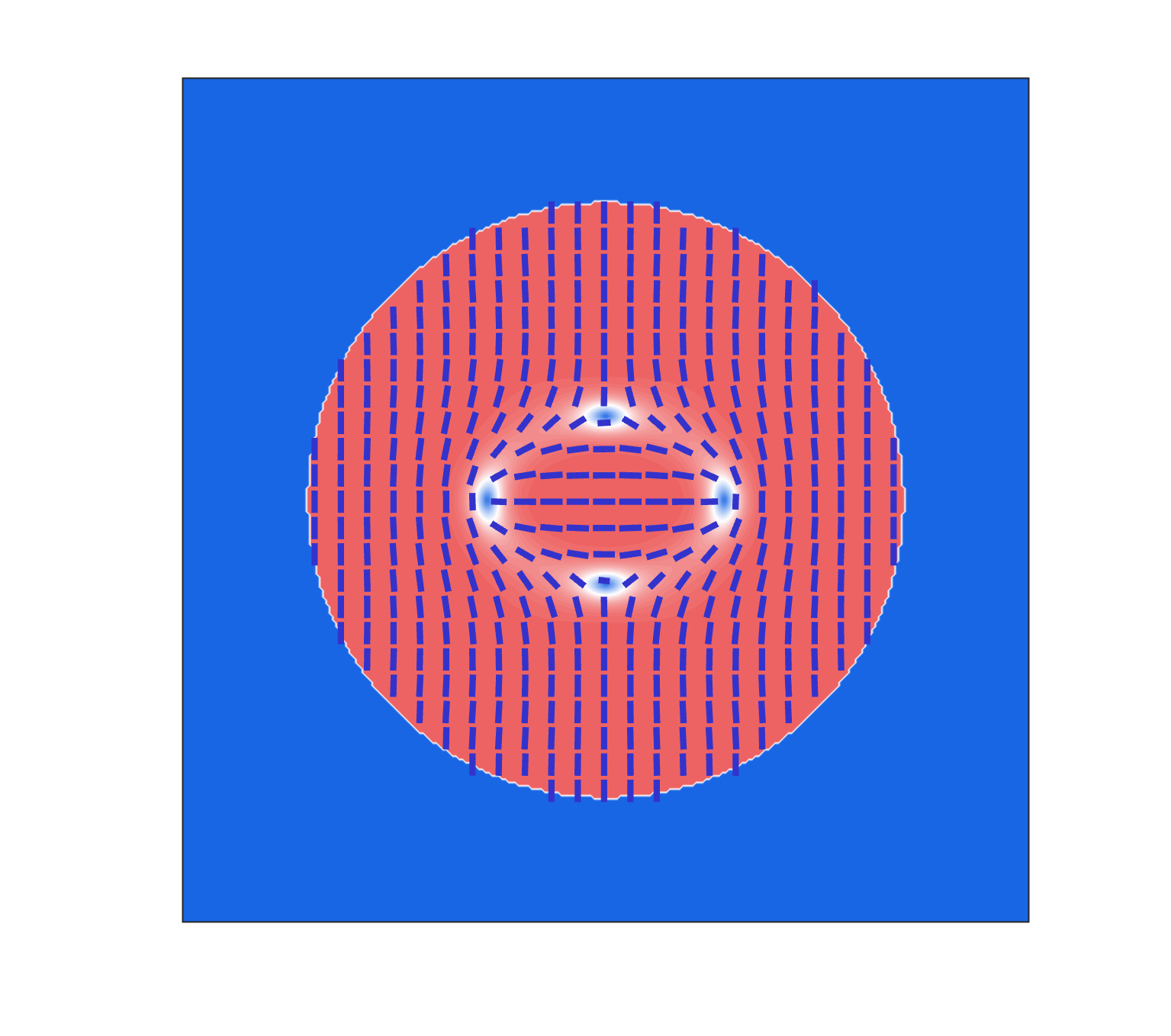}
			\includegraphics[width=0.18\textwidth]{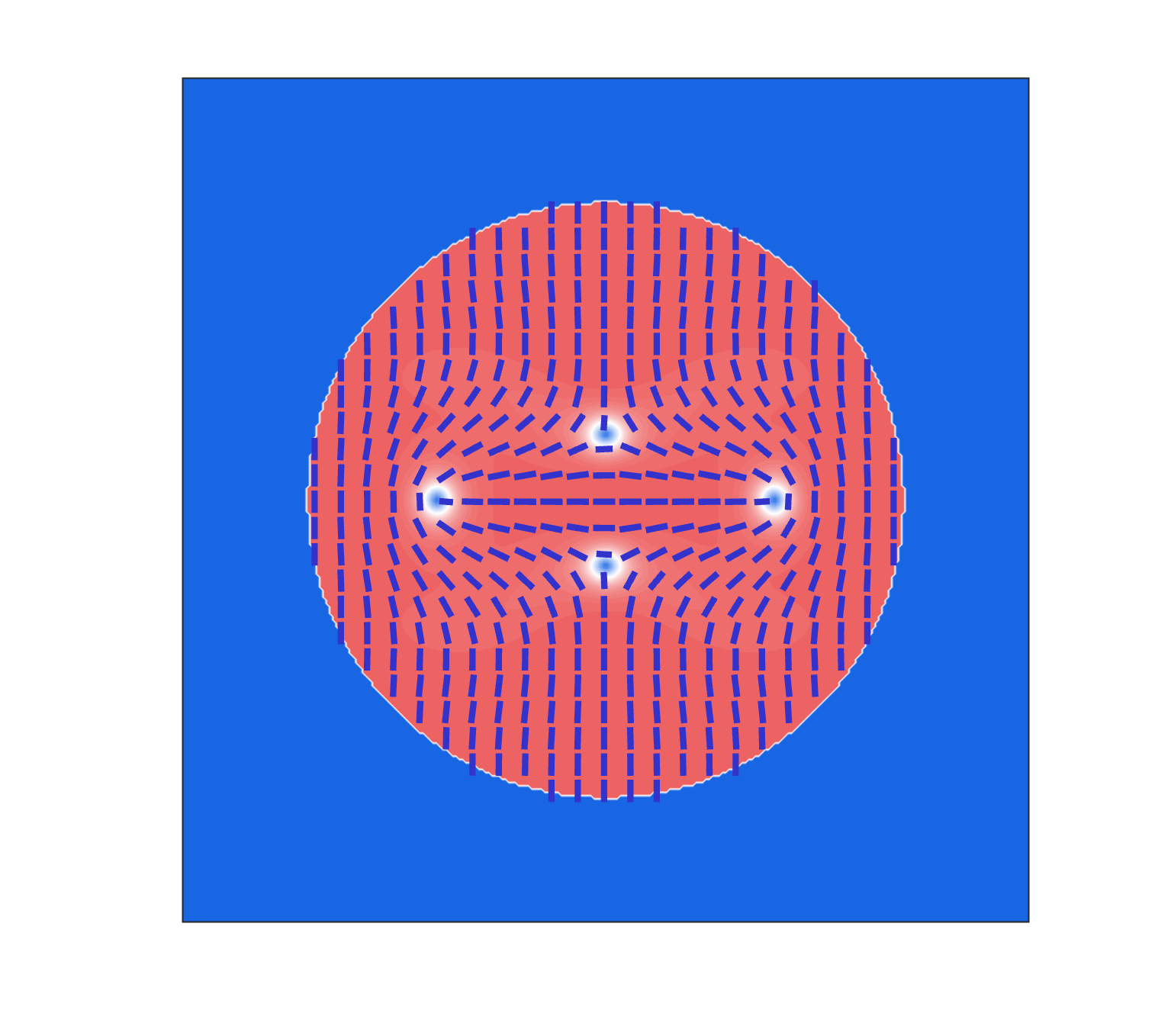}
			\includegraphics[width=0.18\textwidth]{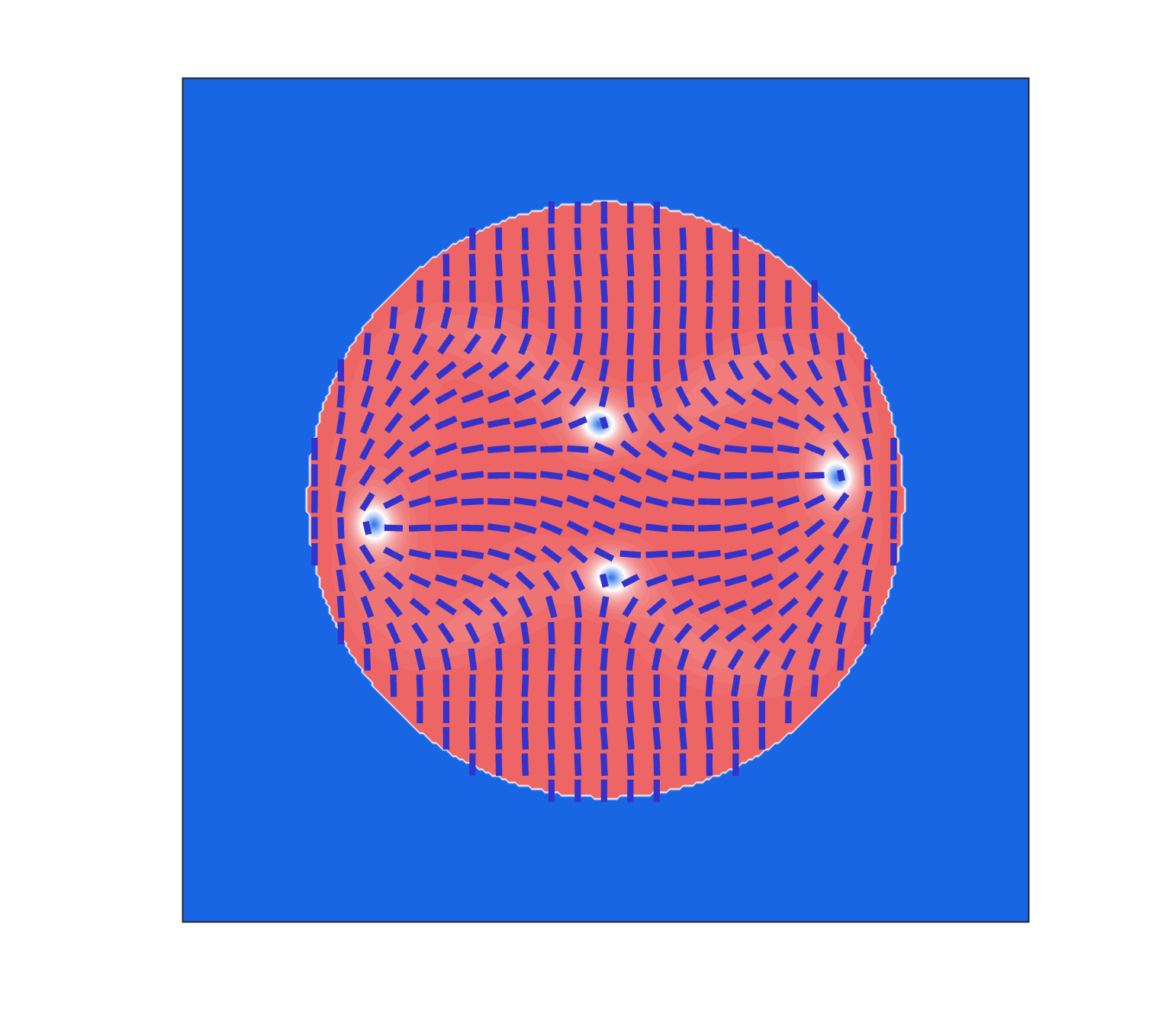}
			\includegraphics[width=0.18\textwidth]{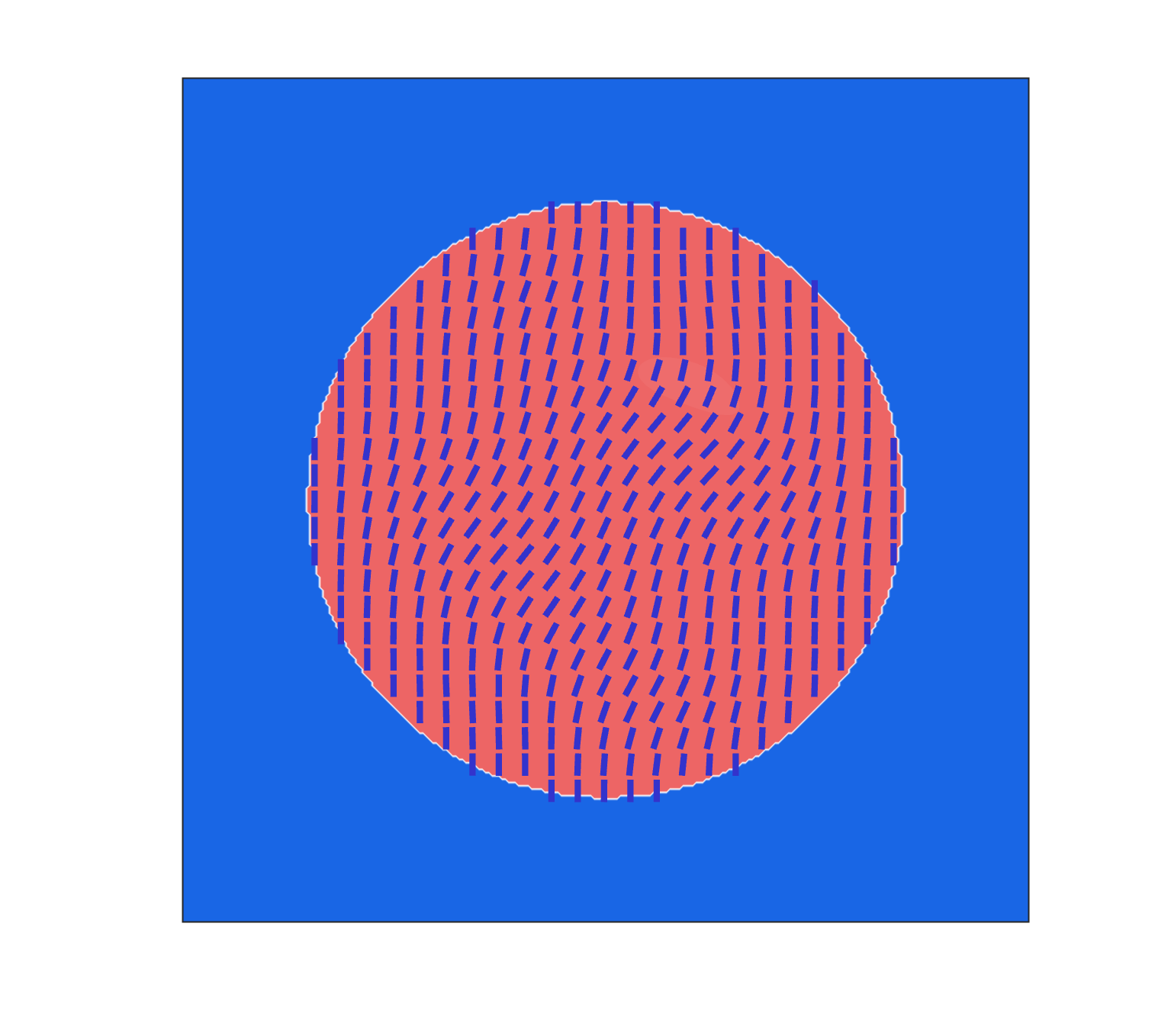}
			\includegraphics[width=0.18\textwidth]{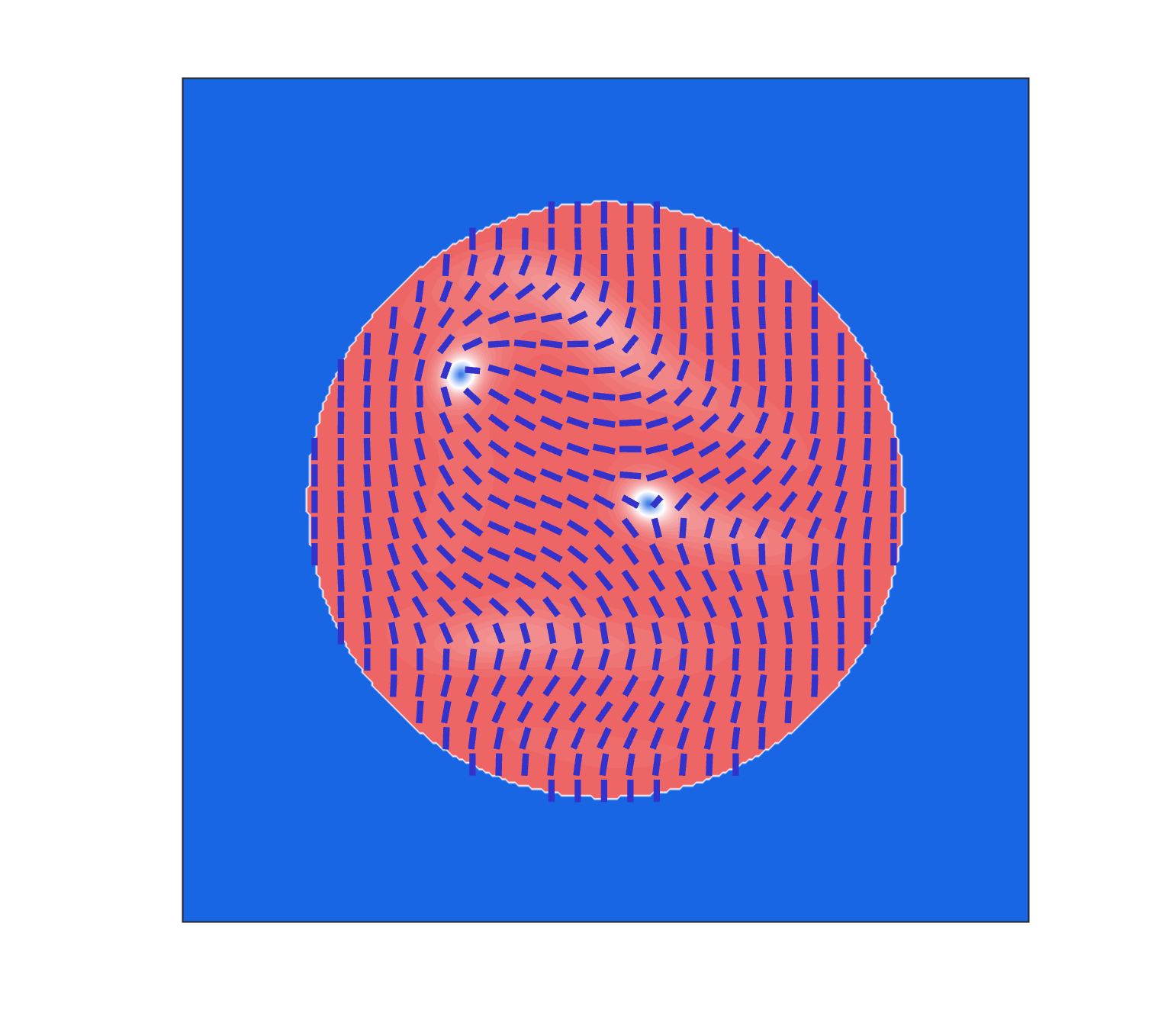}
			\includegraphics[width=0.18\textwidth]{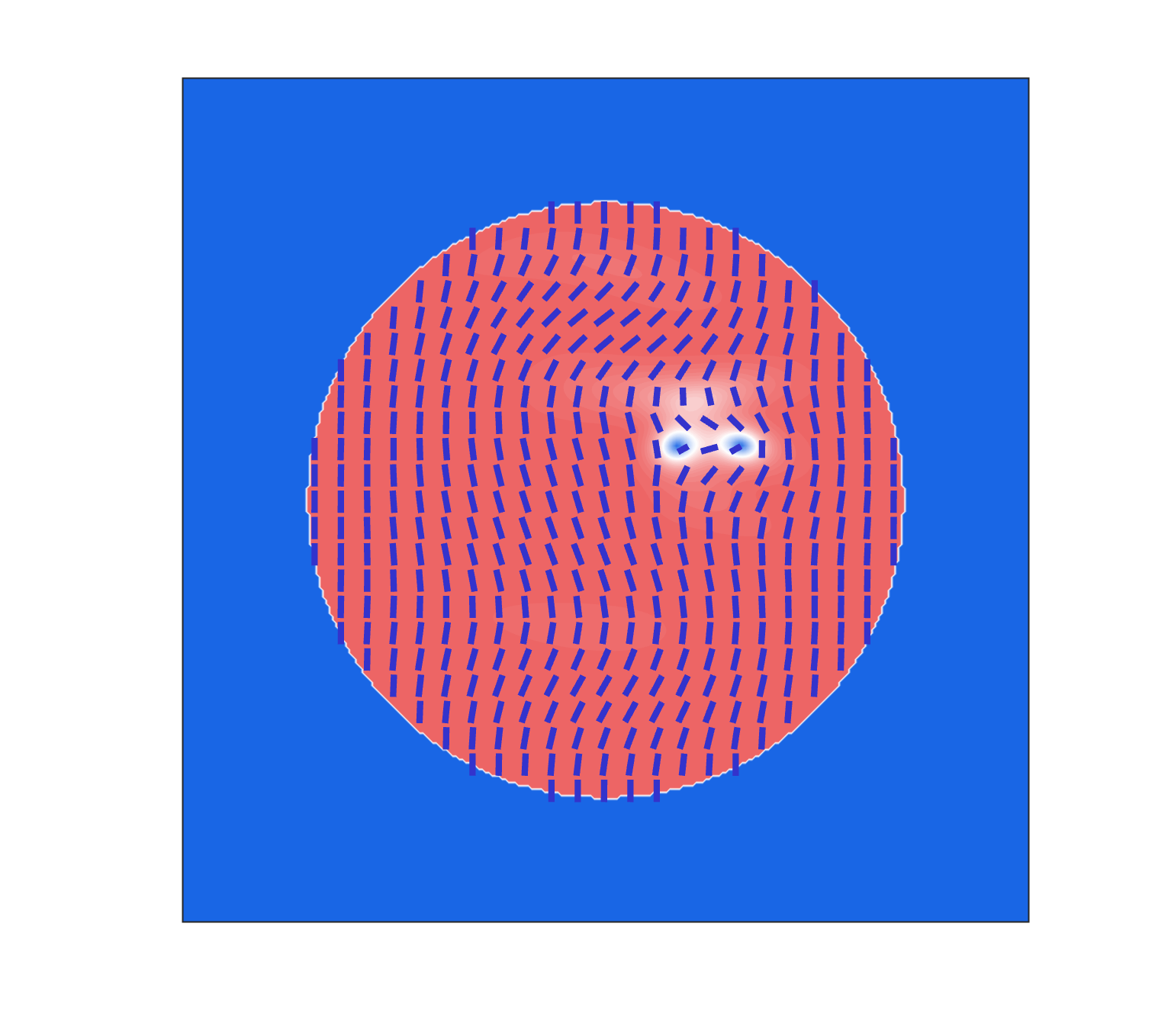}
			\includegraphics[width=0.18\textwidth]{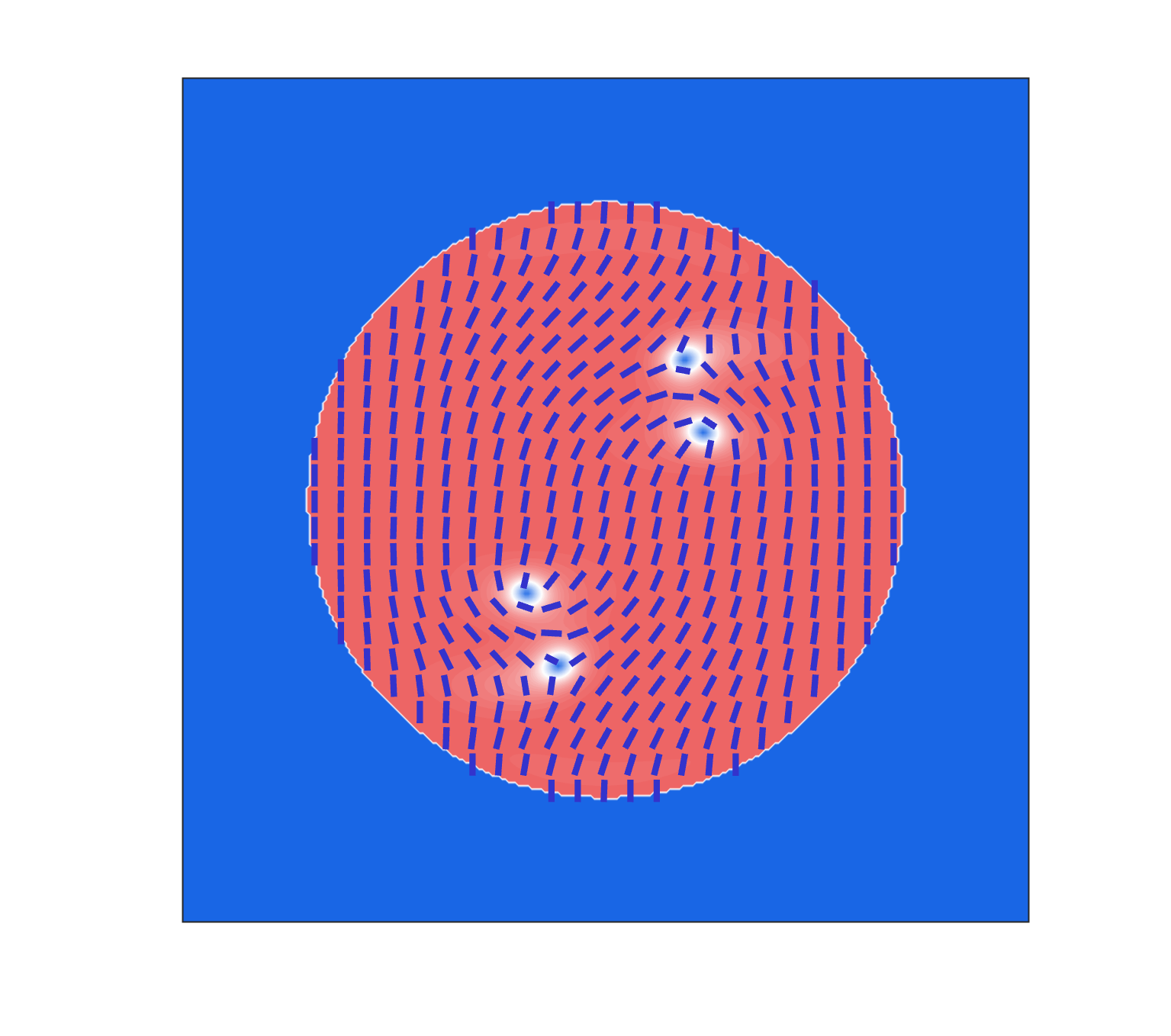}
			\includegraphics[width=0.18\textwidth]{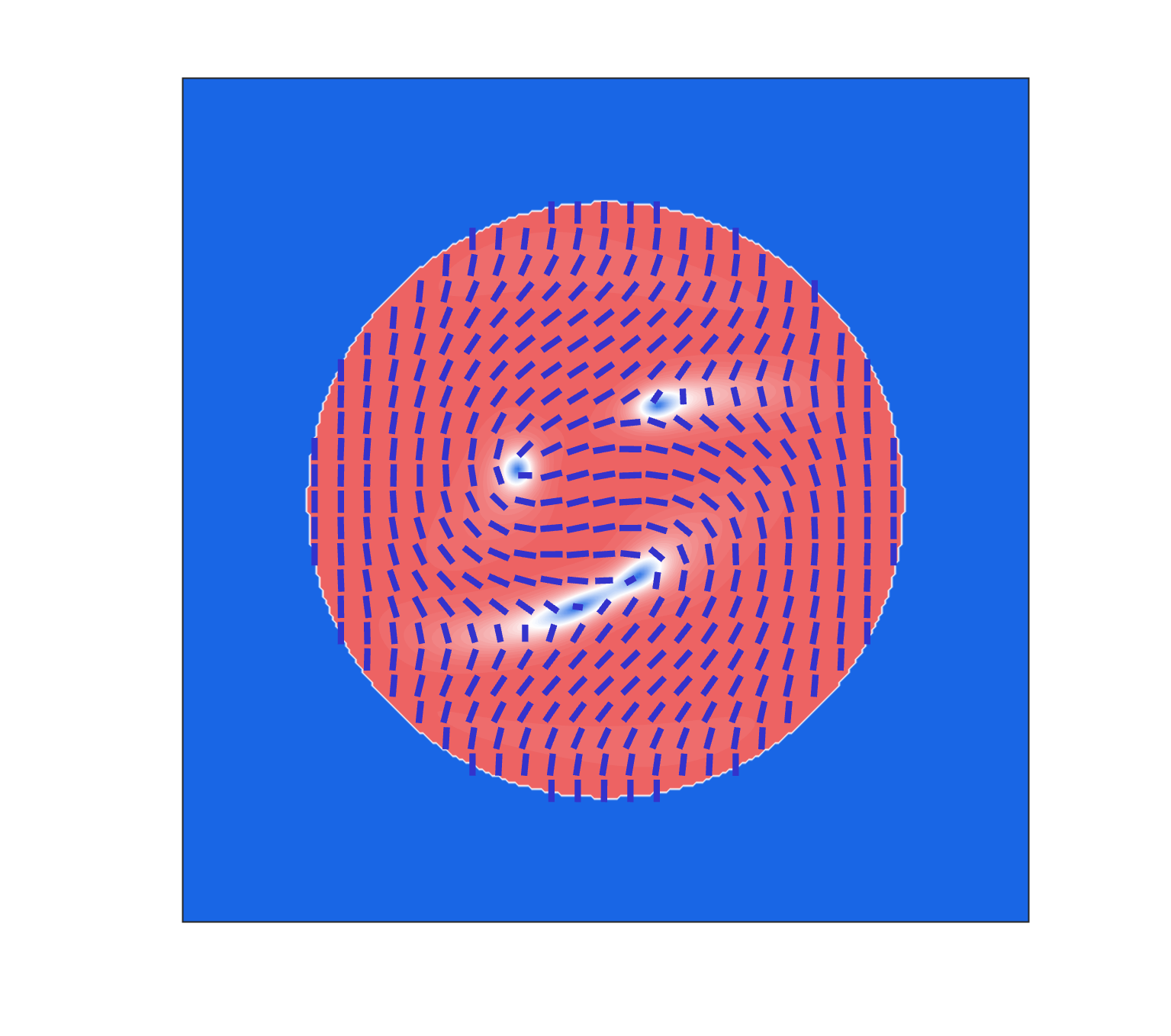}
			\includegraphics[width=0.18\textwidth]{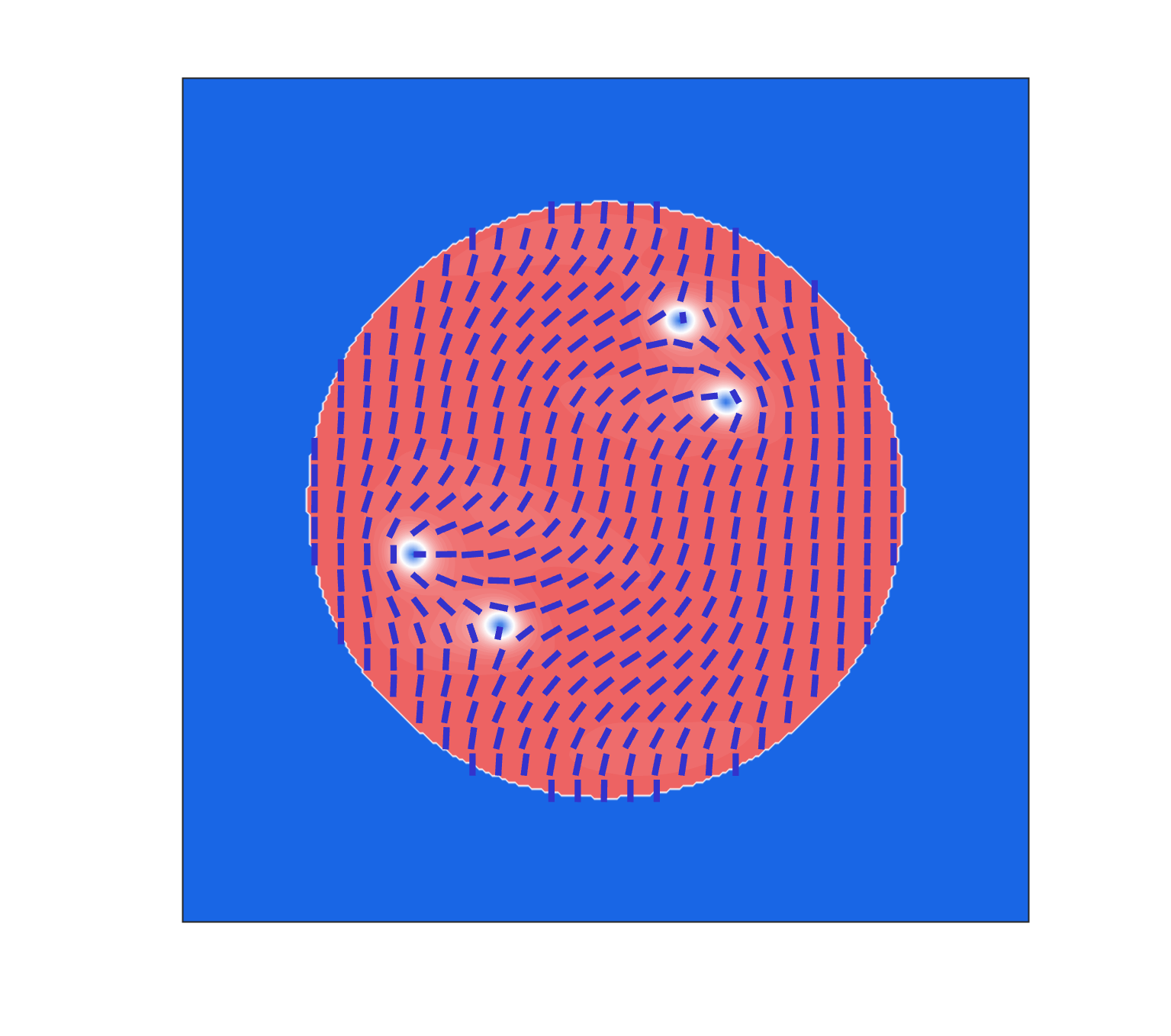}
		\end{minipage}
	}%
	\vspace{1em}
	\fbox{%
		\begin{minipage}{0.96\textwidth}
			\centering
			\textbf{$\chi_{fluid} = -10$, $\xi_{fluid} = 0.1$} \\[0.5em]
			\includegraphics[width=0.18\textwidth]{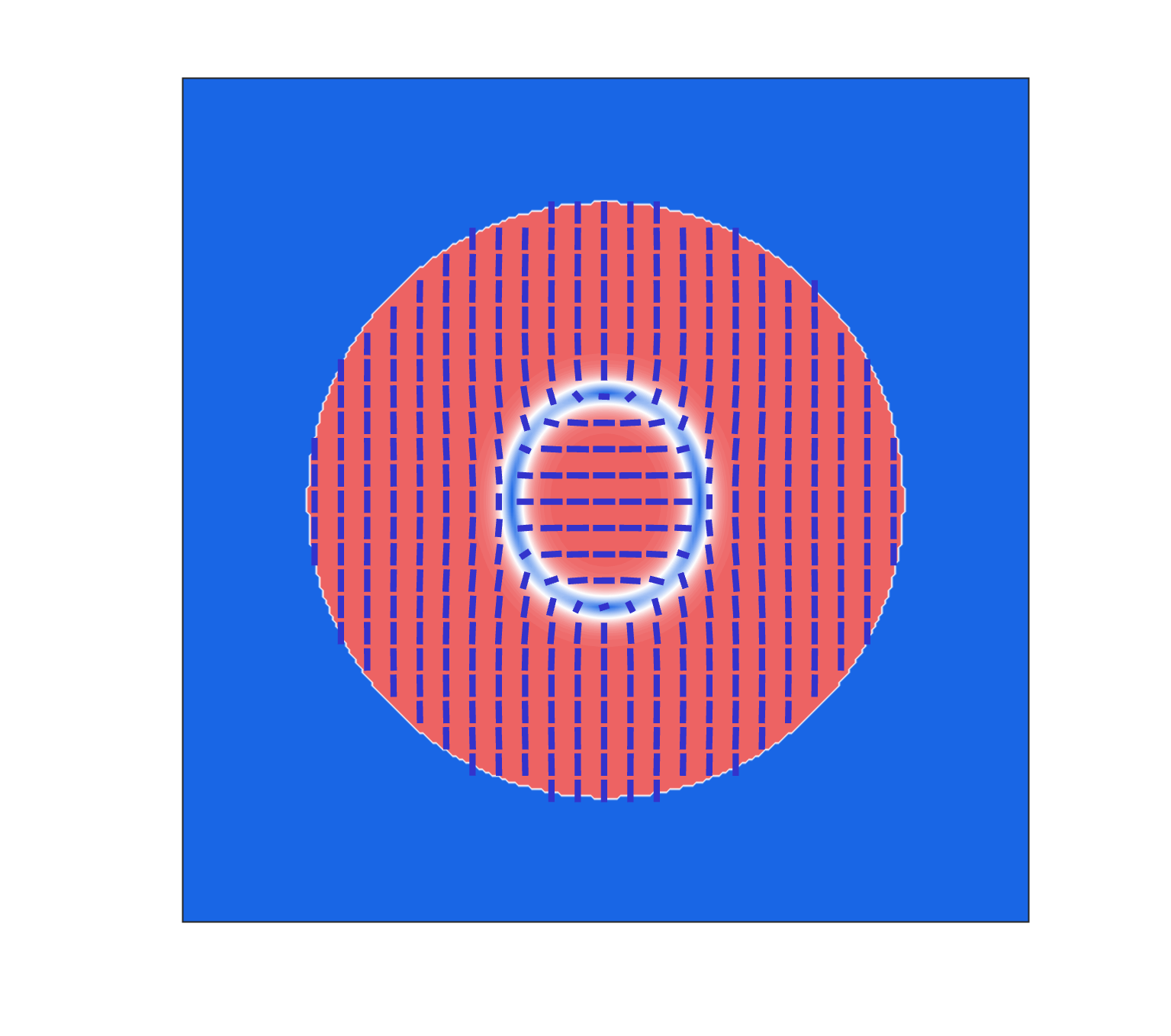}
			\includegraphics[width=0.18\textwidth]{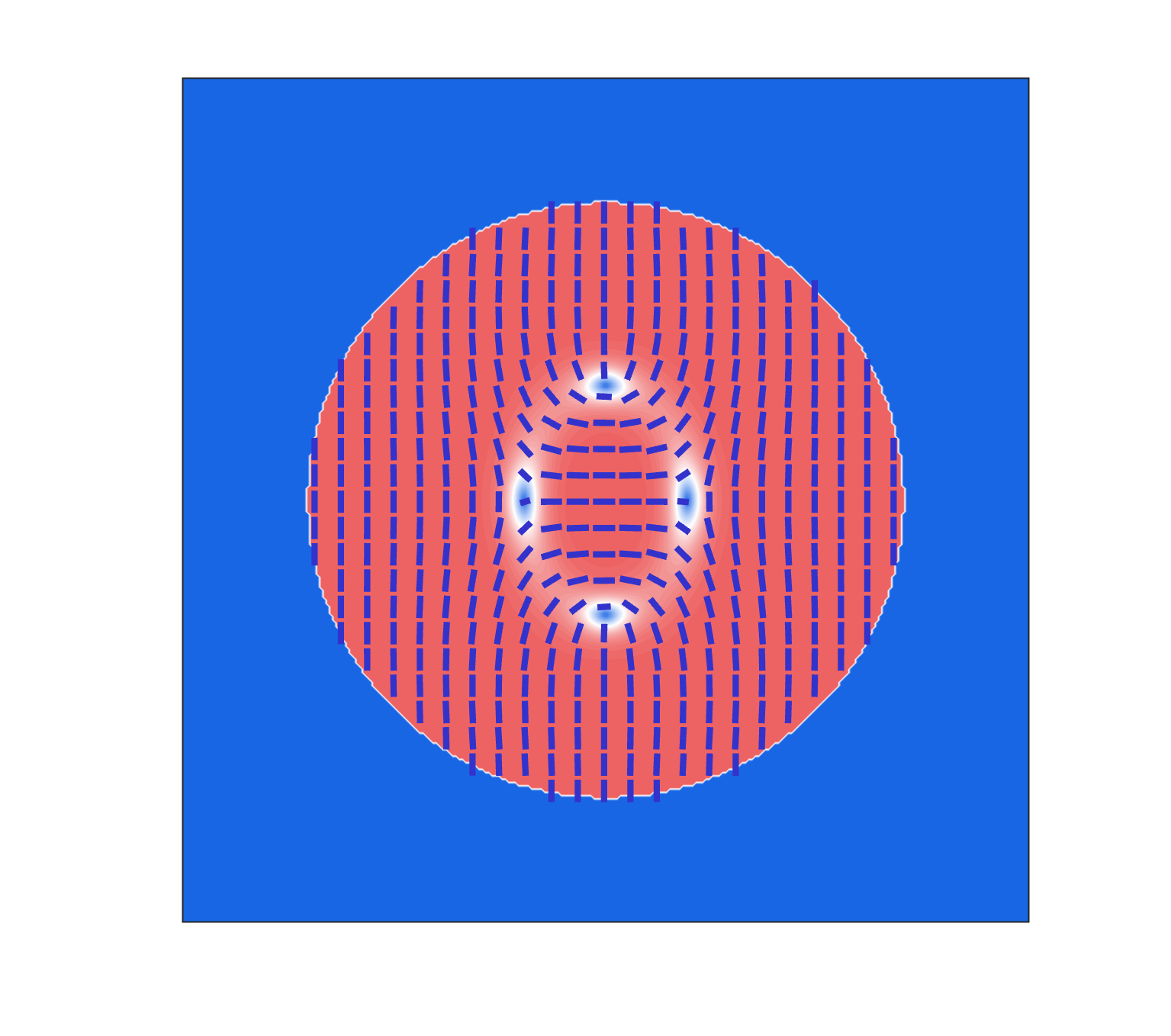}
			\includegraphics[width=0.18\textwidth]{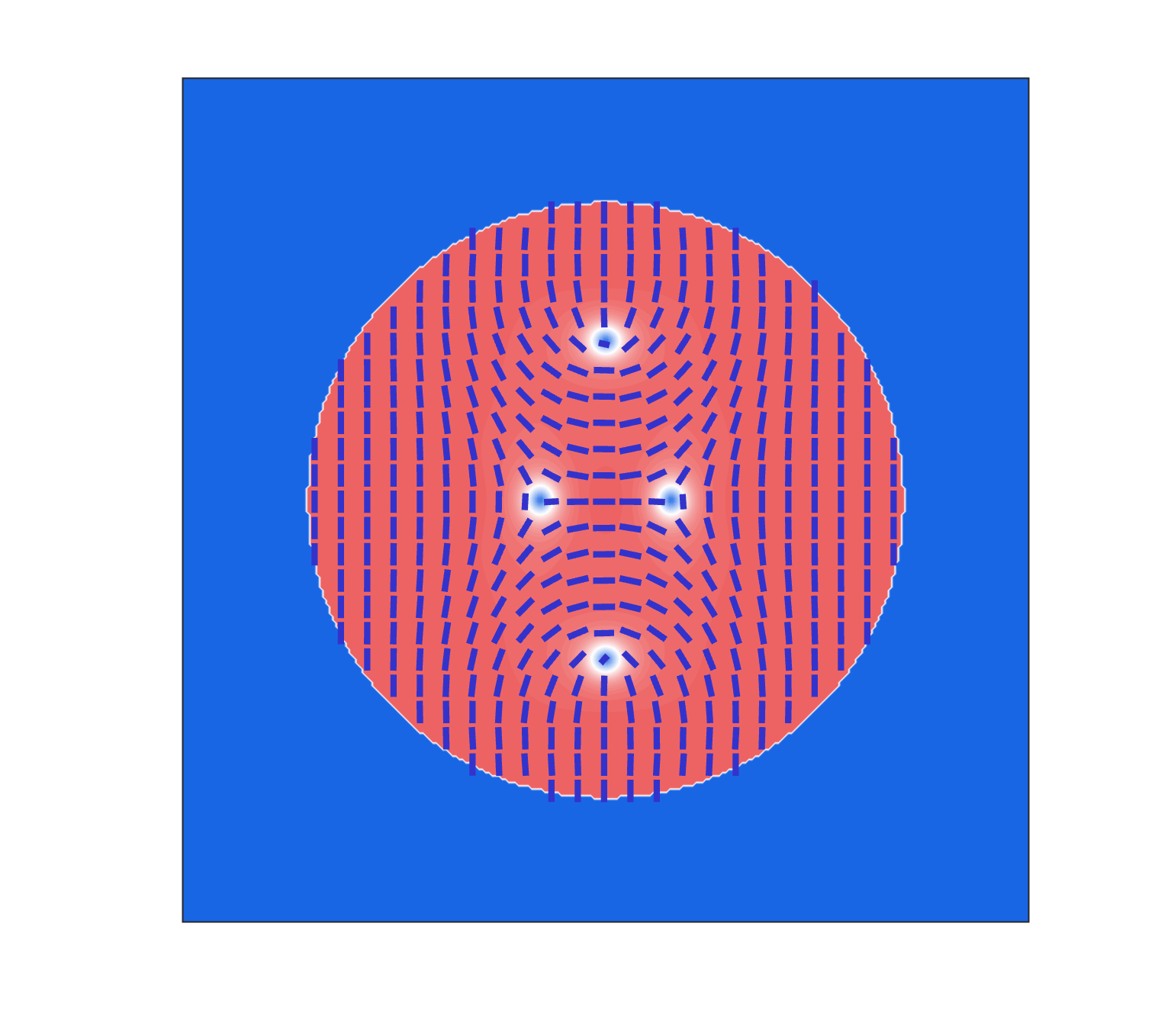}
			\includegraphics[width=0.18\textwidth]{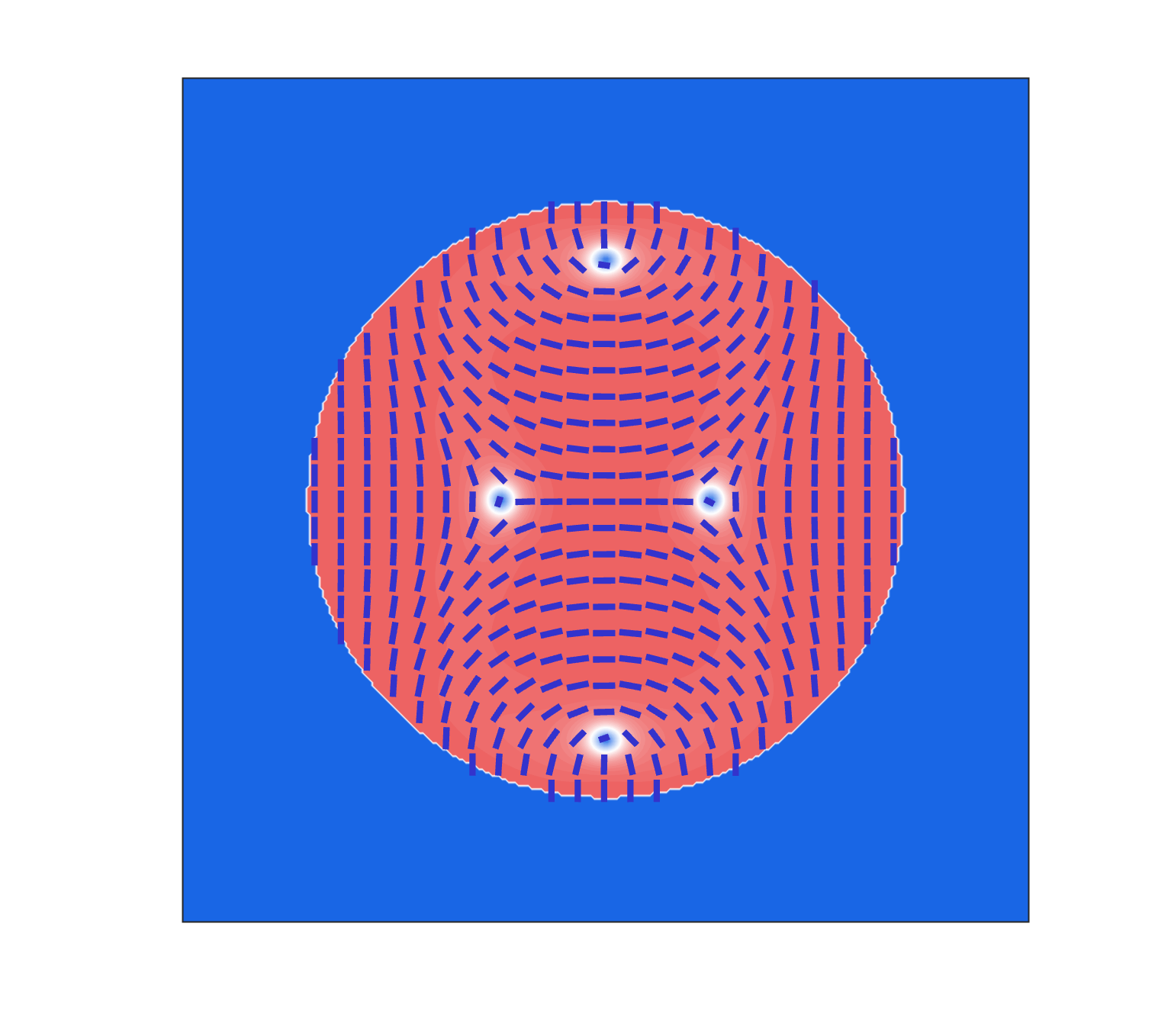}
			\includegraphics[width=0.18\textwidth]{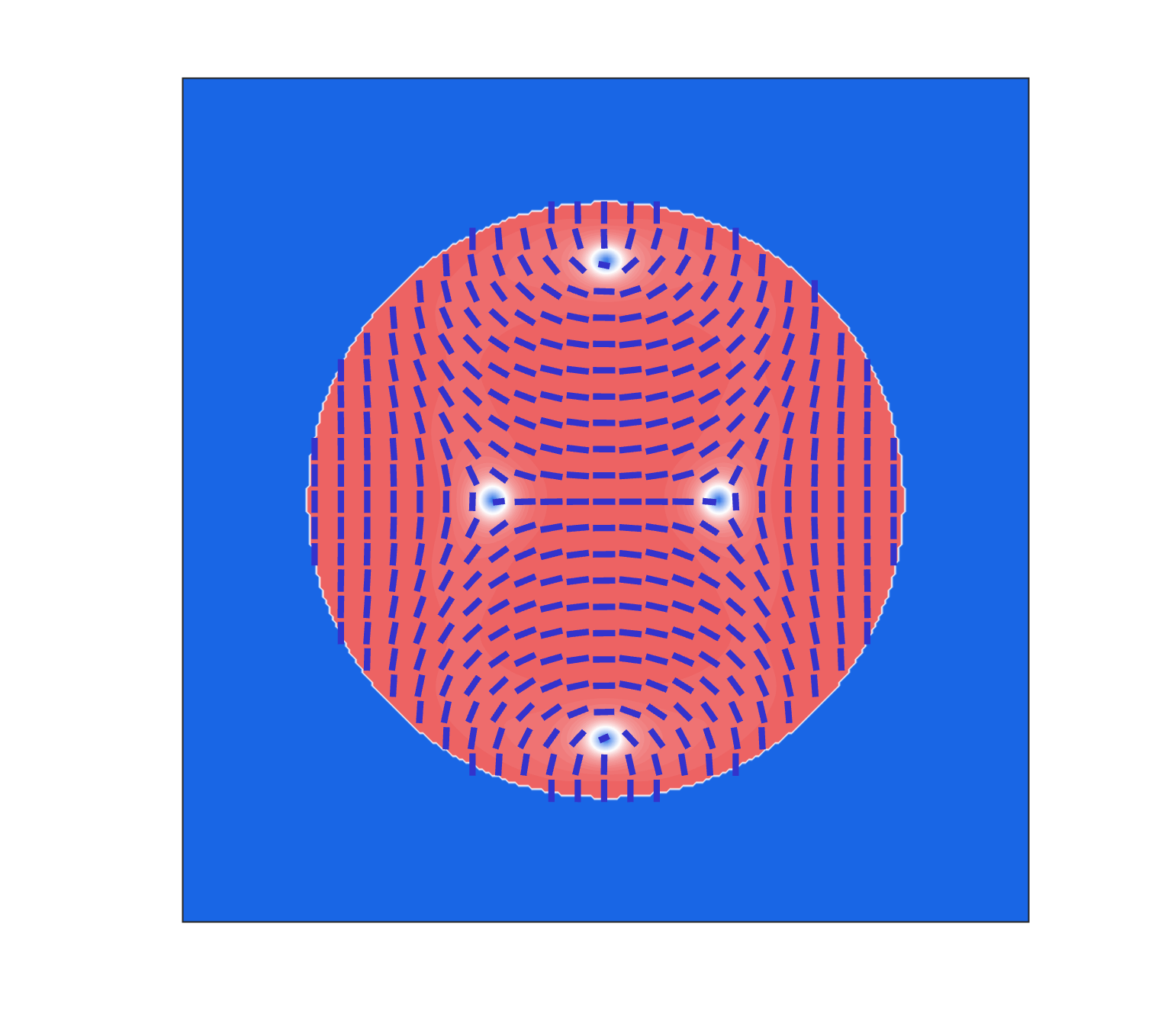}
			\includegraphics[width=0.18\textwidth]{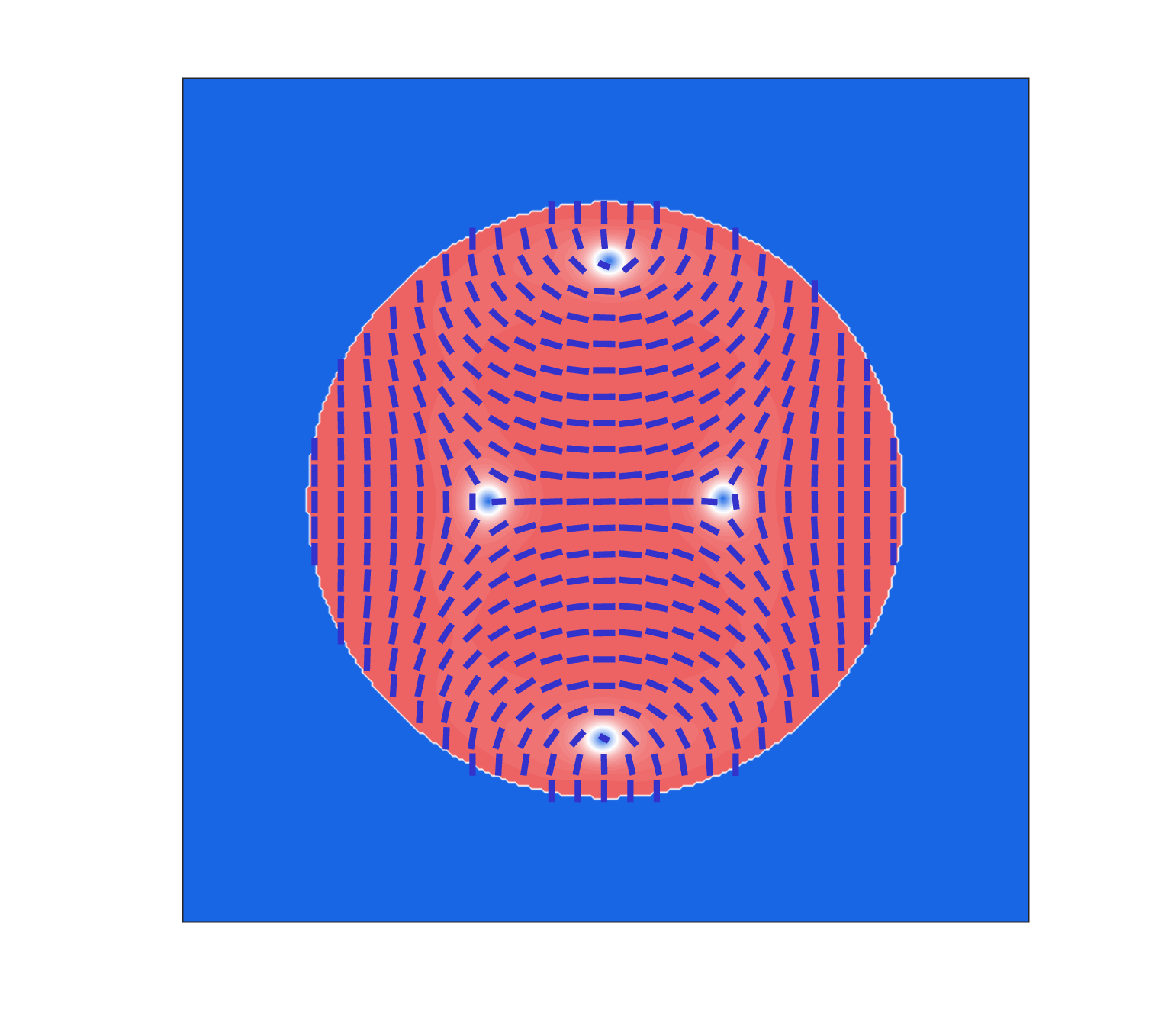}
			\includegraphics[width=0.18\textwidth]{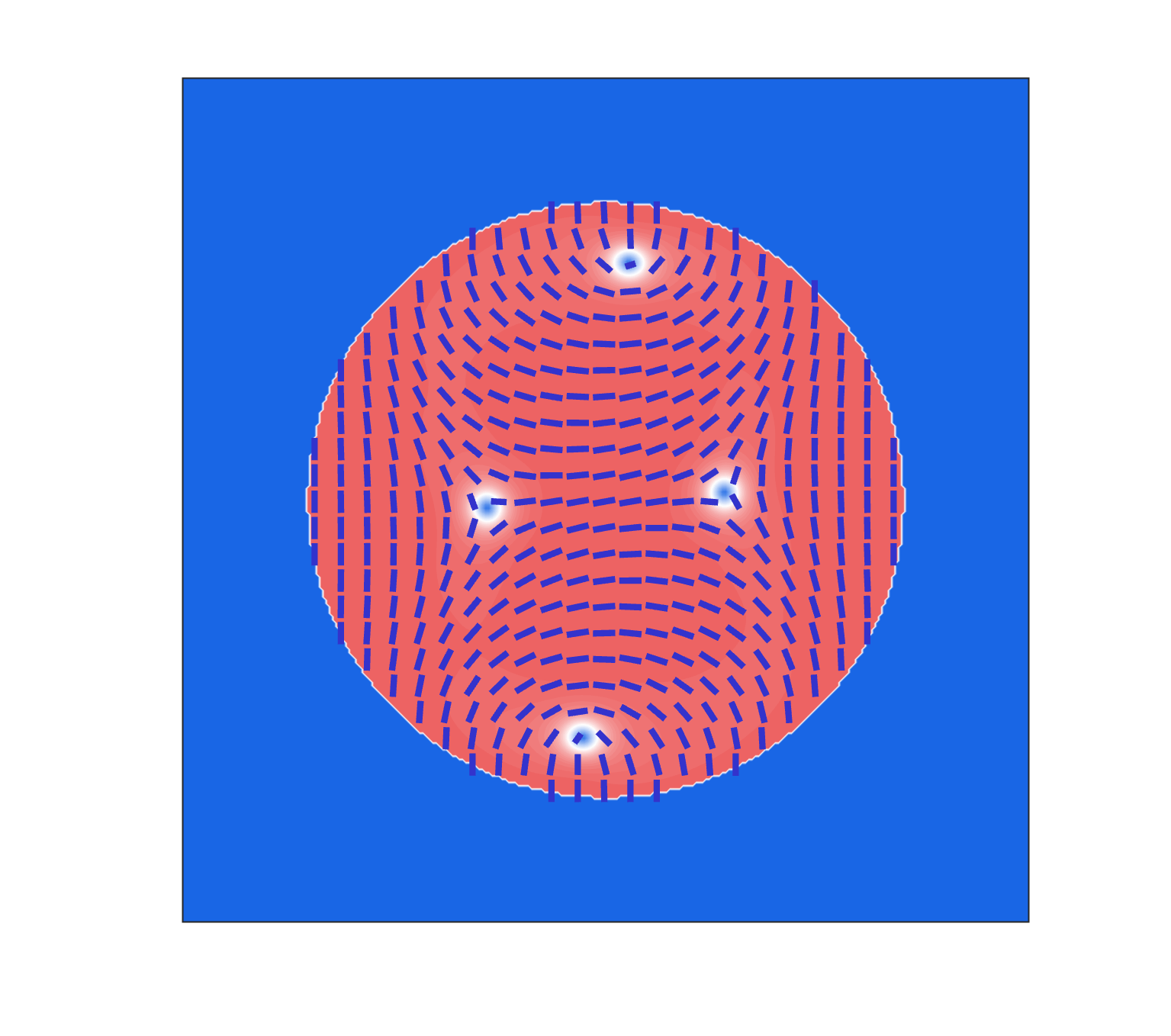}
			\includegraphics[width=0.18\textwidth]{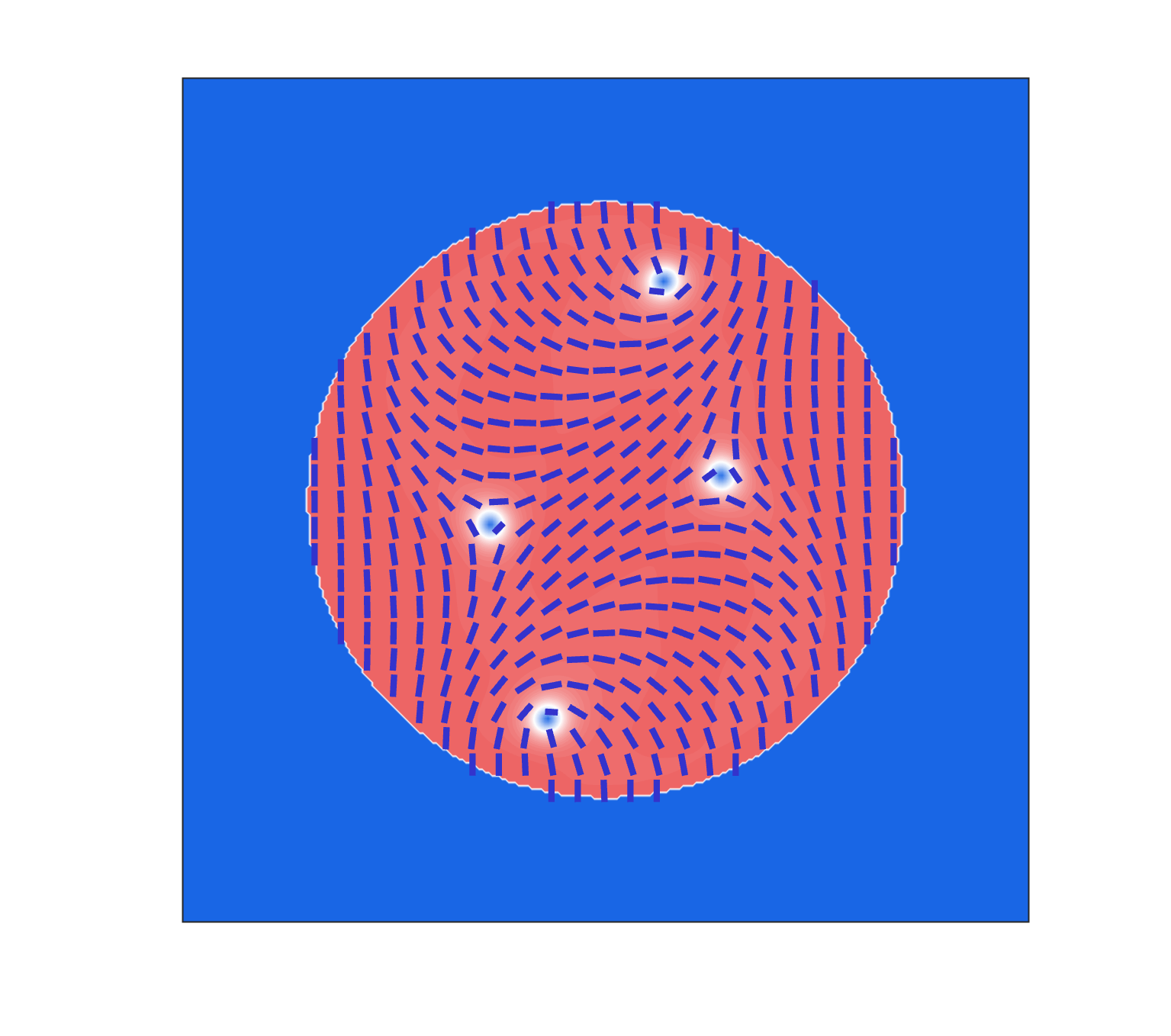}
			\includegraphics[width=0.18\textwidth]{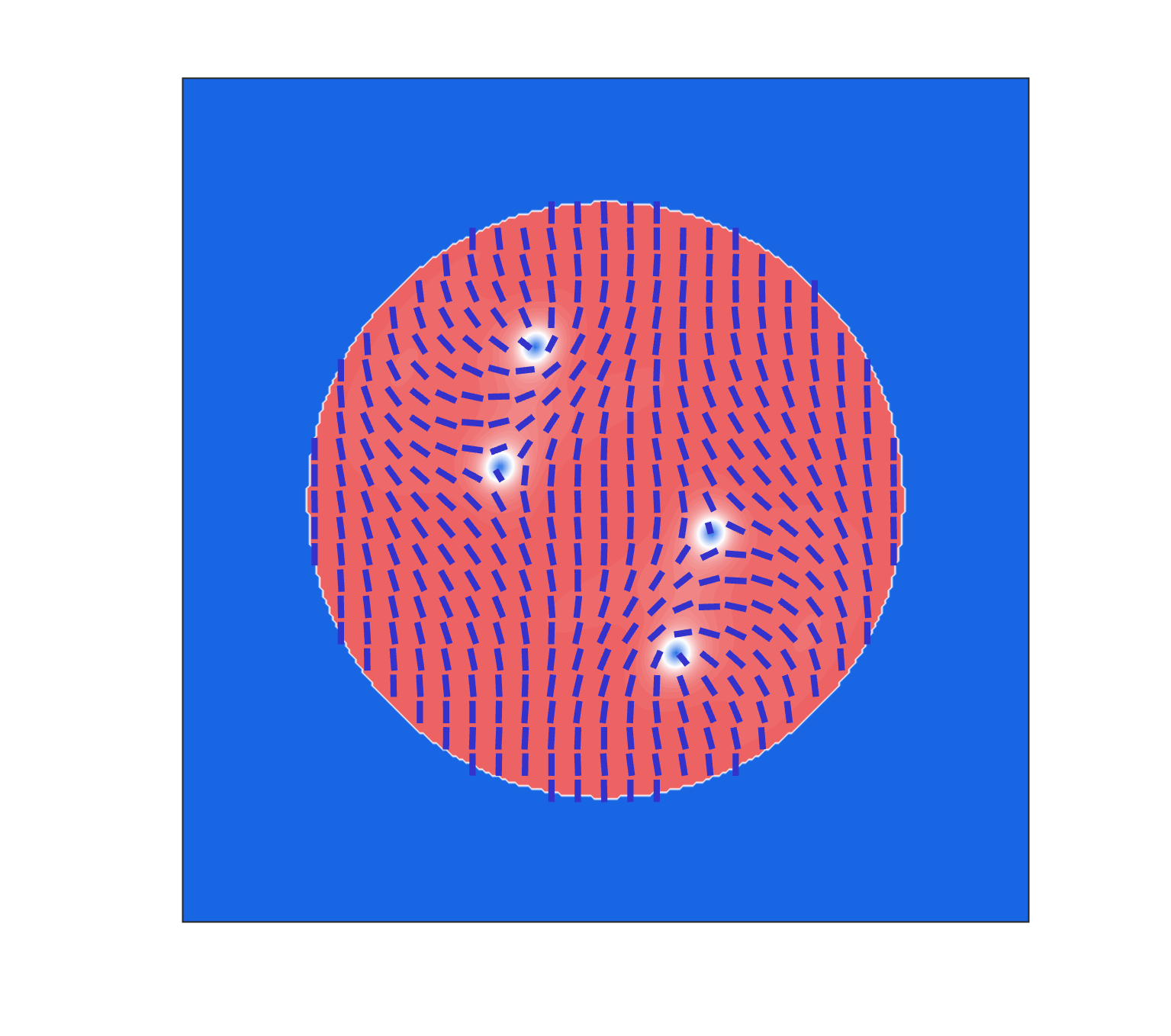}
			\includegraphics[width=0.18\textwidth]{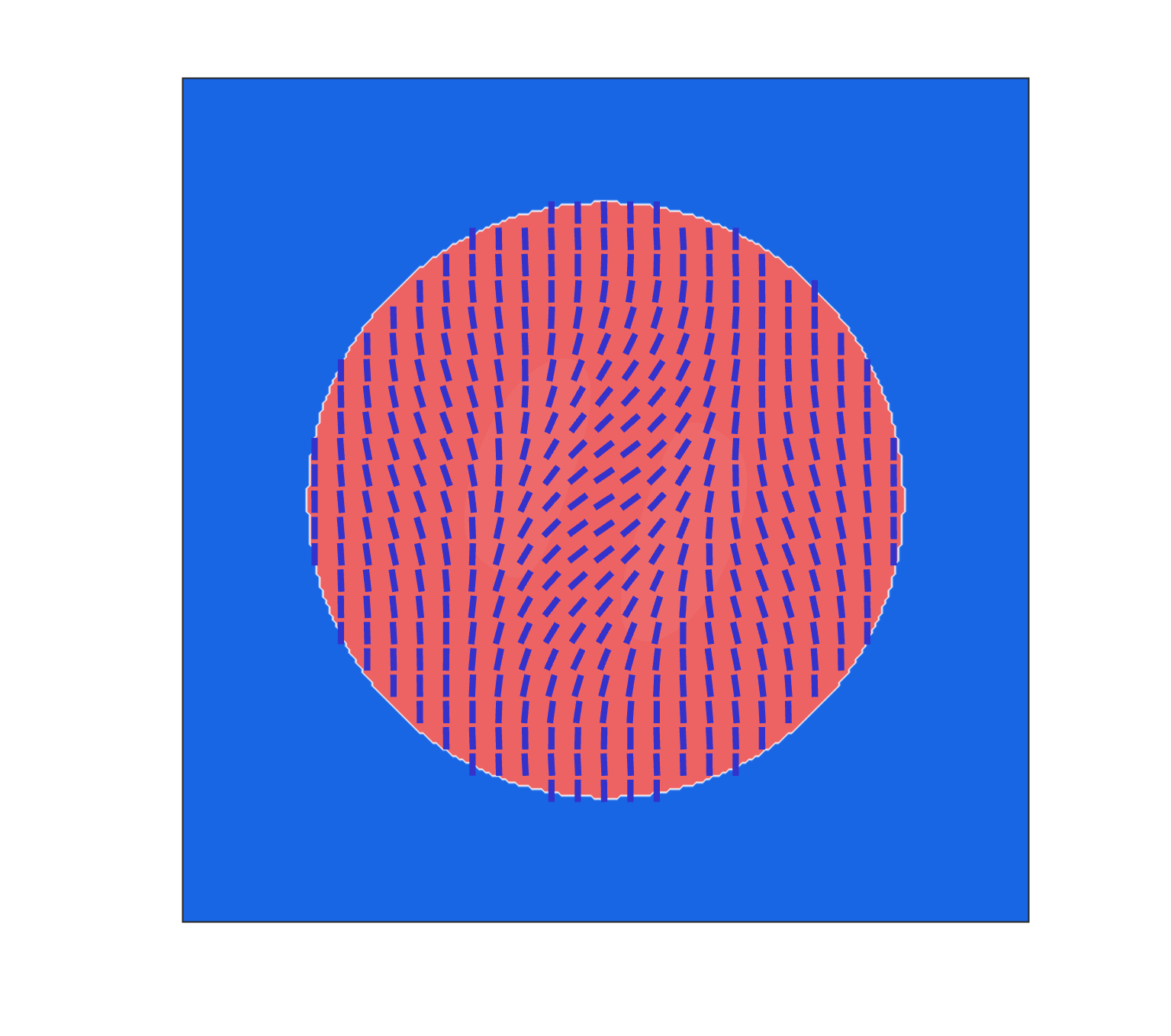}
		\end{minipage}
	}

	\caption{Comparison of defect dynamics for active liquid crystals
		with opposite signs of the activity in a circular domain. Each
		panel shows the contour of the principal eigenvalue and director
		field at various times. Both panels record the solutions at
		$t=0.1$, $0.2$, $0.5$, $5$, $7$, $10$, $13$, $15$, $17$, $20$,
		respectively.}
	\label{fig:incircle-compare}
\end{figure}

The dynamics of defects exhibit notable differences depending on
the shape of the  obstacle. In the circular obstacle case, the two
$+\tfrac{1}{2}$ defects initially move apart due to active forces.
Subsequently, their motion becomes irregular as a result of
interactions among defects, boundary anchoring, and strong active
stresses. Due to high activities, the system does not approach a
steady state in this scenario. As a result, we see dynamic defects
form and go.

\begin{figure}[H]
	\centering

	\fbox{%
		\begin{minipage}{0.96\textwidth}
			\centering
			\textbf{$\chi_{fluid} = 10$, $\xi_{fluid} = -0.1$} \\[0.5em]
			\includegraphics[width=0.15\textwidth]{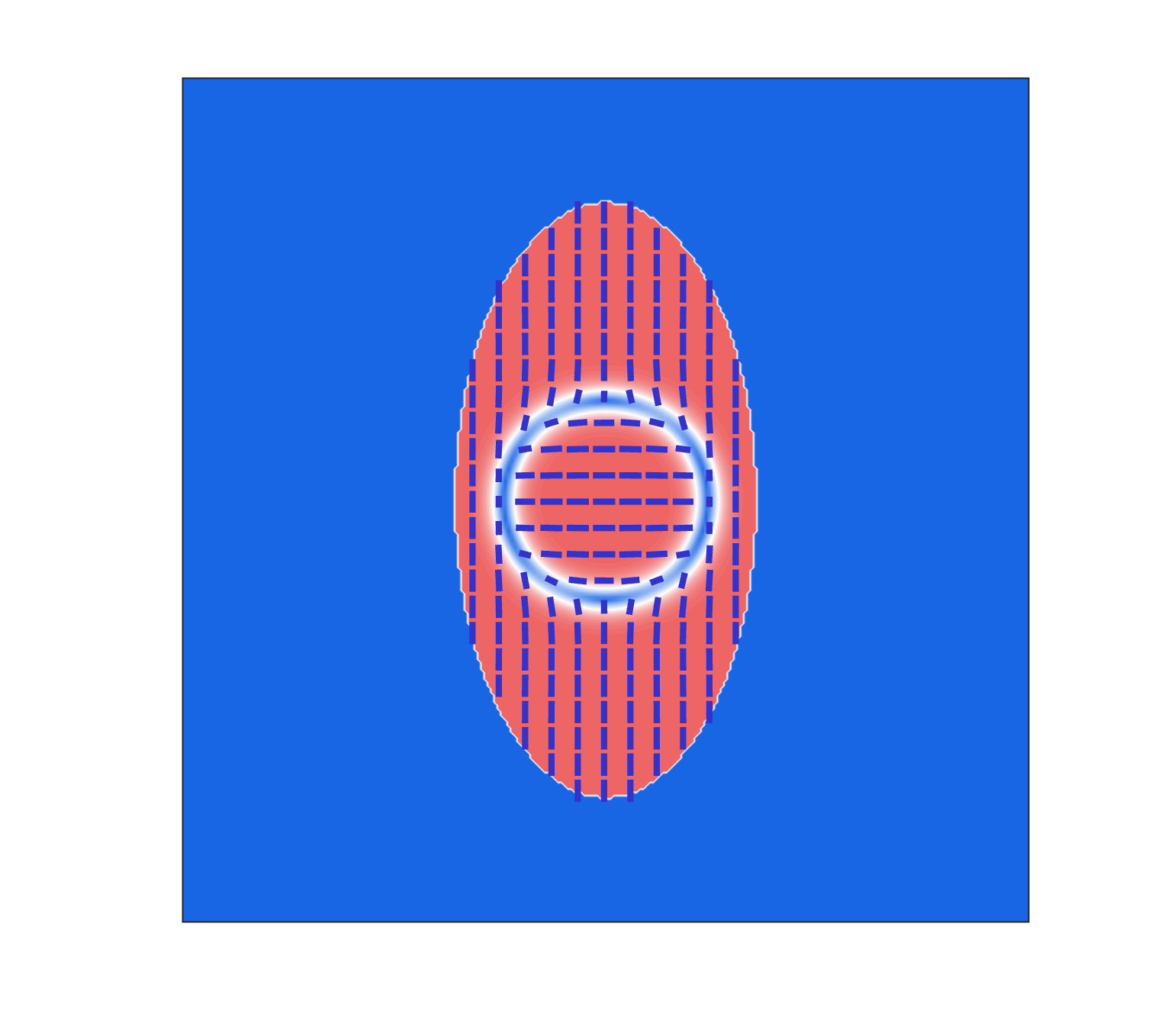}
			\includegraphics[width=0.15\textwidth]{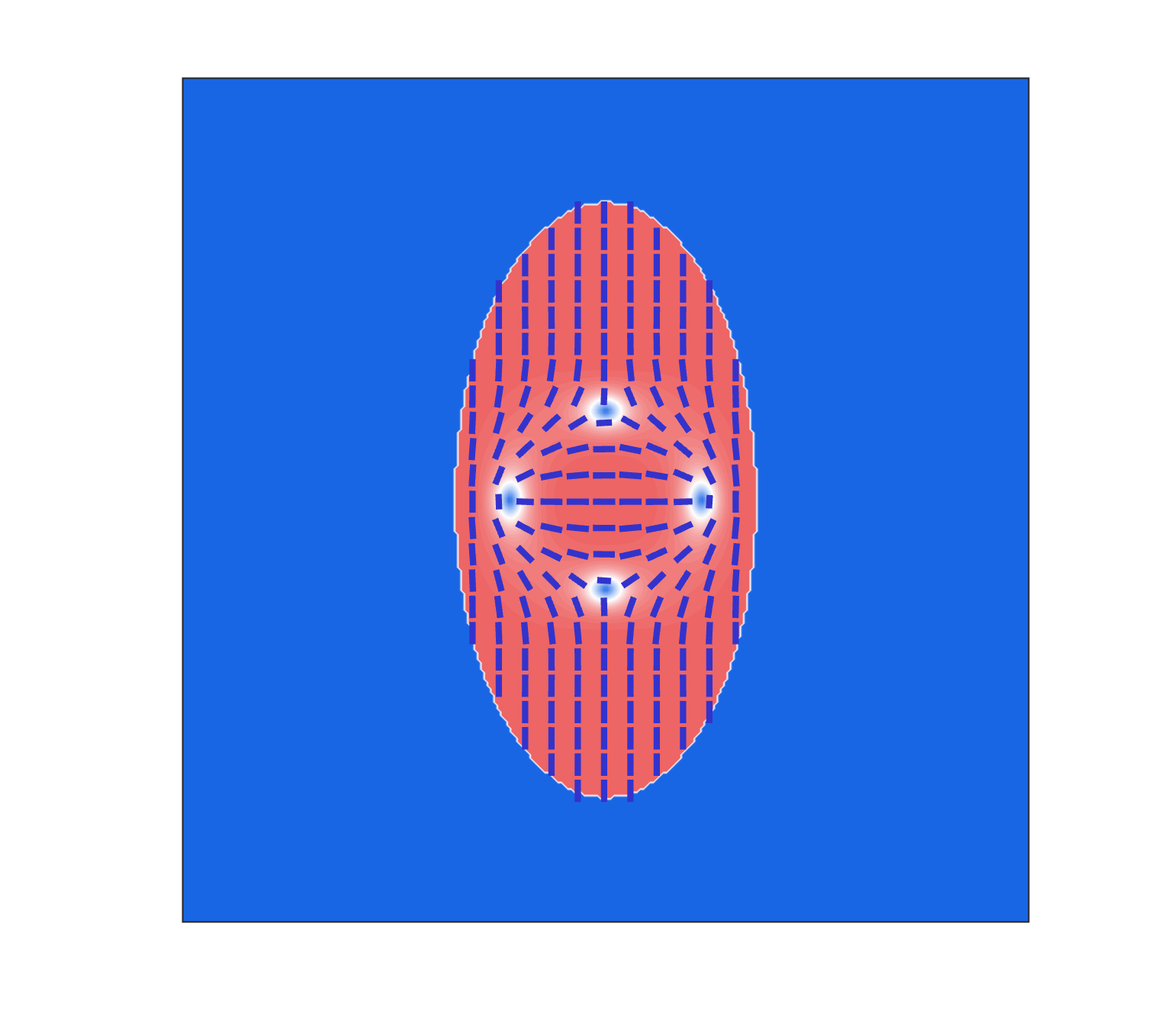}
			\includegraphics[width=0.15\textwidth]{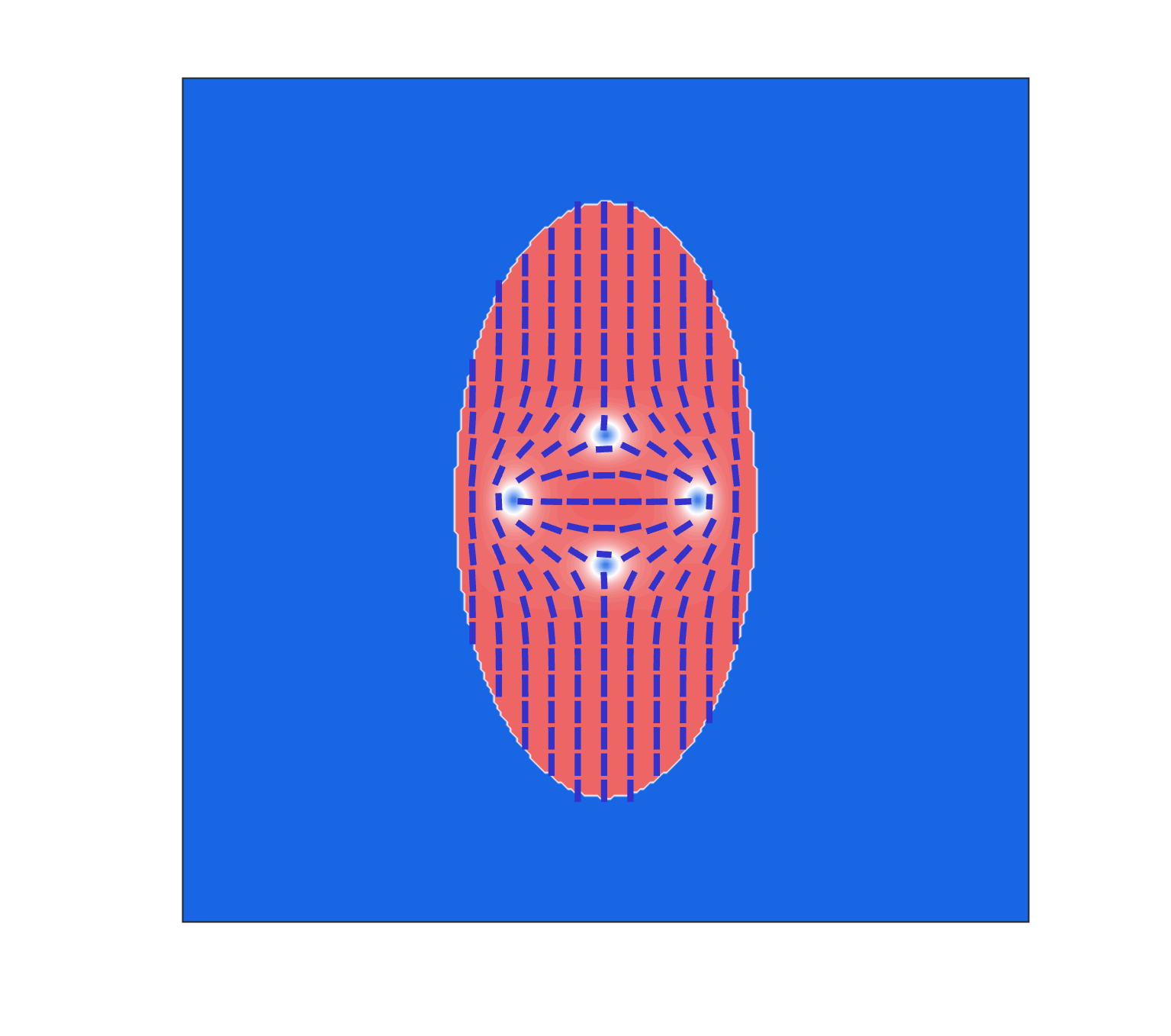}
			\includegraphics[width=0.15\textwidth]{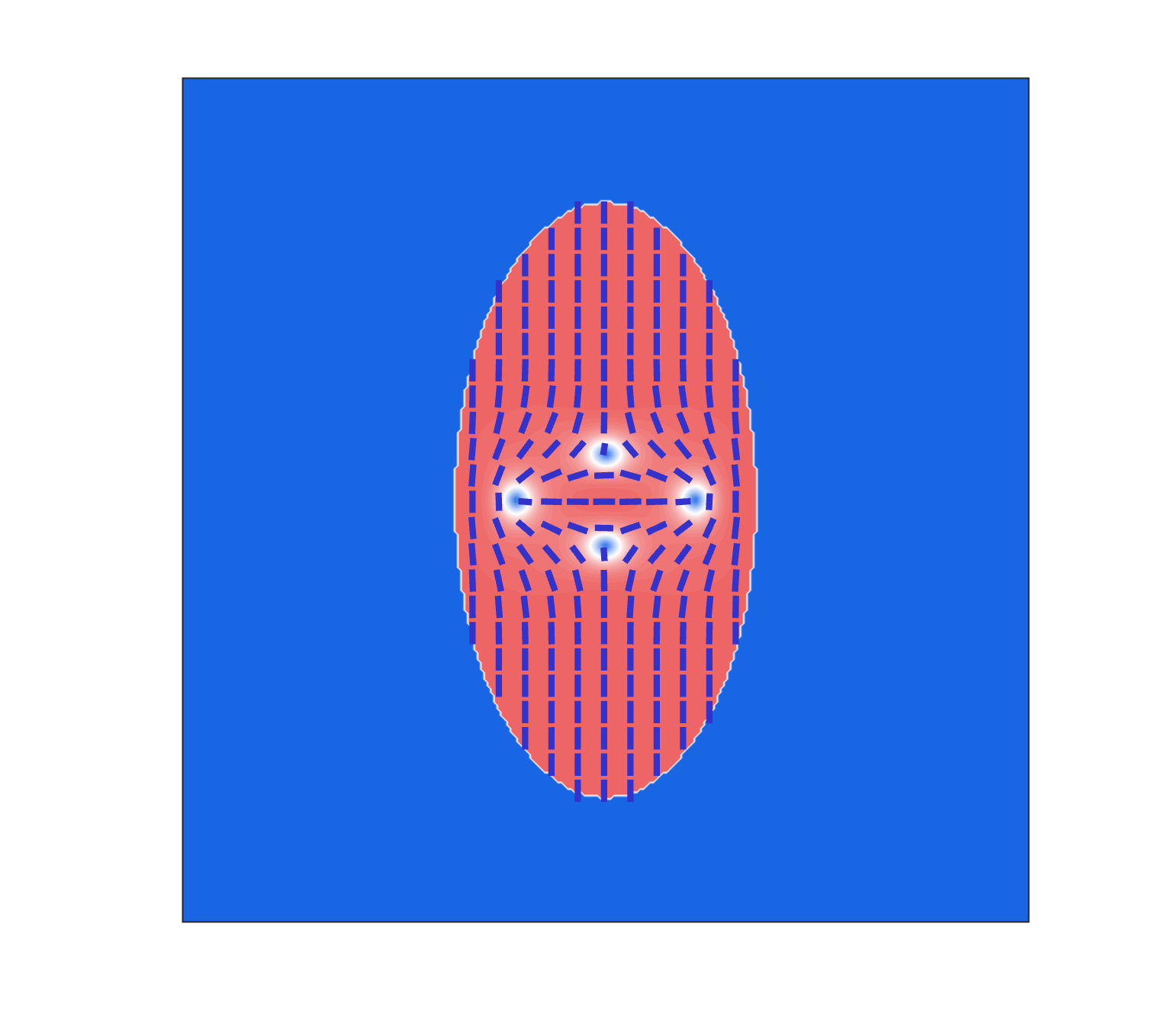}
			\includegraphics[width=0.15\textwidth]{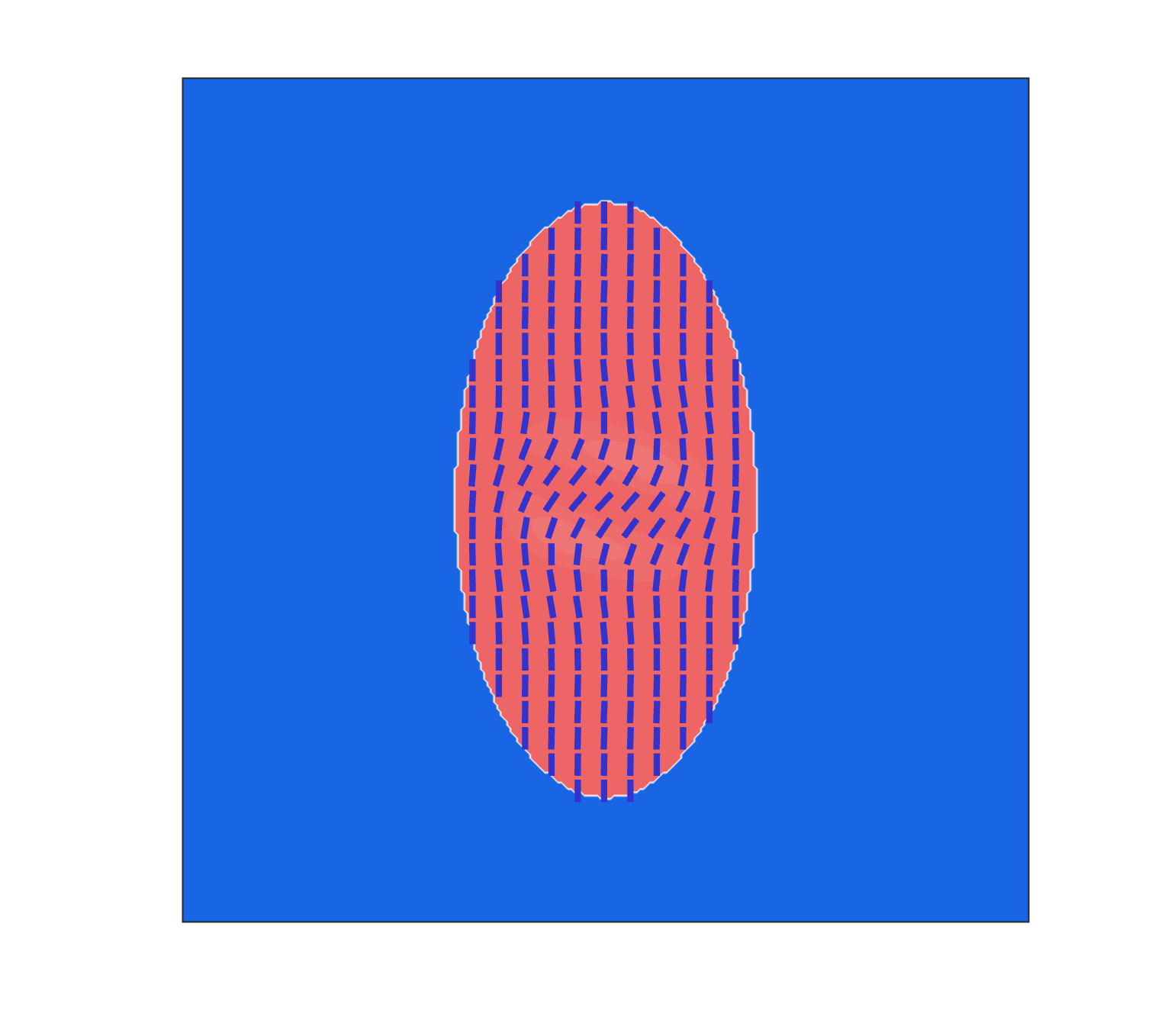}
			\includegraphics[width=0.15\textwidth]{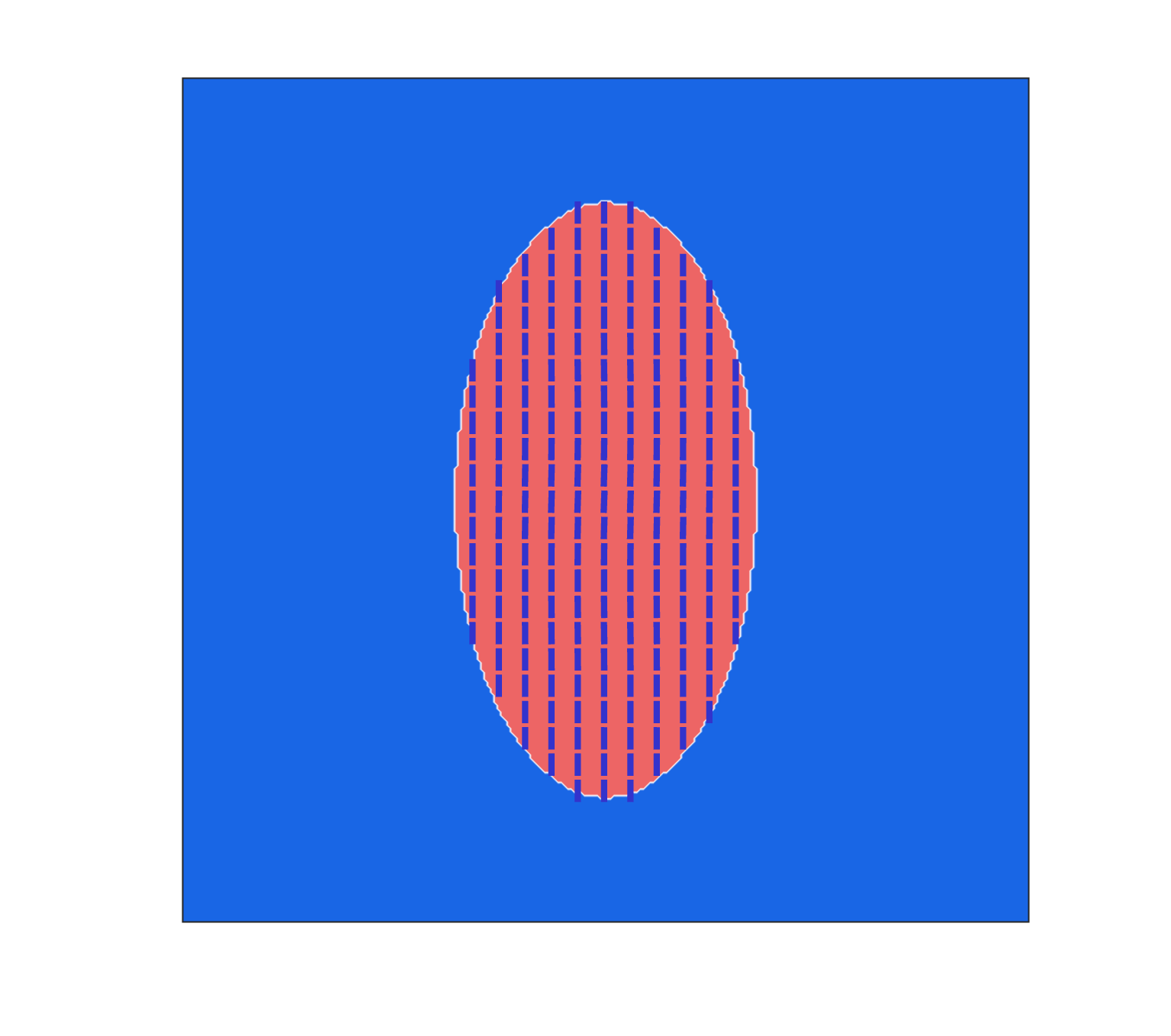}
		\end{minipage}
	}%
	\vspace{1em}
	\fbox{%
		\begin{minipage}{0.96\textwidth}
			\centering
			\textbf{$\chi_{fluid} = -10$, $\xi_{fluid} = 0.1$} \\[0.5em]
			\includegraphics[width=0.15\textwidth]{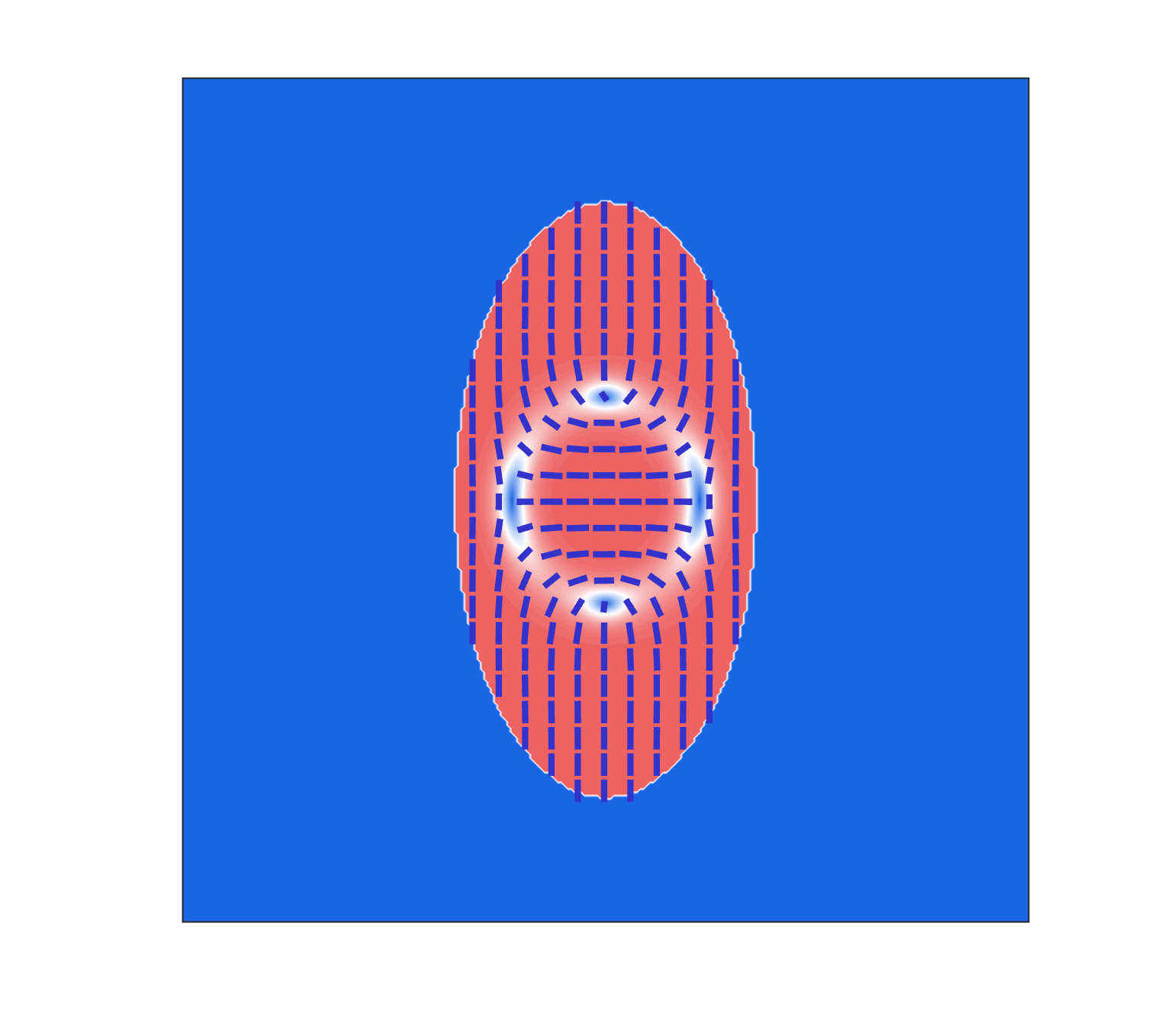}
			\includegraphics[width=0.15\textwidth]{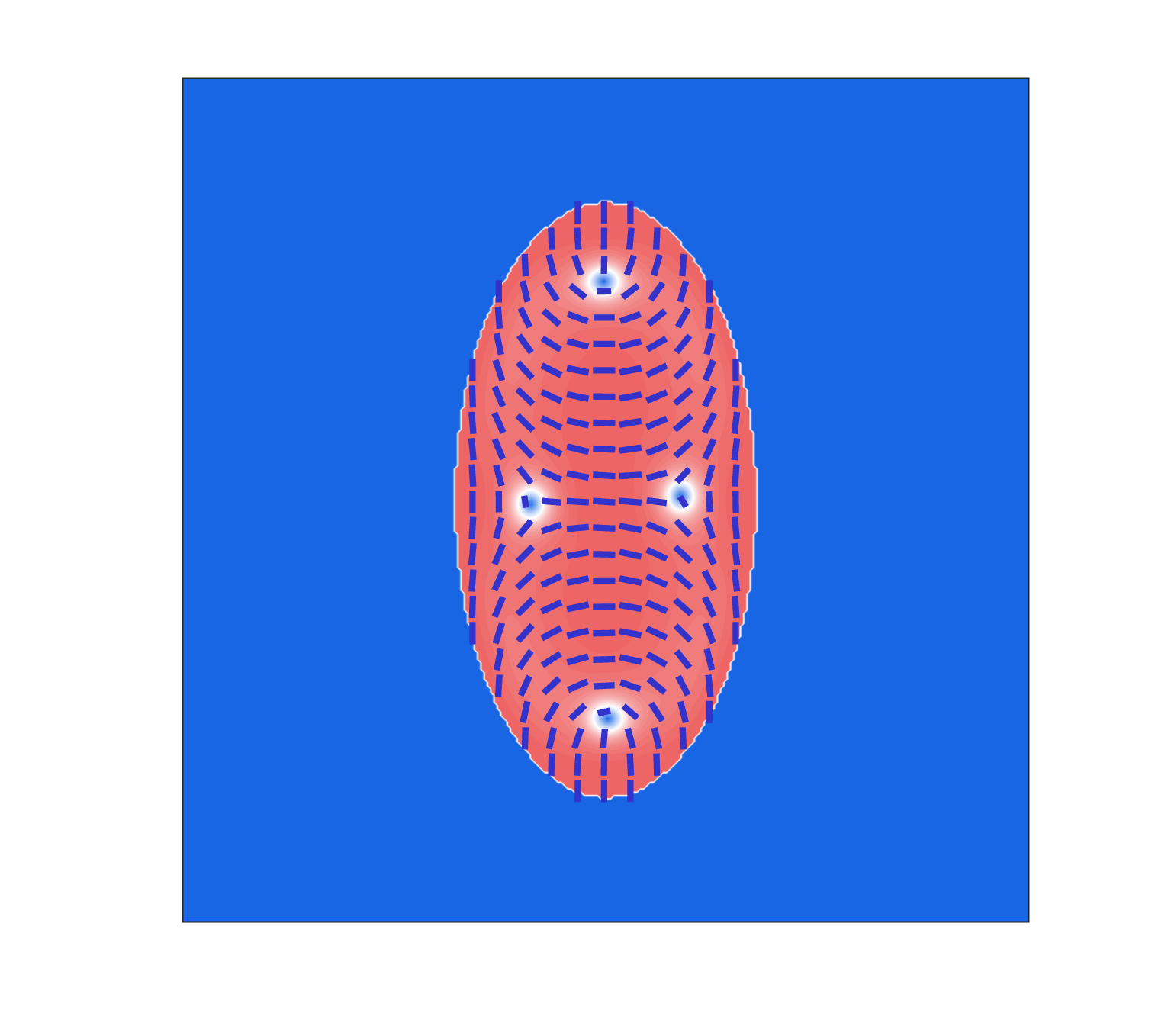}
			\includegraphics[width=0.15\textwidth]{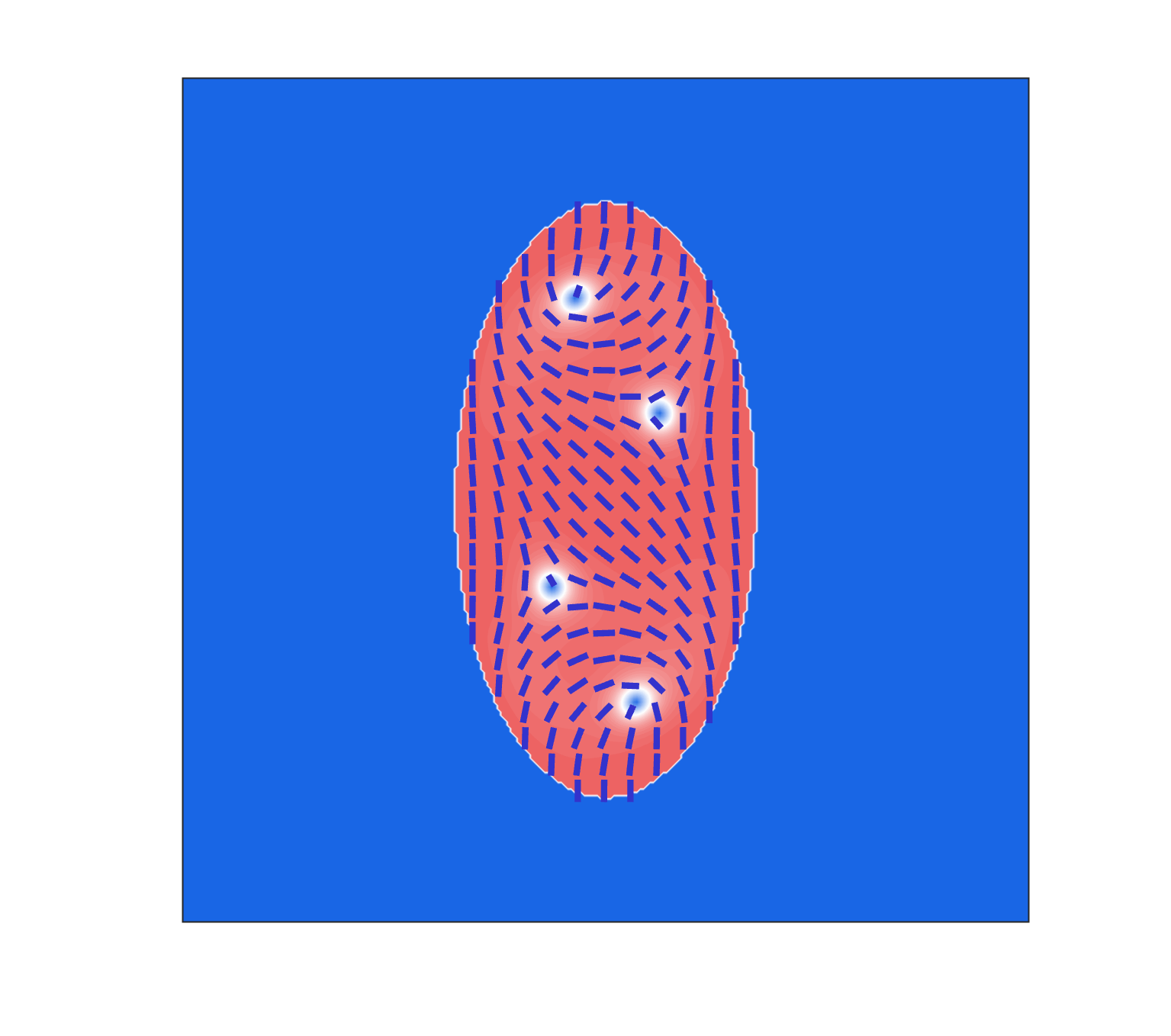}
			\includegraphics[width=0.15\textwidth]{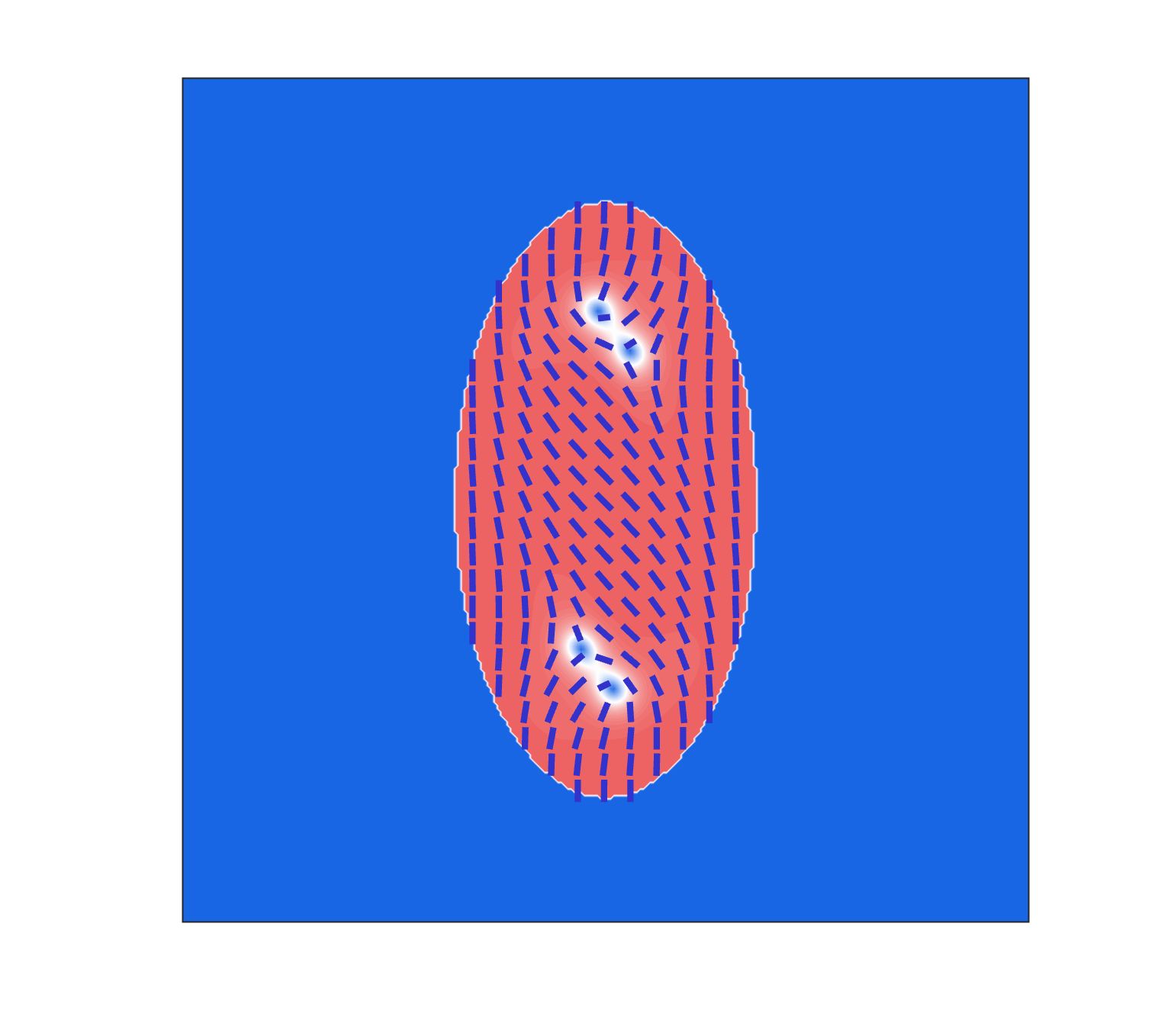}
			\includegraphics[width=0.15\textwidth]{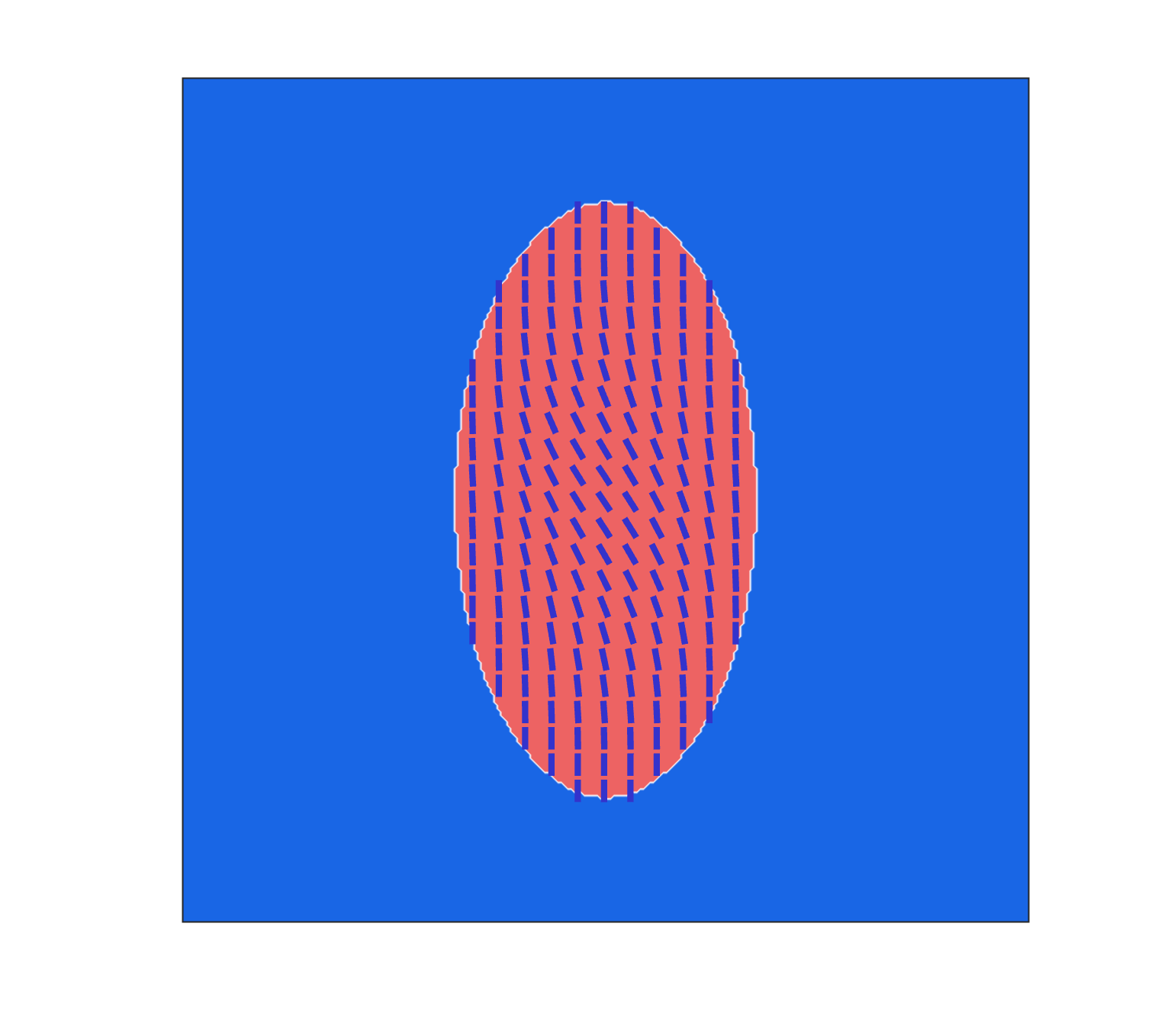}
			\includegraphics[width=0.15\textwidth]{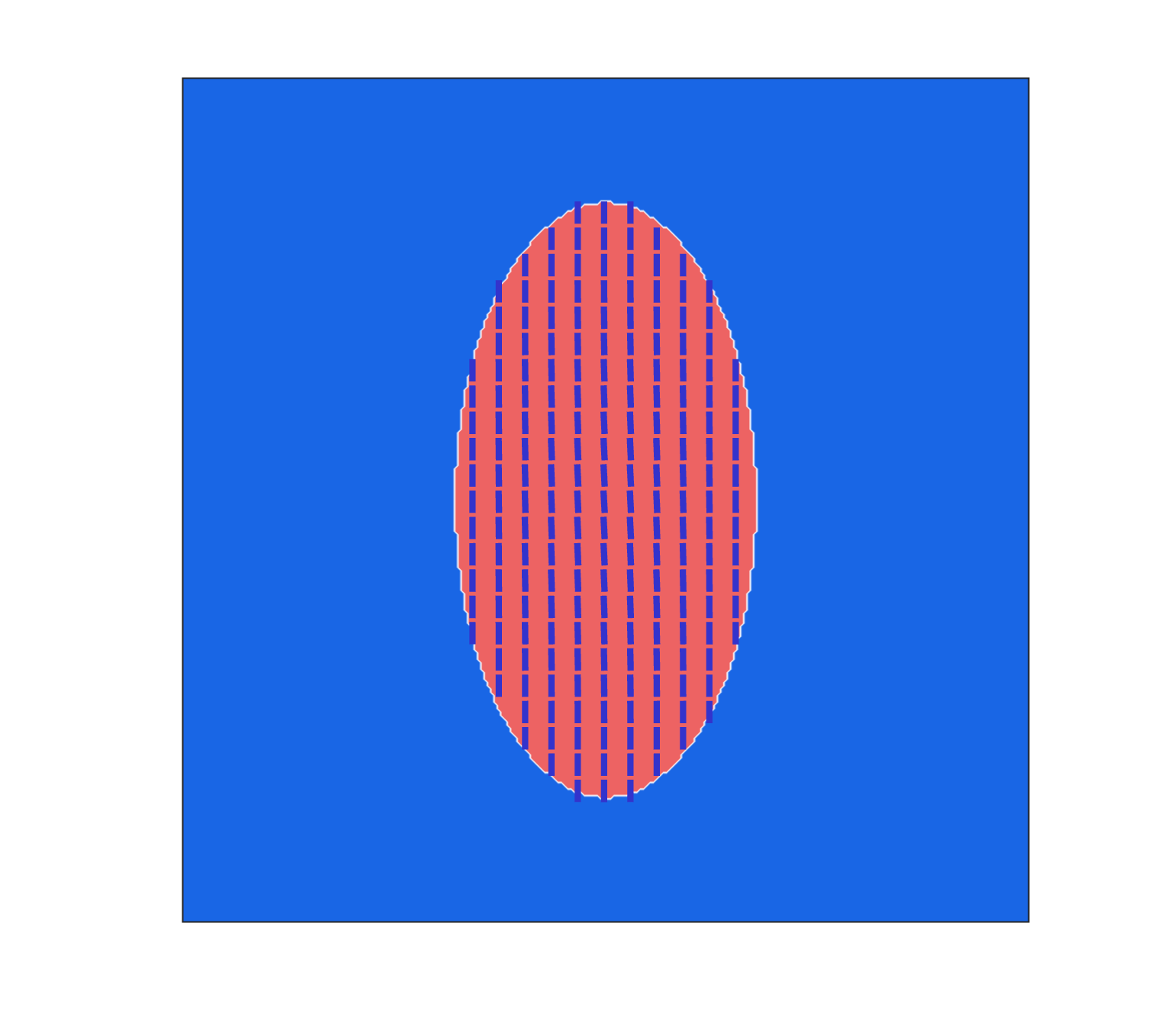}
		\end{minipage}
	}

	\caption{Comparison of defect dynamics for active liquid crystals
		with opposite signs of the activity in a vertical, elliptical
		domain. Each panel shows the contour of the principal eigenvalue
		and director field at various times. The top panel records the
		solutions at $t=0.1$, $0.3$, $1$, $3$, $5$, $10$, the bottom
		panel shows the solution at $t = 0.2$, $7$, $10$, $11$, $12$,
		$20$, respectively.}
	\label{fig:inellipse1-compare}
\end{figure}
In the vertically oriented elliptical obstacle, when $\chi_{fluid}
	> 0$, the two $+\tfrac{1}{2}$ defects cannot separate significantly
because of the geometric confinement. Eventually, the mutual
attraction between the four defects leads them to merge and
annihilate, eliminating all defects and driving the system toward a
defect-free steady state.

\begin{figure}[H]
	\centering

	\fbox{%
		\begin{minipage}{0.96\textwidth}
			\centering
			\textbf{$\chi_{fluid} = 10$, $\xi_{fluild} = -0.1$} \\[0.5em]
			\includegraphics[width=0.15\textwidth]{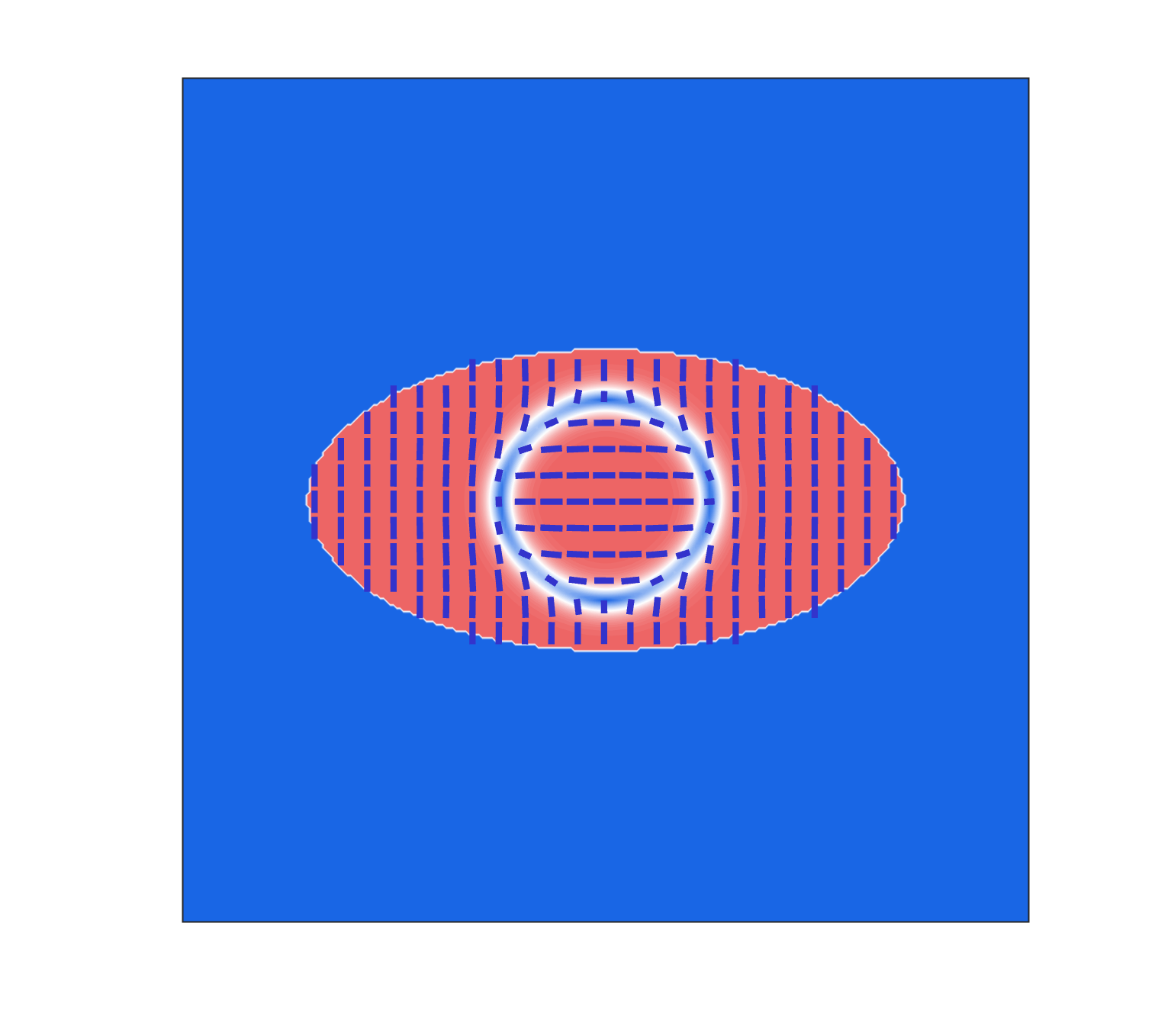}
			\includegraphics[width=0.15\textwidth]{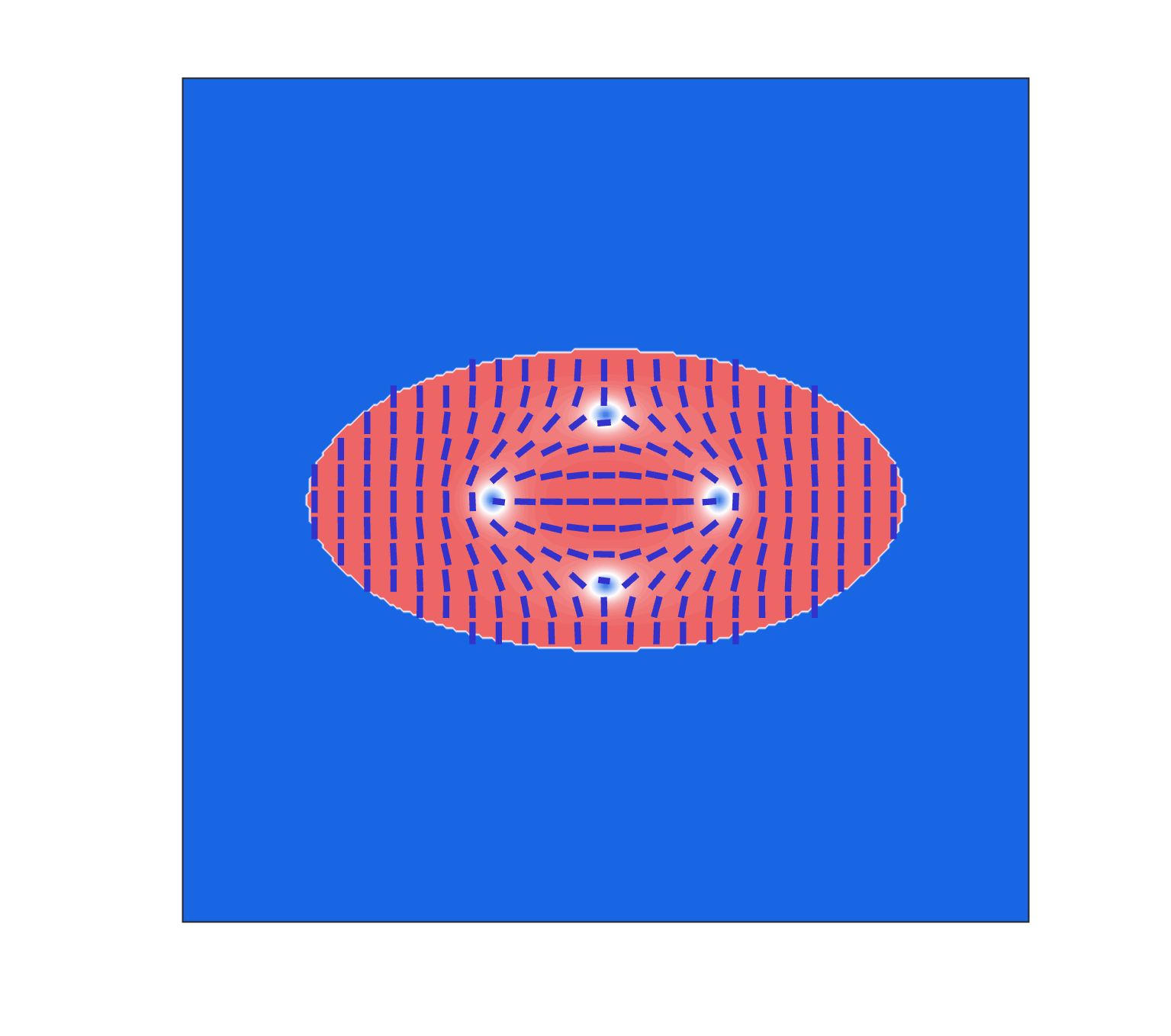}
			\includegraphics[width=0.15\textwidth]{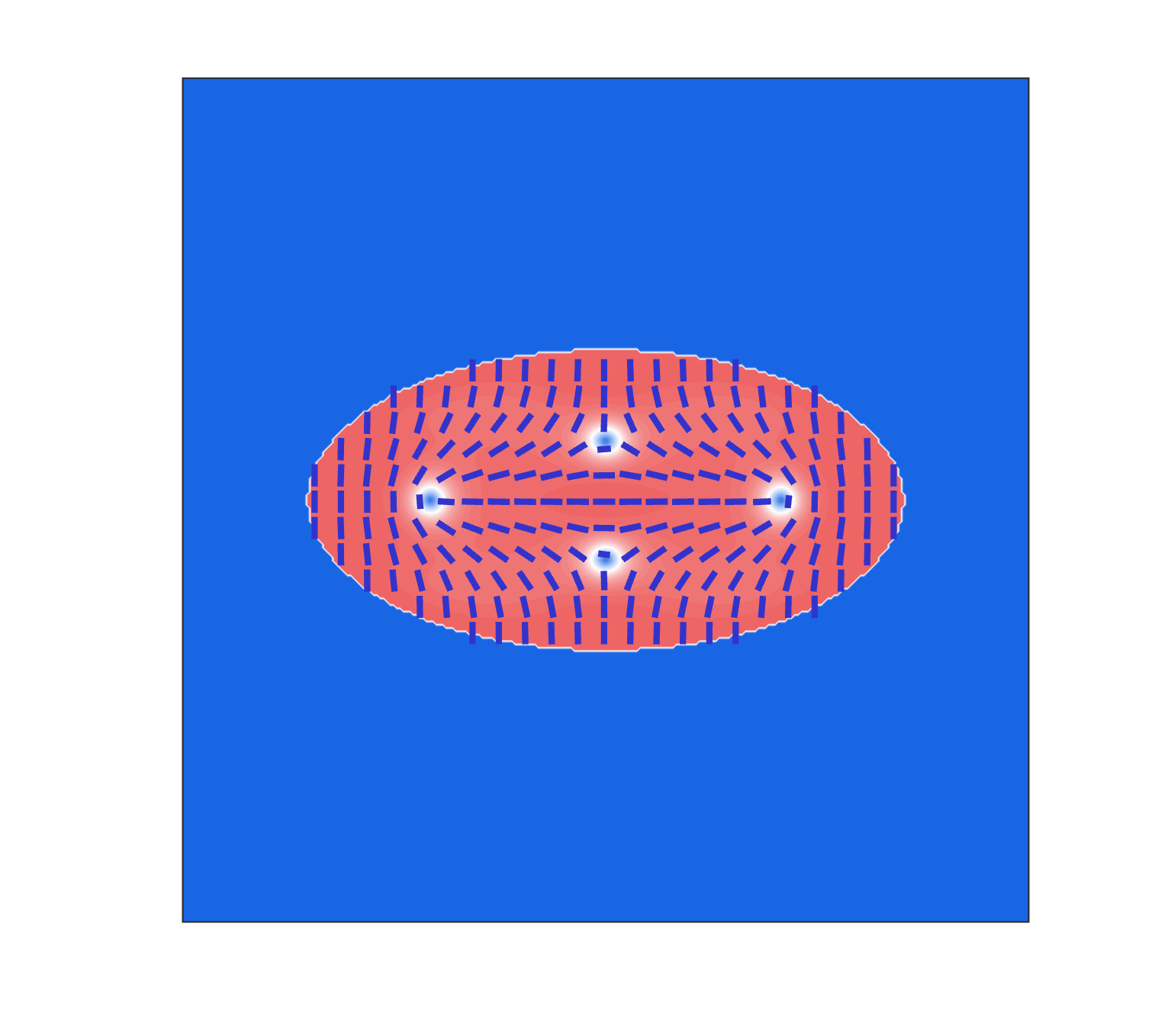}
			\includegraphics[width=0.15\textwidth]{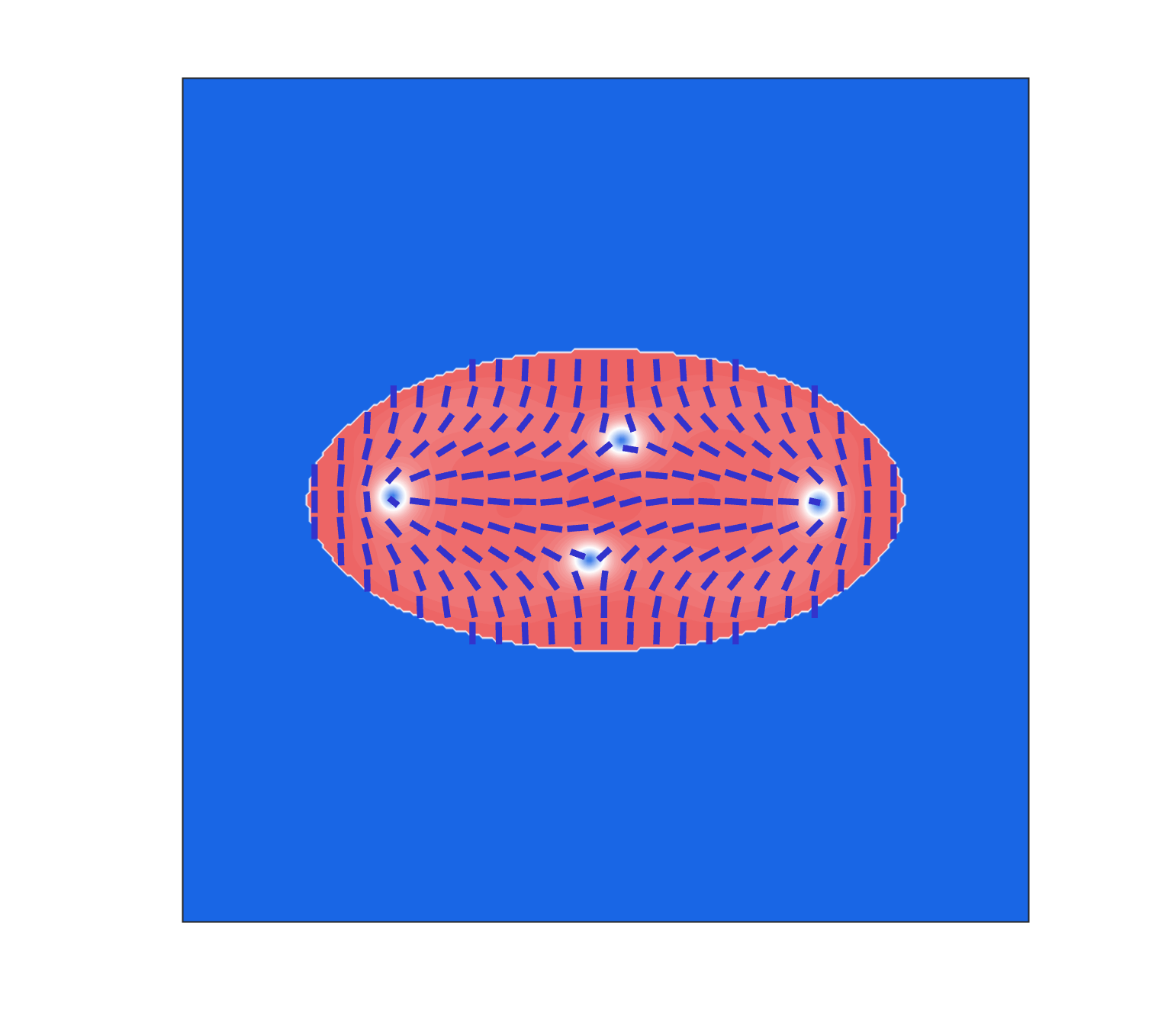}
			\includegraphics[width=0.15\textwidth]{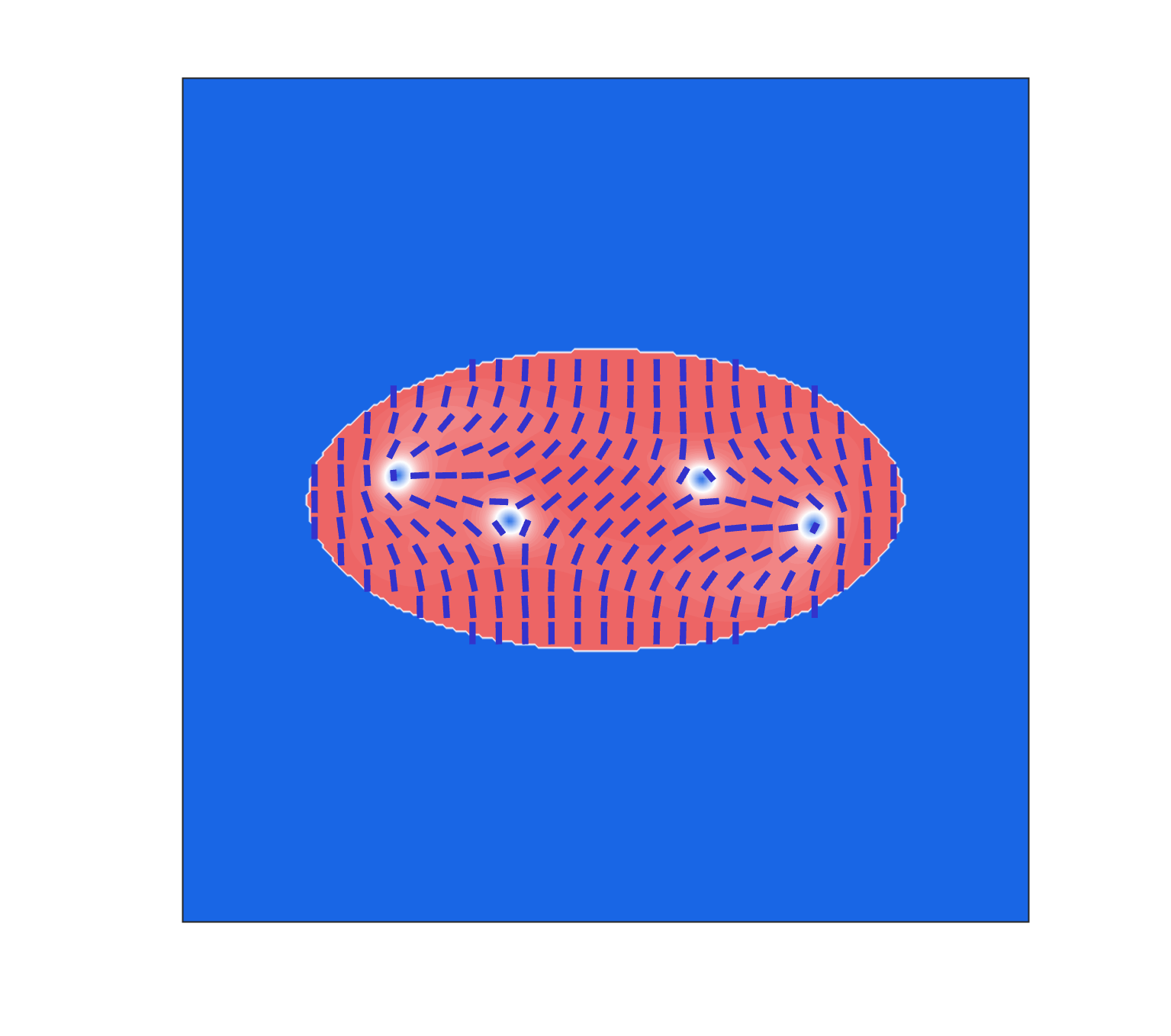}
			\includegraphics[width=0.15\textwidth]{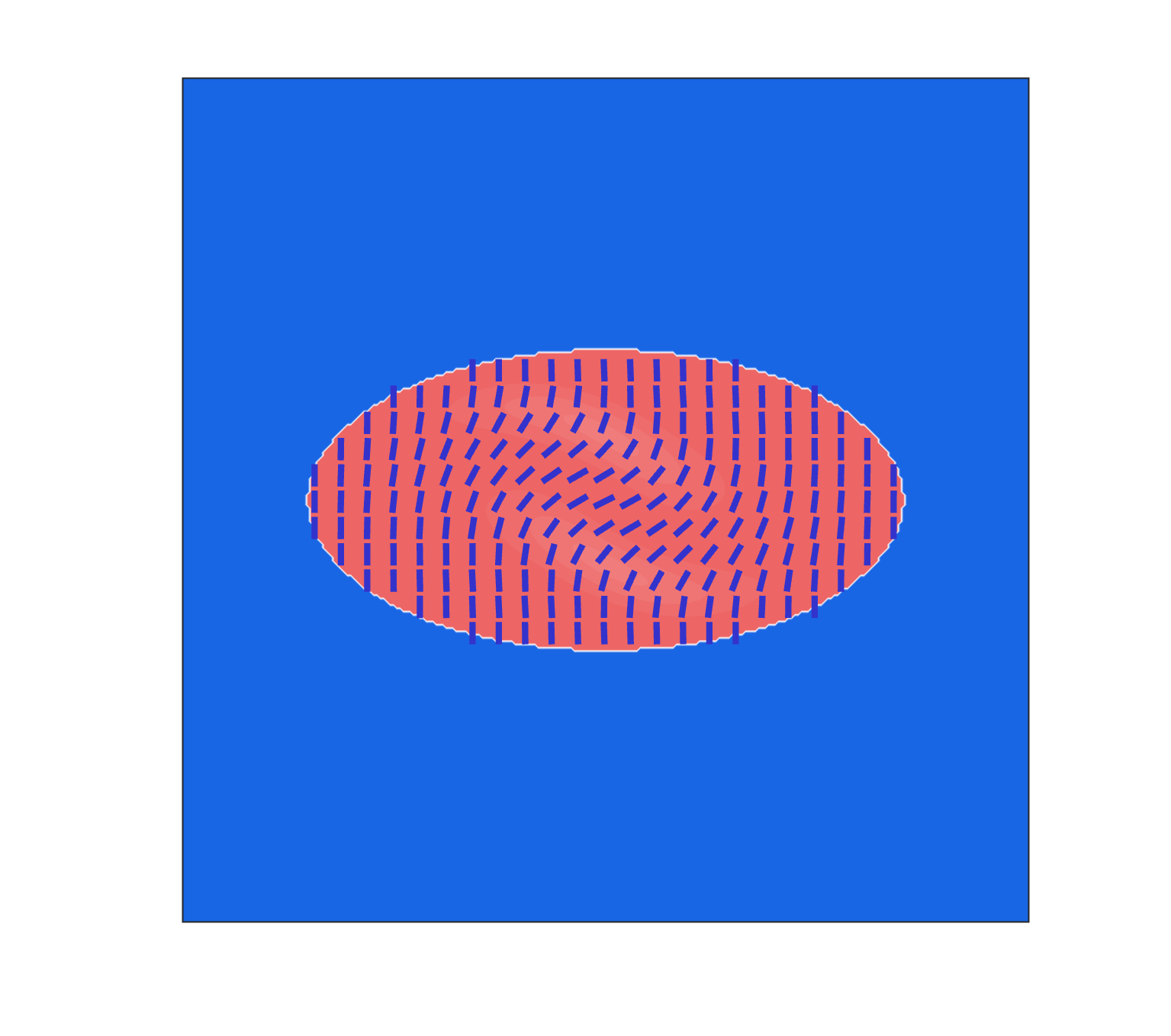}
		\end{minipage}
	}%
	\vspace{1em}
	\fbox{%
		\begin{minipage}{0.96\textwidth}
			\centering
			\textbf{$\chi = -10\psi$, $\xi = 0.1\psi$} \\[0.5em]
			\includegraphics[width=0.15\textwidth]{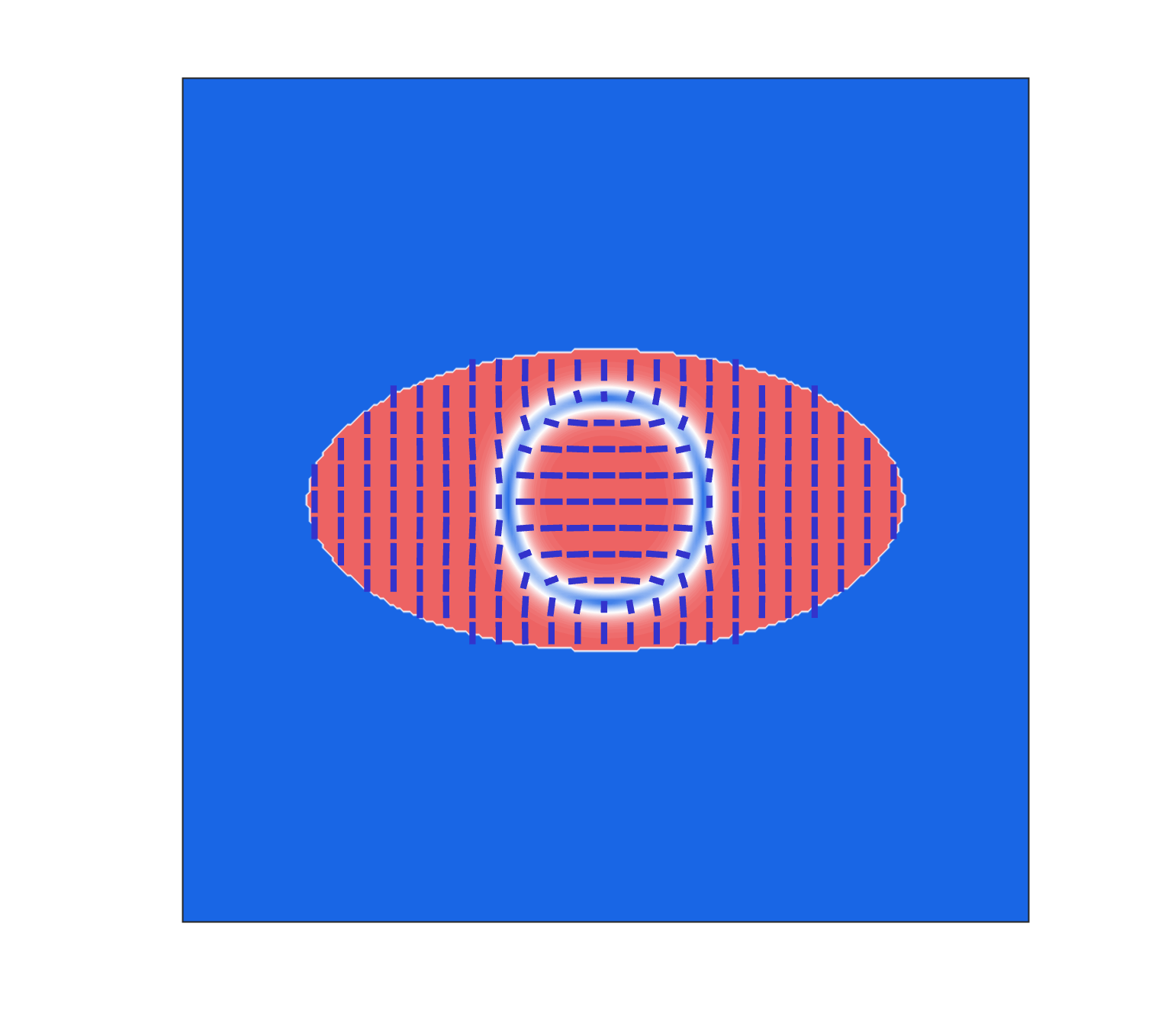}
			\includegraphics[width=0.15\textwidth]{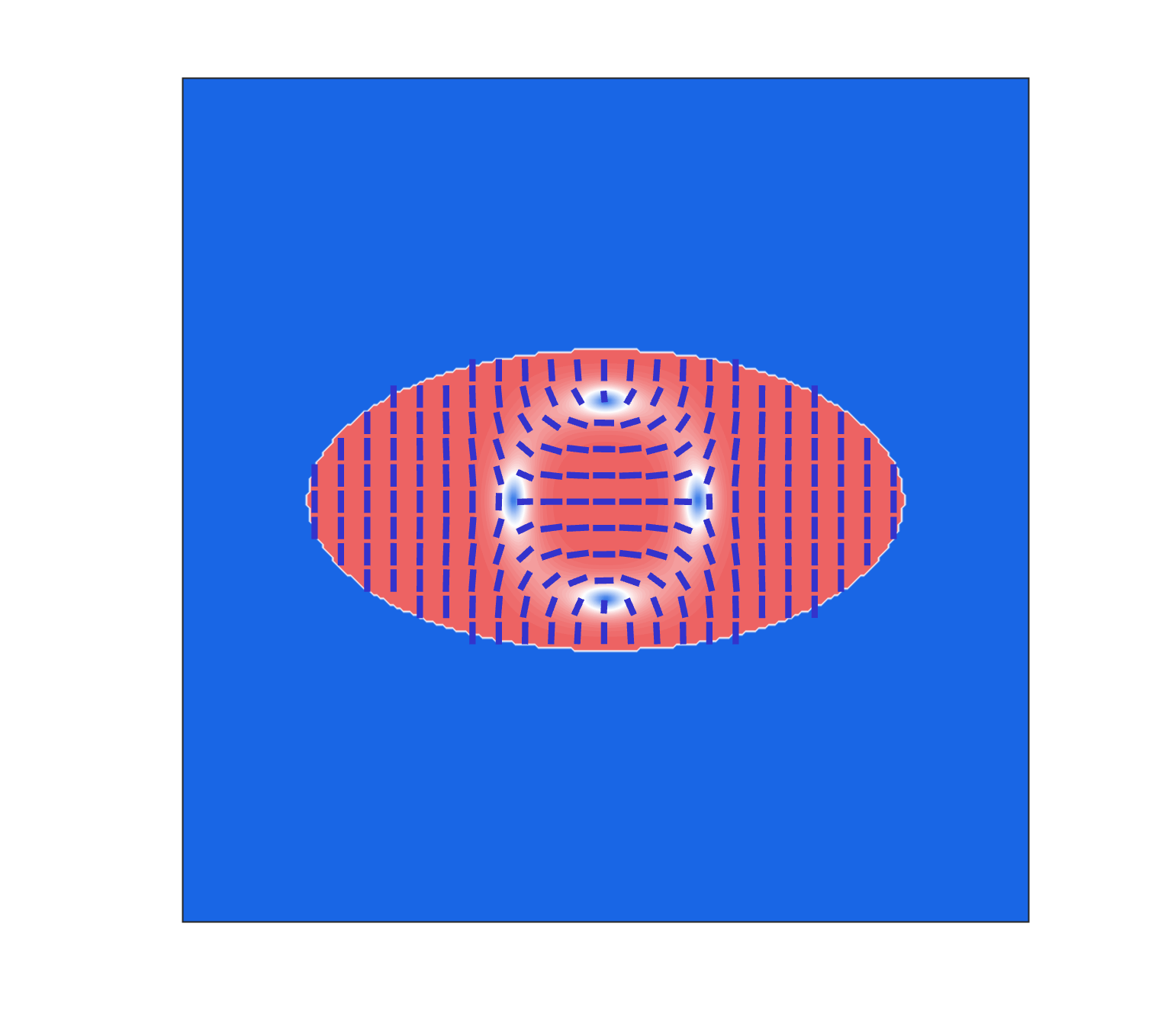}
			\includegraphics[width=0.15\textwidth]{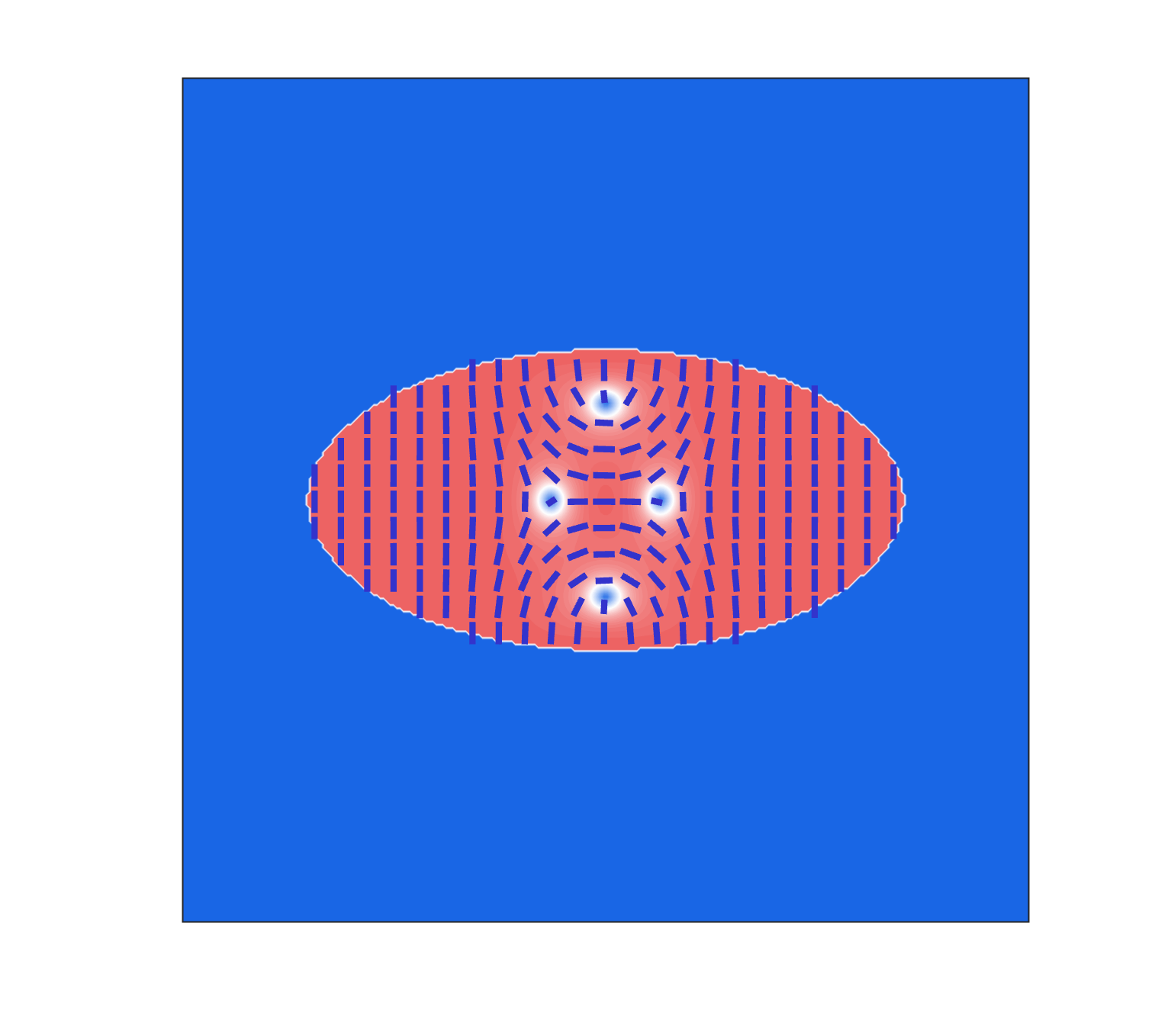}
			\includegraphics[width=0.15\textwidth]{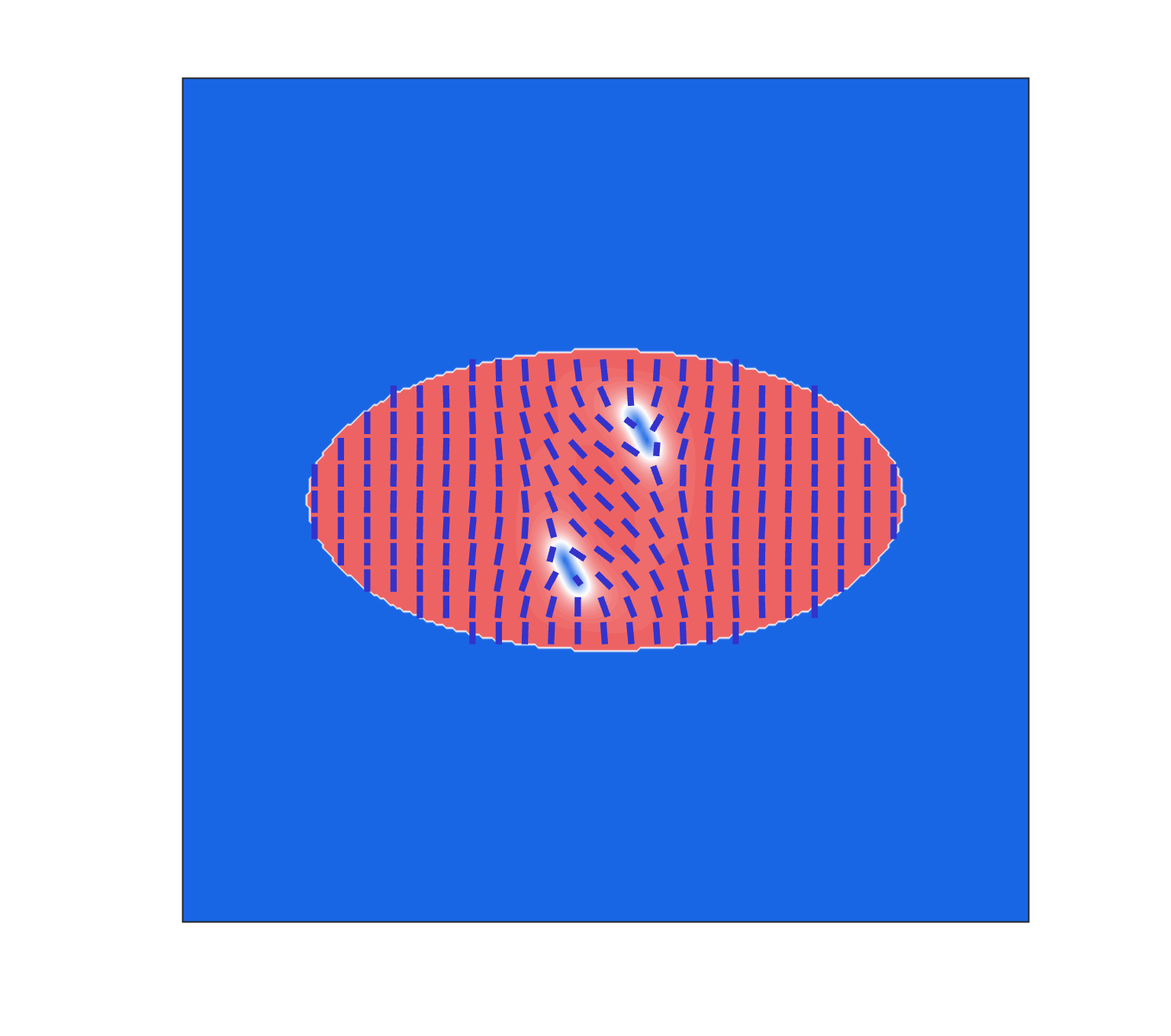}
			\includegraphics[width=0.15\textwidth]{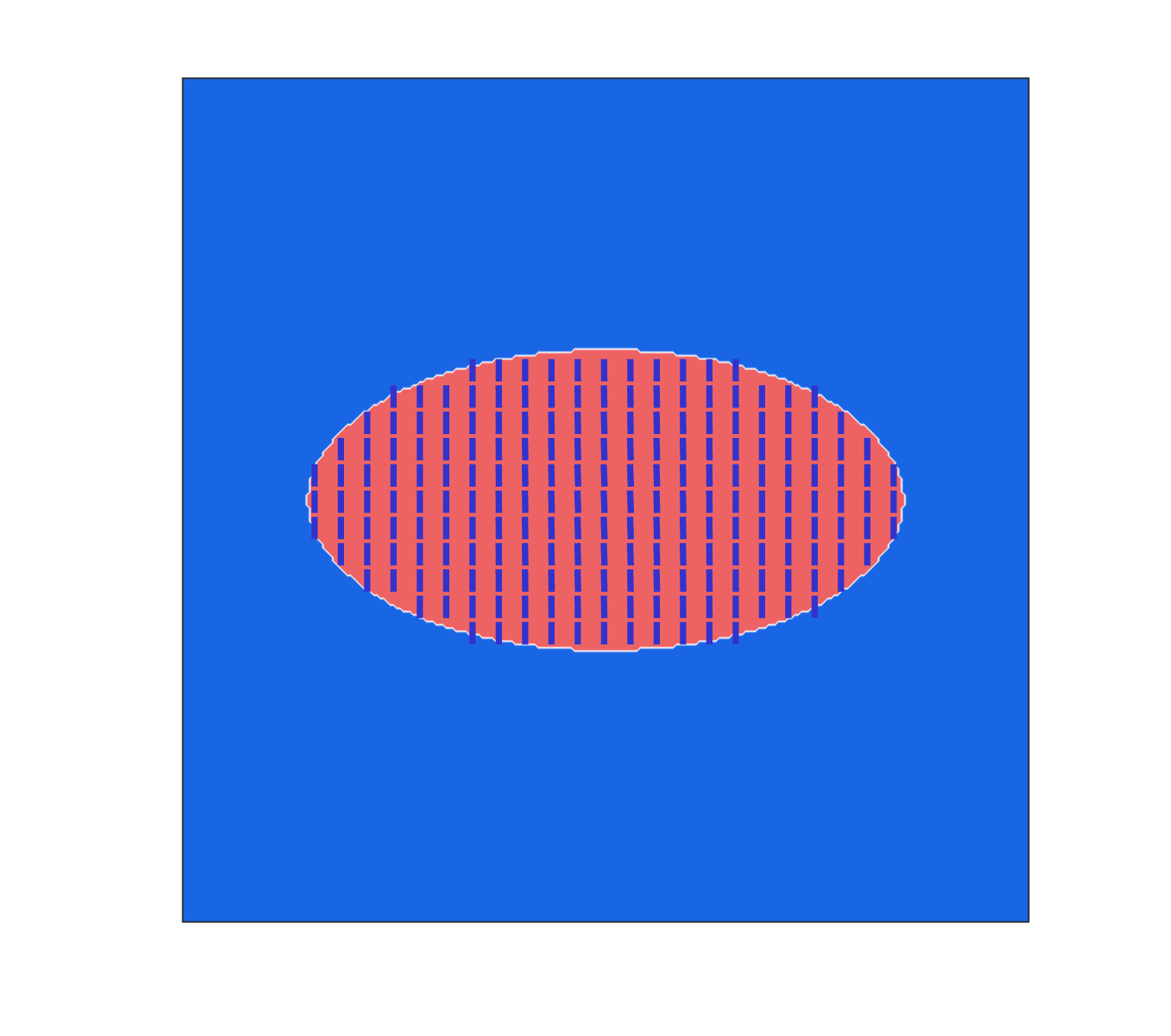}
			\includegraphics[width=0.15\textwidth]{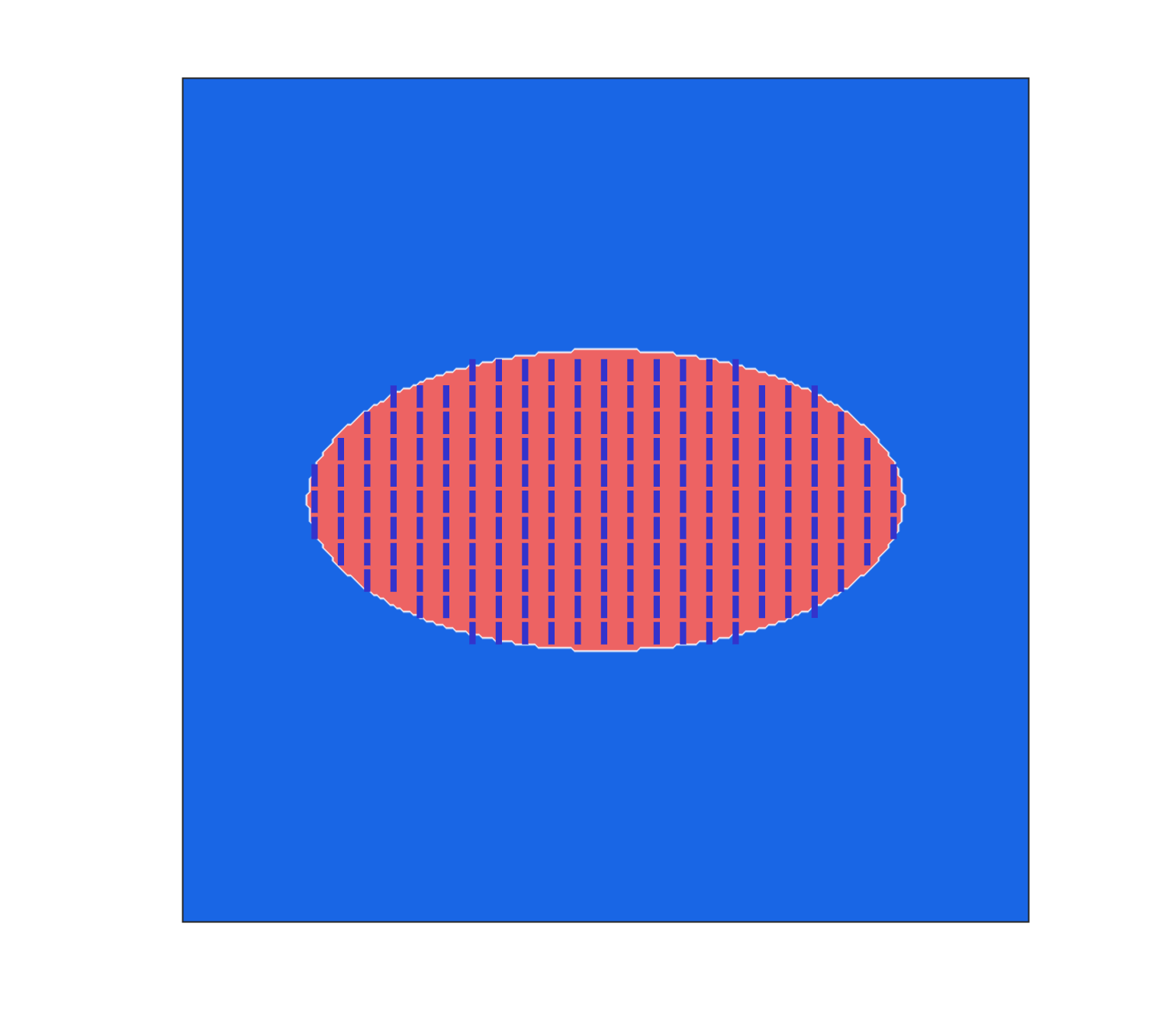}
		\end{minipage}
	}

	\caption{Comparison of defect dynamics for active liquid crystals
		with opposite signs of the activity in a horizontal, elliptical
		domain. Each panel shows the contour of the principal eigenvalue
		and director field at various time times. The top panel records
		the solutions at $t=0.1$, $0.3$, $1$, $5$, $7$, $10$, the bottom
		panel shows the solution at $t = 0.1$, $0.2$, $5$, $7.6$, $10$,
		$12$, respectively.}
	\label{fig:inellipse2-compare}
\end{figure}
For the horizontally oriented elliptical obstacle, when
$\chi_{fluid} > 0$, the $+\tfrac{1}{2}$ defects again initially
move apart due to the active forcing, after which defect pairs
annihilate and the system attains a steady state. For $\chi_{fluid}
	< 0$, a similar defect annihilation scenario occurs, but the motion
of the $+\tfrac{1}{2}$ defects is impeded by the solid boundary,
leading to the rapid stabilization.
\begin{figure}[H]
	\centering

	\fbox{%
		\begin{minipage}{0.96\textwidth}
			\centering
			\textbf{$\chi_{fluid} = 10$, $\xi_{fluid} = -0.1$} \\[0.5em]
			\includegraphics[width=0.15\textwidth]{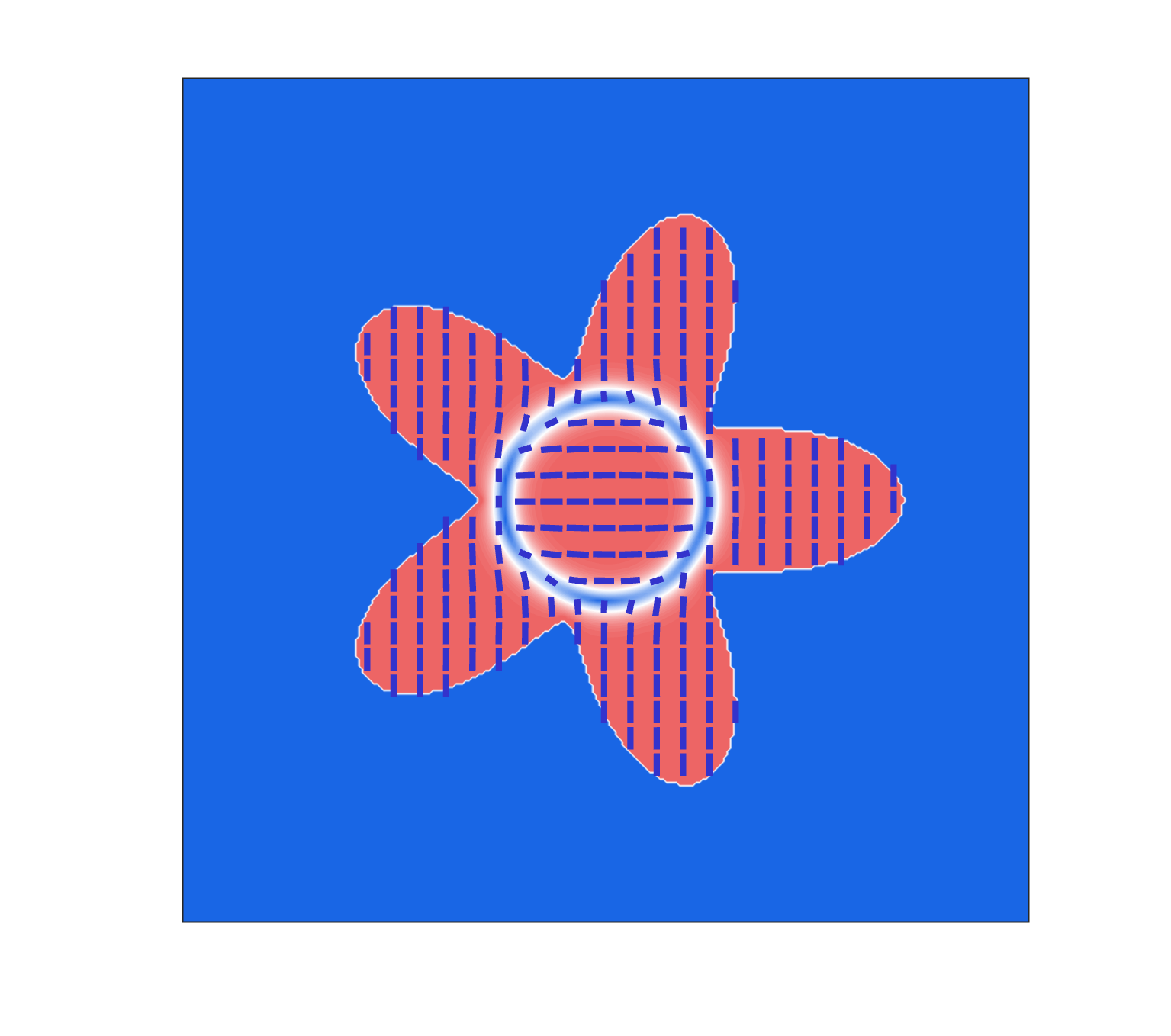}
			\includegraphics[width=0.15\textwidth]{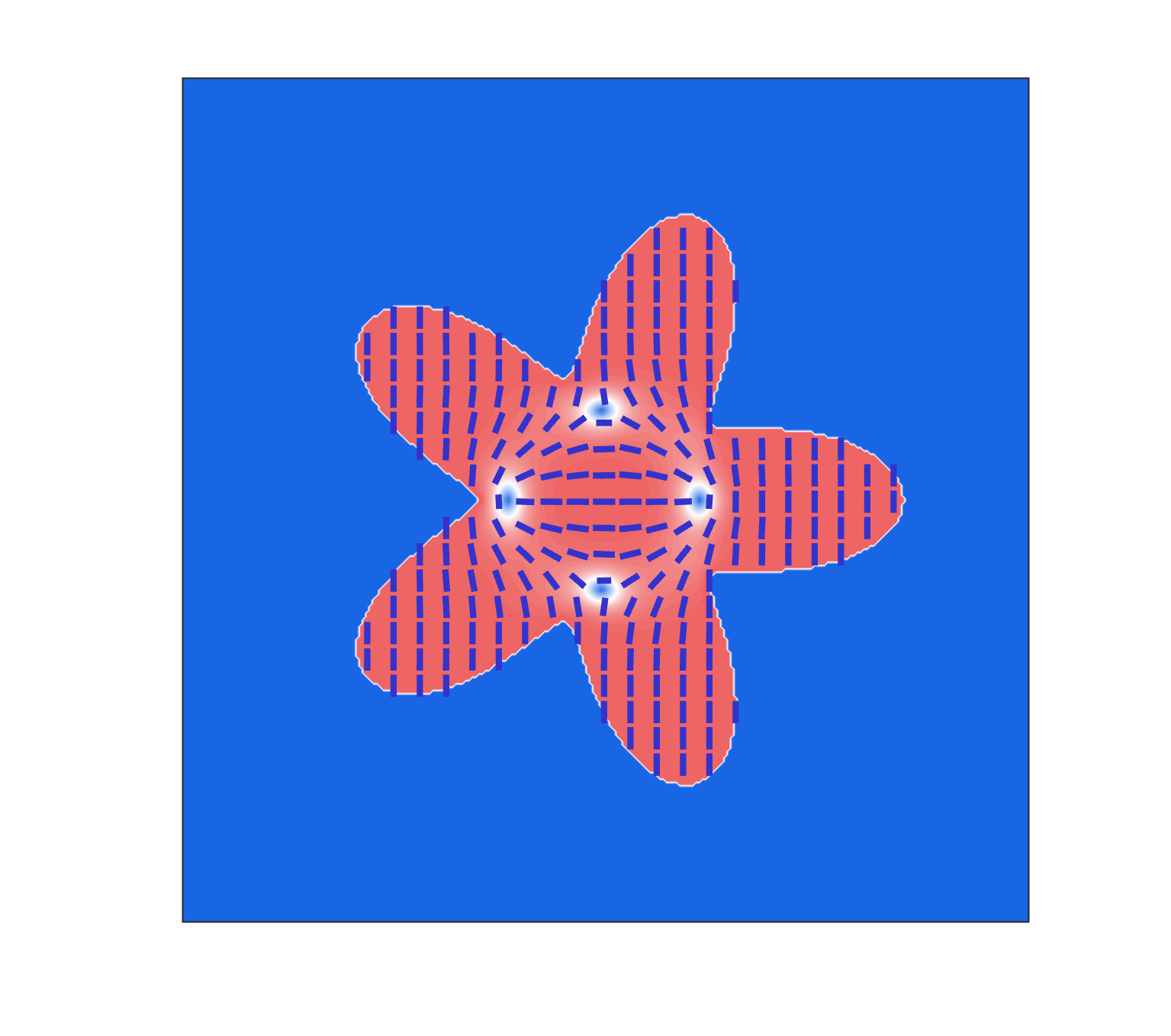}
			\includegraphics[width=0.15\textwidth]{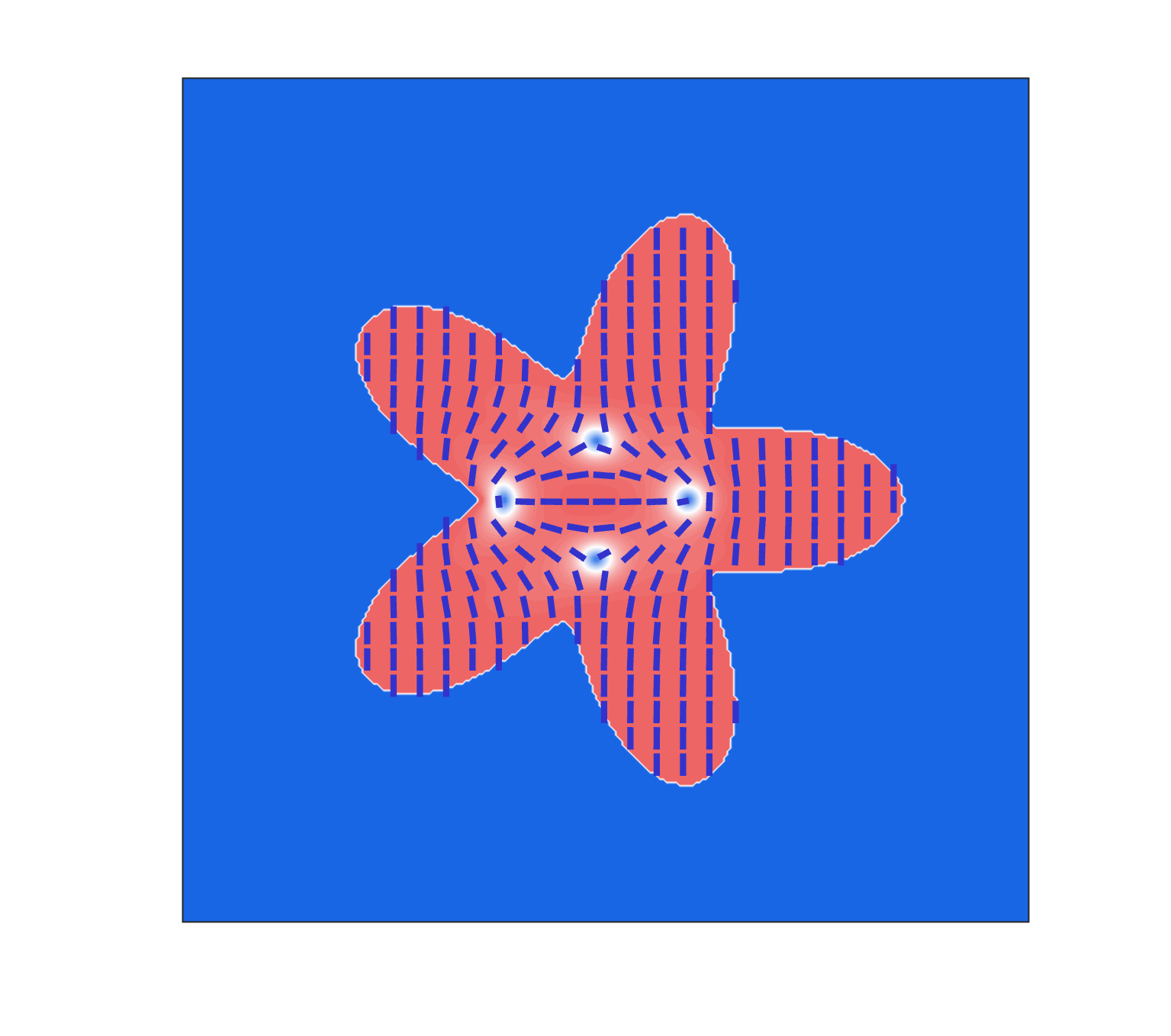}
			\includegraphics[width=0.15\textwidth]{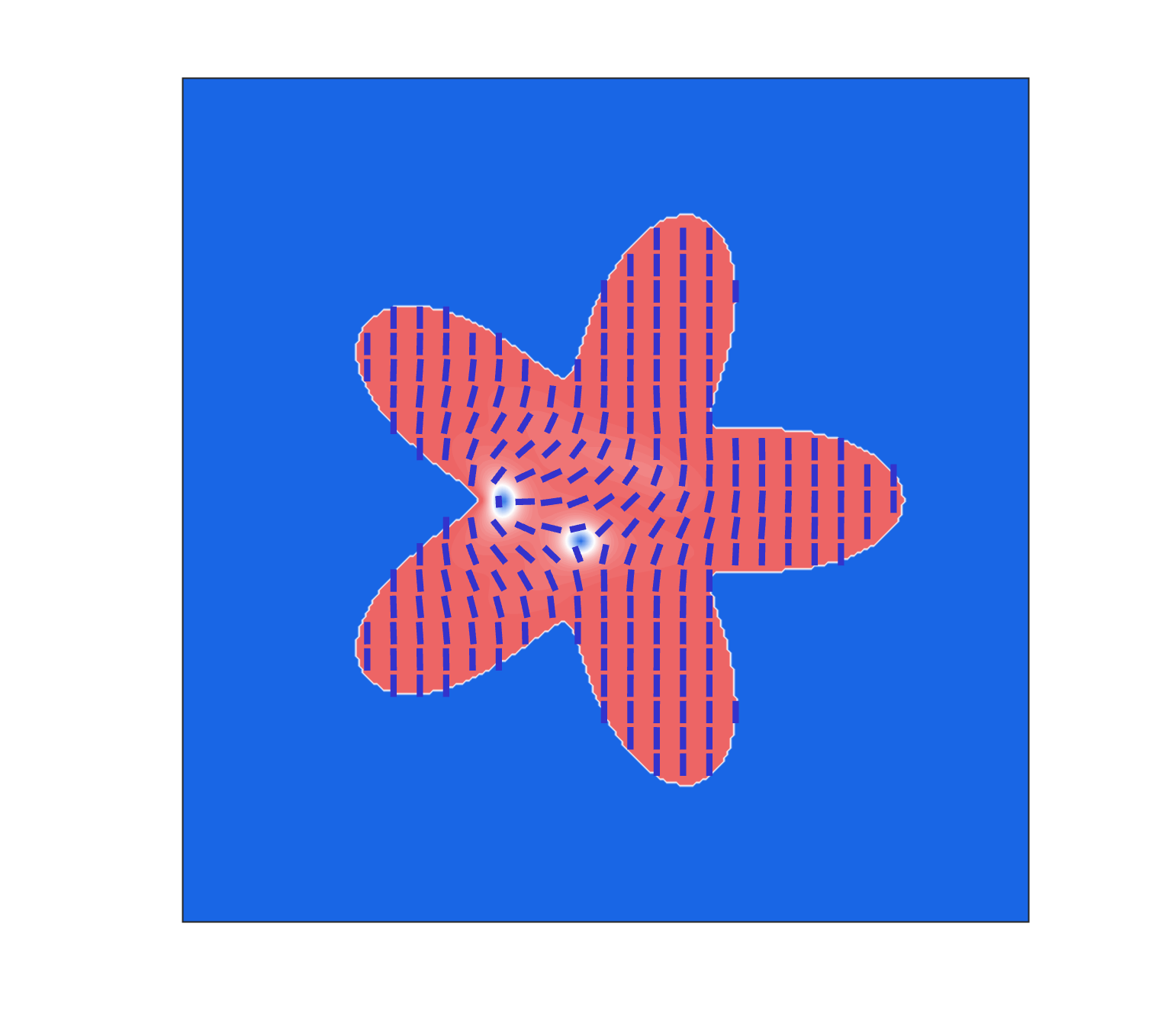}
			\includegraphics[width=0.15\textwidth]{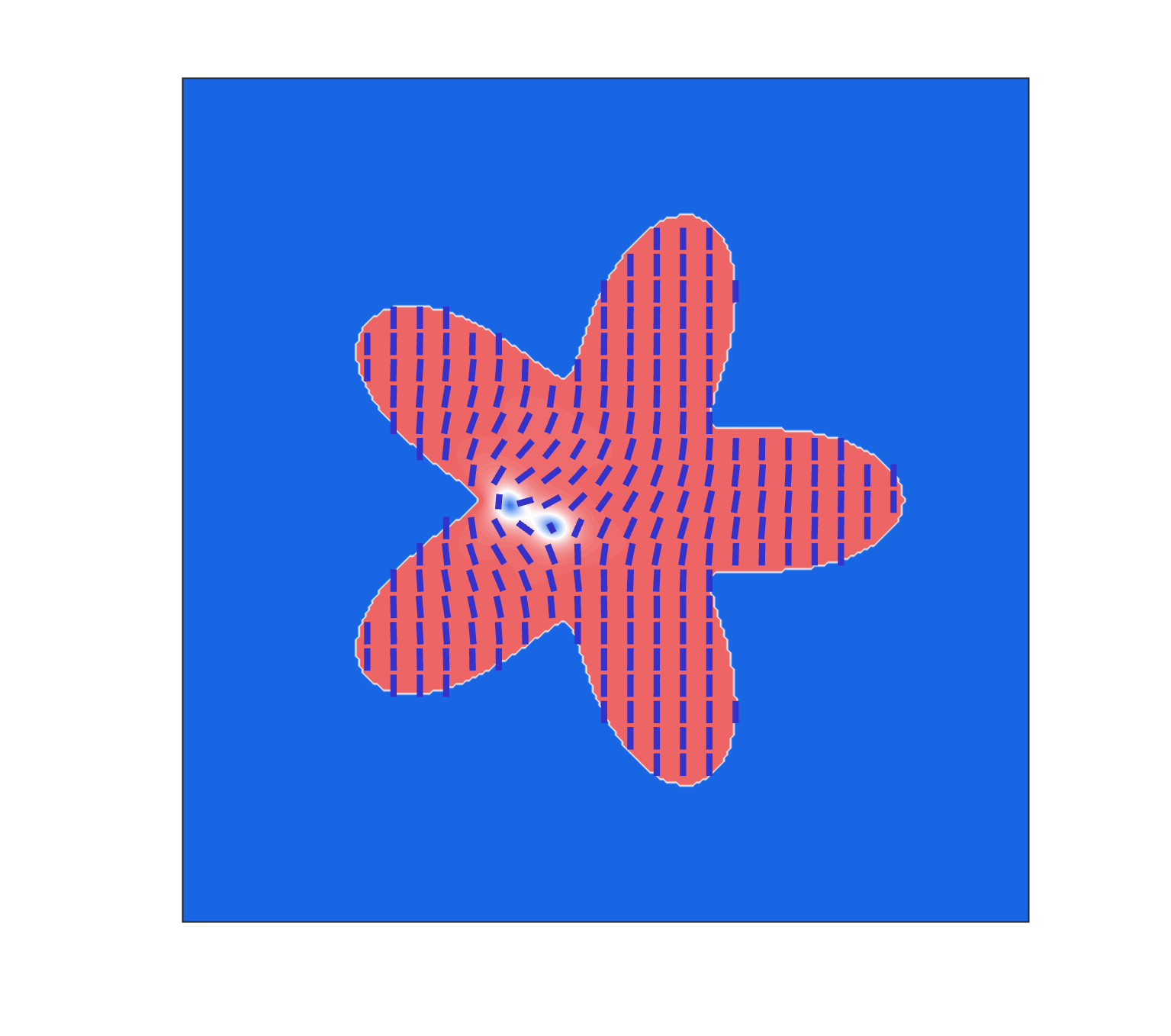}
			\includegraphics[width=0.15\textwidth]{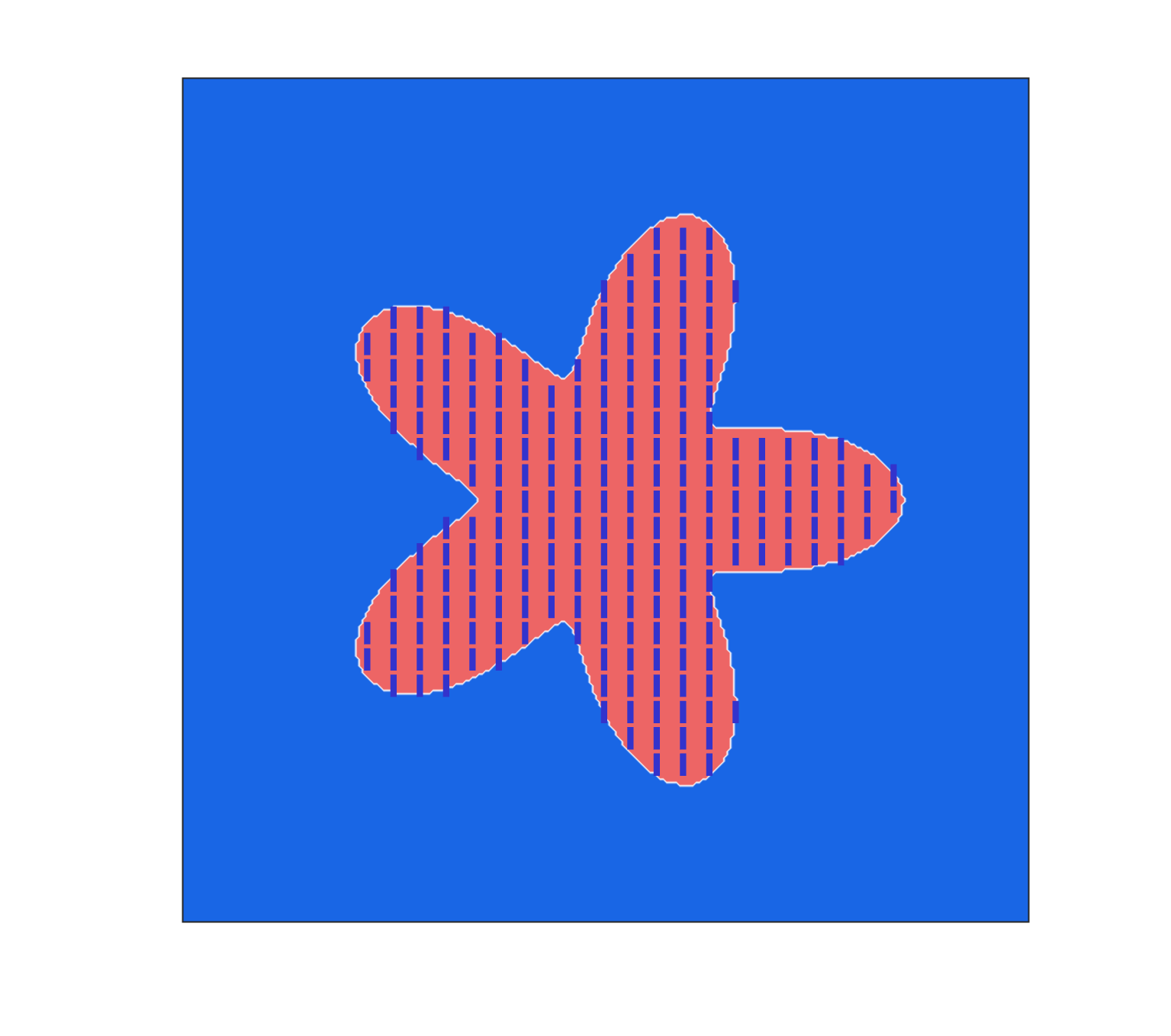}
		\end{minipage}
	}%
	\vspace{1em}
	\fbox{%
		\begin{minipage}{0.96\textwidth}
			\centering
			\textbf{$\chi_{fluid} = -10$, $\xi_{fluid} = 0.1$} \\[0.5em]
			\includegraphics[width=0.15\textwidth]{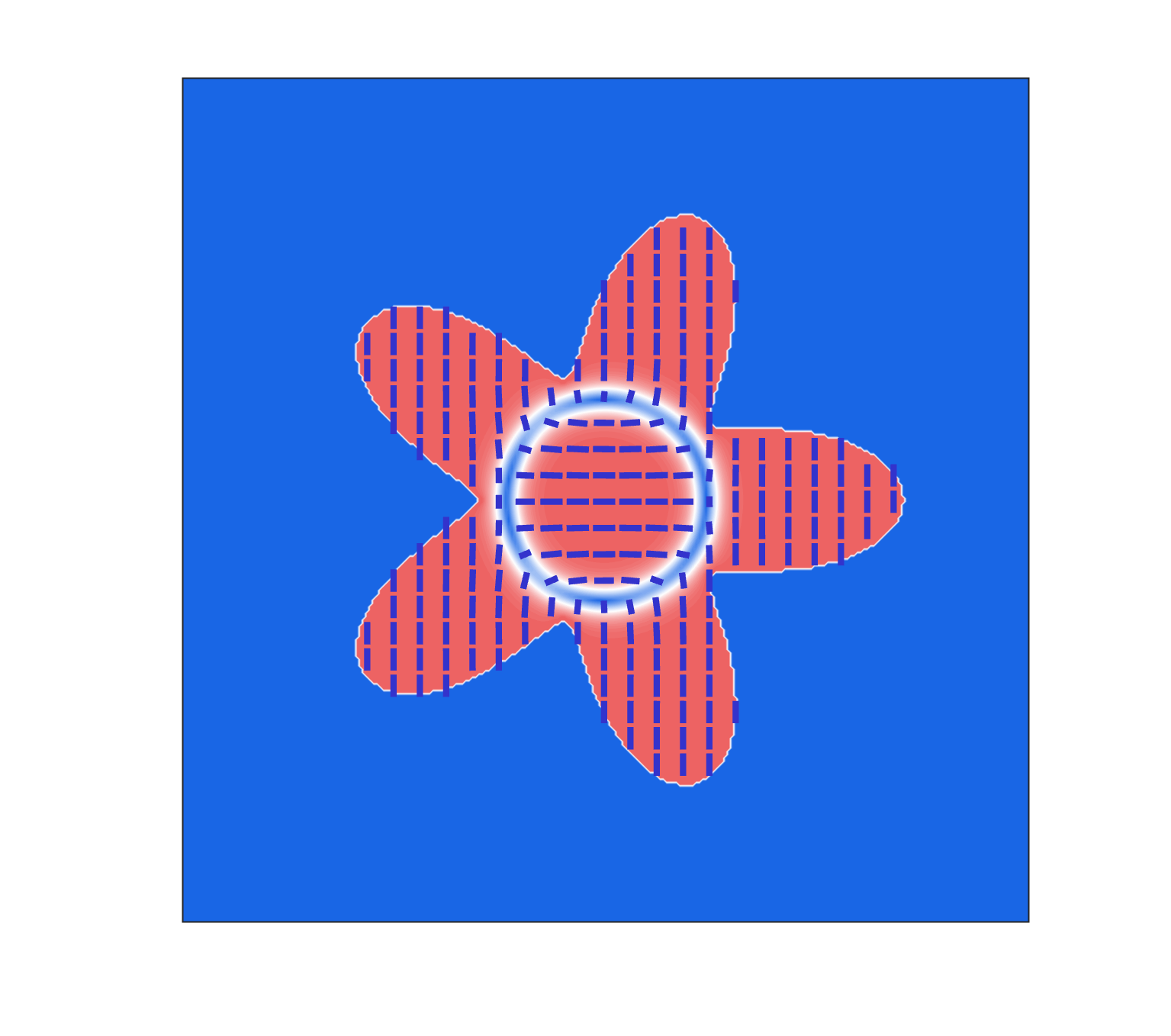}
			\includegraphics[width=0.15\textwidth]{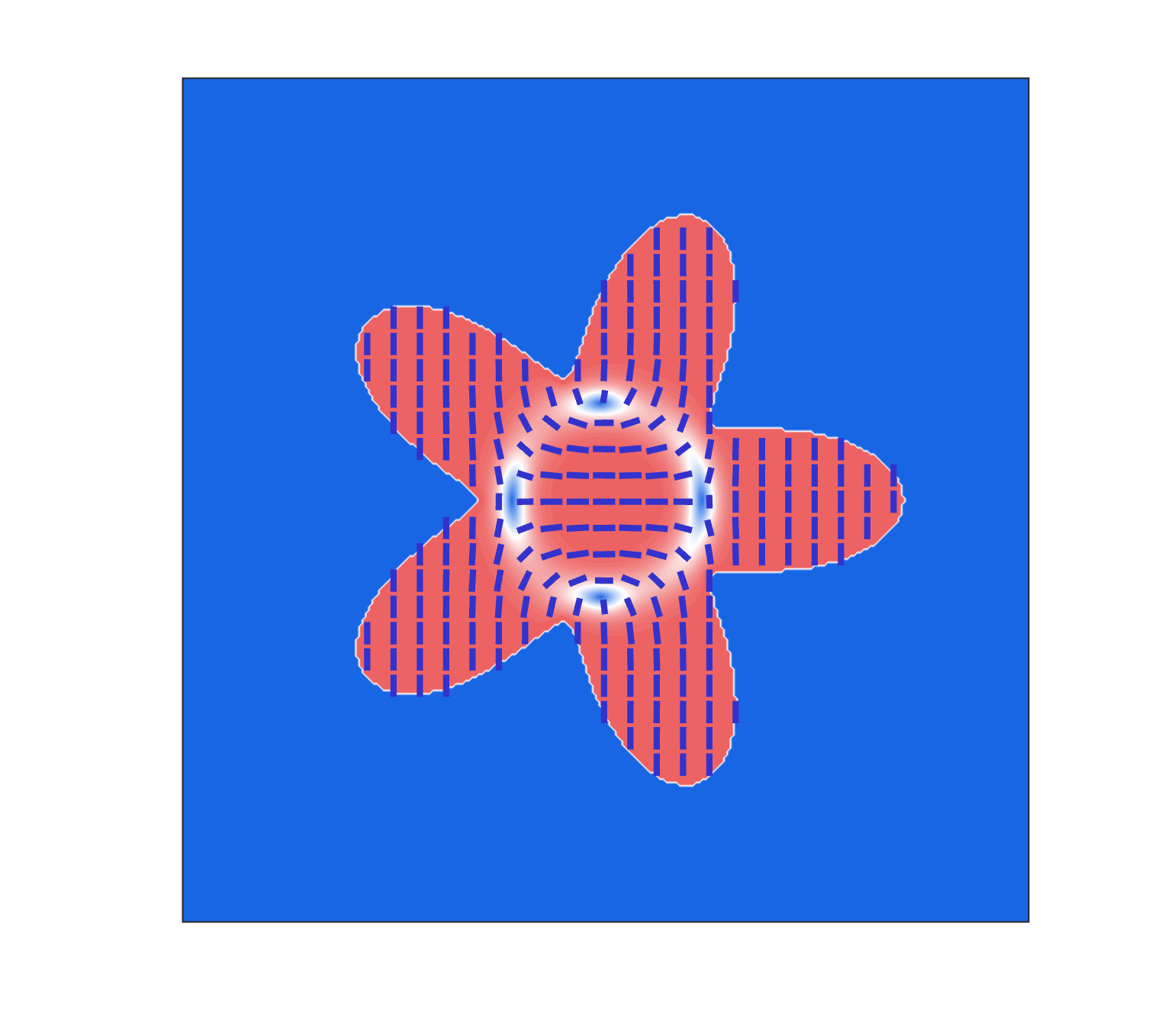}
			\includegraphics[width=0.15\textwidth]{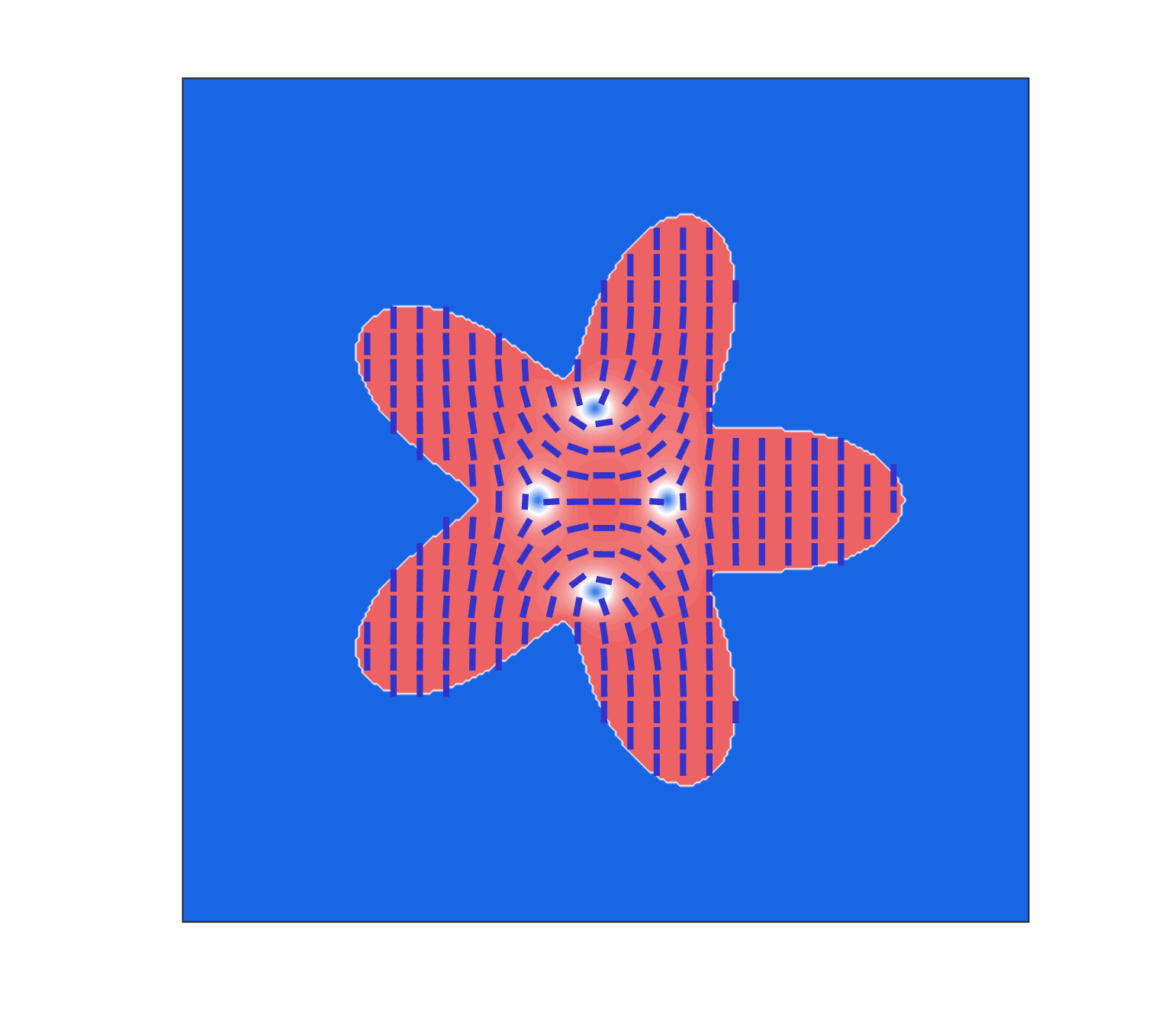}
			\includegraphics[width=0.15\textwidth]{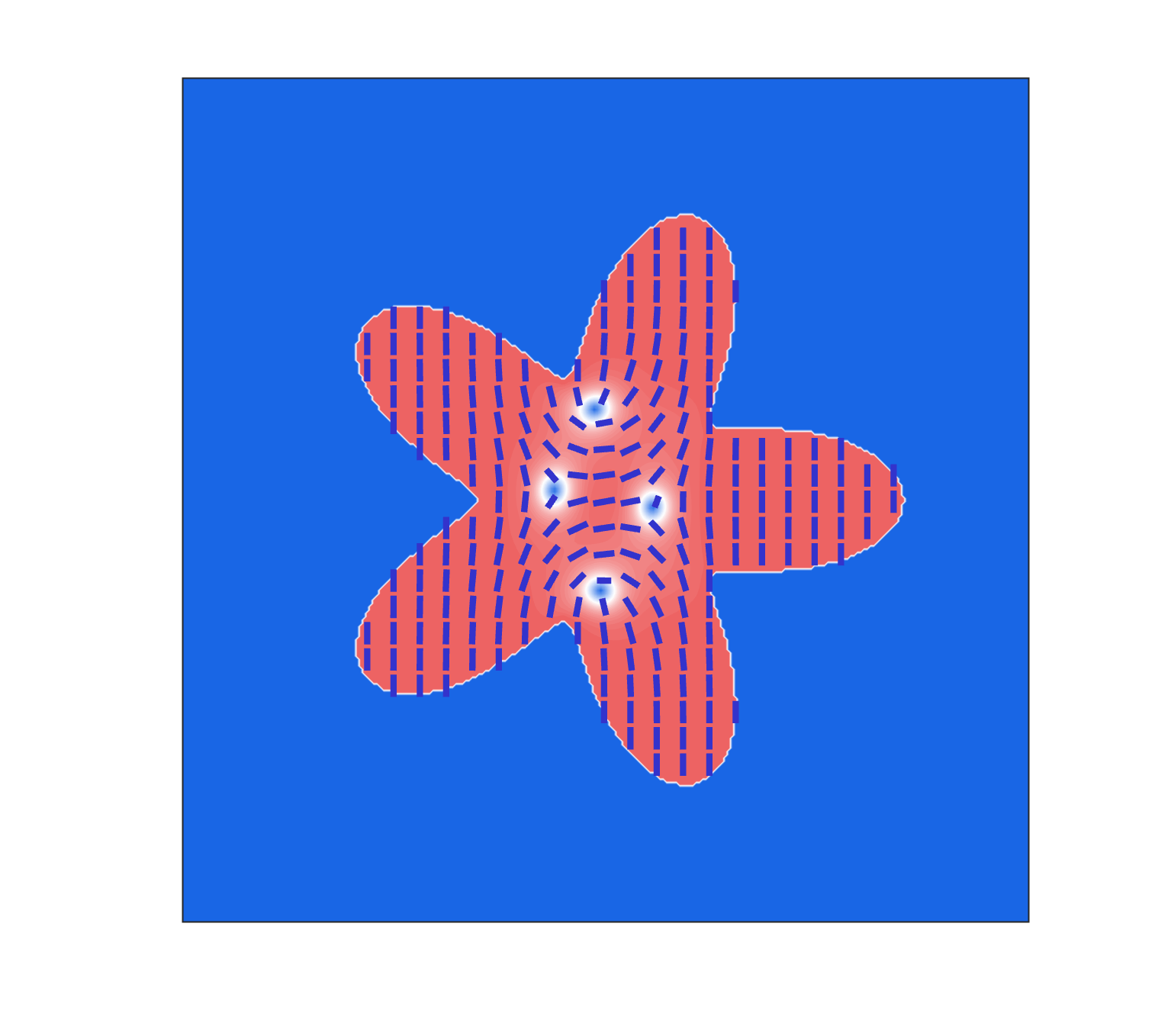}
			\includegraphics[width=0.15\textwidth]{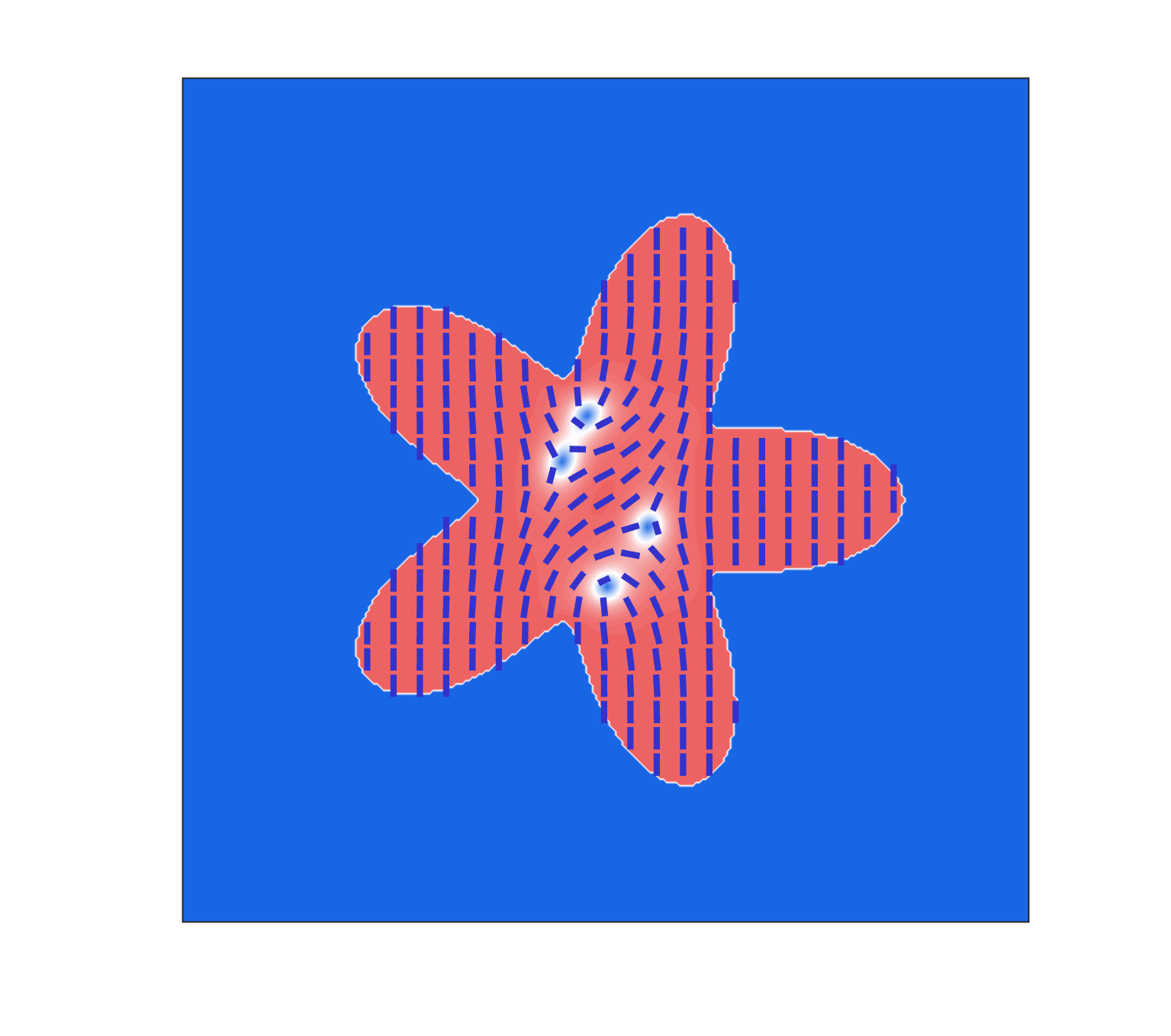}
			\includegraphics[width=0.15\textwidth]{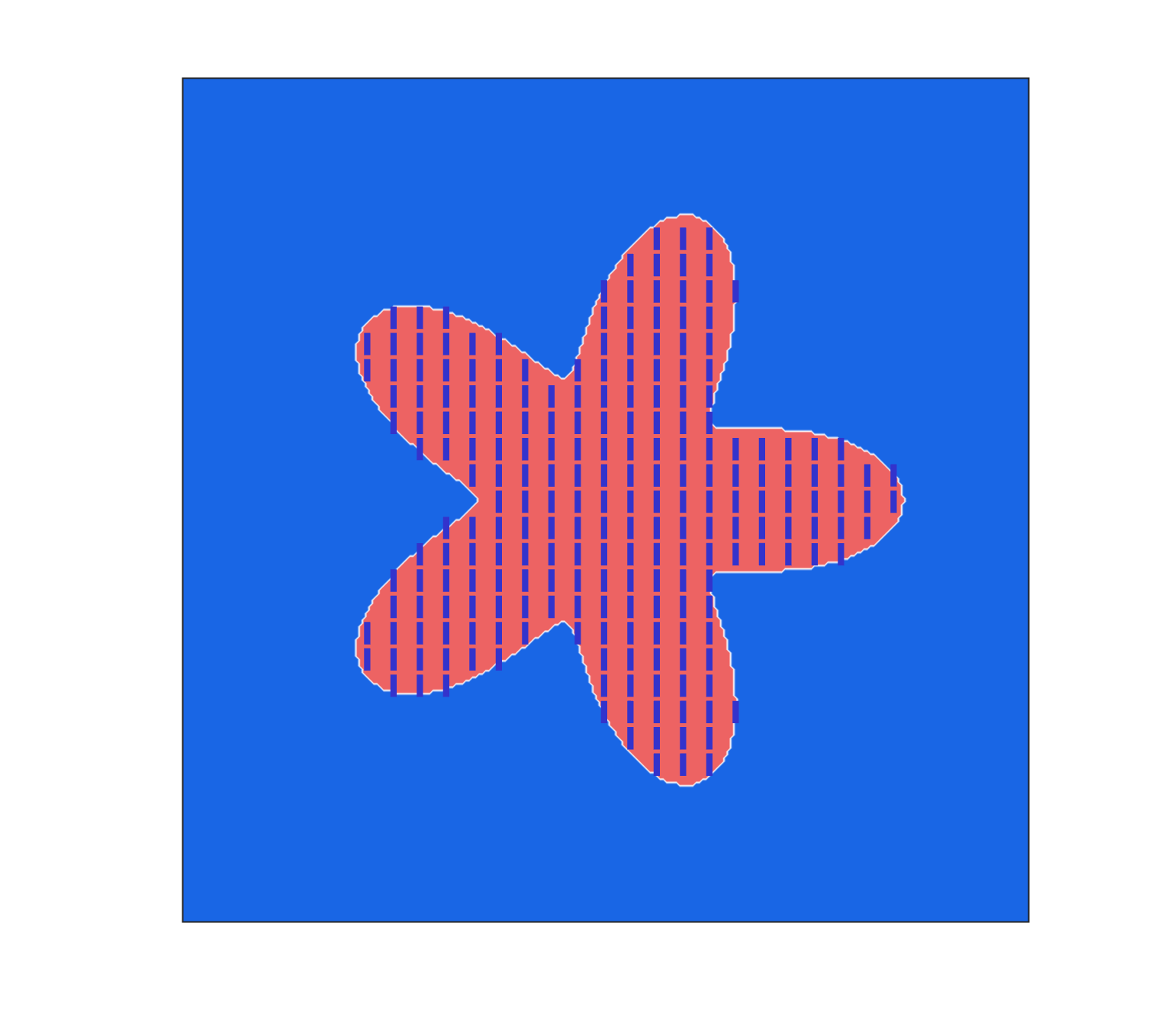}
		\end{minipage}
	}

	\caption{Comparison of defect dynamics for active liquid crystals
		with opposite signs of the activity in a star-shaped domain. Each
		panel shows the contour of the principal eigenvalue and director
		field at various time times. The top panel records the solutions
		at $t=0.1$, $0.3$, $1$, $3$, $3.5$, $20$ the bottom panel shows
		the solution at $t = 0.1$, $0.2$, $1$, $3$, $3.5$, $20$, respectively.}
	\label{fig:star-compare}
\end{figure}

Finally, in the star-shaped domain, the dynamics resemble the
horizontal ellipse case, rapidly approaching equilibrium.

\begin{figure}[H]
	\begin{center}
		\includegraphics[width=0.36\textwidth]{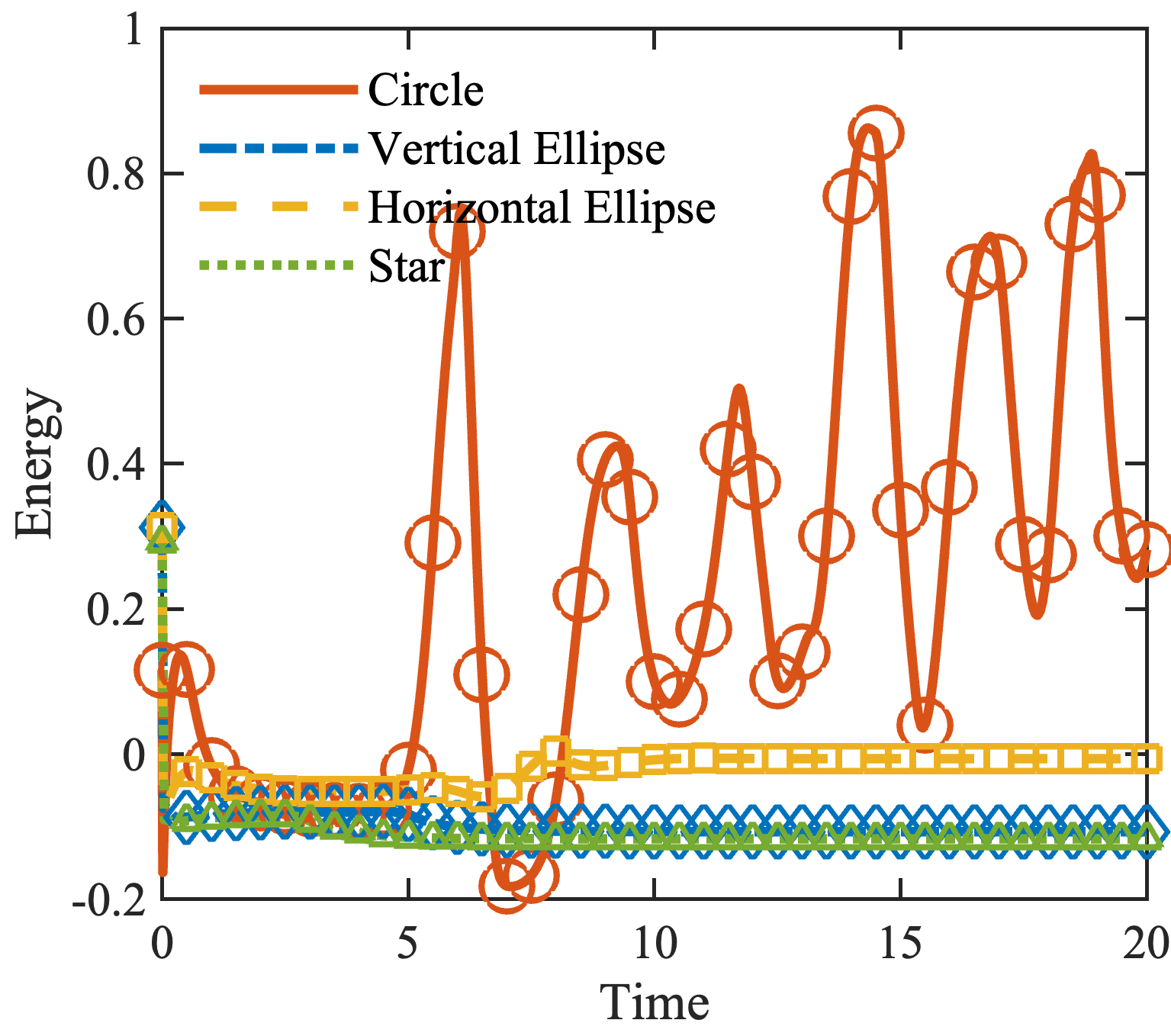}
		\includegraphics[width=0.36\textwidth]{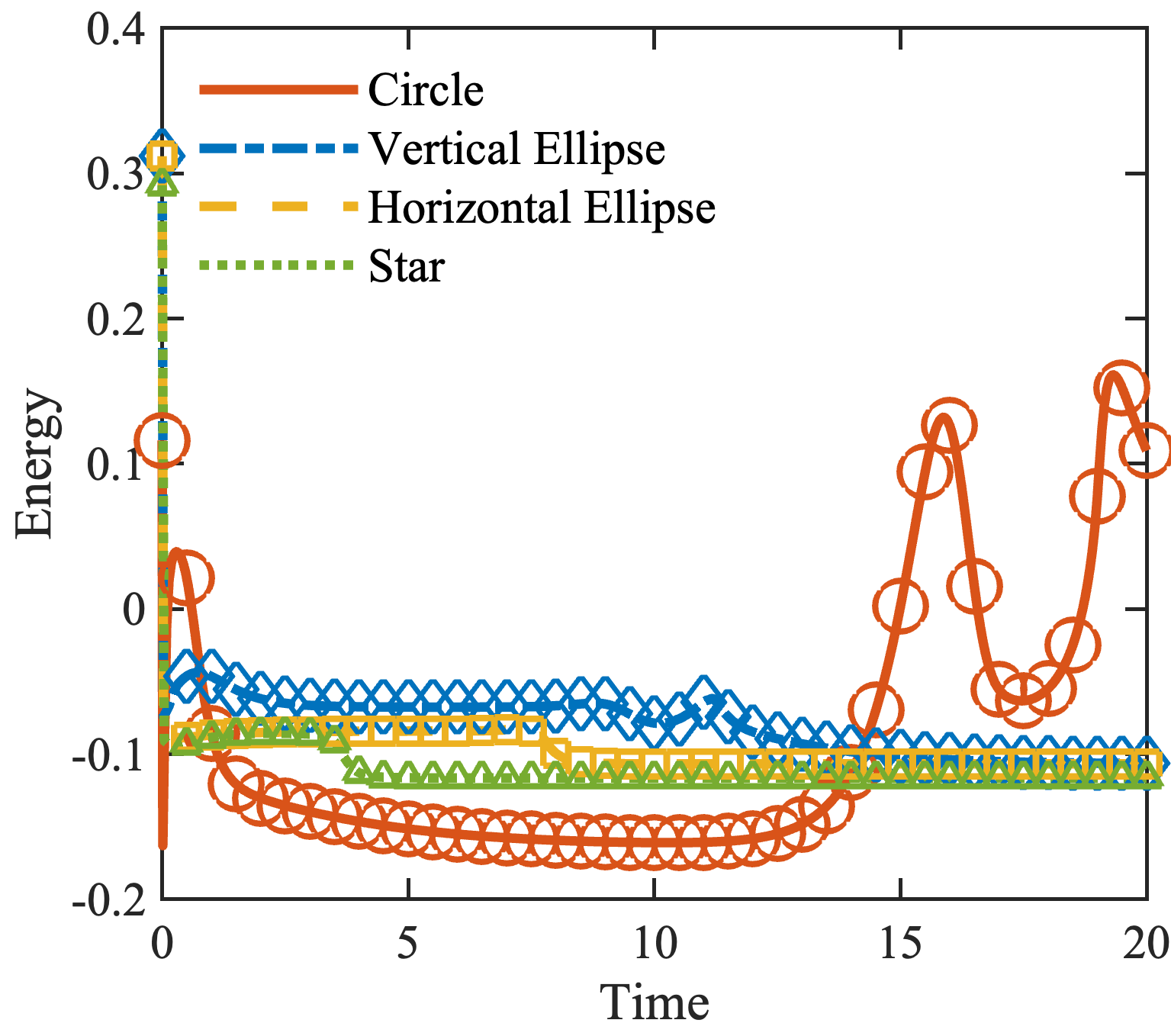}
	\end{center}
	\caption{Temporal evolution of the free energy for active liquid
		crystals in domains with different obstacle shapes. The left
		panel corresponds to the case with $\chi_{fluid} = 10$ and
		$\xi_{fluid} = -0.1$, while the right panel shows the results for
		$\chi_{fluid} = -10$ and $\xi_{fluid} = 0.1$.}\label{fig:energy_shape}
\end{figure}

Figure~\ref{fig:energy_shape} depicts the time evolution of the
free energy for the aforementioned four cases, the left panel shows
the result of $\chi_{fluid} > 0$ and the right panel depicts the
result of $\chi_{fluid} < 0$. One observes that in this case, due
to large  active parameter values, the energy  is clearly no longer
monotonically decreasing, especially in the case of the circular
obstacle. Due to the geometric domain left for the defects to move
around, their motion is primarily driven by the active stress. As a
result, the system does not reach a steady state at the end of the
computation. For other cases, they ultimately arrives at a steady
state due to the effect between $+\tfrac{1}{2}$ and $-\tfrac{1}{2}$
defects and the confined geometry.

\section{Conclusion}
In this study, we have developed a set  of thermodynamically consistent
numerical algorithms for a novel
$\bQ$-tensor-based two-phase hydrodynamic model of active nematic liquid
crystals and a solid substrate in a region of arbitrary geometry.
The discrete equation system resulted from each
algorithm consists of linear, decoupled solvers for the Stokes and
Poisson systems, ensuring computational efficiency. Convergence
tests confirm their first- and
second-order temporal accuracy, respectively. For the passive model,
the SGE-BDF2 scheme is shown in
Appendix~\ref{app:sge-bdf2-stability} to dissipate a modified energy
under the boundedness and stabilization assumptions stated in the
analysis, while the SGE-PDG scheme retains an unconditional
second-order energy stability property. Through extensive numerical
studies, we demonstrated how activity parameters and obstacle
geometries influence defect dynamics and steady-state
configurations. The proposed model and algorithms not only provide
deeper physical insights into the hydrodynamics of the active
nematic system but also
establish a robust computational framework for future research in
active matter. In particular, they furnish an efficient suite of
tools to simulate two-phase active liquid crystals in domains with
arbitrary solid boundaries or embedded obstacles of general shapes.

\section*{Acknowledgements}
Xuelong Gu's research is  supported  by
NSF award  OIA-2242812.  and Qi Wang's research is  partially supported  by
NSF awards  OIA-2242812 and DMS-2038080, DOE award DE-SC0025229,
and an SC GAIN-CRP award.
	{\color{magenta}
		\appendix
		\section{Energy stability of the SGE--BDF2 scheme}
		\label{app:sge-bdf2-stability}

		This appendix provides the energy stability proof for the SGE--BDF2 scheme in the limit of passive liquid crystals.
		For any sequence $f^n$, we write
		\begin{equation*}
			\begin{aligned}
				d_t f^{n+1} & = f^{n+1}-f^n,\qquad
				d_{2t} f^{n+1} = 3f^{n+1}-4f^n+f^{n-1},\qquad
				d_{tt} f^n = f^{n+1}-2f^n+f^{n-1},
			\end{aligned}
		\end{equation*}
		and
		\begin{equation*}
			\begin{aligned}
				D_t f^{n+1} & = \frac{1}{\tau}d_t f^{n+1},\qquad
				D_{2t} f^{n+1} = \frac{1}{2\tau}d_{2t} f^{n+1},\qquad
				D_{tt} f^n = \frac{1}{\tau^2}d_{tt} f^n.
			\end{aligned}
		\end{equation*}
		We assume that \eqref{eq:ass-stabilization} holds. Consequently, for any
		$\mathbf A$ and $\mathbf B$ satisfying \eqref{eq:ass-stabilization}, there
		exists $L_f>0$ such that
		\begin{equation*}
			\|f_{\rm bulk}(\mathbf A)-f_{\rm bulk}(\mathbf B)\|
			\le L_f\|\mathbf A-\mathbf B\|.
		\end{equation*}
		The BDF2 molecular field is given by
		\begin{equation*}
			\begin{aligned}
				\mathbf G_{\mathbf Q}^{n+1}
				={} & -\nabla\cdot\left(K\nabla\mathbf Q^{n+1}\right)
				+W(\mathbf Q^{n+1}-\mathbf Q_{\star})
				+\frac{\tau\kappa}{2}d_{2t}\mathbf Q^{n+1}
				+B_2(f_N(\mathbf Q^n)).
			\end{aligned}
		\end{equation*}

		\begin{lem}
			The following BDF2 identities hold:
			\begin{equation}\label{eq:primitive-bdf2-id-1}
				\begin{aligned}
					2(d_{2t} f^{n+1},f^{n+1})
					={} & \|f^{n+1}\|^2+\|B_2(f^{n+1})\|^2
					-\|f^n\|^2-\|B_2(f^n)\|^2
					+\|d_{tt}f^n\|^2,
				\end{aligned}
			\end{equation}
			\begin{equation}\label{eq:primitive-bdf2-id-2}
				\begin{aligned}
					\|d_{2t}f^{n+1}\|^2
					=6\|d_t f^{n+1}\|^2-2\|d_t f^n\|^2
					+3\|d_{tt}f^n\|^2.
				\end{aligned}
			\end{equation}
		\end{lem}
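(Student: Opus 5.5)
Both identities are purely algebraic statements about an arbitrary sequence in an inner-product space, so the plan is to expand each side in terms of $a=f^{n+1}$, $b=f^n$, $c=f^{n-1}$ and compare coefficients of the quadratic forms $\|a\|^2,\|b\|^2,\|c\|^2,(a,b),(a,c),(b,c)$. No regularity or assumption \eqref{eq:ass-stabilization} is needed; only bilinearity and symmetry of $(\cdot,\cdot)$ will be used.

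For \eqref{eq:primitive-bdf2-id-1}, the left side is $2(3a-4b+c,a)=6\|a\|^2-8(a,b)+2(a,c)$. On the right, I will use $B_2(f^{n+1})=2a-b$ and $B_2(f^n)=2b-c$, and $d_{tt}f^n=a-2b+c$. Expanding gives $\|a\|^2+(4\|a\|^2-4(a,b)+\|b\|^2)-\|b\|^2-(4\|b\|^2-4(b,c)+\|c\|^2)+(\|a\|^2+4\|b\|^2+\|c\|^2-4(a,b)+2(a,c)-4(b,c))$. Collecting terms, the $\|b\|^2$, $\|c\|^2$ and $(b,c)$ contributions cancel, leaving $6\|a\|^2-8(a,b)+2(a,c)$, which matches.

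For \eqref{eq:primitive-bdf2-id-2}, I would rather avoid a full six-coefficient expansion by writing $x=d_tf^{n+1}=a-b$ and $y=d_tf^n=b-c$. Then $d_{2t}f^{n+1}=3x-y$ and $d_{tt}f^n=x-y$. The identity becomes $\|3x-y\|^2=6\|x\|^2-2\|y\|^2+3\|x-y\|^2$. The left side is $9\|x\|^2-6(x,y)+\|y\|^2$, and the right side is $6\|x\|^2-2\|y\|^2+3\|x\|^2-6(x,y)+3\|y\|^2$, so the two agree.

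The only step needing care is the bookkeeping of cross terms in the first identity. The change of variables to differences $x,y$ could be used there too, writing $a=b+x$ and $c=b-y$ to reduce errors. I expect no genuine obstacle, since these are the standard BDF2 telescoping identities, valid pointwise and hence in any $L^2$ or discrete inner product $(\cdot,\cdot)_h$.
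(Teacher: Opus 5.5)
Your proposal is correct. Both comparisons check out: for the first identity each side reduces to $6\|a\|^2-8(a,b)+2(a,c)$, and for the second each side reduces to $9\|x\|^2-6(x,y)+\|y\|^2$.

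The paper gives no proof of this lemma and simply asserts these standard BDF2 identities, so your direct expansion is the routine verification it leaves implicit. Your remark that only bilinearity and symmetry are used is worth keeping. The paper later applies the first identity with the weighted forms $(\sqrt K\nabla\,\cdot,\sqrt K\nabla\,\cdot)$ and $(\sqrt W\,\cdot,\sqrt W\,\cdot)$, which may be degenerate where $K$ or $W$ vanishes, and your argument still covers that case.
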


		\begin{lem}
			For the BDF2 extrapolated nonlinear force, one has
			\begin{equation}\label{eq:primitive-taylor-claim}
				\begin{aligned}
					d_{2t}F_{\rm bulk}(\mathbf Q^{n+1})
					\le{} & \left(d_{2t}\mathbf Q^{n+1},
					B_2(f_{\rm bulk}(\mathbf Q^n))\right)
					+3L_f\|d_t\mathbf Q^{n+1}\|^2
					+3L_f\|d_t\mathbf Q^n\|^2.
				\end{aligned}
			\end{equation}
		\end{lem}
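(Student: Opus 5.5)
The plan is to split the BDF2 difference of the bulk energy into two one-step differences, apply a descent-lemma (Lipschitz-gradient Taylor) bound to each around the middle level $\mathbf Q^n$, and then swap $f_{\rm bulk}(\mathbf Q^n)$ for the extrapolation $B_2(f_{\rm bulk}(\mathbf Q^n))$ at a cost controlled by $L_f$. Write $F^m=F_{\rm bulk}(\mathbf Q^m)$ and $f^m=f_{\rm bulk}(\mathbf Q^m)$. The starting point is
\[
d_{2t}F^{n+1}=3(F^{n+1}-F^n)+(F^{n-1}-F^n).
\]

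\textbf{Step 1 (one-sided Taylor bounds).} Under \eqref{eq:ass-stabilization}, every point $(1-s)\mathbf A+s\mathbf B$ on the segment between two admissible tensors also satisfies the bound $M$, because the norm is convex. Hence the Lipschitz estimate for $f_{\rm bulk}$ holds along the segment. Integrating $F_{\rm bulk}(\mathbf A)-F_{\rm bulk}(\mathbf B)=\int_0^1(f_{\rm bulk}(\mathbf B+s(\mathbf A-\mathbf B)),\mathbf A-\mathbf B)\,ds$ then gives the standard estimate
\[
F_{\rm bulk}(\mathbf A)-F_{\rm bulk}(\mathbf B)\le (f_{\rm bulk}(\mathbf B),\mathbf A-\mathbf B)+\tfrac{L_f}{2}\|\mathbf A-\mathbf B\|^2 .
\]
I apply it with $\mathbf B=\mathbf Q^n$ twice:
\begin{itemize}
\item with $\mathbf A=\mathbf Q^{n+1}$, multiplied by $3$, to bound $3(F^{n+1}-F^n)$;
\item with $\mathbf A=\mathbf Q^{n-1}$, to bound $F^{n-1}-F^n$.
\end{itemize}
The linear terms combine as $(f^n,3d_t\mathbf Q^{n+1}-d_t\mathbf Q^n)=(f^n,d_{2t}\mathbf Q^{n+1})$. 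Therefore
\[
d_{2t}F^{n+1}\le (d_{2t}\mathbf Q^{n+1},f^n)+\tfrac{3L_f}{2}\|d_t\mathbf Q^{n+1}\|^2+\tfrac{L_f}{2}\|d_t\mathbf Q^n\|^2 .
\]

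\textbf{Step 2 (extrapolation correction).} Since $f^n=B_2(f^n)-(f^n-f^{n-1})$, we have
\[
(d_{2t}\mathbf Q^{n+1},f^n)=(d_{2t}\mathbf Q^{n+1},B_2(f^n))-(d_{2t}\mathbf Q^{n+1},f^n-f^{n-1}).
\]
I bound the last term with Cauchy--Schwarz, the Lipschitz bound $\|f^n-f^{n-1}\|\le L_f\|d_t\mathbf Q^n\|$, and the triangle inequality $\|d_{2t}\mathbf Q^{n+1}\|\le 3\|d_t\mathbf Q^{n+1}\|+\|d_t\mathbf Q^n\|$. This gives
\[
L_f\|d_t\mathbf Q^n\|\bigl(3\|d_t\mathbf Q^{n+1}\|+\|d_t\mathbf Q^n\|\bigr)\le \tfrac{3L_f}{2}\|d_t\mathbf Q^{n+1}\|^2+\tfrac{5L_f}{2}\|d_t\mathbf Q^n\|^2,
\]
where the cross term is handled by Young's inequality, $3ab\le\tfrac32(a^2+b^2)$.

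\textbf{Step 3 (collecting constants).} Adding the two steps, the coefficient of $\|d_t\mathbf Q^{n+1}\|^2$ is $\tfrac32+\tfrac32=3$ (times $L_f$), and the coefficient of $\|d_t\mathbf Q^n\|^2$ is $\tfrac12+\tfrac52=3$. This is exactly \eqref{eq:primitive-taylor-claim}.

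The only delicate point is the choice in Step 1 to expand both differences around the common point $\mathbf Q^n$. This makes the linear parts collapse to $d_{2t}\mathbf Q^{n+1}$ and leaves a single correction, $f^n-f^{n-1}$, to be absorbed in Step 2 with constants that match the stated ones. The justification that the Lipschitz constant is valid on the segments, via convexity of the norm ball, should be stated explicitly.
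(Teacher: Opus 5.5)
Your proof is correct and is essentially the paper's argument: the paper also expands both one-step energy differences around $\mathbf Q^n$ with an $\tfrac{L_f}{2}$ Taylor remainder. It then controls the extrapolation error $d_t f_{\rm bulk}(\mathbf Q^n)$ by the Lipschitz bound and Young's inequality, arriving at the same constants $3L_f$ and $3L_f$. The only cosmetic difference is that the paper splits $(d_{2t}\mathbf Q^{n+1}, d_t f_{\rm bulk}(\mathbf Q^n))$ into its $3d_t\mathbf Q^{n+1}$ and $-d_t\mathbf Q^n$ pieces and bounds them separately, whereas you bound them together via the triangle inequality.
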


		\begin{proof}
			A direct calculation gives
			\begin{equation*}
				\begin{aligned}
					 & \left(d_{2t}\mathbf Q^{n+1},B_2(f_{\rm bulk}(\mathbf Q^n))\right) \\
					 & \quad=\left(3d_t\mathbf Q^{n+1}-d_t\mathbf Q^n,
					f_{\rm bulk}(\mathbf Q^n)+d_t f_{\rm bulk}(\mathbf Q^n)\right)       \\
					 & \quad=3(d_t\mathbf Q^{n+1},f_{\rm bulk}(\mathbf Q^n))
					+3(d_t\mathbf Q^{n+1},d_t f_{\rm bulk}(\mathbf Q^n))                 \\
					 & \qquad -(d_t\mathbf Q^n,f_{\rm bulk}(\mathbf Q^n))
					-(d_t\mathbf Q^n,d_t f_{\rm bulk}(\mathbf Q^n)).
				\end{aligned}
			\end{equation*}
			The Taylor's formula with an integral remainder yields
			\begin{equation*}
				\begin{aligned}
					d_tF_{\rm bulk}(\mathbf Q^{n+1})
					={} & (f_{\rm bulk}(\mathbf Q^n),d_t\mathbf Q^{n+1}) \\
					    & +\int_0^1(1-s)
					\left(f_{\rm bulk}'(\mathbf Q^n+s d_t\mathbf Q^{n+1})
					d_t\mathbf Q^{n+1},d_t\mathbf Q^{n+1}\right)\,ds.
				\end{aligned}
			\end{equation*}
			Therefore,
			\begin{equation*}
				(f_{\rm bulk}(\mathbf Q^n),d_t\mathbf Q^{n+1})
				\ge d_tF_{\rm bulk}(\mathbf Q^{n+1})
				-\frac{L_f}{2}\|d_t\mathbf Q^{n+1}\|^2.
			\end{equation*}
			Similarly,
			\begin{equation*}
				\begin{aligned}
					d_tF_{\rm bulk}(\mathbf Q^n)
					={} & (d_t\mathbf Q^n,f_{\rm bulk}(\mathbf Q^n)) \\
					    & -\int_0^1s
					\left(f_{\rm bulk}'(\mathbf Q^n-s d_t\mathbf Q^n)d_t\mathbf Q^n,
					d_t\mathbf Q^n\right)\,ds,
				\end{aligned}
			\end{equation*}
			hence
			\begin{equation*}
				-(f_{\rm bulk}(\mathbf Q^n),d_t\mathbf Q^n)
				\ge -d_tF_{\rm bulk}(\mathbf Q^n)
				-\frac{L_f}{2}\|d_t\mathbf Q^n\|^2.
			\end{equation*}
			Moreover,
			\begin{equation*}
				\|d_t f_{\rm bulk}(\mathbf Q^n)\|
				\le L_f\|d_t\mathbf Q^n\|.
			\end{equation*}
			It follows that
			\begin{equation*}
				\begin{aligned}
					3(d_t\mathbf Q^{n+1},d_t f_{\rm bulk}(\mathbf Q^n))
					 & \ge -3L_f\|d_t\mathbf Q^{n+1}\|\,\|d_t\mathbf Q^n\|, \\
					-(d_t\mathbf Q^n,d_t f_{\rm bulk}(\mathbf Q^n))
					 & \ge -L_f\|d_t\mathbf Q^n\|^2.
				\end{aligned}
			\end{equation*}
			Combining the preceding estimates, we have
			\begin{equation*}
				\begin{aligned}
					 & \left(d_{2t}\mathbf Q^{n+1},B_2(f_{\rm bulk}(\mathbf Q^n))\right) \\
					 & \quad\ge d_{2t}F_{\rm bulk}(\mathbf Q^{n+1})
					-\frac{3L_f}{2}\|d_t\mathbf Q^{n+1}\|^2
					-3L_f\|d_t\mathbf Q^{n+1}\|\,\|d_t\mathbf Q^n\|
					-\frac{3L_f}{2}\|d_t\mathbf Q^n\|^2.
				\end{aligned}
			\end{equation*}
			Since
			\begin{equation*}
				2\|d_t\mathbf Q^{n+1}\|\,\|d_t\mathbf Q^n\|
				\le \|d_t\mathbf Q^{n+1}\|^2+\|d_t\mathbf Q^n\|^2,
			\end{equation*}
			the estimate \eqref{eq:primitive-taylor-claim} follows.
		\end{proof}

		\begin{thm}
			Let
			\begin{equation*}
				\overline{\Gamma}=\|\Gamma_{\mathbf Q}\|_{L^\infty(\Omega)},
				\qquad
				\Theta=\frac12\sqrt{\frac{\kappa}{\overline{\Gamma}}}.
			\end{equation*}
			Assume that
			\begin{equation}\label{eq:primitive-kappa-condition}
				\kappa\ge \frac94\,\overline{\Gamma}L_f^2.
			\end{equation}
			Define the BDF2 energy by
			\begin{equation}\label{eq:primitive-modified-energy}
				\begin{aligned}
					E_{\rm BDF2}^{n+1}
					={} & \frac14\left(\|\mathbf v^{n+1}\|^2
					+\|B_2(\mathbf v^{n+1})\|^2\right)                                   \\
					    & +\frac14\left(\|\sqrt K\nabla\mathbf Q^{n+1}\|^2
					+\|\sqrt K\nabla B_2(\mathbf Q^{n+1})\|^2\right)                     \\
					    & +\frac14\left(\|\sqrt W(\mathbf Q^{n+1}-\mathbf Q_{\star})\|^2
					+\|\sqrt W B_2(\mathbf Q^{n+1}-\mathbf Q_{\star})\|^2\right)         \\
					    & +\frac32F_{\rm bulk}(\mathbf Q^{n+1})
					-\frac12F_{\rm bulk}(\mathbf Q^n)
					+\left(2\Theta+\frac{3L_f}{2}\right)
					\|d_t\mathbf Q^{n+1}\|^2.
				\end{aligned}
			\end{equation}
			Then the SGE--BDF2 scheme in the limit of passive liquid crystals satisfies
			\begin{equation}\label{eq:primitive-final-stability}
				\begin{aligned}
					 & E_{\rm BDF2}^{n+1}-E_{\rm BDF2}^n
					+(4\Theta-3L_f)\|d_t\mathbf Q^{n+1}\|^2
					+\tau\left(2\|\sqrt\eta\mathbf D^{n+1}\|^2
					+\|\sqrt b\mathbf v^{n+1}\|^2\right)     \\
					 & \quad +3\Theta\|d_{tt}\mathbf Q^n\|^2
					+\frac14\|d_{tt}\mathbf v^n\|^2
					+\frac14\|\sqrt K\nabla d_{tt}\mathbf Q^n\|^2
					+\frac14\|\sqrt W d_{tt}\mathbf Q^n\|^2
					\le 0.
				\end{aligned}
			\end{equation}
			In particular, \eqref{eq:primitive-kappa-condition} implies
			$E_{\rm BDF2}^{n+1}\le E_{\rm BDF2}^n$ for every $\tau>0$.
		\end{thm}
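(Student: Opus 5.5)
We will follow the same template as the proof of Theorem~\ref{thm:semi-sge-bdf1}, with the BDF2 identities of the preceding lemmas replacing the first-order ones.

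\textbf{Step 1: test the scheme.} Set $\chi=\xi=0$ and write the BDF2 time difference as $\tfrac{1}{2\tau}d_{2t}$. Take the $L^2$ inner product of the momentum equation in \eqref{eq:sge-bdf1} with $2\tau\,\mathbf g_{\mathbf v}^{n+1}=2\tau\mathbf v^{n+1}$. Take the inner product of the $\mathbf Q$-equation with $2\tau\,\mathbf G_{\mathbf Q}^{n+1}=-2\tau\mathbf H^{n+1}$. The pressure term vanishes by incompressibility, and the viscous and friction terms give $-2\tau(2\|\sqrt\eta\mathbf D^{n+1}\|^2+\|\sqrt b\mathbf v^{n+1}\|^2)$. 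The mobility term gives $-2\tau\|\sqrt{\Gamma_{\mathbf Q}}\mathbf H^{n+1}\|^2\le 0$.

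The skew terms cancel exactly, as in \eqref{eq:ene-bdf1-05}. With the definitions of $\zeta^{n+1},\omega^{n+1}$ one has
\[
\zeta^{n+1}\big[(\mathbf g_{\mathbf v}^{n+1},B_2\mathbf c_{\mathbf v}^n)+(\mathbf G_{\mathbf Q}^{n+1},B_2\mathbf C_{\mathbf Q}^n)\big]-\omega^{n+1}\big[(\mathbf g_{\mathbf v}^{n+1},B_2\mathbf g_{\mathbf v}^n)+(\mathbf G_{\mathbf Q}^{n+1},B_2\mathbf G_{\mathbf Q}^n)\big]=\mathcal G\,\zeta\omega-\mathcal G\,\omega\zeta=0.
\]

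\textbf{Step 2: the quadratic parts.} Apply identity \eqref{eq:primitive-bdf2-id-1} to the left-hand sides:
\begin{itemize}
\item to $(d_{2t}\mathbf v^{n+1},\mathbf v^{n+1})$;
\item to $(d_{2t}\mathbf Q^{n+1},-\nabla\cdot(K\nabla\mathbf Q^{n+1}))$, after integrating by parts;
\item to $(d_{2t}\mathbf Q^{n+1},W(\mathbf Q^{n+1}-\mathbf Q_\star))$, using $d_{2t}\mathbf Q_\star=0$.
\end{itemize}
Dividing by $4$ (the factor $2$ from the identity times the factor $2$ from testing) produces the $\tfrac14(\|\cdot\|^2+\|B_2\cdot\|^2)$ increments in $E^{n+1}_{\rm BDF2}$. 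It also produces the nonnegative terms $\tfrac14\|d_{tt}\mathbf v^n\|^2$, $\tfrac14\|\sqrt K\nabla d_{tt}\mathbf Q^n\|^2$ and $\tfrac14\|\sqrt W d_{tt}\mathbf Q^n\|^2$.

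\textbf{Step 3: the bulk term.} Lemma \eqref{eq:primitive-taylor-claim} bounds $(d_{2t}\mathbf Q^{n+1},B_2f_{\rm bulk}(\mathbf Q^n))$ from below by $d_{2t}F_{\rm bulk}(\mathbf Q^{n+1})-3L_f\|d_t\mathbf Q^{n+1}\|^2-3L_f\|d_t\mathbf Q^n\|^2$. After the division by $2$, $d_{2t}F_{\rm bulk}$ telescopes as the difference of $\tfrac32F_{\rm bulk}(\mathbf Q^{n+1})-\tfrac12F_{\rm bulk}(\mathbf Q^n)$ between levels $n+1$ and $n$.

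\textbf{Step 4: the stabilization term (main obstacle).} The term $\tfrac{\tau\kappa}{2}(d_{2t}\mathbf Q^{n+1},d_{2t}\mathbf Q^{n+1})$ carries a factor $\tau$, while the Lipschitz losses $3L_f\|d_t\mathbf Q\|^2$ do not. We therefore cannot absorb the losses directly and must bring in the mobility dissipation. From the $\mathbf Q$-equation,
\[
d_{2t}\mathbf Q^{n+1}=2\tau\Gamma_{\mathbf Q}\mathbf H^{n+1}+(\text{skew terms}).
\]
The plan is a weighted Young inequality that trades part of $2\tau\|\sqrt{\Gamma_{\mathbf Q}}\mathbf H^{n+1}\|^2$ together with $\tfrac{\tau\kappa}{2}\|d_{2t}\mathbf Q^{n+1}\|^2$ against $(d_{2t}\mathbf Q^{n+1},\cdot)$. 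By AM--GM, $a\tau+\tfrac{b}{\tau}\ge 2\sqrt{ab}$, which removes the $\tau$-dependence and yields a term of order $\sqrt{\kappa/\overline\Gamma}\,\|d_{2t}\mathbf Q^{n+1}\|^2$. This is exactly where the constant $\Theta=\tfrac12\sqrt{\kappa/\overline\Gamma}$ enters.

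The skew components complicate isolating $d_{2t}\mathbf Q^{n+1}$. We will try to avoid them by pairing with $\mathbf H^{n+1}$ only through the already-used test, or by choosing the Young weights so that the skew parts cancel as in Step 1. This is the step we expect to be delicate.

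\textbf{Step 5: closing the estimate.} Once a term $\Theta\|d_{2t}\mathbf Q^{n+1}\|^2$ is available, expand it with \eqref{eq:primitive-bdf2-id-2}:
\[
\Theta\|d_{2t}\mathbf Q^{n+1}\|^2=6\Theta\|d_t\mathbf Q^{n+1}\|^2-2\Theta\|d_t\mathbf Q^n\|^2+3\Theta\|d_{tt}\mathbf Q^n\|^2.
\]
Combine this with $-3L_f\|d_t\mathbf Q^{n+1}\|^2-3L_f\|d_t\mathbf Q^n\|^2$. The $\|d_t\mathbf Q^n\|^2$ parts telescope into the $(2\Theta+\tfrac32L_f)\|d_t\mathbf Q^{n+1}\|^2$ term of the energy. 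What remains is $(4\Theta-3L_f)\|d_t\mathbf Q^{n+1}\|^2$ and $3\Theta\|d_{tt}\mathbf Q^n\|^2$.

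The condition \eqref{eq:primitive-kappa-condition} gives $\Theta\ge\tfrac34L_f$, so $4\Theta-3L_f\ge0$. Collecting all terms yields \eqref{eq:primitive-final-stability}, and monotonicity of $E_{\rm BDF2}$ follows.
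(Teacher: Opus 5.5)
\textbf{Verdict: there is a genuine gap, in your Step 4, and the paper's proof does not fill it either.}

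Your Steps 1--3 and 5 follow the paper's argument, up to your factor-$2$ normalization of the test functions. One bookkeeping slip: keep the halving from Step 3 when you reach Step 5. There the Lipschitz losses are $\tfrac{3L_f}{2}\|d_t\mathbf Q^{n+1}\|^2+\tfrac{3L_f}{2}\|d_t\mathbf Q^n\|^2$, not $3L_f$ each; otherwise you do not arrive at $4\Theta-3L_f$.

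The gap is Step 4, and neither of your proposed remedies closes it. Write the tensor equation as
\[
D_{2t}\mathbf Q^{n+1}=-\Gamma_{\mathbf Q}\mathbf G_{\mathbf Q}^{n+1}+\mathbf R^{n+1},\qquad \mathbf R^{n+1}=\zeta^{n+1}B_2(\mathbf C_{\mathbf Q}^n)-\omega^{n+1}B_2(\mathbf G_{\mathbf Q}^n).
\]
\begin{itemize}
\item The cancellation in Step 1 is an identity for the \emph{sum} of both equations paired with $(\mathbf v^{n+1},\mathbf G_{\mathbf Q}^{n+1})$. It says nothing about the size of $\mathbf R^{n+1}$ itself.
\item $\mathbf R^{n+1}$ is generically nonzero. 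Even if $\mathbf v^n=\mathbf v^{n-1}=0$, you get $\mathbf R^{n+1}=-\omega^{n+1}B_2(\mathbf G_{\mathbf Q}^n)$. Here $\omega^{n+1}$ is driven by $(\mathbf v^{n+1},B_2(\mathbf c_{\mathbf v}^n))$, and $\mathbf c_{\mathbf v}^n$ contains $\nabla\cdot\bm{\Sigma}^n-\mathbf Q^n:\nabla\mathbf H^n$.
\item No dissipative term controls $\|\mathbf R^{n+1}\|$. So no choice of Young weights turns $\tau\|\sqrt{\Gamma_{\mathbf Q}}\mathbf G_{\mathbf Q}^{n+1}\|^2$ into a multiple of $\tfrac{1}{\tau\overline{\Gamma}}\|d_{2t}\mathbf Q^{n+1}\|^2$.
\item Estimating $\mathbf R^{n+1}$ directly instead brings in solution-dependent constants through $\zeta^{n+1}$ and $\omega^{n+1}$. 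These constants are absent from the statement.
\end{itemize}

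The paper's proof passes this point by assertion. It states that, ``after the SGE contribution has canceled in the summed energy identity,'' $D_{2t}\mathbf Q^{n+1}=-\Gamma_{\mathbf Q}\mathbf G_{\mathbf Q}^{n+1}$. It then uses $\tau\|\sqrt{\Gamma_{\mathbf Q}}\mathbf G_{\mathbf Q}^{n+1}\|^2=\tfrac{1}{4\tau}\|\Gamma_{\mathbf Q}^{-1/2}d_{2t}\mathbf Q^{n+1}\|^2$, followed by the AM--GM bound $\tfrac{1}{4\tau\overline{\Gamma}}+\tfrac{\tau\kappa}{4}\ge\Theta$ that you anticipated. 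That identity is the scheme's own tensor equation with $\mathbf R^{n+1}$ deleted. It does not follow from the energy cancellation, and it contradicts the scheme unless the skew residual vanishes.

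So your diagnosis is correct. The argument, yours and the paper's, is complete only when $\mathbf R^{n+1}=0$, for example when the $\mathbf Q$-update carries no skew coupling, as in a pure stabilized BDF2 Allen--Cahn step. With the flow coupling present, the estimate with the $\tau$-independent constant $\Theta$ needs an ingredient that neither your proposal nor the paper provides.
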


		\begin{proof}
			In the passive case, $B_2(\mathbf f_{\mathbf v}^n)=0$ and
			$B_2(\mathbf F_{\mathbf Q}^n)=0$. The SGE--BDF2 update is
			\begin{equation}\label{eq:primitive-equation}
				\left\{
				\begin{aligned}
					D_{2t}\mathbf v^{n+1}
					={} & -\nabla p^{n+1}+2\nabla\cdot(\eta\mathbf D^{n+1})
					-b\mathbf v^{n+1}
					+\zeta^{n+1}B_2(\mathbf c_{\mathbf v}^n)
					-\omega^{n+1}B_2(\mathbf g_{\mathbf v}^n),              \\
					\nabla\cdot\mathbf v^{n+1}
					={} & 0,                                                \\
					D_{2t}\mathbf Q^{n+1}
					={} & -\Gamma_{\mathbf Q}\mathbf G_{\mathbf Q}^{n+1}
					+\zeta^{n+1}B_2(\mathbf C_{\mathbf Q}^n)
					-\omega^{n+1}B_2(\mathbf G_{\mathbf Q}^n).
				\end{aligned}
				\right.
			\end{equation}
			Take the $L^2$ inner product of the first equation in
			\eqref{eq:primitive-equation} with $\tau\mathbf v^{n+1}$ and the
			$L^2$ inner product of the third equation in
			\eqref{eq:primitive-equation}
			with $\tau\mathbf G_{\mathbf Q}^{n+1}$. The pressure term
			vanishes because
			\begin{equation*}
				-\tau(\nabla p^{n+1},\mathbf v^{n+1})
				=\tau(p^{n+1},\nabla\cdot\mathbf v^{n+1})=0.
			\end{equation*}
			The viscous and drag terms yield
			\begin{equation*}
				2\tau(\nabla\cdot(\eta\mathbf D^{n+1}),\mathbf v^{n+1})
				=-2\tau\|\sqrt\eta\mathbf D^{n+1}\|^2,
			\end{equation*}
			and
			\begin{equation*}
				-\tau(b\mathbf v^{n+1},\mathbf v^{n+1})
				=-\tau\|\sqrt b\mathbf v^{n+1}\|^2.
			\end{equation*}
			The tensor dissipation gives
			\begin{equation*}
				-\tau(\Gamma_{\mathbf Q}\mathbf G_{\mathbf Q}^{n+1},
				\mathbf G_{\mathbf Q}^{n+1})
				=-\tau\|\sqrt{\Gamma_{\mathbf Q}}\mathbf G_{\mathbf Q}^{n+1}\|^2.
			\end{equation*}
			The SGE terms cancel exactly. Indeed,
			\begin{equation*}
				\begin{aligned}
					 & \zeta^{n+1}\left[(\mathbf v^{n+1},B_2(\mathbf c_{\mathbf v}^n))
						              +(\mathbf G_{\mathbf Q}^{n+1},B_2(\mathbf C_{\mathbf
								              Q}^n))\right]         \\
					 & \quad -\omega^{n+1}\left[(\mathbf v^{n+1},B_2(\mathbf
						                      g_{\mathbf v}^n))
						                      +(\mathbf G_{\mathbf Q}^{n+1},B_2(\mathbf G_{\mathbf
								                      Q}^n))\right] \\
					 & =\zeta^{n+1}\omega^{n+1}
					\left(\|B_2(\mathbf g_{\mathbf v}^n)\|^2
					+\|B_2(\mathbf G_{\mathbf Q}^n)\|^2\right)                                 \\
					 & \quad -\omega^{n+1}\zeta^{n+1}
					\left(\|B_2(\mathbf g_{\mathbf v}^n)\|^2
					+\|B_2(\mathbf G_{\mathbf Q}^n)\|^2\right)=0.
				\end{aligned}
			\end{equation*}
			Adding the two tested equations yields
			\begin{equation}\label{eq:primitive-tested-sum}
				\begin{aligned}
					 & \frac12\left(d_{2t}\mathbf v^{n+1},\mathbf v^{n+1}\right)
					+\frac12\left(d_{2t}\mathbf Q^{n+1},
					\mathbf G_{\mathbf Q}^{n+1}\right)                           \\
					 & \quad =-\tau\left(
					2\|\sqrt\eta\mathbf D^{n+1}\|^2
					+\|\sqrt b\mathbf v^{n+1}\|^2
					+\|\sqrt{\Gamma_{\mathbf Q}}\mathbf G_{\mathbf Q}^{n+1}\|^2
					\right).
				\end{aligned}
			\end{equation}
			By the definition of $\mathbf G_{\mathbf Q}^{n+1}$,
			\begin{equation}\label{eq:primitive-G-expanded}
				\begin{aligned}
					 & \frac12\left(d_{2t}\mathbf Q^{n+1},
					\mathbf G_{\mathbf Q}^{n+1}\right)            \\
					 & =\frac12\left(d_{2t}\mathbf Q^{n+1},
					-\nabla\cdot(K\nabla\mathbf Q^{n+1})
					+W(\mathbf Q^{n+1}-\mathbf Q_{\star})\right)  \\
					 & \quad +\frac12\left(d_{2t}\mathbf Q^{n+1},
					B_2(f_N(\mathbf Q^n))\right)
					+\frac{\tau\kappa}{4}\|d_{2t}\mathbf Q^{n+1}\|^2.
				\end{aligned}
			\end{equation}
			Applying \eqref{eq:primitive-bdf2-id-1} to the velocity gives
			\begin{equation}\label{eq:primitive-velocity-identity}
				\begin{aligned}
					\frac12\left(d_{2t}\mathbf v^{n+1},\mathbf v^{n+1}\right)
					={} & \frac14d_t\left(\|\mathbf v^{n+1}\|^2
					+\|B_2(\mathbf v^{n+1})\|^2\right)
					+\frac14\|d_{tt}\mathbf v^n\|^2.
				\end{aligned}
			\end{equation}
			Using integration by parts and the boundary conditions, the
			linear tensor part satisfies
			\begin{equation}\label{eq:primitive-linear-Q-identity}
				\begin{aligned}
					 & \frac12\left(d_{2t}\mathbf Q^{n+1},
					-\nabla\cdot(K\nabla\mathbf Q^{n+1})
					+W(\mathbf Q^{n+1}-\mathbf Q_{\star})\right)                \\
					 & =\frac14d_t\left(\|\sqrt K\nabla\mathbf Q^{n+1}\|^2
					+\|\sqrt K\nabla B_2(\mathbf Q^{n+1})\|^2\right)
					+\frac14\|\sqrt K\nabla d_{tt}\mathbf Q^n\|^2               \\
					 & \quad +\frac14d_t\left(\|\sqrt W(\mathbf Q^{n+1}-\mathbf
					Q_{\star})\|^2
					+\|\sqrt W B_2(\mathbf Q^{n+1}-\mathbf Q_{\star})\|^2\right)
					+\frac14\|\sqrt W d_{tt}\mathbf Q^n\|^2.
				\end{aligned}
			\end{equation}
			By \eqref{eq:primitive-taylor-claim},
			\begin{equation}\label{eq:primitive-bulk-in-energy}
				\begin{aligned}
					 & \frac12\left(d_{2t}\mathbf Q^{n+1},
					B_2(f_{\rm bulk}(\mathbf Q^n))\right)                   \\
					 & \quad \ge \frac12d_{2t}F_{\rm bulk}(\mathbf Q^{n+1})
					-\frac{3L_f}{2}\|d_t\mathbf Q^{n+1}\|^2
					-\frac{3L_f}{2}\|d_t\mathbf Q^n\|^2                     \\
					 & \quad =d_t\left(\frac32F_{\rm bulk}(\mathbf Q^{n+1})
					-\frac12F_{\rm bulk}(\mathbf Q^n)\right)
					-\frac{3L_f}{2}\|d_t\mathbf Q^{n+1}\|^2
					-\frac{3L_f}{2}\|d_t\mathbf Q^n\|^2.
				\end{aligned}
			\end{equation}
			Combining
			\eqref{eq:primitive-tested-sum}--\eqref{eq:primitive-bulk-in-energy}
			gives
			\begin{equation}\label{eq:primitive-preliminary}
				\begin{aligned}
					 & \widetilde E^{n+1}-\widetilde E^n
					+\tau\left(
					2\|\sqrt\eta\mathbf D^{n+1}\|^2
					+\|\sqrt b\mathbf v^{n+1}\|^2
					+\|\sqrt{\Gamma_{\mathbf Q}}\mathbf G_{\mathbf Q}^{n+1}\|^2
					\right)                                                   \\
					 & \quad +\frac{\tau\kappa}{4}\|d_{2t}\mathbf Q^{n+1}\|^2
					+\frac14\|d_{tt}\mathbf v^n\|^2
					+\frac14\|\sqrt K\nabla d_{tt}\mathbf Q^n\|^2
					+\frac14\|\sqrt W d_{tt}\mathbf Q^n\|^2                   \\
					 & \le \frac{3L_f}{2}\|d_t\mathbf Q^{n+1}\|^2
					+\frac{3L_f}{2}\|d_t\mathbf Q^n\|^2,
				\end{aligned}
			\end{equation}
			where $\widetilde E^{n+1}$ denotes \eqref{eq:primitive-modified-energy}
			without the last term
			$\left(2\Theta+\frac{3L_f}{2}\right)\|d_t\mathbf Q^{n+1}\|^2$.

			The Allen--Cahn stabilization is used through the following
			square estimate.
			For the dissipative tensor block, after the SGE contribution
			has canceled in
			the summed energy identity,
			\begin{equation*}
				D_{2t}\mathbf Q^{n+1}=-\Gamma_{\mathbf Q}\mathbf G_{\mathbf Q}^{n+1}.
			\end{equation*}
			Consequently,
			\begin{equation*}
				\tau\|\sqrt{\Gamma_{\mathbf Q}}\mathbf G_{\mathbf Q}^{n+1}\|^2
				=\frac{1}{4\tau}
				\left\|\Gamma_{\mathbf Q}^{-1/2}d_{2t}\mathbf Q^{n+1}\right\|^2.
			\end{equation*}
			Since $\Gamma_{\mathbf Q}\le\overline{\Gamma}$, we obtain
			\begin{equation}\label{eq:primitive-FTY-square}
				\begin{aligned}
					 & \tau\|\sqrt{\Gamma_{\mathbf Q}}\mathbf G_{\mathbf Q}^{n+1}\|^2
					+\frac{\tau\kappa}{4}\|d_{2t}\mathbf Q^{n+1}\|^2                  \\
					 & \quad \ge
					\left(\frac{1}{4\tau\overline{\Gamma}}+\frac{\tau\kappa}{4}\right)
					\|d_{2t}\mathbf Q^{n+1}\|^2
					\ge \Theta\|d_{2t}\mathbf Q^{n+1}\|^2.
				\end{aligned}
			\end{equation}
			Using \eqref{eq:primitive-FTY-square} in
			\eqref{eq:primitive-preliminary}
			gives
			\begin{equation}\label{eq:primitive-after-square}
				\begin{aligned}
					 & \widetilde E^{n+1}-\widetilde E^n
					+\Theta\|d_{2t}\mathbf Q^{n+1}\|^2
					+\tau\left(2\|\sqrt\eta\mathbf D^{n+1}\|^2
					+\|\sqrt b\mathbf v^{n+1}\|^2\right)          \\
					 & \quad +\frac14\|d_{tt}\mathbf v^n\|^2
					+\frac14\|\sqrt K\nabla d_{tt}\mathbf Q^n\|^2
					+\frac14\|\sqrt W d_{tt}\mathbf Q^n\|^2       \\
					 & \le \frac{3L_f}{2}\|d_t\mathbf Q^{n+1}\|^2
					+\frac{3L_f}{2}\|d_t\mathbf Q^n\|^2.
				\end{aligned}
			\end{equation}
			Finally, by \eqref{eq:primitive-bdf2-id-2},
			\begin{equation}\label{eq:primitive-BDF2-norm-Q}
				\begin{aligned}
					\|d_{2t}\mathbf Q^{n+1}\|^2
					=6\|d_t\mathbf Q^{n+1}\|^2
					-2\|d_t\mathbf Q^n\|^2
					+3\|d_{tt}\mathbf Q^n\|^2.
				\end{aligned}
			\end{equation}
			Substituting \eqref{eq:primitive-BDF2-norm-Q} into
			\eqref{eq:primitive-after-square} and moving all terms to the left-hand
			side yield
			\begin{equation*}
				\begin{aligned}
					 & \widetilde E^{n+1}-\widetilde E^n
					+\left(6\Theta-\frac{3L_f}{2}\right)\|d_t\mathbf Q^{n+1}\|^2
					-\left(2\Theta+\frac{3L_f}{2}\right)\|d_t\mathbf Q^n\|^2 \\
					 & \quad +3\Theta\|d_{tt}\mathbf Q^n\|^2
					+\frac14\|d_{tt}\mathbf v^n\|^2
					+\frac14\|\sqrt K\nabla d_{tt}\mathbf Q^n\|^2
					+\frac14\|\sqrt W d_{tt}\mathbf Q^n\|^2                  \\
					 & \le -2\tau\|\sqrt\eta\mathbf D^{n+1}\|^2
					-\tau\|\sqrt b\mathbf v^{n+1}\|^2.
				\end{aligned}
			\end{equation*}
			Add and subtract
			$\left(2\Theta+\frac{3L_f}{2}\right)\|d_t\mathbf Q^{n+1}\|^2$.
			The difference of the added terms is precisely the last part of
			the modified
			energy \eqref{eq:primitive-modified-energy}, and the remaining
			coefficient of
			$\|d_t\mathbf Q^{n+1}\|^2$ is
			\begin{equation*}
				\begin{aligned}
					6\Theta-\frac{3L_f}{2}-\left(2\Theta+\frac{3L_f}{2}\right)
					=4\Theta-3L_f.
				\end{aligned}
			\end{equation*}
			This proves \eqref{eq:primitive-final-stability}. Finally,
			\eqref{eq:primitive-kappa-condition} is equivalent to $4\Theta\ge 3L_f$;
			hence the modified energy is non-increasing for arbitrary $\tau>0$.
		\end{proof}
		\begin{rmk}
			The condition, \eqref{eq:primitive-kappa-condition}, is a sufficient
			condition to ensure the energy stability of the
			SGE--BDF2 scheme. As in the BDF1 case, such a requirement is mainly
			introduced to ensure the validity of the theoretical analysis and is
			not expected to be necessary in practical computations. In practice,
			the stabilization parameter can be chosen in a more practical way,
			potentially in conjunction with the time step size, while
			preserving  energy stable behavior.
		\end{rmk}
	}

\bibliographystyle{elsarticle-num}
\bibliography{Ref}

\end{document}